\newtheorem{theorem}[section]{Theorem}
\newtheorem{corollary}{Corollary}[section]
\newtheorem{lemma}[section]{Lemma}
\newtheorem{proposition}[section]{Proposition}
\newtheorem{conjecture}[equation]{Conjecture}
\theoremstyle{definition}
\newtheorem{definition}[equation]{Definition}
\newtheorem{example}[equation]{Example}
\theoremstyle{remark}
\newtheorem{remark}[equation]{Remark}
\newtheorem{theorem}{Theorem}[section]
\newtheorem{lemma}[theorem]{Lemma}
\newtheorem{corollary}[theorem]{Corollary}
\newtheorem{proposition}[theorem]{Proposition}
\newtheorem{definition}[theorem]{Definition}
\newtheorem{conjecture}[theorem]{Conjecture}
\newtheorem{remark}[theorem]{Remark}
\newtheorem{example}[theorem]{Example}
\numberwithin{equation}{section}
\begin{document}
	
	\title{$Q\widetilde{Q}$-systems for twisted quantum affine algebras}
	
	\author{Keyu Wang}
	
	\address{Universit\'e Paris Cit\'e and Sorbonne Universit\'e, CNRS, IMJ-PRG, F-75006, Paris, France}
	
	\email{keyu.wang@imj-prg.fr}
	
\begin{abstract}
	We establish the $Q \widetilde{Q}$-systems for the twisted quantum affine algebras that were conjectured in \cite{frenkel2018spectra}. We develop the representation theory of Borel subalgebra of twisted quantum affine algebras and we construct their prefundamental representations. We also propose a general conjecture on the relations between twisted and non-twisted types. We prove this conjecture for some particular classes of representations, including prefundamental representations.
\end{abstract}

	\maketitle
	%\tableofcontents
\section{Introduction}
The motivation of this paper is the conjectural correspondence between quantum ${}^{L}\hat{\mathfrak{g}}$-KdV Hamiltonians and affine $\hat{\mathfrak{g}}$-opers through $Q\widetilde{Q}$-systems for twisted quantum affine algebras \cite{frenkel2018spectra}. This paper aims to establish the twisted $Q\widetilde{Q}$-systems in this correspondence.

The origin of this problem is the discovery of a correspondence between integrable models and ordinary differential equations, known as ODE-IM correspondence. In integrable systems, the subject is the quantized KdV Hamiltonians developed by V.~Bazhanov, S.~Lukyanov and A.~Zamolodchikov \cite{bazhanov1996integrable,bazhanov1997integrable,bazhanov1999integrable}. In ordinary differential equations, the subject is one-dimensional second order Schrodinger equations. See the review of P.~Dorey, C.~Dunning and R.~Tateo \cite{dorey2007ode} and references therein. More precisely, two systems of relations are studied therein. One is the Baxter's $TQ$ relations and the other is the quantum Wronskian relations.

This correspondence is then realized as the special case, $\hat{\mathfrak{g}} = \hat{\mathfrak{sl}}_2$, of a general correspondence proposed by B.~Feigin and E.~Frenkel \cite{feigin2011quantization} between the spectra of quantum $\hat{\mathfrak{g}}$-KdV Hamiltonians and affine ${}^{L}\hat{\mathfrak{g}}$-opers. Here $\hat{\mathfrak{g}}$ is a non-twisted affine Lie algebra and ${}^{L}\hat{\mathfrak{g}}$ is the Langlands dual algebra of $\hat{\mathfrak{g}}$. The algebra $\hat{\mathfrak{sl}}_2$ is self-dual, thus in this special case, quantum $\hat{\mathfrak{sl}}_2$-KdV Hamiltonians are in correspondence with $\hat{\mathfrak{sl}}_2$-opers. The second order Schrodinger operators considered in the ODE-IM correspondence can be realized as $\hat{\mathfrak{sl}}_2$-opers.

The $TQ$ relations for general non-twisted quantum affine algebras was studied by E.~Frenkel and D.~Hernandez \cite{frenkel2015baxter}. The $T$ operators are constructed from fundamental representations of $\mathcal{U}_q(\hat{\mathfrak{g}})$ and the $Q$ operators are constructed from prefundamental representations, defined by D.~Hernandez and M.~Jimbo \cite{hernandez2012asymptotic}, in the category $\mathcal{O}$ of the Borel subalgebra of $\mathcal{U}_q(\hat{\mathfrak{g}})$. The $TQ$ relations are then stated as universal relations in the Grothendieck ring $K_0(\mathcal{O})$ of category $\mathcal{O}$. These universal relations in $K_0(\mathcal{O})$ provide the $TQ$-relations for spectra of quantum $\hat{\mathfrak{g}}$-KdV Hamiltonians through the construction of V.~Bazhanov, S.~Lukyanov and A.~Zamolodchikov \cite{bazhanov1997integrable,bazhanov1999integrable}.
In the special case $\hat{\mathfrak{sl}}_2$, the prefundamental representation recovers the $Q$ operator in the ODE-IM correspondence.

The generalization of quantum Wronskian relations on the side of quantum KdV Hamiltonians was studied later by E.~Frenkel and D.Hernandez \cite{frenkel2018spectra}, known as $Q\widetilde{Q}$ relations. Again, they are universal relations in the Grothendieck ring $K_0(\mathcal{O})$ of category $\mathcal{O}$. On the side of opers, a series of works by D.~Masoero, A.~Raimondo and D.~Valeri \cite{masoero2016bethe,masoero2017bethe} studied the general quantum Wronskian relations in ODE, also called $Q\widetilde{Q}$-systems. The $Q\widetilde{Q}$-systems derived from two different fields share the same form. It was shown there \cite{masoero2016bethe,masoero2017bethe} that solutions of $Q\widetilde{Q}$-systems can be associated with the affine ${}^{L}\hat{\mathfrak{g}}$-opers defined by E. Frenkel. Therefore, the $Q\widetilde{Q}$-systems relate the two sides of the KdV-opers correspondence.

The subject of current paper appears when the simple Lie algebra $\mathfrak{g}$ is not simply laced. In this case, $\hat{\mathfrak{g}}$ is not self-dual and ${}^{L}\hat{\mathfrak{g}}$ is an affine Kac-Moody algebra of twisted type. The question is if there exists a KdV-opers correspondence when $\hat{\mathfrak{g}}$ and ${}^{L}\hat{\mathfrak{g}}$ change their roles. That is, a correspondence between twisted quantum ${}^{L}\hat{\mathfrak{g}}$-KdV Hamiltonians and non-twisted affine $\hat{\mathfrak{g}}$-opers. This was proposed as a conjecture by E.~Frenkel and D.~Hernandez in the context of $Q\widetilde{Q}$-systems \cite{frenkel2018spectra}. In more details, it was also conjectured \cite{frenkel2018spectra} that similar $Q\widetilde{Q}$-systems hold in $K_0(\mathcal{O})$ of Borel subalgebras of twisted quantum affine algebras $\mathcal{U}_q(\mathcal{L} \mathfrak{g}^{\sigma})$, and that these twisted $Q\widetilde{Q}$-systems will provide a bridge between twisted quantum ${}^{L}\hat{\mathfrak{g}}$-KdV Hamiltonians and non-twisted affine $\hat{\mathfrak{g}}$-opers. 

It is worth to remark that there are other types of relations discovered in the Grothendieck ring. For example, the $QQ^*$-systems in \cite[Section~3.4]{frenkel2018spectra} and the folded $QQ$-systems in \cite[Section~5.7]{frenkel2021folded}. They are different from the $Q\widetilde{Q}$-systems of twisted quantum affine algebras considered in this paper.

We also remark that in non-twisted types, the representation theory of quantum affine Borel algebras is closely related to that of shifted quantum affine algebras \cite{hernandez2020representations}. The later are at the center of important developments in the context of the study of quantized K-theoretical Coulomb branches. The shifted quantum affine algebras of twisted types have not been defined. We expect our results will be useful in the study of twisted shifted quantum affine algebras.

The present paper focuses on the first part of this conjecture, that is, we prove the $Q\widetilde{Q}$-systems for twisted quantum affine algebras conjectured in \cite{frenkel2018spectra}. Different from the non-twisted case, we propose a new method to avoid using shifted quantum affine algebras which has not been defined for twisted types, and to avoid using $\hat{\mathfrak{sl}}_2$-reductions in the case $A_{2n}^{(2)}$.

For our purpose, we first define the category $\mathcal{O}$ of the Borel subalgebra $\mathcal{U}_q(\mathfrak{b}^{\sigma})$ of twisted quantum affine algebras by mimicking the non-twisted case in the work of D.~Hernandez and M.~Jimbo \cite{hernandez2012asymptotic}. We prove that the simple objects in the category $\mathcal{O}$ are the $l$-highest weight modules by standard arguments. More precisely, the allowed highest $l$-weights are those tuples of rational functions $(\psi_i(u))_{i \in I}$ which are compatible with the Dynkin diagram automorphism in the sense that $\psi_{\sigma(i)}(u) = \psi_{i}(\omega u)$, where $\omega$ is an $M$th root of unity, where $M$ is the order of $\sigma$. To prove it, we need to show that the positive and negative prefundamental representations $L_{i,a}^{\pm}$ are in the category $\mathcal{O}$. These prefundamental representations are constructed by the same method in non-twisted case \cite{hernandez2012asymptotic}. We verify that these constructions still work in the twisted case.

Then we define the Grothendieck ring $K_0(\mathcal{O})$ of the category $\mathcal{O}$ and also the $q$-character of modules in $\mathcal{O}$. We prove that the $q$-character is a ring isomorphism
\[\chi_q^{\sigma} : K_0(\mathcal{O}) \to \mathcal{E}_{l}^{\sigma}\]
to some commutative ring. Note that this is also true in the non-twisted case by the same argument. In particular, the Grothendieck ring $K_0(\mathcal{O})$ is commutative.

The relation between the non-twisted quantum affine algebra $\mathcal{U}_q(\mathcal{L}\mathfrak{g})$ and the twisted quantum affine algebra $\mathcal{U}_q(\mathcal{L}\mathfrak{g})^{\sigma}$ associated by the automorphism $\sigma$ of Dynkin diagram is rather mysterious. Although there is no known relation between $\mathcal{U}_q(\mathcal{L}\mathfrak{g})$-modules and $\mathcal{U}_q(\mathcal{L}\mathfrak{g})^{\sigma}$-modules on the level of algebra actions, their $q$-characters are found to be closely related. For example, it is proved by D.~Hernandez \cite{hernandez2010kirillov} that the $q$-character of KR-modules of $\mathcal{U}_q(\mathcal{L}\mathfrak{g})^{\sigma}$ can be obtained from the $q$-character of the corresponding KR-modules of $\mathcal{U}_q(\mathcal{L}\mathfrak{g})$ by the folding map. In particular, they have the same dimensions on each height of weight spaces. Generalizing to the category $\mathcal{O}$, this relation still holds for negative and positive prefundamental representations by their construction. 

This relation is useful to generalize known results in non-twisted types to twisted types. As an application, we derive the $TQ$-relations for twisted types and relate them to the $TQ$-relations for non-twisted types.

It is not clear if this relation holds in general. We propose Conjecture \ref{conj} that this relation is true for $l$-highest weight modules. However, we modify this conjecture by restricting to a subcategory $\mathcal{O}_1$ because of Example \ref{counterexample1}, and by rescaling by the usual character because of Example \ref{counterexample2.1} and \ref{counterexample2.2}.

Moreover, we prove that this conjecture holds for another particular class of representations in $\mathcal{O}$: the representation $X_{\bar{i},a} = L(\widetilde{\mathbf{\Psi}}_{i,a})$. As a consequence, the $Q\widetilde{Q}$-systems for twisted quantum affine algebras can be proved by applying the conjecture on $X_{\bar{i},a}$ and on $L_{\bar{i},a}^+$.

As an application of the $Q\widetilde{Q}$-systems of twisted types, we derive the Bethe Ansatz equations for twisted quantum affine algebras by applying the transfer matrices on $Q\widetilde{Q}$-systems.

The main differences between twisted and non-twisted types are:
\begin{itemize}
	\item The quantum Serre relations in Drinfeld presentation are different. Luckily, for our purposes, we explain that they are not used directly.
	
	\item The nodes satisfying $i = \sigma(i)$ need a specific detailed calculation.
	
	\item The type $A_{2n}^{(2)}$ is special, because we can not use $\mathfrak{sl}_2$-reductions on the node $\bar{n}$. Therefore, all results for non-twisted types proved by $\mathfrak{sl}_2$-reduction either fail or need another proof in this case.
	
	\item Unlike classical Lie algebras, where twisted affine Lie algebras are invariant subalgebras of a corresponding non-twisted affine Lie algebras, twisted quantum affine algebras are not subalgebras of the corresponding non-twisted quantum affine algebras in the sense which would be compatible with classical limits.
\end{itemize}

The structure of this paper is organized as follows.

In Section~\ref{sectiontwisted}, we review the definition and two different presentations of twisted quantum affine algebras. These results for twisted quantum affine algebras are proved by I.~Damiani \cite{damiani2000r,damiani2012drinfeld,damiani2015from}. We review the theory of finite-dimensional representations of twisted quantum affine algebras studied by V.~Chari and A.~Pressley \cite{chari1998twisted} and the $q$-characters theory of finite-dimensional representations studied by D.~Hernandez \cite{hernandez2010kirillov}.

In Section~\ref{sectionA}, we construct asymptotic representations of Borel subalgebras of twisted quantum affine algebras by mimicking the construction in non-twisted case \cite{hernandez2012asymptotic}. Since the main tool of this construction is proved for all types of quantum affine algebras \cite[Proposition~5.6]{hernandez2010simple}, the proof is simply a detailed calculation, especially at the nodes $i = \sigma(i)$. We prove a refined version of coproduct formulas for twisted quantum affine algebras.

In Section~\ref{sectionO}, we define the category $\mathcal{O}$ of representations of Borel subalgebras for the twisted type. We study the simple objects in the category $\mathcal{O}$ and we define the twisted $q$-character in the category $\mathcal{O}$. We propose a conjectural relation between the twisted and non-twisted $q$-characters. It is known that such a statement holds for Kirillov-Reshetikhin modules \cite{hernandez2010kirillov}. We prove it holds for some other representations, namely, the prefundamental representations. We also establish the $TQ$ relations of twisted types.

In Section~\ref{sectionQQ}, we prove the $Q\widetilde{Q}$-system for twisted quantum affine algebras conjectured by E.~Frenkel and D.~Hernandez \cite{frenkel2018spectra}. For this purpose, we also study some structures of positive prefundamental representations, and we establish some analogous results in \cite{feigin2017finite} for the twisted types with another proof. We derive the Bethe Ansatz equations of twisted types from the $Q\widetilde{Q}$-systems. 

In Appendix~\ref{appendixa}, we verify the condition that $q$ is not a root of unity is sufficient. 

In Appendix~\ref{appendixb}, we calculate running examples of type $A_2^{(2)}$.

\section{Twisted quantum affine algebras}\label{sectiontwisted}
In this section, we review the two equivalent realizations of twisted quantum affine algebras. Then we recall the theory of finite dimensional representations of $\mathcal{U}_q(\mathcal{L}\mathfrak{g}^{\sigma})$ and their $q$-characters. The notation here is slightly different from that in \cite{hernandez2010kirillov} at fixed indices for later convenience. At last, we state Conjecture~\ref{conjectureg} which links twisted $q$-characters and non-twisted $q$-characters of finite dimensional representations.

\subsection{Generators and relations}\label{subsectionpresentation}
The twisted quantum affine algebras can be realized either as a quantization of the Kac-Moody algebras of twisted affine type or as a twisted affinization of quantum universal enveloping algebras of the underlying Kac-Moody algebra of finite type. This leads to two different presentations of the twisted quantum affine algebras, namely the Drinfeld-Jimbo presentation and the Drinfeld presentation.

For $N \in \mathbb{N}$, an $N \times N$ matrix $A$ is called a generalized Cartan matrix if $A_{i,j} \in \mathbb{Z}$, $A_{i,i}=2$, $A_{i,j} \leq 0$ and $A_{i,j}=0$ if and only if $A_{j,i}=0$. We suppose that $A$ is symmetrizable, that is, there exists a diagonal matrix $D = \mathrm{diag}(d_1,\cdots,d_N)$ such that the matrix $DA$ is symmetric. We will fix the numbers $d_i$ later. 

Let $q \in \mathbb{C}^*$ be not a root of unity. For $1 \leq i \leq N$ and $m \in \mathbb{N}$, we introduce the notations
\[q_i = q^{d_i}, \; [m]_{q_i} = \frac{q_i^m-q_i^{-m}}{q_i-q_i^{-1}}, \; [m]_{q_i}! = [m]_{q_i} [m-1]_{q_i} \cdots [1]_{q_i}.\]
We use the convention $[0]_q = 0$, $[0]_q! = 1$.

\begin{definition}
	Let $\mathfrak{g}(A)$ be the Kac-Moody Lie algebra defined by the symmetrizable generalized Cartan matrix $A$. The quantum Kac-Moody algebra $\mathcal{U}_q(\mathfrak{g}(A))$ is the associative $\mathbb{C}$-algebra with generators $k_{i}^{\pm 1} , e_i^{\pm} \; (1 \leq i \leq N)$, and relations
	\begin{equation}
		\begin{split}
			&k_i k_i^{-1} = k_i^{-1}k_i = 1, \; k_ik_j = k_jk_i,\\
			&k_ie_j^{\pm} = q_i^{\pm A_{i,j}}e_j^{\pm}k_i,\\
			&[e_i^{+},e_j^{-}] = \delta_{i,j} \frac{k_i-k_i^{-1}}{q_i - q_i^{-1}}, \forall i,j,\\
			&\sum_{r=0}^{1-A_{i,j}} (-1)^r(e_i^{\pm})^{(1-A_{i,j}-r)} e_j^{\pm} (e_i^{\pm})^{(r)} = 0 \; \text{for} \; i \neq j,
		\end{split}
	\end{equation}
	where $(e_i^{\pm})^{(r)} = \frac{(e_i^{\pm})^r}{[r]_{q_i}!}$.
\end{definition}

A generalized Cartan matrix is called of affine type if $\mathrm{rank}(A) = N-1$, and all its proper principal minors are positive. When the generalized Cartan matrix $A$ is of affine type, the affine Kac-Moody algebra $\mathfrak{g}(A)$ can be realized as an affinization of finite-dimensional simple Lie algebras \cite[Chapter~8]{kac1990infinite}.  We focus on the case when $A$ is of twisted affine type in the table \cite[Page~45]{kac1990infinite}. In this case, $\mathfrak{g}(A)$ is of type $X_n^{(M)}$ with $X=A,D,E$, $n \in \mathbb{N}$ and $M = 2,3$. 

We denote by $\mathfrak{g}$ the finite-dimensional simple Lie algebra of type $X_n$ and $I=\{1,\cdots,n\}$ the indices set of the Dynkin diagram of $\mathfrak{g}$. Its Cartan matrix is denoted by $C$. Let $\sigma:I \to I$ be a non-trivial Dynkin diagram automorphism and $M$ be the order of $\sigma$. Denote $I_{\sigma}$ to be the set of orbits of $\sigma$ acting on $I$. The orbit of $i \in I$ is denoted by $\bar{i} \in I_{\sigma}$. For each $\bar{i} \in I_{\sigma}$, fix the representative $i \in I$ such that $\sigma^k(i) \geq i$, $\forall k$, under the normal label of Dynkin diagrams as in \cite{hernandez2010kirillov}. These fixed representatives will be used throughout this paper.

We denote by $\hat{\mathfrak{g}}^{\sigma}$ the twisted affine Kac-Moody algebra of type $X_n^{(M)}$ and $\hat{I}_{\sigma} = I_{\sigma} \sqcup \{\epsilon\}$ the indices set of the Dynkin diagram of $\hat{\mathfrak{g}}^{\sigma}$, where $\epsilon$ is the additional node. $C^{\sigma}$ is the generalized Cartan matrix of $\hat{\mathfrak{g}}^{\sigma}$.

We fix the diagonal matrix $\mathrm{diag}(d_{\bar{i}})_{\bar{i} \in \hat{I}_{\sigma}}$ as in \cite[Section~2.4]{hernandez2010kirillov}. For $i \in I$, we introduce the notation $N_i$ equals to $M$ if $\sigma(i)=i$, and equals to $1$ otherwise. Note that $N_i = d_{\bar{i}}$ except the case $(A_{2n}^{(2)}, \bar{i} = \bar{n})$.

It is known that there are unique positive integers $(a_i)_{i \in \hat{I}_{\sigma}}$ such that 
\[\sum_{i \in \hat{I}_{\sigma}} a_id_iC_{i,j}^{\sigma} = 0 \; \mathrm{for \; all}\; j \in \hat{I}_{\sigma}, \; a_{\epsilon} = 1.\]
Then $c := \prod_{i \in \hat{I}_{\sigma}} k_i^{a_i}$ lies in the center of $\mathcal{U}_q(\hat{\mathfrak{g}}^{\sigma})$.

\begin{definition}
	The twisted quantum loop algebra $\mathcal{U}_q(\mathcal{L}\mathfrak{g}^{\sigma})$ is the quotient of $\mathcal{U}_q(\hat{\mathfrak{g}}^{\sigma})$ by the ideal generated by $c-1$.
\end{definition}

\begin{remark}\label{grading}
	The algebra $\mathcal{U}_q(\mathcal{L}\mathfrak{g}^{\sigma})$ has a $\mathbb{Z}$-grading given by $\mathrm{deg}(e_i^+) = 1$, $\mathrm{deg}(e_i^-) = -1$, $\mathrm{deg}(k_i^{\pm 1}) = 0$, $\forall i \in \hat{I}_{\sigma}$.
\end{remark}

\begin{remark}\label{remarkantipode}
	Remark that the twisted quantum loop algebra $\mathcal{U}_q(\mathcal{L}\mathfrak{g}^{\sigma})$ is a Hopf algebra in the sense 
	\[\begin{split}
		&\Delta(k_i^{\pm 1}) = k_i^{\pm 1} \otimes k_i^{\pm 1}, \; \Delta(e_i^+) = e_i^+ \otimes 1 + k_i \otimes e_i^+, \; \Delta(e_i^-) = e_i^- \otimes k_i^{-1} + 1 \otimes e_i^-, \\
		&S(k_i^{\pm 1}) = k_i^{\mp 1}, \; S(e_i^+) = -k_i^{-1} e_i^+ , \; S(e_i^-) = -e_i^-k_i, \\
		&\epsilon(k_i^{\pm 1}) = 1, \epsilon(e_i^{\pm}) = 0.
	\end{split}
	\]
	In particular,
	\[S^{-1}(k_i^{\pm 1}) = k_i^{\mp 1}, \; S^{-1}(e_i^+) = - e_i^+ k_i^{-1} , \; S^{-1}(e_i^-) = -k_i e_i^-.\]
\end{remark}

The twisted quantum loop algebras have another presentation known as the Drinfeld presentation. The corresponding generators below are called Drinfeld generators.

\begin{theorem}\cite{chari1998twisted}\cite[Theorem~9.1]{damiani2015from} \label{Drelations}
	The algebra $\mathcal{U}_q(\mathcal{L}\mathfrak{g}^{\sigma})$ is isomorphic to the algebra with generators $x_{i,k}^{\pm} (i \in I, k \in \mathbb{Z})$, $h_{i,k} (i \in I, k \in \mathbb{Z} \setminus \{0\})$, $k_i^{\pm1} (i \in I)$, and the defining relations
	\begin{equation}\label{commutationdrinfeld}
		\begin{split}
			&x_{\sigma(i),k}^{\pm} = \omega^kx_{i,k}^{\pm}, \; h_{\sigma(i),k}^{\pm} = \omega^kh_{i,k}^{\pm}, \;
			k_{\sigma(i)}^{\pm 1} = k_{i}^{\pm 1},\\
			&k_ik_i^{-1} = k_i^{-1}k_i = 1, \; k_ik_j = k_jk_i,\\
			&h_{i,k}h_{j,l}=h_{j,l}h_{i,k}, \; k_ih_{j,l} = h_{j,l}k_i,\\
			&k_ix_{j,k}^{\pm} = q^{\pm \sum_{r=1}^M C_{i,\sigma^r(j)}} x_{j,k}^{\pm}k_i,\\
			&[h_{i,k}, x_{j,l}^{\pm}] = \pm \frac{1}{k}(\sum_{r=1}^M [\frac{kC_{i,\sigma^r(j)}}{d_{\bar{i}}}]_{q_{\bar{i}}} \omega^{kr})x_{j,k+l}^{\pm},\\
			&[x_{i,k}^+,x_{j,l}^-] = \frac{\sum_{r=1}^M \delta_{\sigma^r(i),j} \omega^{rl}}{N_i} (\frac{\phi_{i,k+l}^+ - \phi_{i,k+l}^-}{q_{\bar{i}} - q_{\bar{i}}^{-1}}), \; \forall i,j \in I,\\
		\end{split}
	\end{equation}
	where the $\phi_{i,k}^{\pm}$ are defined by
	\[\phi_{i}(u) = \sum_{k=0}^{\infty} \phi_{i,\pm k}^{\pm} u^{\pm k} = k_i^{\pm 1} exp(\pm (q_{\bar{i}} - q_{\bar{i}}^{-1}) \sum_{l=1}^{\infty} h_{i, \pm l}u^{\pm l} ), \]
	together with the quantum Serre relations defined below.

	To write down the quantum Serre relations, we introduce the notation:
	
	For $i,j \in I$, define $d_{i,j} \in \mathbb{Q}$ and $P^{\pm}_{i,j}(u_1,u_2) \in \mathbb{Q}[u_1,u_2]$ by:
	\begin{itemize}
		
		\item if $C_{i,\sigma(i)} = 2$, then $d_{i,j} = \frac{1}{2}$ and $P_{ij}^{\pm}(u_1,u_2) = 1$,
		
		\item if $C_{i,\sigma(i)} = 0$ and $\sigma(j) \neq j$, then $d_{i,j} = \frac{1}{2M}$ and $P_{ij}^{\pm}(u_1,u_2) = 1$,
		
		\item if $C_{i,\sigma(i)} = 0$ and $\sigma(j) = j$, then $d_{i,j} = \frac{1}{2}$ and $P_{ij}^{\pm}(u_1,u_2) = \frac{u_1^Mq^{\pm 2M} - u_2^M}{u_1q^{\pm 2} - u_2}$,
		
		\item if $C_{i,\sigma(i)} = -1$, then $d_{i,j} = \frac{1}{8}$ and $P_{ij}^{\pm}(u_1,u_2) = u_1q^{\pm 1} + u_2$.
		
	\end{itemize}
	Denote by $x^{\pm}_i(u) = \sum_{l \in \mathbb{Z}} x_{i,l}^{\pm} u^{-l}$.
	The quantum Serre relations are:
	\begin{equation}\label{relationxx}
		\prod_{r=1}^M (u_1-\omega^r q^{\pm C_{i,\sigma^r(j)}} u_2) x^{\pm}_i(u_1) x^{\pm}_j(u_2) = \prod_{r=1}^M (u_1 q^{\pm C_{i,\sigma^r(j)}} -\omega^r u_2) x^{\pm}_j(u_2) x^{\pm}_i(u_1),
	\end{equation}
	
	if $C_{i,j}= -1$ and $\sigma(i) \neq j$, then 
	\begin{equation}
		\begin{split}
			\mathrm{Sym}_{u_1,u_2} \{P_{ij}^{\pm}(u_1,u_2)[x^{\pm}_j(v) x^{\pm}_i(u_1) x^{\pm}_i(u_2)& - (q^{2Md_{i,j}} + q^{-2Md_{i,j}}) x^{\pm}_i(u_1) x^{\pm}_j(v) x^{\pm}_i(u_2) \\
			& +  x^{\pm}_i(u_1) x^{\pm}_i(u_2) x^{\pm}_j(v) )]\} = 0,
		\end{split}
	\end{equation}
	
	if $C_{i,\sigma(i)} = -1$, that is the case $A_{2n}^{(2)}$, then
	\begin{equation}
		\mathrm{Sym}_{u_1,u_2,u_3} \{[q^{3/2}u_1^{\mp 1} - (q^{1/2} + q^{-1/2})u_2^{\mp 1} + q^{-3/2}u_3^{\mp 1}]x^{\pm}_i(u_1)x^{\pm}_i(u_2)x^{\pm}_i(u_3)\} =0,
	\end{equation}
	\begin{equation}
		\mathrm{Sym}_{u_1,u_2,u_3} \{[q^{-3/2}u_1^{\pm 1} - (q^{1/2} + q^{-1/2})u_2^{\pm 1} + q^{3/2}u_3^{\pm 1}]x^{\pm}_i(u_1)x^{\pm}_i(u_2)x^{\pm}_i(u_3)\} =0.
	\end{equation}
\end{theorem}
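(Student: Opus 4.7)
The plan is to construct an explicit isomorphism between the Drinfeld-Jimbo presentation of $\mathcal{U}_q(\mathcal{L}\mathfrak{g}^\sigma)$ (obtained from the Kac-Moody presentation with generators $e_i^\pm, k_i^{\pm 1}$ for $i \in \hat{I}_\sigma$ modulo $c-1$) and the algebra $\widetilde{\mathcal{U}}$ given by the Drinfeld generators and relations in the statement. I would do this by first defining a map $\Phi: \widetilde{\mathcal{U}} \to \mathcal{U}_q(\mathcal{L}\mathfrak{g}^\sigma)$, realizing the Drinfeld generators as explicit elements of the Drinfeld-Jimbo presentation via Lusztig's braid group action, and then exhibiting an inverse.

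To define $\Phi$, I would fix a reduced expression for a translation element of length $2$ in the extended affine Weyl group associated with $\hat{\mathfrak{g}}^\sigma$, use the induced Lusztig automorphisms $T_{\bar i}$ to transport the simple root vectors $e_{\bar i}^{\pm}$ along the affine direction, and declare the image of $x_{i,k}^\pm$ to be (a rescaled version of) the resulting real root vectors. The images of $h_{i,k}$ and of the power series $\phi_i(u)$ are then determined either by taking appropriate commutators $[x_{i,k}^+, x_{i,-k}^-]$ and extracting the imaginary root vectors via the exponential/Schur polynomial dictionary, or directly by Beck-type formulas. The twisting factors $\omega^k$ in the first line of (\ref{commutationdrinfeld}) arise automatically because the $\sigma$-action on $I$ descends from an automorphism of the affine Dynkin diagram that intertwines with the translation used. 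The key compatibility to check is that $\Phi$ respects all the defining relations of $\widetilde{\mathcal{U}}$: the commutations involving $k_i$ are formal, the $[h_{i,k}, x_{j,l}^\pm]$ relations reduce to a finite-type computation inside a rank-two subalgebra after using the grading of Remark~\ref{grading}, and the mixed $[x^+,x^-]$ relation follows from the standard triangular decomposition argument adapted to the twisted setting.

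For the inverse $\Psi$, I would send the finite-type Chevalley generators $e_{\bar i}^\pm, k_{\bar i}^{\pm 1}$ ($\bar i \in I_\sigma$) to $x_{i,0}^\pm, k_i^{\pm 1}$, and define the image of the affine node generators $e_\epsilon^\pm, k_\epsilon^{\pm 1}$ via an explicit formula in terms of $x_{i,\pm 1}^\mp$ and a product of the $k_i$'s dictated by $c=1$; compatibility with the Serre relations at the affine node is the content that forces the precise shape of the polynomials $P_{ij}^\pm$ and the exponents appearing in (\ref{relationxx}). Showing $\Phi \circ \Psi = \mathrm{id}$ and $\Psi \circ \Phi = \mathrm{id}$ is then formal once the relations have been verified, since both algebras are generated by images of elements that match on generators.

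The hard part is the verification of the quantum Serre relations in both directions, and this is exactly where the two subtle features of the twisted case enter. First, at a node with $\sigma(i) = i$ but $C_{i,\sigma(i)} = 0$, one has to track the $M$-th power factors appearing in $P_{ij}^\pm(u_1,u_2) = (u_1^M q^{\pm 2M} - u_2^M)/(u_1 q^{\pm 2} - u_2)$, which originate from averaging over the orbit of $\sigma$ and would be easy to miss. Second, in type $A_{2n}^{(2)}$ the node $\bar n$ has $C_{i,\sigma(i)} = -1$, producing the cubic-in-$x^\pm$ Serre relations with three spectral parameters; these have no non-twisted analogue and must be derived by a direct, fairly intricate computation in a rank-two subalgebra, essentially reproducing the argument of Damiani. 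Everything else is either inherited from the finite type $X_n$ computation or reduces to a straightforward but tedious check of how $\omega$-twisting interacts with the braid group automorphisms.
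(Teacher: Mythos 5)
The paper does not prove this theorem: it is quoted verbatim from the literature, with the proof deferred to Chari--Pressley and, in complete form, to Damiani's series of papers culminating in \cite[Theorem~9.1]{damiani2015from}. Your outline is an accurate high-level description of exactly that strategy (root vectors via Lusztig automorphisms attached to translation elements of the extended affine Weyl group, imaginary root vectors extracted from commutators, verification of the Serre relations in rank two, and an explicit inverse on the Chevalley generators), so there is no divergence of approach to report. That said, what you have written is a plan rather than a proof: the entire mathematical content lies in the relation checks you defer, and these occupy several long papers of Damiani. Two points deserve flagging. First, the translation element used to generate $x_{i,k}^{\pm}$ is $T_{\omega_i}$ for a fundamental coweight, whose length is in general much larger than $2$; the ``length $2$'' normalization you propose is not how the real root vectors are indexed. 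Second, your closing claim that $\Phi\circ\Psi=\mathrm{id}$ and $\Psi\circ\Phi=\mathrm{id}$ are ``formal'' is only true once both maps are known to be well-defined homomorphisms, and well-definedness of $\Psi$ (i.e., that the Drinfeld--Jimbo Serre relations at the affine node follow from the listed Drinfeld relations alone) is precisely the completeness statement that makes this theorem hard; it cannot be absorbed into the formal step. Within the scope of this paper, citing the references, as the author does, is the appropriate treatment.
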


\begin{remark}
	The definition of $d_{i,j}$ in the second case where $d_{i,j} = \frac{1}{2M}$ is different from that in \cite{hernandez2010kirillov} as we correct a typo therein. We also note that the last relation in \eqref{commutationdrinfeld} is different from that in \cite{hernandez2010kirillov} since we followed the notation in \cite{damiani2015from} which differ by a scalar $N_i$ on $x_{i,j}^+$.
\end{remark}

The topological algebra generated by Drinfeld currents and the above relations is equipped with the Drinfeld coproduct $\Delta^{(D)} : \mathcal{U}_q(\mathcal{L}\mathfrak{g}^{\sigma}) \to \mathcal{U}_q(\mathcal{L}\mathfrak{g}^{\sigma}) \hat{\otimes} \mathcal{U}_q(\mathcal{L}\mathfrak{g}^{\sigma})$, where $\hat{\otimes}$ stands for the topological completion tensor product.

\begin{equation}
	\begin{split}
		&\Delta^{(D)}(\phi^{\pm}_{i}(u)) =  \phi^{\pm}_{i}(u) \otimes \phi^{\pm}_{i}(u),\\
		&\Delta^{(D)}(x^+_{i}(u)) = x^+_{i}(u) \otimes 1 + \phi^-_{i}(u^{-1}) \otimes x^+_{i}(u),\\
		&\Delta^{(D)}(x^-_{i}(u)) = 1 \otimes x^-_{i}(u) + x^-_{i}(u) \otimes \phi^+_{i}(u^{-1}).\\
	\end{split}
\end{equation}

\begin{remark}\label{phixcommutation}
	One may verify that the Drinfeld coproducts above are well-defined by using the commutation relation among Drinfeld currents:
	\[
	\phi_i^{\beta}(u_1^{-1})x_j^{\pm}(u_2) = \frac{\prod_{r=1}^M (u_1 q^{\pm C_{i,\sigma^r(j)}} -\omega^r u_2)}{\prod_{r=1}^M (u_1-\omega^r q^{\pm C_{i,\sigma^r(j)}} u_2)} x_j^{\pm}(u_2)\phi_i^{\beta}(u_1^{-1}), \; \beta \in \{+,-\},
	\]
	together with \eqref{relationxx}.
\end{remark}

\subsection{Finite-dimensional representations}
A $\mathcal{U}_q(\mathcal{L}\mathfrak{g}^{\sigma})$-module $V$ is said to be of type $1$ if $k_i$ acts semi-simply on $V$ with eigenvalues in $q_i^{\mathbb{Z}}$. We consider the category of finite-dimensional representations of type $1$ of the twisted quantum loop algebras $\mathcal{U}_q(\mathcal{L}\mathfrak{g}^{\sigma})$, denoted by $\mathcal{C}(\mathcal{U}_q(\mathcal{L}\mathfrak{g}^{\sigma}))$. This is a monoidal abelian category. Let $\mathrm{Rep}(\mathcal{U}_q(\mathcal{L}\mathfrak{g}^{\sigma}))$ be its Grothendieck ring.

The twisted quantum loop algebra has a triangular decomposition \cite[Theorem~9.2]{damiani2015from}
\[\mathcal{U}_q(\mathcal{L}\mathfrak{g}^{\sigma}) \simeq \mathcal{U}_q(\mathcal{L}\mathfrak{g}^{\sigma})^+ \otimes \mathcal{U}_q(\mathcal{L}\mathfrak{g}^{\sigma})^0 \otimes \mathcal{U}_q(\mathcal{L}\mathfrak{g}^{\sigma})^-,\]
where $\mathcal{U}_q(\mathcal{L}\mathfrak{g}^{\sigma})^{\pm}$ (resp. $\mathcal{U}_q(\mathcal{L}\mathfrak{g}^{\sigma})^0$) is the subalgebra generated by $x_{i,k}^{\pm}$ (resp. $h_{i,k}$ and $k_i^{\pm 1}$). Then we can define the $l$-highest weight modules.

\begin{definition}
	A $\mathcal{U}_q(\mathcal{L}\mathfrak{g}^{\sigma})$-module $V$ is called an $l$-highest weight module if there is a generating vector $v$ of $V$ such that $\phi_{i,\pm m}^{\pm}.v = \gamma_{i,\pm m}^{\pm}v$ and $x_{i,l}^+.v=0 \; (i \in I,m \in \mathbb{N},l \in \mathbb{Z})$, where $\gamma_{i,\pm m}^{\pm} \in \mathbb{C}$.
\end{definition}

We write $\gamma_i^{+}(u) = \sum_{m \geq 0} \gamma_{i,m}^{+} u^{m} \in \mathbb{C}[[u]]$ and $\gamma_i^{-}(u) = \sum_{m \geq 0} \gamma_{i,- m}^{-} u^{- m} \in \mathbb{C}[[u^{-1}]]$.

\begin{theorem}\cite[Proposition~12.2.3]{chari1994guide}
	\begin{itemize}
		\item[(1)] Any irreducible finite-dimensional representation of $\mathcal{U}_q(\mathcal{L}\mathfrak{g}^{\sigma})$ is obtained from a type $1$ representation by an automorphism of $\mathcal{U}_q(\mathcal{L}\mathfrak{g}^{\sigma})$, $e_i^{+} \mapsto \epsilon_ie_i^{+}$, $e_i^{-} \mapsto e_i^{-}$, $k_i \mapsto \epsilon_ik_i$, where $\epsilon_i = \pm 1$.
		
		\item[(2)] Every irreducible finite-dimensional type $1$ representation is of $l$-highest weight.
	\end{itemize}
\end{theorem}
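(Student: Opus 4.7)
The plan is to follow the classical argument for non-twisted types (essentially the one in \cite{chari1994guide}) in the setting of the Drinfeld presentation of Theorem \ref{Drelations}. The structure of the proof is unchanged by the twisting; only elementary verifications involving the $\omega$-symmetrized products and the diagonal nodes $\sigma(i) = i$ deserve attention.

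For part (1), each triple $(e_i^+, e_i^-, k_i^{\pm 1})$, $i \in \hat{I}_\sigma$, generates a copy of $\mathcal{U}_{q_i}(\mathfrak{sl}_2)$, so by the standard finite-dimensional classification the eigenvalues of every $k_i$ on an irreducible finite-dimensional representation $V$ lie in $\pm q_i^{\mathbb{Z}}$. One picks signs $(\epsilon_i)_{i \in \hat{I}_\sigma}$ so that after twisting all $k_i$-eigenvalues become positive powers of $q_i$. I would then check that the prescribed map $e_i^+ \mapsto \epsilon_i e_i^+$, $e_i^- \mapsto e_i^-$, $k_i \mapsto \epsilon_i k_i$ preserves the Drinfeld--Jimbo relations using $\epsilon_i^2 = 1$: the Cartan commutations are immediate, $[e_i^+, e_j^-] = \delta_{ij}(k_i - k_i^{-1})/(q_i - q_i^{-1})$ is rescaled consistently on both sides, and the quantum Serre relation in degree $1-A_{i,j}$ transforms by the common scalar $\epsilon_i^{1-A_{i,j}}\epsilon_j$, which cancels. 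This defines an algebra automorphism, and pulling $V$ back along its inverse yields a type $1$ representation.

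For part (2), I would decompose $V = \bigoplus_\lambda V_\lambda$ with respect to the joint semisimple action of $(k_{\bar{i}})_{\bar{i} \in I_\sigma}$ and use the standard dominance partial order on the twisted root lattice, under which each raising current $x_{j,l}^+$ shifts a weight by the simple root $\alpha_{\bar{j}}$ of the folded root system (as read off from $k_i x_{j,l}^+ = q^{\sum_{r=1}^M C_{i,\sigma^r(j)}} x_{j,l}^+ k_i$). Finite-dimensionality provides a maximal weight $\lambda_0$ with $V_{\lambda_0} \neq 0$, and every $v \in V_{\lambda_0}$ is then automatically annihilated by all $x_{j,l}^+$. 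The space $V_{\lambda_0}$ is also preserved by every element of $\mathcal{U}_q(\mathcal{L}\mathfrak{g}^\sigma)^0$: preservation by $k_i^{\pm 1}$ is immediate, and preservation by each $\phi_{i,m}^\pm$ follows from the rational commutation in Remark \ref{phixcommutation}, which once cleared of its polynomial denominator and read coefficient by coefficient in $u_1, u_2$ expresses $\phi_{i,m}^\pm x_{j,l}^+$ as a finite linear combination of terms $x_{j,l'}^+ \phi_{i,m'}^\pm$. Since the $\phi_{i,m}^\pm$ mutually commute (from $[h_{i,k}, h_{j,l}] = 0$) and act on the finite-dimensional space $V_{\lambda_0}$, they admit a common eigenvector $v$; this vector is $l$-highest weight by construction, and irreducibility of $V$ forces $V = \mathcal{U}_q(\mathcal{L}\mathfrak{g}^\sigma) v$.

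The step that requires the most care is deriving the finite recursion for $\phi_{i,m}^\pm x_{j,l}^+$ from Remark \ref{phixcommutation}: the denominators there are degree-$M$ polynomials with roots at the $\omega^r q^{C_{i,\sigma^r(j)}}$, and at the diagonal nodes $\sigma(i) = i$, most prominently $\bar{n}$ in type $A_{2n}^{(2)}$, the $\omega$-symmetrized products take a markedly different form from the generic $\sigma(i) \neq i$ case. Writing these explicitly and checking that extracting coefficients in $u_1, u_2$ yields only finitely many nonzero terms on each side is where the twisted-specific verification lives; once it is carried out, the rest of the argument transposes verbatim from the non-twisted case.
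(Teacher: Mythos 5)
This theorem is simply cited in the paper from \cite[Proposition~12.2.3]{chari1994guide}; the paper gives no proof of its own. Your argument is a correct reconstruction of the standard one, transposed to the Drinfeld--Jimbo presentation of the twisted algebra, and the twisted features you worry about (the $\omega$-symmetrized products, the diagonal nodes, $A_{2n}^{(2)}$) genuinely do not affect the structure of the argument. Two points are worth flagging.

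For part (1), you define the sign twist on the generators $e_i^{\pm}, k_i$ of $\mathcal{U}_q(\hat{\mathfrak{g}}^{\sigma})$ and check the Serre and Weyl relations, but you do not verify that the map descends to the quotient $\mathcal{U}_q(\mathcal{L}\mathfrak{g}^{\sigma})$ by the ideal $(c-1)$, where $c = \prod_{i \in \hat{I}_{\sigma}} k_i^{a_i}$. Your map sends $c$ to $\bigl(\prod_i \epsilon_i^{a_i}\bigr) c$, so the ideal is preserved iff $\prod_i \epsilon_i^{a_i} = 1$. This does hold here: $c$ acts by $1$ on the irreducible module, each $k_i$ acts by $\epsilon_i q_i^{n_i}$ on any fixed weight vector, and since $q$ is not a root of unity, $-1 \notin q^{\mathbb{Z}}$, forcing $\prod_i \epsilon_i^{a_i} = 1$. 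Still, this is a step that should be made explicit rather than left implicit, since without the constraint the sign twist is not an endomorphism of $\mathcal{U}_q(\mathcal{L}\mathfrak{g}^{\sigma})$ at all.

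For part (2), the detour through the rational commutation of Remark \ref{phixcommutation} to show that the $\phi_{i,m}^{\pm}$ preserve $V_{\lambda_0}$ is superfluous in the way you have set things up. Once $\lambda_0$ is a maximal $\mathfrak{t}$-weight, $V_{\lambda_0 + \alpha_{\bar{j}}} = 0$ already forces $x_{j,l}^+ V_{\lambda_0} = 0$ for all $j,l$; and since each $\phi_{i,m}^{\pm}$ commutes with all the $k_j$ it has weight $0$ and tautologically preserves every weight space, $V_{\lambda_0}$ included. The commutation-relation argument you sketch is the right tool for the variant where one works with the full kernel $V_0 = \{v : x_{j,l}^+ v = 0, \ \forall j,l\}$ rather than first isolating a maximal weight; this is essentially the route the paper follows when it proves the analogous statement for simple objects in category $\mathcal{O}$ of the Borel subalgebra in Section~4. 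Either route works, but your write-up mixes the two and thereby appears to invoke a lemma it does not actually need.
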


The following theorem classifies irreducible finite-dimensional representations. It was firstly proved in \cite[Theorem~3.1]{chari1998twisted} and was completed in \cite[Theorem~2.12]{hernandez2010kirillov}.

\begin{theorem}\label{poly}
	The irreducible $l$-highest weight representation determined by the $l$-weight $\gamma = (\gamma_{i,\pm m}^{\pm})_{i \in I, m \geq 0}$ is finite-dimensional if and only if there exist polynomials $(P_i)_{i \in I} \in \mathbb{C}[u]^I$ such that $P_i(0)=1$, $P_{\sigma(i)}(u) = P_i(\omega u) $ and we have equations in $\mathbb{C}[[u]]$ (resp. in $\mathbb{C}[[u^{-1}]]$):
	
	\begin{equation}
		\gamma_i^{\pm}(u) = q^{deg(P_i)} \frac{P_i(uq^{-1})}{P_i(uq)}, \; \forall i \in I.
	\end{equation}
\end{theorem}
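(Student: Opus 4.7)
The proof splits, in the standard way, into a necessity direction and a sufficiency direction, with the twisted features concentrated at nodes fixed by $\sigma$. The plan is to first cut the problem into a family of rank-one subalgebra questions (one per orbit of $\sigma$), read off the rational form of $\gamma_i^{\pm}(u)$ from finite-dimensionality there, and then realise every compatible tuple $(P_i)_{i \in I}$ as the head of a tensor product of fundamental modules constructed in~\cite{chari1998twisted}.

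For the necessity direction, suppose $V$ is finite-dimensional of $l$-highest weight $\gamma$ with highest $l$-weight vector $v$. First I would show that each $\phi_i(u)$ acts on the highest weight line as a pair of expansions of one and the same rational function $\gamma_i(u) \in \mathbb{C}(u)$ at $0$ and at $\infty$. The key input is that, for $i \in I$ with either $\sigma(i) \neq i$ or with $C_{i,\sigma(i)} = 2$, the Drinfeld generators $\{x_{i,k}^{\pm}, h_{i,k}, k_i^{\pm 1}\}$ generate a quotient of $\mathcal{U}_{q_{\bar i}}(\widehat{\mathfrak{sl}}_2)$ acting on the finite-dimensional weight spaces through $x_i^{\pm}$, and Chari--Pressley's classical argument then produces a monic polynomial $P_i$ with $P_i(0) = 1$ such that
\[
\gamma_i^{\pm}(u) = q^{\deg P_i}\,\frac{P_i(u q^{-1})}{P_i(u q)}
\]
as expansions in $\mathbb{C}[[u^{\pm 1}]]$. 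The compatibility condition $P_{\sigma(i)}(u) = P_i(\omega u)$ reads directly off the first relation in \eqref{commutationdrinfeld}: $x_{\sigma(i),k}^{\pm} = \omega^k x_{i,k}^{\pm}$ forces $\phi_{\sigma(i),k}^{\pm} = \omega^k \phi_{i,k}^{\pm}$ through the $[x^+,x^-]$ bracket, hence $\gamma_{\sigma(i)}^{\pm}(u) = \gamma_i^{\pm}(\omega u)$, which in turn amounts to $P_{\sigma(i)}(u) = P_i(\omega u)$.

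For the sufficiency direction, given any tuple $(P_i)_{i \in I}$ satisfying $P_i(0) = 1$ and $P_{\sigma(i)}(u) = P_i(\omega u)$, I would factor each $P_i$ into linear pieces and identify the corresponding $l$-weight as a product of fundamental $l$-weights. By \cite{chari1998twisted}, for each orbit $\bar i \in I_{\sigma}$ and each $a \in \mathbb{C}^{\ast}$ there is a fundamental module $V_{\bar i, a}$ whose Drinfeld polynomials are $P_j(u) = 1 - \omega^r a u$ when $j = \sigma^r(i)$ (with $i$ the chosen representative) and $P_j = 1$ otherwise, which is exactly the right compatibility. The irreducible head of a tensor product of such fundamental modules is an $l$-highest weight finite-dimensional module whose Drinfeld polynomials are, by the Drinfeld coproduct formula for $\phi_i^{\pm}(u)$ on the highest weight vector, exactly the prescribed $(P_i)_{i \in I}$.

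The main obstacle is the node $\bar n$ in type $A_{2n}^{(2)}$, that is, the case $C_{i,\sigma(i)} = -1$. Here the subalgebra generated by $\{x_{n,k}^{\pm}, h_{n,k}, k_n\}$ is not of $\widehat{\mathfrak{sl}}_2$-type, so the rank-one reduction used for the other nodes is unavailable, and the exotic cubic quantum Serre relations listed in Theorem~\ref{Drelations} enter essentially. This is precisely the gap completed in \cite[Theorem~2.12]{hernandez2010kirillov}: one has to do the corresponding analysis inside a $\mathcal{U}_q(A_2^{(2)})$-subalgebra by hand, showing that the generating series $\gamma_n^{\pm}(u)$ are still of the asserted rational form with the common parameter $q$ (as opposed to $q_{\bar n}$) so that the statement becomes uniform in $i \in I$. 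Everything else -- classifying simples via Drinfeld polynomials and checking the $\sigma$-equivariance -- is then a direct extension of the non-twisted argument, and this $A_{2n}^{(2)}$-step is what I would expect to be the only truly delicate computation.
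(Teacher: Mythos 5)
The paper itself does not prove Theorem~\ref{poly}: it attributes the result to \cite[Theorem~3.1]{chari1998twisted}, completed for the remaining cases by \cite[Theorem~2.12]{hernandez2010kirillov}, so there is no in-paper argument to compare against. Your sketch is a plausible reconstruction of that cited proof --- rank-one reduction at most nodes, $\sigma$-equivariance of the Drinfeld polynomials read off from $h_{\sigma(i),k}=\omega^k h_{i,k}$, sufficiency via heads of tensor products of fundamental modules, and a separate treatment of the $A_{2n}^{(2)}$ node. However, the condition you give for where the $\widehat{\mathfrak{sl}}_2$-reduction applies is logically vacuous: ``$\sigma(i)\neq i$ or $C_{i,\sigma(i)}=2$'' is satisfied by \emph{every} $i\in I$, because $C_{i,\sigma(i)}=2$ is equivalent to $\sigma(i)=i$. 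In particular it includes the node $n$ in $A_{2n}^{(2)}$, where $\sigma(n)=n+1\neq n$ and $C_{n,\sigma(n)}=-1$, which directly contradicts your own closing paragraph identifying $C_{i,\sigma(i)}=-1$ as the case where the reduction fails. The dichotomy you want is $C_{i,\sigma(i)}\in\{0,2\}$ (rank-one reduction to a $\mathcal{U}_{q_{\bar i}}(\widehat{\mathfrak{sl}}_2)$-quotient) versus $C_{i,\sigma(i)}=-1$ (the $A_2^{(2)}$-subalgebra analysis).

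A second, smaller slip concerns the fundamental modules in the sufficiency step. Your description ``$P_j(u)=1-\omega^r a u$ when $j=\sigma^r(i)$'' only makes sense when $i\neq\sigma(i)$; at a fixed node every $r$ returns $j=i$, so the formula is ambiguous. The correct statement, recorded in the paper's definition of $V_{i,a}$, is that for $i=\sigma(i)$ one has $P_i(u)=\prod_{r=0}^{M-1}(1-\omega^r a u)=1-a^M u^M$. Accordingly, the factorization step should observe that the compatibility $P_{\sigma(i)}(u)=P_i(\omega u)$ at $i=\sigma(i)$ forces $P_i$ to be $\omega$-invariant, hence a polynomial in $u^M$, and these are exactly the polynomials generated multiplicatively by the $1-a^Mu^M$. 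With these two points corrected, your outline reproduces the cited argument.
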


\begin{remark}\label{diffnotation}
	We used a different notation from that in \cite{chari1998twisted} or in \cite{hernandez2010kirillov}. The only difference appears in $P_i(u)$ for $i = \sigma(i)$, which is corresponding to $P_i(u^M)$ in \cite{hernandez2010kirillov}. We modify this notation for later convenience so that we can unify the different cases, for example, in the definition of Kirillov-Reshetikhin modules and of the truncated monomials in Theorem \ref{truncatedR}.
\end{remark}

\subsection{Kirillov-Reshetikhin modules}
According to Theorem~\ref{poly}, we consider a special class of representations of $\mathcal{U}_q(\mathcal{L}\mathfrak{g}^{\sigma})$, called fundamental representations $V_{i,a}$, $i \in I, a \in \mathbb{C}^*$. They are defined by basic polynomials:

When $i \neq \sigma(i)$, $V_{i,a}$ is defined by $P_i(u) = 1-ua$ and $P_{\sigma^k(i)}(u) = P_i(\omega^k u)$, $P_j(u)=1$ if $j \notin \bar{i}$. In particular, $V_{i,a} = V_{\sigma(i),\omega a}$.

When $i = \sigma(i)$, $V_{i,a}$ is defined by $P_i(u) = 1-u^Ma^M$ and $P_j(u) = 1$ if $j \notin \bar{i}$.

\begin{definition}
	The Kirillov-Reshetikhin modules (KR-modules) $W^{(i)}_{k,a}$ are defined by polynomials:
	
	When $i \neq \sigma(i)$, $P_i(u) = (1-ua)(1-uaq^2)\cdots(1-uaq^{2k-2})$ and $P_{\sigma^k(i)}(u) = P_i(\omega^k u)$, $P_j(u)=1$ if $j \notin \bar{i}$.
	
	When $i = \sigma(i)$, $P_i(u) = (1-u^Ma^M)(1-u^Ma^Mq^{2M})\cdots(1-u^Ma^Mq^{M(2k-2)})$ and $P_j(u) = 1$ if $j \notin \bar{i}$.
	
	The notation $W^{(\bar{i})}_{k,a}$ and $V_{\bar{i},a}$ stands for the $W^{(i)}_{k,a}$ and $V_{i,a}$ at the fixed representative $i$ of $\bar{i}$.
\end{definition}

\begin{remark}
	The $V_{i,a}$ and $W^{(i)}_{k,a}$ for $i = \sigma(i)$ here are denoted by $V_{i,a^M}$ and $W^{(i)}_{k,a^M}$ in \cite{hernandez2010kirillov}.
\end{remark}

In this paper, we want to study the inductive system of KR-modules defined in \cite{hernandez2012asymptotic}. Before doing so, let us review the $q$-character theory of twisted quantum affine algebras \cite{hernandez2010kirillov}.

\subsection{Twisted $q$-characters}

The $q$-character theory of twisted quantum affine algebras was introduced in \cite{hernandez2010kirillov}, following the definitions for non-twisted case in \cite{frenkel1999q}.

Let $V$ be a finite dimensional representation of $\mathcal{U}_q(\mathcal{L}\mathfrak{g}^{\sigma})$. We call 
\[V_{\gamma} := \{v \in V \vert \exists p \geq 0, \forall i \in I,m \geq 0, (\phi_{i,\pm m}^{\pm} - \gamma_{i,\pm m}^{\pm})^p.v = 0 \} \]
the $l$-weight space associated to $\gamma = (\gamma_{i,\pm m}^{\pm})_{i \in I, m \geq 0}$, $\gamma_{i,\pm m}^{\pm} \in \mathbb{C}$. We call $\gamma$ an $l$-weight of $V$ if $V_{\gamma} \neq 0$. 
The definition of twisted $q$-characters is based on the following lemma.

\begin{lemma}\cite[Lemma~3.1]{hernandez2010kirillov}
	Let $V$ be a finite dimensional representation of $\mathcal{U}_q(\mathcal{L}\mathfrak{g}^{\sigma})$ and $\gamma$ an $l$-weight of $V$. Then there exists complex polynomials $(P_i(u))_{i \in I}, (Q_i(u))_{i \in I}$, such that $P_i(0) = Q_i(0)= 1$, $P_{\sigma(i)}(u) = P_i(\omega u)$, $Q_{\sigma(i)}(u) = Q_i(\omega u)$ and $\gamma$ satisfies in $\mathbb{C}[[u]]$ (resp. in $\mathbb{C}[[u^{-1}]]$):
	
	\begin{equation}
		\gamma_i^{\pm}(u) = q^{deg(P_i) - deg(Q_i)} \frac{P_i(uq^{-1})Q_i(uq)}{P_i(uq)Q_i(uq^{-1})}, \; \forall i \in I .
	\end{equation}
\end{lemma}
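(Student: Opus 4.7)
The plan is to mirror Frenkel-Reshetikhin's strategy from the non-twisted case: reduce to a rank-one calculation by restricting $V$ to subalgebras indexed by the orbits $\bar{i} \in I_\sigma$, invoke the classification of $l$-weights of finite-dimensional $\mathcal{U}_q(\mathcal{L}\mathfrak{sl}_2)$-modules, and then use $\sigma$-equivariance to package the polynomials across each orbit. The commutation relations in Theorem \ref{Drelations} imply that the operators $\phi_{i,\pm m}^{\pm}$ pairwise commute, so $V$ decomposes into generalized joint eigenspaces and $V_\gamma$ is well defined; fixing a nonzero $v \in V_\gamma$, the task is to realize each $\gamma_i^\pm(u)$ as the expansion at $0$ (respectively $\infty$) of a common rational function of the stated shape.

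For each orbit $\bar{i} \in I_\sigma$ with fixed representative $i$, I consider the subalgebra $\mathcal{U}_{\bar{i}} \subset \mathcal{U}_q(\mathcal{L}\mathfrak{g}^\sigma)$ generated by $\{x_{i,k}^{\pm}, \phi_{i,\pm k}^{\pm}, k_i^{\pm 1}\}$. The Drinfeld relations $x_{\sigma(i),k}^{\pm} = \omega^k x_{i,k}^{\pm}$ and $\phi_{\sigma(i),k}^{\pm} = \omega^k \phi_{i,k}^{\pm}$ show that $\mathcal{U}_{\bar{i}}$ depends only on $\bar{i}$ and, in the case $\sigma(i) = i$, force $x_{i,k}^{\pm} = 0$ and $\phi_{i,k}^{\pm} = 0$ unless $M \mid k$. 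Inspection of the commutator $[x_{i,k}^+, x_{i,l}^-]$ in \eqref{commutationdrinfeld} shows that it reduces to the standard $\mathcal{U}_{q_{\bar{i}}}(\mathcal{L}\mathfrak{sl}_2)$-form (after the substitution $u \mapsto u^M$ in the $\sigma(i)=i$ case). Restricting $V$ to $\mathcal{U}_{\bar{i}}$ then yields a finite-dimensional representation of a quotient of $\mathcal{U}_{q_{\bar{i}}}(\mathcal{L}\mathfrak{sl}_2)$, and the classical classification of $l$-weights of finite-dimensional $\mathcal{U}_q(\mathcal{L}\mathfrak{sl}_2)$-modules (see \cite{chari1994guide}) supplies polynomials $P_i, Q_i \in \mathbb{C}[u]$ with $P_i(0) = Q_i(0) = 1$ such that
\[
\gamma_i^\pm(u) = q_{\bar{i}}^{\deg P_i - \deg Q_i} \, \frac{P_i(uq^{-1})\, Q_i(uq)}{P_i(uq)\, Q_i(uq^{-1})}
\]
as the appropriate asymptotic expansions.

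To extend the definition to the remaining nodes of each orbit, I set $P_{\sigma^r(i)}(u) := P_i(\omega^r u)$ and $Q_{\sigma^r(i)}(u) := Q_i(\omega^r u)$. The identity $\gamma_{\sigma(i)}^\pm(u) = \gamma_i^\pm(\omega u)$, which follows at once from $\phi_{\sigma(i),k}^{\pm} = \omega^k \phi_{i,k}^{\pm}$, guarantees that these extensions still verify the required rational expression, and the normalization $P_{\sigma(i)}(0) = P_i(0) = 1$ is automatic.

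The main obstacle is the case $A_{2n}^{(2)}$ at the representative $\bar{n}$, where $\sigma(n) = n+1$, $C_{n,\sigma(n)} = -1$, and the quantum Serre relations take the special cubic form. Although the commutator $[x_{n,k}^+, x_{n,l}^-]$ in \eqref{commutationdrinfeld} still collapses to the standard form (since $N_n = 1$), the subalgebra $\mathcal{U}_{\bar{n}}$ is not a quotient of any $\mathcal{U}_q(\mathcal{L}\mathfrak{sl}_2)$. My plan here is to bypass the $\mathfrak{sl}_2$-identification and note that the Serre relations are never used in the Frenkel-Reshetikhin argument: finite-dimensionality of $V$ combined with the mutual commutation of the $\phi^\pm$ and the standard $[x^+, x^-]$-commutator suffices to force $\gamma_n^\pm(u)$ to be the expansion of a common rational function of the claimed form.
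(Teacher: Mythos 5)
The paper does not prove this lemma itself; it is quoted from \cite[Lemma~3.1]{hernandez2010kirillov}, so the relevant comparison is with the proof given there. Your overall strategy --- reduction to the rank-one subalgebra attached to each orbit, appeal to the $\mathcal{U}_q(\mathcal{L}\mathfrak{sl}_2)$ theory, and $\sigma$-equivariant bookkeeping via $\phi^{\pm}_{\sigma(i),k}=\omega^k\phi^{\pm}_{i,k}$ --- is exactly the strategy of that proof, and your treatment of the orbits with $C_{i,\sigma(i)}\in\{0,2\}$ is essentially right, up to two points you gloss over: to invoke the classification of finite-dimensional $\mathcal{U}_{q_{\bar i}}(\mathcal{L}\mathfrak{sl}_2)$-modules you must check that \emph{all} defining relations of that algebra (not only the $[x^+,x^-]$-commutator but also the $[h,x^{\pm}]$ and $x^{\pm}x^{\pm}$ relations of Theorem~\ref{Drelations}) are satisfied by the generators of $\mathcal{U}_{\bar i}$, and when $\sigma(i)=i$ the exponent bookkeeping has to be done in the variable $u^M$ (your prefactor $q_{\bar i}^{\deg P_i-\deg Q_i}$ is not literally the $q^{\deg P_i-\deg Q_i}$ of the statement).

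The genuine gap is your last paragraph. For the node $\bar n$ of $A_{2n}^{(2)}$ you propose to discard everything except finite-dimensionality, the commutativity of the $\phi$'s, and the relation $[x^+_{n,k},x^-_{n,l}]\sim\phi^+_{n,k+l}-\phi^-_{n,k+l}$, claiming these already force the stated form of $\gamma_n^{\pm}(u)$. They do not. That elementary argument (the one behind Lemma~\ref{rational} in this paper) yields only that $\gamma_n^+(u)$ and $\gamma_n^-(u)$ are the expansions at $0$ and $\infty$ of a single rational function $f$ with $f(0)f(\infty)=1$. The lemma asserts much more: that $f$ factors as $q^{\deg P-\deg Q}P(uq^{-1})Q(uq)/\bigl(P(uq)Q(uq^{-1})\bigr)$, i.e.\ that its zeros and poles pair off in ratios $q^{\pm 2}$ --- equivalently, that every $l$-weight is a Laurent monomial in the variables $Y_{i,a}$. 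For instance $f(u)=c(1-au)/(1-bu)$ with $c^2=b/a$ satisfies $f(0)f(\infty)=1$ but has the required shape only when $a/b=q^{\pm2}$. In the non-twisted argument of \cite{frenkel1999q} this factorization is extracted from the classification of finite-dimensional irreducible $\mathcal{U}_q(\mathcal{L}\mathfrak{sl}_2)$-modules as tensor products of evaluation modules, on which the $\phi^{\pm}(u)$-eigenvalues are computed explicitly --- precisely the structural input (Serre relations included) that you propose to drop. For $\bar n$ in type $A_{2n}^{(2)}$ the rank-one subalgebra is a quotient of $\mathcal{U}_q(\mathcal{L}\mathfrak{sl}_3^{(2)})$ rather than of $\mathcal{U}_q(\mathcal{L}\mathfrak{sl}_2)$, and the missing step is the corresponding structure theory for that algebra (the classification of \cite{chari1998twisted} together with an explicit determination of the $\phi$-eigenvalues, as carried out in \cite{hernandez2010kirillov}). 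Your proposal does not supply this, so the hardest case of the lemma remains unproved.
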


Introduce formal variables $(Y_{i,a}^{\pm 1})_{i \in I, a \in \mathbb{C}^*}$. An $l$-weight $\gamma$ is encoded by a unique monomial  $m_\gamma = \prod_{i \in I,a \in \mathbb{C}^*} Y_{i,a}^{p_{i,a} - q_{i,a}}$, where $p_{i,a},q_{i,a} \in \mathbb{N}$ are determined by
\[P_i(u) = \prod_{a \in \mathbb{C}^*}(1-ua)^{p_{i,a}}, Q_i(u) = \prod_{a \in \mathbb{C}^*}(1-ua)^{q_{i,a}}.\] 
Note that we have necessarily $P_{\sigma(i)}(u) = P_i(\omega u)$, $Q_{\sigma(i)}(u) = Q_i(\omega u)$. For the fixed representative $i$ of $\bar{i}$, if we put $Z_{\bar{i},a}^{\pm 1} = \prod_{k=1}^M Y_{\sigma^k(i),a\omega^k}^{\pm 1}$, then $m_{\gamma}$ is a monomial in variables $Z_{\bar{i},a}^{\pm 1}$. 

$Z_{\bar{i},a}^{\pm 1} = Z_{\bar{i},a\omega}^{\pm 1}$ when $i = \sigma(i)$ are the only algebraic relations among the variables $Z_{\bar{i},a}^{\pm 1}$. Define 
\[\mathcal{Z} := \mathbb{Z}[Z_{\bar{i},a}^{\pm 1}]_{\bar{i} \in I_{\sigma},a \in \mathbb{C}^*}/(Z_{\bar{i},a}^{\pm 1} = Z_{\bar{i},a\omega}^{\pm 1})_{i = \sigma(i), a \in \mathbb{C}^*}.\]

We remark that for $i = \sigma(i)$, the notation $Z_{\bar{i},a}^{\pm 1}$ here is denoted by $Z_{\bar{i},a^M}^{\pm 1}$ in \cite{hernandez2010kirillov}.

\begin{definition}\cite[Theorem~3.4]{hernandez2010kirillov}
	The twisted $q$-character morphism is the injective ring homomorphism
	\[\begin{split}
		\chi_q^{\sigma} : \mathrm{Rep}(\mathcal{U}_q(\mathcal{L}\mathfrak{g}^{\sigma})) & \to \mathcal{Z},\\
		V & \mapsto \sum_{\gamma} \mathrm{dim}(V_{\gamma})m_\gamma.
	\end{split}\]
\end{definition}

\begin{remark}\label{remarkZ}
	In the proof of \cite[Theorem~3.4]{hernandez2010kirillov}, the \cite[Theorem~2.7]{hernandez2010kirillov} therein should be replaced by a refined version of coproduct formulas, which are twisted analogue of the formulas in \cite[Lemma~1]{frenkel1999q}. We will justify it later in the proof of Proposition~\ref{coproduct}.
\end{remark}

For the fixed representative $i$ of $\bar{i}$, we denote $Z_{i,a}^{\pm 1} := Z_{\bar{i},a}^{\pm 1}$ and $Z_{\sigma^k(i),a}^{\pm 1} = Z_{\bar{i},a\omega^k}^{\pm 1}$. Since $Z_{\bar{i},a}^{\pm 1} = Z_{\bar{i},a\omega}^{\pm 1}$ when $i = \sigma(i)$, the elements $Z_{i,a}^{\pm 1}$ are well-defined elements in $\mathcal{Z}$.

With this notation in hand, the KR-modules $W^{(i)}_{k,a}$ can be written as the $l$-highest weight modules defined by the highest $l$-weight $M^{(i)}_{k,a} := Z_{i,a}Z_{i,aq^2}\cdots Z_{i,aq^{2k-2}}$. These coincide with the KR-modules defined in \cite[Definition~3.18]{hernandez2010kirillov} since we modified the notations as mentioned in Remark~\ref{diffnotation}.

The formal variables $Z_{\bar{i},a}$ are quantum analogues of the fundamental weights. The elements $A_{\bar{i},a} \in \mathbb{Z}[Z_{\bar{i},a}^{\pm 1}]_{\bar{i} \in I_{\sigma},a \in \mathbb{C}^*}$ are defined as quantum analogues of the simple roots as follows. Firstly, recall that in the non-twisted case, for $i \in I$, we have
\[A_{i,a} = Z_{i,aq} Z_{i,aq^{-1}} \prod_{j \in I \vert C_{i,j}=-1} Z_{j,a}^{-1}.\]
Now we define the elements $A_{\bar{i},a}$ to be the $A_{i,a}$ for the fixed representative $i \in I$.

For $a \in \mathbb{C}^*$, we denote by 
\[\mathcal{Z}_a = \mathbb{Z}[Z_{\bar{i},a q^n \omega^m}^{\pm 1}]_{\bar{i} \in I_{\sigma}, n,m \in \mathbb{Z}}/(Z_{\bar{i},b} = Z_{\bar{i},b\omega})_{i = \sigma(i),b \in \mathbb{C}^*}\] 
and 
\[\mathcal{A}_a = \mathbb{Z}[A_{\bar{i},a q^n \omega^m}^{- 1}]_{\bar{i} \in I_{\sigma}, n,m \in \mathbb{Z}}/(A_{\bar{i},b}^{-1} = A_{\bar{i},b\omega}^{-1})_{i = \sigma(i),b \in \mathbb{C}^*}.\] $\mathcal{Z}_a$ is a subring of $\mathcal{Z}$.

\begin{theorem}\cite{hernandez2010kirillov}
	Let $a \in \mathbb{C}^*$. Let $M$ be a dominant monomial in $\mathcal{Z}_a$ and $L(M)$ be the $l$-highest weight module of highest $l$-weight $M$, then
	\begin{equation}
		\frac{\chi^{\sigma}_q(L(M))}{M} \in \mathcal{A}_a.
	\end{equation}
\end{theorem}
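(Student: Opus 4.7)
The plan is to adapt the Frenkel-Mukhin style argument to the twisted setting, tracking how lowering operators shift $l$-weights. Since $L(M)$ is $l$-highest weight, the triangular decomposition from \cite{damiani2015from} gives $L(M) = \mathcal{U}_q(\mathcal{L}\mathfrak{g}^{\sigma})^{-} v_M$, so every $l$-weight vector of $L(M)$ arises from the highest weight vector $v_M$ by iterated action of the Drinfeld currents $x^{-}_{j,k}$. The inductive claim I would establish is: if $w$ lies in the generalized $l$-weight space attached to a monomial $m$ with $m/M \in \mathcal{A}_a$, then every generalized $l$-weight component appearing in $x^{-}_{j,k} w$ has monomial of the form $m \cdot A_{\bar{j},b}^{-1}$ with $b \in a q^{\mathbb{Z}} \omega^{\mathbb{Z}}$. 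Granting this, the statement follows by induction, the base case being that $v_M$ itself has monomial $M$.

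The key inductive step I would prove via the commutation relation recorded in Remark~\ref{phixcommutation}:
\[
\phi_i^{\beta}(u^{-1}) \, x_j^{-}(v) = \frac{\prod_{r=1}^{M} (u q^{-C_{i,\sigma^r(j)}} - \omega^r v)}{\prod_{r=1}^{M} (u - \omega^r q^{-C_{i,\sigma^r(j)}} v)} \, x_j^{-}(v) \, \phi_i^{\beta}(u^{-1}).
\]
If $\phi_i^\beta(u^{-1})$ acts on $w$ with generalized eigenvalue $\gamma$, then after decomposing $x^{-}_{j,k} w$ into generalized $\phi$-eigenvector components and extracting the residue contribution localized at some $v = b$, the new eigenvalue is $\gamma$ times the above rational function evaluated at $v = b$. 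A direct comparison with the definition $A_{i,a} = Z_{i,aq} Z_{i,aq^{-1}} \prod_{j : C_{i,j} = -1} Z_{j,a}^{-1}$, after rewriting $Z_{\bar{i},c}^{\pm 1} = \prod_{k=1}^M Y_{\sigma^k(i), c \omega^k}^{\pm 1}$, identifies this rational factor with the $\phi$-eigenvalue shift encoded by $A_{\bar{j},b}^{-1}$; for the fixed-point orbits $j = \sigma(j)$ the relation $Z_{\bar{j},b} = Z_{\bar{j},b\omega}$ in $\mathcal{Z}$ is precisely what causes the $M$ distinct $\omega^r$-shifted residues to collapse into a single well-defined factor.

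The spectral-parameter bookkeeping is then automatic: since $M \in \mathcal{Z}_a$, all $Z$-variables occurring in $M$ have parameter in $a q^{\mathbb{Z}} \omega^{\mathbb{Z}}$, and each $A_{\bar{j},b}^{-1}$ introduced by the above step has $b$ differing from a parameter already present in $m$ by factors $q^{\pm 1}$ and $\omega^r$ dictated by the commutation rational function, so the induction stays inside $M \cdot \mathcal{A}_a$. The main obstacle I expect is the case-by-case analysis at nodes with $i = \sigma(i)$, in particular at node $\bar{n}$ in type $A_{2n}^{(2)}$ where $\mathfrak{sl}_2$-reductions are unavailable: there the normalization factor $N_i$ in the last relation of \eqref{commutationdrinfeld} enters, and one must verify carefully that the collapse of the $\omega^r$-shifted residues via $Z_{\bar{i},b} = Z_{\bar{i},b\omega}$ produces exactly one $A_{\bar{j},b}^{-1}$ rather than $M$ copies. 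Once this check is carried out, the induction closes and the theorem is proved.
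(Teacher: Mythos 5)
This theorem is cited from \cite{hernandez2010kirillov} and the present paper does not reprove it, so the relevant comparison is with the argument in that reference. Hernandez's proof (following the Frenkel--Mukhin strategy from the non-twisted case) proceeds by restricting $L(M)$ to the rank-one subalgebra $\hat{\mathcal{U}}_{\bar{i}}$ attached to each node $\bar{i}$, which is a copy of $\mathcal{U}_{q_i}(\widehat{\mathfrak{sl}}_2)$ at most nodes but a copy of $\mathcal{U}_q(A_2^{(2)})$ at the node $\bar{n}$ in type $A_{2n}^{(2)}$. The cone statement is then read off from the explicit structure of $l$-weight decompositions of finite-dimensional modules over these rank-one algebras. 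Your sketch instead tries to work directly with the commutation relation between $\phi_i^\beta(u^{-1})$ and $x_j^-(v)$ and to ``extract the residue contribution localized at $v=b$.'' This is a genuinely different (and, if it worked, more elementary) route, but as written it has a real gap precisely at that extraction step.

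The gap is the following. Given a generalized $l$-weight vector $w$, the vectors $x^-_{j,k}.w$ for varying $k$ are \emph{not} individually $l$-weight vectors; they are distributed across several generalized $\phi$-eigenspaces, and the distribution depends on $k$. The identity
\[
\phi_i^{\beta}(u^{-1})\,x_j^-(v) = R(u,v)\,x_j^-(v)\,\phi_i^{\beta}(u^{-1})
\]
is an equality of formal series, and turning it into the pointwise statement ``the component of $x^-_j(v).w$ supported at $v=b$ has eigenvalue $\gamma(u)R(u,b)$'' requires showing that the vector-valued series $\phi_i^\beta(u^{-1})x^-_j(v).w$ actually organizes into finitely many rational pieces whose poles in $v$ parametrize the new $l$-weight spaces, and that these pieces are simultaneously generalized eigenvectors for \emph{all} the $\phi_i^\pm$. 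This is exactly the content supplied, in the published proof, by the $\widehat{\mathfrak{sl}}_2$- (resp.\ $A_2^{(2)}$-)reduction together with the classification of rank-one $l$-weights; without it the induction step is a heuristic. You flag the node $\bar n$ in $A_{2n}^{(2)}$ as the likely trouble spot, which is correct, but the same rigor issue is already present at the non-fixed nodes, and the case $i = \sigma(i)$ with $C_{i,\sigma(i)}=-1$ requires a separate analysis of the rank-one algebra $A_2^{(2)}$ that your sketch does not carry out. To close the proof along your lines you would need either to reinstate the rank-one reduction (which is what \cite{hernandez2010kirillov} does) or to prove directly, in the twisted setting, the analytic/formal lemma that the generalized eigenspace decomposition of $x^-_j(v).w$ is governed by the poles of $R(u,v)$ in $v$.
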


Notice that given $a \in \mathbb{C}^*$, there is a unique algebra automorphism $\tau_a : \mathcal{U}_q(\mathcal{L}\mathfrak{g}^{\sigma}) \to \mathcal{U}_q(\mathcal{L}\mathfrak{g}^{\sigma})$ defined on Drinfeld generators
\begin{equation}\label{twistingbya}
	\tau_a(x_{i,m}^{\pm}) = a^{\pm m} x_{i,m}^{\pm}, \; \tau_a(h_{i,r}) = a^rh_{i,r}, \; \tau_a(k_i^{\pm 1}) = k_i^{\pm 1}, \; \forall i \in I, m \in \mathbb{Z}, r \in \mathbb{Z} \setminus \{0\} .
\end{equation}

Under this automorphism, $W^{(i)}_{k,ab} = \tau_a^*(W^{(i)}_{k,b})$ and $\chi_q^{\sigma}(W^{(i)}_{k,ab})$ is obtained from $\chi_q^{\sigma} (W^{(i)}_{k,b})$ by replacing all 
$Z_{\bar{i},c}^{\pm 1}$ by $Z_{\bar{i},ac}^{\pm 1}$, $\forall \bar{i} \in I_{\sigma}, c \in \mathbb{C}^*$. Therefore, it is sufficient to study the case $a=1$. 

Fix an $N \in \mathbb{Z}$. For any monomial $M = \prod_{i \in I, n,m \in \mathbb{Z}} Z_{i,q^n\omega^m}^{u_{i,n,m}}$ in $\mathcal{Z}_1$, where $u_{i,n,m} \in \mathbb{Z}$, we define $M^{= N}$ (resp. $M^{\leq N}$, $M^{\geq N}$, $M^{<N}$, $M^{>N}$) to be the product of $Z_{i,q^n\omega^m}^{u_{i,n,m}}$ occurring in $M$ with $n=N$ (resp. $n \leq N$, $n \geq N$, $n <N$, $n>N$). This is well-defined since we assume that $q$ is not a root of unity. Clearly, $M = M^{\leq N}M^{>N} = M^{\geq N}M^{<N}$.

By the above theorem, $L(M)$ has a subspace $L(M)_{\geq N}$ which is the direct sum of $l$-weight spaces $(L(M))_{\gamma}$ such that $\gamma \in M \mathbb{Z}[A^{-1}_{\bar{i},q^n\omega^m}]_{\bar{i} \in I_{\sigma}, m \in \mathbb{Z}, n \geq N+1}$.

We recall the following theorems that are proved for all types of quantum affine algebras including the twisted ones. They will be used in the construction of asymptotic representations.

The following theorem is due to Kashiwara \cite[Section~9]{kashiwara2002level}. It was proved in restricted cases by Varagnolo, Vasserot \cite{varagnolo2002standard} and, separately, Chari \cite{chari2002braid} with different methods.

\begin{theorem}\label{truncatedR}
	For any dominant monomial $M \in \mathcal{Z}_1$, the image of the intertwining operator 
	\[L(M^{\geq N}) \otimes L(M^{<N}) \to L(M^{<N}) \otimes L(M^{\geq N})\] is simple and is isomorphic to $L(M)$.
\end{theorem}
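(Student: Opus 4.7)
The plan is to sandwich the image of the intertwiner between a highest-$l$-weight quotient of the source and a highest-$l$-weight submodule of the target, each having $L(M)$ as its unique simple subquotient of highest $l$-weight $M$. Since $M^{\geq N}$ and $M^{<N}$ together reconstruct $M$, the vector $v_\geq \otimes v_<$, the tensor product of the $l$-highest-weight vectors of the two factors, has $l$-weight $M$ and is annihilated by every $x_i^+(u)$ by direct application of the Drinfeld coproduct to the two highest-weight vectors; hence the cyclic submodule it generates is an $l$-highest-weight module of highest $l$-weight $M$, with unique simple quotient $L(M)$.

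The key technical step is to prove cyclicity of $L(M^{\geq N}) \otimes L(M^{<N})$ as a $\mathcal{U}_q(\mathcal{L}\mathfrak{g}^{\sigma})$-module generated by $v_\geq \otimes v_<$. Using the Drinfeld coproduct
\begin{equation*}
\Delta^{(D)}(x_i^-(u)) = 1 \otimes x_i^-(u) + x_i^-(u) \otimes \phi_i^+(u^{-1}),
\end{equation*}
the operator $\phi_i^+(u^{-1})$ acts on $v_<$ by a scalar rational function in $u$ whose zeros and poles are governed by $M^{<N}$, while the analogous scalars arising from $v_\geq$ are governed by $M^{\geq N}$. The spectral separation (parameters of $M^{\geq N}$ at $q^n$ with $n \geq N$, those of $M^{<N}$ at $n < N$) ensures the two rational functions share no zeros or poles, so a partial fraction expansion in $u$ decouples the two summands of the coproduct and lets us extract $x_i^-(u)v_\geq \otimes v_<$ and $v_\geq \otimes x_i^-(u)v_<$ separately from the action on $v_\geq \otimes v_<$. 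Iterating generates every $l$-weight subspace. The same argument with reversed factors shows that $v_< \otimes v_\geq$ generates an $l$-highest-weight submodule of $L(M^{<N}) \otimes L(M^{\geq N})$ having $L(M)$ as its cosocle.

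Finally, I would invoke the universal $R$-matrix of twisted type, whose existence is ensured by Damiani's work, to construct the intertwiner, normalized so that $v_\geq \otimes v_<$ maps to $v_< \otimes v_\geq$ modulo strictly lower $l$-weights (the spectral separation guarantees that no pole of the normalized $R$ hits our evaluation). The image is then simultaneously a highest-$l$-weight quotient of the cyclic source (so has $L(M)$ as its unique simple top) and is contained in the $l$-highest-weight submodule of the target generated by $v_< \otimes v_\geq$ (which has $L(M)$ as its unique simple head). These two constraints force the image to equal $L(M)$. The main obstacle lies in the cyclicity argument: at nodes fixed by $\sigma$, especially the special node $\bar n$ of $A_{2n}^{(2)}$, the $M$-fold orbit factors from Remark \ref{phixcommutation} enter the rational function controlling $\phi_i^+(u^{-1})|_{v_<}$, and one must verify case by case that the spectral separation in $q$-powers genuinely precludes any resonance, so that the partial fraction decoupling survives in the twisted setting.
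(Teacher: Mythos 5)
The paper does not prove this statement; it is quoted as a known result, citing Kashiwara \cite[Section~9]{kashiwara2002level} (with alternate proofs by Varagnolo--Vasserot and Chari). Those proofs rely on crystal bases, quiver varieties, or braid group actions, respectively --- entirely different technologies from the Drinfeld-coproduct and partial-fraction argument you sketch. So you are attempting to reprove a deep black-box theorem, which is a reasonable exercise, but your sketch is not yet a proof.

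Two concrete gaps. First, your cyclicity argument is carried out with the Drinfeld coproduct $\Delta^{(D)}$, whereas the tensor product module and the $R$-matrix intertwiner are defined via the standard Drinfeld--Jimbo coproduct $\Delta$. The paper's Lemma \ref{lemmaconjugation} says these are conjugate by $\mathcal{R}_{<0}^{\vee}$, and since $\mathcal{R}_{<0}^{\vee}$ fixes the tensor product of highest $l$-weight vectors, cyclicity does transfer --- but you need to say this explicitly, and also justify that the Drinfeld tensor product is well defined on these finite-dimensional modules (the completed tensor is a priori formal). Moreover, the claimed spectral separation is not clean at the boundary: the $\phi_i^+$-eigenvalue coming from $Z_{\bar{\jmath},q^n}$ shifts the zeros/poles to $q^{n\pm 1}$, so parameters of $M^{\geq N}$ at $q^N$ and of $M^{<N}$ at $q^{N-1}$ can collide at $q^{N-1}$ or $q^N$; the decoupling therefore requires a more careful residue argument and is not a blanket statement.

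Second, and more seriously, your concluding ``sandwich'' step does not close. Both of your two constraints on the image say that it is an $l$-highest weight module with $L(M)$ as its unique simple \emph{quotient} (head/cosocle); you have written ``head'' twice under two different names. What is actually needed to force the image to be simple is the dual statement on the target side: that $L(M^{<N}) \otimes L(M^{\geq N})$ has $L(M)$ as its \emph{socle} (cocyclic), so that any nonzero submodule contains $L(M)$, and one then concludes using the multiplicity $[L(M^{\geq N}) \otimes L(M^{<N}) : L(M)] = 1$. Cocyclicity of the target is genuinely a separate assertion, usually obtained by dualizing the cyclicity statement applied to the opposite order of tensor factors (with the appropriate twists of spectral parameters by $q^{\pm h^\vee}$ or the like), not by ``the same argument with reversed factors,'' which only gives a highest-weight \emph{submodule}, not a socle statement. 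Without this step the image could a priori be a longer indecomposable quotient of the cyclic source, so the argument as written is incomplete.
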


As an application, we have an inclusion of KR-modules.

\begin{theorem}\label{inductive}\cite[Proposition~5.6]{hernandez2010simple} 
	Let $v'$ be a highest $l$-weight vector of $L(M^{<N})$. Then the intertwining operator restricts to a bijection
	\[\varphi : L(M^{\geq N}) \otimes v' \to L(M)_{\geq N}. \]
\end{theorem}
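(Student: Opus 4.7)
The plan is to realize $\varphi$ as the restriction of a surjective module homomorphism and then to compare $l$-weight spaces on both sides. By Theorem~\ref{truncatedR}, the intertwining operator factors through an isomorphism onto $L(M)$, giving a surjective $\mathcal{U}_q(\mathcal{L}\mathfrak{g}^\sigma)$-module homomorphism $\pi : L(M^{\geq N}) \otimes L(M^{<N}) \twoheadrightarrow L(M)$. I would set $\varphi(w) := \pi(w \otimes v')$. Because $v'$ has $l$-weight $M^{<N}$ and the Drinfeld coproduct multiplies $l$-weights, $\pi$ sends $L(M^{\geq N})_{\gamma_1} \otimes v'$ into the generalized $l$-weight space of $L(M)$ at $\gamma_1 \cdot M^{<N}$.

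The key technical step is a \emph{confinement lemma}: every $l$-weight $\gamma_1$ of $L(M^{\geq N})$ has the form $M^{\geq N} \cdot \prod_k A^{-1}_{\bar{i}_k, q^{n_k} \omega^{m_k}}$ with all $n_k \geq N+1$. To prove this I would first establish a support lemma that every monomial of $\chi_q^\sigma(L(M^{\geq N}))$ involves only $Z$-variables at levels $\geq N$; this should follow from a Frenkel--Mukhin-type recursive analysis, using that lowering by $A^{-1}$ cannot create a factor strictly below the minimum $Z$-level already present. Given the support lemma, the confinement follows: each $A^{-1}_{\bar{i}, q^n \omega^m}$ contributes a $Z^{-1}$ at level $n-1$ which, since all $A^{-1}$ factors produce $Z^{-1}$'s rather than $Z^{+1}$'s, cannot be erased by a subsequent $A^{-1}$ and therefore must be cancelled by a $Z$ of $M^{\geq N}$; this forces $n-1 \geq N$, i.e., $n \geq N+1$. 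An entirely analogous dual support lemma applied to $L(M^{<N})$ shows that all $A^{-1}$-factors in $\chi_q^\sigma(L(M^{<N}))$ have levels $\leq N$.

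Combining the two confinement statements yields the following: for $\gamma \in L(M)_{\geq N}$ arising through $\pi$ from $L(M^{\geq N})_{\gamma_1} \otimes L(M^{<N})_{\gamma_2}$, the factor $\gamma_2$ must equal $M^{<N}$, since any nontrivial $\gamma_2$ would contribute an $A^{-1}$ at level $\leq N$ incompatible with $\gamma/M \in \mathbb{Z}[A^{-1}_{\bar{i}, q^n \omega^m}]_{n \geq N+1}$. Hence $(L(M^{\geq N}) \otimes L(M^{<N}))_\gamma = L(M^{\geq N})_{\gamma_1} \otimes v'$ for $\gamma = \gamma_1 \cdot M^{<N} \in L(M)_{\geq N}$. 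Since $\pi$ is surjective on each $l$-weight space (as a module map), $\varphi$ restricted to this subspace surjects onto $L(M)_\gamma$; dimension counting, propagated from the highest weight where $\dim L(M^{\geq N})_{M^{\geq N}} = \dim L(M)_M = 1$ and extended using that $\pi$ identifies the matching graded pieces, then forces bijectivity.

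The main obstacle is the support lemma together with its dual, especially in the twisted setting where the identification $Z_{\bar{i}, a} = Z_{\bar{i}, a\omega}$ at fixed nodes $i = \sigma(i)$ could in principle allow an $A^{-1}$ at a seemingly low level to cancel a higher-level $Z$ after relabeling by the root of unity $\omega$. The case of type $A_{2n}^{(2)}$ at node $\bar{n}$ is especially delicate because the usual $\mathfrak{sl}_2$-reduction fails there, requiring a direct analysis through the Drinfeld relations of Theorem~\ref{Drelations}.
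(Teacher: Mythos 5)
Your skeleton --- confine the $l$-weights of the two tensor factors, conclude that the generalized eigenspace of $L(M^{\geq N})\otimes L(M^{<N})$ at any $\gamma$ with $\gamma M^{-1}\in\mathbb{Z}[A^{-1}_{\bar i,q^n\omega^m}]_{n\geq N+1}$ is exactly $L(M^{\geq N})_{\gamma_1}\otimes v'$, then use the intertwiner --- is the right one (the paper only cites this result, and the cited proof runs along these lines). But two of your steps do not hold as written. The ``dual support lemma'' is false: it is not true that all $A^{-1}$-factors of $\chi_q^{\sigma}(L(M^{<N}))$ sit at levels $\leq N$. Already for $\mathfrak{sl}_3$ the fundamental module $L(Y_{1,q^{N-1}})$ contains the monomial $Y_{1,q^{N-1}}A^{-1}_{1,q^{N}}A^{-1}_{2,q^{N+1}}$, with an $A^{-1}$-factor at level $N+1$; twisted fundamental modules behave the same way. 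What is true, and all your argument needs, is that every non-highest monomial of $\chi_q^{\sigma}(L(M^{<N}))$ carries \emph{at least one} $A^{-1}$-factor at level $\leq N$ (the first descent from the dominant monomial produces such a factor and $A^{-1}$-factors are never removed); combined with the fact that $P_1P_2\in\mathbb{Z}[A^{-1}_{n\geq N+1}]$ for monomials $P_1,P_2$ in the $A^{-1}$'s forces $P_2\in\mathbb{Z}[A^{-1}_{n\geq N+1}]$, this weaker statement suffices. As for the support lemma for $L(M^{\geq N})$, the remark that ``lowering by $A^{-1}$ cannot create a factor below the minimum level'' is not a proof; the actual argument is an induction on $\mathfrak{t}$-weights through rank-one reductions ($\hat{\mathfrak{sl}}_2$, and $A_2^{(2)}$ at the node $\bar n$ in type $A_{2n}^{(2)}$), which is precisely the technical content you are assuming.

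The more serious gap is injectivity. Surjectivity of $\pi$ on each generalized eigenspace gives only $\dim L(M)_{\gamma}\leq\dim L(M^{\geq N})_{\gamma_1}$; the reverse inequality is essentially equivalent to the theorem and cannot be ``propagated from the highest weight,'' since $\ker\pi$ is a proper submodule that could a priori meet $L(M^{\geq N})\otimes v'$ in lower weights (note $L(M^{\geq N})\otimes v'$ is not a submodule, so no induction on a module structure is available). The standard way to close this is the triangularity of the intertwiner coming from the factorization $\mathcal{R}=\mathcal{R}_{<0}\mathcal{R}_{=0}\mathcal{R}_{>0}q^{-t_{\infty}}$: because $v'$ is annihilated by all $x^+_{j,m}$, one gets $\mathcal{I}(w\otimes v')=c_w\,v'\otimes w+(\text{terms whose first tensor factor has strictly lower }\mathfrak{t}\text{-weight})$ with $c_w\neq 0$, and projecting onto the top weight line $\mathbb{C}v'$ of the first factor shows $\mathcal{I}$ is injective on $L(M^{\geq N})\otimes v'$. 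Without this (or an equivalent input establishing $\dim L(M)_{\gamma}=\dim L(M^{\geq N})_{\gamma_1}$), your proof only yields a surjection.
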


\subsection{Relation with non-twisted types}\label{sectionrelation}

It is proved by D.~Hernandez that when $q$ is generic in the sense of \cite[Lemma~3.9]{hernandez2010kirillov}, the image of twisted $q$-characters can be described as in \cite[Theorem~3.12]{hernandez2010kirillov}. We note that the condition on $q$ can be loosed to $q$ be not a root of unity. See Appendix~\ref{appendixa} for the discussion.

Recall that in non-twisted case, the $q$-character is an injective ring homomorphism
\[\chi_q : \mathrm{Rep}(\mathcal{U}_q(\mathcal{L}\mathfrak{g})) \to \mathbb{Z}[Y_{i,a}^{\pm 1}]_{i \in I,a \in \mathbb{C}^*}. \]

We define the folding map $\pi$ to be the ring homomorphism
\[\begin{split}
	\pi : \mathbb{Z}[Y_{i,a}^{\pm 1}]_{i \in I,a \in \mathbb{C}^*} & \to \mathcal{Z}, \\
	Y_{i,a}^{\pm 1} & \to Z_{i,a}^{\pm 1}.
\end{split}
\] 

The folding map $\pi$ maps the image of non-twisted $q$-characters into the image of twisted $q$-characters \cite[Theorem~4.15]{hernandez2010kirillov}. Therefore, there is a unique ring homomorphism
\[\bar{\pi} : \mathrm{Rep}(\mathcal{U}_q(\mathcal{L}\mathfrak{g})) \to \mathrm{Rep}(\mathcal{U}_q(\mathcal{L}\mathfrak{g}^{\sigma})) \]
such that the diagram commutes:

\[\begin{tikzcd}
	\mathrm{Rep}(\mathcal{U}_q(\mathcal{L}\mathfrak{g})) \arrow{r}{\chi_q} \arrow{d}{\bar{\pi}} & \mathbb{Z}[Y_{i,a}^{\pm 1}]_{i \in I,a \in \mathbb{C}^*} \arrow{d}{\pi} \\
	\mathrm{Rep}(\mathcal{U}_q(\mathcal{L}\mathfrak{g}^{\sigma})) \arrow{r}{\chi_q^{\sigma}} & \mathcal{Z} \\
\end{tikzcd}.
\]

Since $\pi$ is an isomorphism when restricted to the subring $\mathbb{Z}[Y_{i,q^r}^{\pm 1}]_{i \in I,r \in \mathbb{Z}}$, the lifted homomorphism $\bar{\pi}$ of Grothendieck rings is also an isomorphism \cite[Theorem~4.15]{hernandez2010kirillov}:

\[\bar{\pi} :\mathrm{Rep}_1(\mathcal{U}_q(\mathcal{L}\mathfrak{g})) \xrightarrow{\sim} \mathrm{Rep}_1(\mathcal{U}_q(\mathcal{L}\mathfrak{g}^{\sigma})).\]

Here $\mathrm{Rep}_1(\mathcal{U}_q(\mathcal{L}\mathfrak{g}))$ is the subring of $\mathrm{Rep}(\mathcal{U}_q(\mathcal{L}\mathfrak{g}))$ generated by the classes of $l$-highest weight modules $[L(M)]$ with $M \in \mathbb{Z}[Y_{i,q^r}^{\pm 1}]_{i \in I,r \in \mathbb{Z}}$, and similarly $\mathrm{Rep}_1(\mathcal{U}_q(\mathcal{L}\mathfrak{g}^{\sigma}))$ is the subring of $\mathrm{Rep}(\mathcal{U}_q(\mathcal{L}\mathfrak{g}^{\sigma}))$ generated by the classes of $l$-highest weight modules $[L(M)]$ with $M \in \mathcal{Z}_1$.

However, it is not clear if $\bar{\pi}$ maps the class of irreducible $l$-highest weight module $[L(M)]$ to the class of irreducible $l$-highest weight module $[L(\pi(M))]$. In other words, it is not clear if $\chi_q^{\sigma}(L(\pi(M)))$ is obtained from $\chi_q(L(M))$ by the folding map $\pi$. This is true when $L(M)$ is a KR-module, proved in \cite[Theorem~4.15]{hernandez2010kirillov}.

\begin{conjecture}\label{conjectureg}
	The ring isomorphism $\bar{\pi} :\mathrm{Rep}_1(\mathcal{U}_q(\mathcal{L}\mathfrak{g})) \xrightarrow{\sim} \mathrm{Rep}_1(\mathcal{U}_q(\mathcal{L}\mathfrak{g}^{\sigma}))$  maps the class of irreducible $l$-highest weight modules $[L(M)]$ to the class of irreducible $l$-highest module $[L(\pi(M))]$, for all monomial $M \in \mathbb{Z}[Y_{i,q^r}^{\pm 1}]_{i \in I,r \in \mathbb{Z}}$.
\end{conjecture}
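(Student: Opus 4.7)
My plan is to reformulate the conjecture as an equality of $q$-characters and then proceed by a monomial-theoretic induction. Concretely, since $\bar{\pi}$ is already known to be a ring isomorphism fitting into the commutative square of Section~\ref{sectionrelation}, and since $\chi_q^\sigma$ is injective, the statement $\bar{\pi}([L(M)]) = [L(\pi(M))]$ is equivalent to
\[
\pi(\chi_q(L(M))) = \chi_q^\sigma(L(\pi(M))) \quad \text{for all dominant } M \in \mathbb{Z}[Y_{i,q^r}^{\pm 1}]_{i \in I, r \in \mathbb{Z}}.
\]
The base case is provided by \cite[Theorem~4.15]{hernandez2010kirillov}: the identity holds when $L(M)$ is a Kirillov--Reshetikhin module $W^{(i)}_{k,a}$, and in particular for every fundamental monomial.

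Next I would attempt an induction on the support of $M$. By Theorem~\ref{truncatedR}, for any integer $N$, $L(M)$ is the simple head of $L(M^{\geq N}) \otimes L(M^{<N})$, which splits the problem across a spectral cut. Iterating, one reduces to dominant monomials $M$ whose factors all sit at a common spectral parameter $q^r$; such $M$ further factorize as products of fundamental monomials $Y_{i,q^r}$, whose twisted counterparts are covered by the KR base case. The inductive step then asks one to control how simple factors recombine inside a tensor product $L(M_1) \otimes L(M_2)$ under folding: knowing $\chi_q(L(M_i)) \xrightarrow{\pi} \chi_q^\sigma(L(\pi(M_i)))$ for $i = 1, 2$, one must verify that the composition multiplicities $[L(M_1) \otimes L(M_2) : L(M')]$ match their twisted analogues $[L(\pi(M_1)) \otimes L(\pi(M_2)) : L(\pi(M'))]$.

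The main obstacle is precisely this matching of decomposition multiplicities: the kernels of the canonical surjections $L(M_1) \otimes L(M_2) \twoheadrightarrow L(M_1 M_2)$ in the twisted and non-twisted settings live in genuinely different algebras, and there is no direct map between them compatible with $\pi$. A promising alternative is to transport the problem through the cluster-algebra structure on $\mathrm{Rep}_1(\mathcal{U}_q(\mathcal{L}\mathfrak{g}))$ of Hernandez--Leclerc: if $\bar{\pi}$ can be shown to intertwine the folding of cluster algebras with the folding of Dynkin diagrams, then every real simple module, being a cluster monomial in an appropriate subcategory, would automatically satisfy the conjecture, and only non-real simple modules would remain to handle. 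Pending such a general argument, I would fall back on the strategy actually pursued in the rest of the paper: prove the conjecture directly for classes built as asymptotic limits of KR-modules, namely the prefundamental representations $L^{\pm}_{\bar{i},a}$ and the modules $X_{\bar{i},a} = L(\widetilde{\mathbf{\Psi}}_{i,a})$, where the folding identity is inherited from the KR-case by passing to a suitably normalized limit. This limited version already suffices to establish the twisted $Q\widetilde{Q}$-system, even though the full Conjecture~\ref{conjectureg} remains open in general.
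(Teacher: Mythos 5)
This statement is stated in the paper as an open conjecture; the paper offers no proof of it, only evidence in special cases (Kirillov--Reshetikhin modules via \cite[Theorem~4.15]{hernandez2010kirillov}, and later, in the category $\mathcal{O}$ setting, prefundamental representations and the modules $X_{\bar{i},a}$). Your proposal correctly recognizes this: your reformulation of $\bar{\pi}([L(M)]) = [L(\pi(M))]$ as the $q$-character identity $\pi(\chi_q(L(M))) = \chi_q^{\sigma}(L(\pi(M)))$ is exactly right (injectivity of $\chi_q^{\sigma}$ plus the defining commutative square), and your base case is the one the paper relies on. So as an assessment of the state of the problem your write-up is accurate; as a proof it is not one, and the gap is exactly where you locate it.

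To be concrete about why the inductive step fails: Theorem~\ref{truncatedR} expresses $L(M)$ as the simple \emph{head} (image of the intertwiner) of $L(M^{\geq N}) \otimes L(M^{<N})$, so even if you know $\pi(\chi_q(L(M_i))) = \chi_q^{\sigma}(L(\pi(M_i)))$ for both factors, the $q$-character of the head is the product of the factors' $q$-characters \emph{minus} the contributions of the other composition factors, and there is no a priori bijection between the composition series of $L(M_1)\otimes L(M_2)$ and of $L(\pi(M_1))\otimes L(\pi(M_2))$ compatible with $\pi$ — the two tensor products live over different algebras with no algebra map between them. Moreover, an induction "downward" on composition factors is not well-founded without an additional finiteness or ordering argument, since the putative lower factors $L(M')$ are themselves instances of the conjecture for monomials that need not be simpler in any obvious sense. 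Your cluster-algebra suggestion also needs caution: the paper explicitly distinguishes the folded $QQ$-systems of \cite{frenkel2021folded} from the twisted setting, and no cluster structure on $\mathrm{Rep}_1(\mathcal{U}_q(\mathcal{L}\mathfrak{g}^{\sigma}))$ compatible with $\bar{\pi}$ is established here, so that route is itself conjectural. Your fallback — proving the identity only for asymptotic limits of KR-modules (prefundamentals) and for $X_{\bar{i},a}$, normalized by the usual character where necessary — is precisely what the paper does, and note (Example~\ref{counterexample2.1}, \ref{counterexample2.2}) that for the $X$-modules the unnormalized identity $\pi(\chi_q) = \chi_q^{\sigma}$ actually fails, which is why the category-$\mathcal{O}$ version (Conjecture~\ref{conj}) must be stated with the division by $\chi$.
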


\begin{example}\label{counterexample1}
	It is not true that $\pi(\chi_q(L(M))) = \chi_q^{\sigma}(L(\pi(M)))$ if we don't restrict to the subring $\mathrm{Rep}_1(\mathcal{U}_q(\mathcal{L}\mathfrak{g}))$. For example, $\pi(Y_{1,1}Y_{2,-q^2}) = Z_{\bar{1},1}Z_{\bar{1},q^2}$. The KR-module $L(Z_{\bar{1},1}Z_{\bar{1},q^2})$ of $\mathcal{U}_q(\mathcal{L}\mathfrak{sl}_3^{(2)})$ has dimension 6, while the module $L(Y_{1,1}Y_{2,-q^2})$ of $\mathcal{U}_q(\mathcal{L}\mathfrak{sl}_3)$ has dimension 9.
\end{example}

\section{Borel subalgebras and asymptotic representations}\label{sectionA}
In this section, we review the definition of the Borel subalgebra of the twisted quantum loop algebras. We give a detailed proof of a refined version of formulas of coproducts on Drinfeld generators, which are needed in the construction of asymptotic representations, and also need as we noted in Remark~\ref{remarkZ}. We construct asymptotic representations of Borel subalgebras following the method in \cite{hernandez2012asymptotic}. 

\subsection{Generators}
\begin{definition}
	The Borel subalgebra $\mathcal{U}_q(\mathfrak{b}^{\sigma})$ is the subalgebra of $\mathcal{U}_q(\mathcal{L}\mathfrak{g}^{\sigma})$ generated by elements $e_{i}^+$, $k_{i}^{\pm 1}$, $i \in \hat{I}_{\sigma} = I_{\sigma} \cup \{\epsilon\}$.
\end{definition}
We will also use the negative Borel subalgebra $\mathcal{U}_q(\mathfrak{b}_{-}^{\sigma})$ generated by $e_{i}^-$, $k_{i}^{\pm 1}$, $i \in \hat{I}_{\sigma} = I_{\sigma} \cup \{\epsilon\}$.

The Borel subalgebra is the algebra with the above generators and the Weyl and Serre relations among them.

\begin{theorem}\label{boreldefinition}\cite[Theorem~4.21]{jantzen1996lectures}
	The Borel subalgebra $\mathcal{U}_q(\mathfrak{b}^{\sigma})$ is isomorphic to the algebra with generators $e_{i}$, $k_{i}^{\pm 1}$, $i \in \hat{I}_{\sigma}$ and relations 
	\[\begin{split}
		&k_ik_i^{-1} = k_i^{-1}k_i = 1, \; k_ik_j = k_jk_i, \\
		&k_ie_j = q_i^{C_{i,j}^{\sigma}}e_j k_i, \; \forall i,j, \\
		&\sum_{r=0}^{1-C_{i,j}^{\sigma}} (-1)^r (e_i)^{(1-C_{i,j}^{\sigma}-r)} e_j (e_i)^{(r)}=0,\; \text{for}\;  i \neq j.\\
	\end{split}
	\]
\end{theorem}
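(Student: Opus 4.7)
The plan is to exhibit a surjective algebra homomorphism from the abstract algebra $\widetilde{\mathcal{U}}$ defined by the listed presentation onto $\mathcal{U}_q(\mathfrak{b}^{\sigma})$, and then to reduce injectivity to the quantum Kac-Moody case treated by Jantzen. First I would define $\phi : \widetilde{\mathcal{U}} \to \mathcal{U}_q(\mathfrak{b}^{\sigma})$ by sending the abstract $e_i$ and $k_i^{\pm 1}$ to the elements $e_i^+$ and $k_i^{\pm 1}$ of $\mathcal{U}_q(\mathcal{L}\mathfrak{g}^{\sigma})$. Every listed relation appears verbatim among the Drinfeld-Jimbo defining relations of $\mathcal{U}_q(\hat{\mathfrak{g}}^{\sigma})$ recalled in Section~\ref{subsectionpresentation} (none of them involves the $e_i^-$ generators or the relation $[e_i^+,e_j^-] = \delta_{i,j}(k_i - k_i^{-1})/(q_i - q_i^{-1})$), so $\phi$ is well-defined, and it is surjective by the very definition of $\mathcal{U}_q(\mathfrak{b}^{\sigma})$.

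For injectivity, the plan is to exploit a triangular decomposition on both sides. The abstract algebra admits a candidate factorization $\widetilde{\mathcal{U}} \cong \widetilde{\mathcal{U}}^{0} \otimes \widetilde{\mathcal{U}}^{+}$, where $\widetilde{\mathcal{U}}^{0}$ is the Laurent polynomial algebra in the $k_i^{\pm 1}$ and $\widetilde{\mathcal{U}}^{+}$ is the quotient of the free algebra on the $e_i$'s by the quantum Serre relations. On the other side, $\mathcal{U}_q(\mathfrak{b}^{\sigma})$ inherits from the triangular decomposition of $\mathcal{U}_q(\hat{\mathfrak{g}}^{\sigma})$ the decomposition $\mathcal{U}_q^{0} \otimes \mathcal{U}_q^{+}$. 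The Cartan parts are immediately identified under $\phi$. The nontrivial content is the identification of positive parts, namely that the quantum Serre relations alone suffice to present the positive nilpotent subalgebra generated by the $e_i^+$'s. This is precisely Jantzen's Theorem~4.21 applied to the symmetrizable Kac-Moody algebra $\hat{\mathfrak{g}}^{\sigma}$, and its proof uses only general features of quantum Kac-Moody algebras, so it transfers directly.

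The main point to check carefully is the passage from $\mathcal{U}_q(\hat{\mathfrak{g}}^{\sigma})$ to its quotient $\mathcal{U}_q(\mathcal{L}\mathfrak{g}^{\sigma}) = \mathcal{U}_q(\hat{\mathfrak{g}}^{\sigma})/(c-1)$, because the central element $c = \prod_{i \in \hat{I}_{\sigma}} k_i^{a_i}$ lies in the Cartan part of the Borel. I would verify that this quotient restricts on the Borel to the single relation $c = 1$ and introduces no further identifications among the generators. Since the presentation as stated does not spell out $c = 1$, the cleanest reading is that the theorem first establishes the presentation for the Borel of the underlying quantum Kac-Moody algebra via Jantzen, and the same presentation is then transported to $\mathcal{U}_q(\mathfrak{b}^{\sigma})$ once the relation $c = 1$ is absorbed into the Cartan. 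Modulo this bookkeeping, the statement reduces entirely to Jantzen's theorem.
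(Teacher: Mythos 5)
The paper offers no proof of this theorem at all: it is presented purely as a citation to Jantzen, so there is no argument in the paper to compare your attempt against. Your sketch is the correct reduction, and you should be credited for the one substantive observation that the paper glosses over. As you note, every relation in the presentation holds verbatim among the $e_i^+,k_i^{\pm 1}$ of $\mathcal{U}_q(\hat{\mathfrak{g}}^{\sigma})$, surjectivity is built in, and injectivity on the positive part is exactly what Jantzen's triangular-decomposition theorem provides for the quantum Kac--Moody algebra $\mathcal{U}_q(\hat{\mathfrak{g}}^{\sigma})$.

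Your hesitation about $c-1$ is in fact a genuine imprecision in the stated theorem, not mere bookkeeping to be waved away. The Weyl relations already force $c=\prod_{i\in\hat I_\sigma}k_i^{a_i}$ to be central in the abstract presented algebra $\widetilde{\mathcal U}$ (since $\sum_i a_i d_i C^{\sigma}_{i,j}=0$), but nothing in the listed relations forces $c=1$ there, so the Cartan subalgebra of $\widetilde{\mathcal U}$ is a Laurent ring of rank $|\hat I_\sigma|$. By contrast, $\mathcal{U}_q(\mathfrak{b}^{\sigma})$ sits inside the quotient $\mathcal{U}_q(\mathcal{L}\mathfrak{g}^{\sigma})=\mathcal{U}_q(\hat{\mathfrak{g}}^{\sigma})/(c-1)$, where $k_\epsilon$ is determined by the other $k_i$'s and the Cartan has rank $|I_\sigma|$. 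Hence $\phi:\widetilde{\mathcal U}\to\mathcal{U}_q(\mathfrak{b}^{\sigma})$ has kernel the (nontrivial) ideal $(c-1)$, and the theorem as literally printed describes $\mathcal{U}_q(\hat{\mathfrak{g}}^{\sigma})^{\ge 0}$ rather than $\mathcal{U}_q(\mathfrak{b}^{\sigma})$. The correct statement should add the single relation $\prod_{i\in\hat I_\sigma}k_i^{a_i}=1$, after which your argument (Jantzen on $\mathcal{U}_q(\hat{\mathfrak{g}}^{\sigma})$, then pass to the quotient by the central element, which only affects the Cartan factor of the triangular decomposition) closes cleanly. You should not present the $c=1$ issue as optional bookkeeping: without it the claimed isomorphism is false.
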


The Borel subalgebra contains the Drinfeld generators $x_{i,r}^+$, $k_i^{\pm 1}$, $h_{i,k}$ and $x_{i,k}^- (i \in I,r \geq 0, k > 0)$ and therefore the elements $\phi_{i,r}^+ (i \in I,r \geq 0$). But it is not generated by these elements, since $e_0$ can not be generated by these elements.

\begin{proposition}
	We have a triangular decomposition of $\mathcal{U}_q(\mathfrak{b}^{\sigma})$ induced from that of $\mathcal{U}_q(\mathcal{L}\mathfrak{g}^{\sigma})$, that is, we have an isomorphism of vector spaces
	\[\mathcal{U}_q(\mathfrak{b}^{\sigma})^+ \otimes \mathcal{U}_q(\mathfrak{b}^{\sigma})^0 \otimes \mathcal{U}_q(\mathfrak{b}^{\sigma})^- \xrightarrow{\sim} \mathcal{U}_q(\mathfrak{b}^{\sigma}),\]
	where $\mathcal{U}_q(\mathfrak{b}^{\sigma})^{\pm} = \mathcal{U}_q(\mathfrak{b}^{\sigma}) \cap \mathcal{U}_q(\mathcal{L}\mathfrak{g}^{\sigma})^{\pm}$ and  $\mathcal{U}_q(\mathfrak{b}^{\sigma})^{0} = \mathcal{U}_q(\mathfrak{b}^{\sigma}) \cap \mathcal{U}_q(\mathcal{L}\mathfrak{g}^{\sigma})^{0}$ are subalgebras of $\mathcal{U}_q(\mathfrak{b}^{\sigma})$. 
\end{proposition}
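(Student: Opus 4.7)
The plan is to prove injectivity of the multiplication map $\mu : \mathcal{U}_q(\mathfrak{b}^\sigma)^+ \otimes \mathcal{U}_q(\mathfrak{b}^\sigma)^0 \otimes \mathcal{U}_q(\mathfrak{b}^\sigma)^- \to \mathcal{U}_q(\mathfrak{b}^\sigma)$ by invoking the ambient triangular decomposition of $\mathcal{U}_q(\mathcal{L}\mathfrak{g}^\sigma)$, and then to prove surjectivity by checking that the image of $\mu$ is a subalgebra containing every Drinfeld--Jimbo generator of $\mathcal{U}_q(\mathfrak{b}^\sigma)$.

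Injectivity is immediate: the composition
\[
\mathcal{U}_q(\mathfrak{b}^\sigma)^+ \otimes \mathcal{U}_q(\mathfrak{b}^\sigma)^0 \otimes \mathcal{U}_q(\mathfrak{b}^\sigma)^- \hookrightarrow \mathcal{U}_q(\mathcal{L}\mathfrak{g}^\sigma)^+ \otimes \mathcal{U}_q(\mathcal{L}\mathfrak{g}^\sigma)^0 \otimes \mathcal{U}_q(\mathcal{L}\mathfrak{g}^\sigma)^- \xrightarrow{\sim} \mathcal{U}_q(\mathcal{L}\mathfrak{g}^\sigma)
\]
is injective by Damiani's triangular decomposition and factors through $\mathcal{U}_q(\mathfrak{b}^\sigma) \hookrightarrow \mathcal{U}_q(\mathcal{L}\mathfrak{g}^\sigma)$, so $\mu$ itself is injective. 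For surjectivity, set $\mathcal{V} := \mathcal{U}_q(\mathfrak{b}^\sigma)^+ \cdot \mathcal{U}_q(\mathfrak{b}^\sigma)^0 \cdot \mathcal{U}_q(\mathfrak{b}^\sigma)^- \subseteq \mathcal{U}_q(\mathfrak{b}^\sigma)$. It suffices to show that $\mathcal{V}$ is a subalgebra containing every generator $e_i, k_i^{\pm 1}$ for $i \in \hat{I}_\sigma$. The $k_i^{\pm 1}$ lie in $\mathcal{U}_q(\mathfrak{b}^\sigma)^0 \subseteq \mathcal{V}$; for $i \in I_\sigma$, Damiani's isomorphism identifies $e_i$ with $x_{i,0}^+ \in \mathcal{U}_q(\mathfrak{b}^\sigma)^+$; and for the extra node, Damiani's explicit formula expresses $e_\epsilon$ as a Drinfeld negative root vector of strictly positive loop degree times a Cartan factor, placing $e_\epsilon \in \mathcal{U}_q(\mathfrak{b}^\sigma)^- \cdot \mathcal{U}_q(\mathfrak{b}^\sigma)^0 \subseteq \mathcal{V}$.

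That $\mathcal{V}$ is a subalgebra reduces, via the commutation $k_i x_{j,k}^\pm \propto x_{j,k}^\pm k_i$ and the analogous relation for $h_{i,r}$, to the single inclusion $\mathcal{U}_q(\mathfrak{b}^\sigma)^- \cdot \mathcal{U}_q(\mathfrak{b}^\sigma)^+ \subseteq \mathcal{V}$. By induction on the length of a word in Drinfeld generators, this in turn reduces to the identity $[x_{i,k}^+, x_{j,l}^-] \in \mathcal{U}_q(\mathfrak{b}^\sigma)^0$ for $k \geq 0$ and $l \geq 1$. The commutator formula in Theorem~\ref{Drelations} gives a scalar multiple of $\phi^+_{i,k+l} - \phi^-_{i,k+l}$, and since $k+l \geq 1 > 0$ forces $\phi^-_{i,k+l} = 0$, only $\phi^+_{i,k+l} \in \mathcal{U}_q(\mathfrak{b}^\sigma)^0$ survives. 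The main obstacle is the verification for $e_\epsilon$: one must invoke the precise form of Damiani's isomorphism at the additional affine node and check that the loop indices appearing in that formula respect the positivity constraints defining $\mathcal{U}_q(\mathfrak{b}^\sigma)^-$ and $\mathcal{U}_q(\mathfrak{b}^\sigma)^0$, a verification which is case-dependent across the twisted types $X_n^{(M)}$ and in particular requires separate care in the $A_{2n}^{(2)}$ case flagged in the introduction.
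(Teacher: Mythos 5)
Your route is essentially the paper's: injectivity is pulled back from Damiani's triangular decomposition of $\mathcal{U}_q(\mathcal{L}\mathfrak{g}^{\sigma})$, and surjectivity is obtained by locating the Drinfeld--Jimbo generators of $\mathcal{U}_q(\mathfrak{b}^{\sigma})$ inside $\mathcal{V}=\mathcal{U}_q(\mathfrak{b}^{\sigma})^+\,\mathcal{U}_q(\mathfrak{b}^{\sigma})^0\,\mathcal{U}_q(\mathfrak{b}^{\sigma})^-$, the only delicate generator being $e_{\epsilon}$, which you correctly place in $\mathcal{U}_q(\mathfrak{b}^{\sigma})^-\cdot\mathcal{U}_q(\mathfrak{b}^{\sigma})^0$ via Damiani's formula (the paper cites \cite[Proposition~9.3 (iii)]{damiani2015from} for exactly this point). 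You go further than the paper by making explicit that one must also check that $\mathcal{V}$ is closed under multiplication.

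It is precisely that closure step which contains a genuine gap. You reduce it, ``by induction on the length of a word in Drinfeld generators,'' to the identity $[x^+_{i,k},x^-_{j,l}]\in\mathcal{U}_q(\mathfrak{b}^{\sigma})^0$ for $k\ge 0$, $l\ge 1$; this tacitly assumes that $\mathcal{U}_q(\mathfrak{b}^{\sigma})^-=\mathcal{U}_q(\mathfrak{b}^{\sigma})\cap\mathcal{U}_q(\mathcal{L}\mathfrak{g}^{\sigma})^-$ is generated by the currents $x^-_{j,l}$ with $l\ge 1$. That is false, and the paper itself records the obstruction: $\mathcal{U}_q(\mathfrak{b}^{\sigma})$ contains but is \emph{not} generated by the Drinfeld generators $x^+_{i,r},k_i^{\pm1},h_{i,k},x^-_{i,k}$ ($r\ge0$, $k>0$), precisely because $e_{\epsilon}$ does not lie in the subalgebra they generate. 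Writing $e_{\epsilon}=F\cdot k$ with $F\in\mathcal{U}_q(\mathfrak{b}^{\sigma})^-$ a negative root vector and $k$ a Cartan element, if $F$ were a polynomial in the $x^-_{j,l}$, $l\ge1$, then $e_{\epsilon}$ would lie in that subalgebra; hence $F$ is an element of $\mathcal{U}_q(\mathfrak{b}^{\sigma})^-$ that your induction never reaches, and commuting such root vectors past $\mathcal{U}_q(\mathfrak{b}^{\sigma})^+$ is not controlled by the single relation you verify. (Equivalently: $F$ has small loop degree but finite weight $-\alpha$ with $\alpha$ of height at least $2$, whereas any monomial in the $x^-_{j,l}$, $l\ge1$, of that weight has loop degree at least $2$.) The clean way to close the gap --- and what the paper implicitly relies on, cf. the factorization $\mathcal{R}=\mathcal{R}_{<0}\mathcal{R}_{=0}\mathcal{R}_{>0}q^{-t_{\infty}}$ recalled in Section~\ref{copproof} --- is Damiani's convex PBW basis of $\mathcal{U}_q(\mathfrak{b}^{\sigma})$ \cite{damiani2000r}: ordered monomials in the real root vectors $E_{m\delta+\alpha}$ ($m\ge0$), then the imaginary root vectors together with the $k_{\lambda}$, then the $E_{m\delta-\alpha}$ ($m\ge1$). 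Each such monomial visibly lies in $\mathcal{V}$, which yields surjectivity (and injectivity) in one stroke.
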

\begin{proof}
	The injectivity follows from the triangular decomposition of $\mathcal{U}_q(\mathcal{L}\mathfrak{g}^{\sigma})$
	\[\mathcal{U}_q(\mathcal{L}\mathfrak{g}^{\sigma})^+ \otimes \mathcal{U}_q(\mathcal{L}\mathfrak{g}^{\sigma})^0 \otimes \mathcal{U}_q(\mathcal{L}\mathfrak{g}^{\sigma})^- \xrightarrow{\sim} \mathcal{U}_q(\mathcal{L}\mathfrak{g}^{\sigma}).\]
	The surjectivity follows from the fact that $\mathcal{U}_q(\mathfrak{b}^{\sigma})$ has generators $e_i^+, k_i^{\pm 1}$, $i \in I_{\sigma} \cup \{\epsilon\}$. These generators are in the image because $k_i^{\pm 1} \in \mathcal{U}_q(\mathfrak{b}^{\sigma})^{0}$ for all  $i \in I_{\sigma} \cup \{\epsilon\}$, $e_i^+ \in \mathcal{U}_q(\mathfrak{b}^{\sigma})^{+}$ for  $i \in I_{\sigma}$. And finally, using \cite[Proposition~9.3 (iii)]{damiani2015from}, $e_{\epsilon}^+ \in \mathcal{U}_q(\mathfrak{b}^{\sigma})^{+} \otimes \mathcal{U}_q(\mathfrak{b}^{\sigma})^{0}$.
\end{proof}

Similarly, we also have triangular decomposition of negative Borel subalgebra $\mathcal{U}_q(\mathfrak{b}^{\sigma}_{-})$ with $\mathcal{U}_q(\mathfrak{b}^{\sigma}_{-})^{\pm} = \mathcal{U}_q(\mathfrak{b}^{\sigma}_{-}) \cap \mathcal{U}_q(\mathcal{L}\mathfrak{g}^{\sigma})^{\pm}$ and  $\mathcal{U}_q(\mathfrak{b}^{\sigma}_{-})^{0} = \mathcal{U}_q(\mathfrak{b}^{\sigma}_{-}) \cap \mathcal{U}_q(\mathcal{L}\mathfrak{g}^{\sigma})^{0}$.

\subsection{Coproduct formulas}\label{copproof}
We will use the formulas of coproducts on Drinfeld generators. This is a refined version of the coproduct formula in \cite[Proposition~7.1.2, Proposition~7.1.5]{damiani2000r}. A similar formula was stated in \cite[Theorem~2.2]{jing1999vertex} and in \cite[Proposition~4.3]{chari1998twisted}. We give another detailed proof here.

\begin{proposition}\label{coproduct}
	Let $X^+ = \sum_{j \in I, m \in \mathbb{Z}} \mathbb{C}x^+_{j,m}$. We have the following formulas of the coproduct on Drinfeld generators:
	\begin{equation}\label{eqcoproduct}
		\begin{split}
			& \Delta(x^+_{i,m}) = x^+_{i,m} \otimes 1 \; (mod \; \mathcal{U}_q(\mathcal{L} \mathfrak{g}^{\sigma}) \otimes \mathcal{U}_q(\mathcal{L} \mathfrak{g}^{\sigma})X^+), \\
			& \Delta(x^-_{i,l}) = 1 \otimes x^-_{i,l} + \sum_{j=1}^l x^-_{i,j} \otimes \phi^+_{i,l-j} \; (mod \; \mathcal{U}_q(\mathcal{L} \mathfrak{g}^{\sigma}) \otimes \mathcal{U}_q(\mathcal{L} \mathfrak{g}^{\sigma})X^+), \\
			& \Delta(x^-_{i,-k}) = 1 \otimes x^-_{i,-k} + \sum_{j=0}^k x^-_{i,-j} \otimes \phi^-_{i,-k+j} \; (mod \; \mathcal{U}_q(\mathcal{L} \mathfrak{g}^{\sigma}) \otimes \mathcal{U}_q(\mathcal{L} \mathfrak{g}^{\sigma})X^+), \\
			& \Delta(\phi^+_{i,k}) = \sum_{j=0}^k \phi^+_{i,k-j} \otimes \phi^+_{i,j} \; (mod \; \mathcal{U}_q(\mathcal{L} \mathfrak{g}^{\sigma}) \otimes \mathcal{U}_q(\mathcal{L} \mathfrak{g}^{\sigma})X^+), \\
			& \Delta(\phi^-_{i,-k}) = \sum_{j=0}^k \phi^-_{i,-k+j} \otimes \phi^-_{i,-j} \; (mod \; \mathcal{U}_q(\mathcal{L} \mathfrak{g}^{\sigma}) \otimes \mathcal{U}_q(\mathcal{L} \mathfrak{g}^{\sigma})X^+), \\
		\end{split}
	\end{equation}
	where $i \in I$, $m \in \mathbb{Z}$, $k \geq 0$, $l > 0$.
\end{proposition}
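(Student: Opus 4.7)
The strategy is to refine the coproduct formulas of Damiani \cite{damiani2000r}, which are stated modulo a larger two-sided ideal, down to the one-sided ideal $\mathcal{U}_q(\mathcal{L}\mathfrak{g}^{\sigma})\otimes \mathcal{U}_q(\mathcal{L}\mathfrak{g}^{\sigma})X^+$. The plan is a simultaneous induction on $|m|$, treating all five families of generators at once so that the inductive hypothesis is available for the commutators that tie them together. Throughout, I will work systematically modulo $\mathcal{U}_q(\mathcal{L}\mathfrak{g}^{\sigma})\otimes \mathcal{U}_q(\mathcal{L}\mathfrak{g}^{\sigma})X^+$, noting that this subspace is a right ideal under the algebra structure of $\mathcal{U}_q(\mathcal{L}\mathfrak{g}^{\sigma})\otimes \mathcal{U}_q(\mathcal{L}\mathfrak{g}^{\sigma})$ and is closed under the commutators that arise.

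For the base case, I identify the low-mode Drinfeld generators with elements expressible through Drinfeld-Jimbo generators via Damiani's explicit construction. Concretely, for the chosen representative $i$ of $\bar i$, $x_{i,0}^+$ is a scalar multiple of $e_i^+$, $x_{i,0}^- = e_i^-$, and $\phi_{i,0}^{\pm} = k_i^{\pm 1}$, so the claimed reductions follow directly from the Drinfeld-Jimbo coproduct recalled in Remark~\ref{remarkantipode}. For instance, $\Delta(e_i^+) = e_i^+\otimes 1 + k_i \otimes e_i^+$ collapses to $x_{i,0}^+\otimes 1$ modulo $\mathcal{U}_q(\mathcal{L}\mathfrak{g}^{\sigma})\otimes \mathcal{U}_q(\mathcal{L}\mathfrak{g}^{\sigma})X^+$ since $k_i\otimes e_i^+ = (k_i\otimes 1)(1\otimes x_{i,0}^+)$, while $\Delta(e_i^-) = 1\otimes e_i^- + e_i^-\otimes k_i^{-1}$ matches the $k=0$ case of the third line of \eqref{eqcoproduct}. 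The first positive-mode generator $x_{i,1}^-$ and first negative-mode generator $x_{i,-1}^+$, which are produced via the Lusztig automorphism associated to the affine node, require a slightly longer but direct computation by applying $\Delta$ to the braided expression and collecting terms modulo the ideal.

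For the inductive step, I use the Drinfeld relation
\[
[h_{i,k}, x_{j,l}^{\pm}] = \pm \frac{1}{k}\Bigl(\sum_{r=1}^M \Bigl[\tfrac{kC_{i,\sigma^r(j)}}{d_{\bar i}}\Bigr]_{q_{\bar i}} \omega^{kr}\Bigr)\, x_{j,k+l}^{\pm}
\]
to express a higher-mode Drinfeld generator as a commutator of generators whose coproduct is already known, then apply $\Delta$ (which is an algebra homomorphism) and the inductive hypothesis to each factor; after extracting the leading term modulo $\mathcal{U}_q(\mathcal{L}\mathfrak{g}^{\sigma})\otimes \mathcal{U}_q(\mathcal{L}\mathfrak{g}^{\sigma})X^+$, one reads off the formula for $\Delta(x_{j,k+l}^\pm)$. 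The formulas for $\Delta(\phi_{i,\pm k}^{\pm})$ are then extracted in the same way from the bracket $[x_{i,k}^+, x_{i,l}^-]$ in \eqref{commutationdrinfeld}. Solving for $x_{j,k+l}^\pm$ requires the quantum-integer coefficient to be nonzero, which is guaranteed by the assumption that $q$ is not a root of unity.

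The main obstacle arises at the fixed nodes $i = \sigma(i)$, where the constraint $x_{\sigma(i),k}^\pm = \omega^k x_{i,k}^\pm$ forces $x_{i,k}^\pm$ to vanish unless $M\mid k$, and the commutation scalar collapses to an averaged form that must be checked to remain invertible on the relevant modes. This requires a detailed case analysis, in particular for $A_{2n}^{(2)}$ at the node $\bar n$ where $C_{i,\sigma(i)} = -1$: the $\mathfrak{sl}_2$-reduction is unavailable, the quantum Serre relations take the exceptional cubic shape, and the identification of the first nonzero-mode Drinfeld generator with a Drinfeld-Jimbo expression differs from the generic situation, so the base case must be verified directly there using Damiani's braid-operator formulas. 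Once this case-by-case verification is completed, the refined formulas \eqref{eqcoproduct} hold in all twisted types, and in particular justify the use of Frenkel--Reshetikhin-type coproduct arguments needed in Remark~\ref{remarkZ}.
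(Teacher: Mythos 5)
Your proposal takes a genuinely different route from the paper, and unfortunately it contains a gap that would make the inductive step fail. Let me first describe the contrast and then the gap.

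The paper's proof does not do any induction on modes. It relies entirely on Lemma~\ref{lemmaconjugation} (from Enriquez, Khoroshkin, Pakuliak), which says that the standard coproduct $\Delta$ and the Drinfeld coproduct $\Delta^{(D)}$ are conjugate by the factor $\mathcal{R}^{\vee}_{<0}$ of the universal $R$-matrix. One writes $\mathcal{R}^{\vee}_{<0} = 1\otimes 1 + A$ and $(\mathcal{R}^{\vee}_{<0})^{-1} = 1\otimes 1 + B$ with $A,B \in \mathcal{U}_q(\mathcal{L}\mathfrak{g}^{\sigma})\otimes\mathcal{U}_q(\mathcal{L}\mathfrak{g}^{\sigma})X^+$, plugs in the explicit Drinfeld coproducts, and controls the terms modulo $\mathcal{U}_q\otimes\mathcal{U}_qX^+$. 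Crucially, for the $x^-_{i,l}$ formula the paper does \emph{not} just discard everything in the ideal: it extracts the leading term of $B$ explicitly, $B = -(q_{\bar i}-q_{\bar i}^{-1})\sum_{j\le 0}x^-_{i,j}\otimes x^+_{i,-j} + \cdots$, and shows that $B(1\otimes x^-_{i,l})$ produces exactly the extra terms $\sum_j x^-_{i,j}\otimes\phi^+_{i,l-j}$ that appear in the final formula. This step has no analogue in your argument, and that is where the problem lies.

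Here is the gap. The subspace $\mathcal{U}_q(\mathcal{L}\mathfrak{g}^{\sigma})\otimes\mathcal{U}_q(\mathcal{L}\mathfrak{g}^{\sigma})X^+$ is a \emph{left} ideal in the tensor product algebra, not a right ideal (for a right ideal you would need $x^+_{j,m}\cdot b \in \mathcal{U}_q X^+$ for arbitrary $b$, which is false), and consequently it is not stable under commutators. Your induction applies $\Delta$ to the relation $[h_{i,k},x^-_{j,l}] \propto x^-_{j,k+l}$, replaces $\Delta(h_{i,k})$ and $\Delta(x^-_{j,l})$ by their reductions modulo the ideal, and reads off $\Delta(x^-_{j,k+l})$ from the commutator of the reductions. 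But the error terms you are discarding actually matter: if $\Delta(h_{i,k}) = (h_{i,k}\otimes 1 + 1\otimes h_{i,k}) + \alpha$ with $\alpha$ in the ideal, the commutator of $\alpha$ with $1\otimes x^-_{j,l}$ produces terms of the form $x^-\otimes[x^+, x^-]\sim x^-\otimes\phi^+$, which are \emph{not} in the ideal and which are precisely the intermediate terms $x^-_{i,j}\otimes\phi^+_{i,l-j}$ with $0<j<l$ in the target formula. You can see this already in rank one: applying your recipe with $\Delta(h_{i,1})\equiv h_{i,1}\otimes 1+1\otimes h_{i,1}$ and $\Delta(x^-_{i,1})\equiv 1\otimes x^-_{i,1}+x^-_{i,1}\otimes k_i$ gives $\Delta(x^-_{i,2}) \equiv 1\otimes x^-_{i,2}+x^-_{i,2}\otimes k_i$ and completely misses the term $x^-_{i,1}\otimes\phi^+_{i,1}$. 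To produce that missing term one needs the exact form of $\Delta(h_{i,1})$, including its $x^-\otimes x^+$ part. So the induction as stated yields a wrong formula.

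Two smaller remarks. The assertion in your first paragraph that the subspace ``is a right ideal'' is the place where the reasoning silently goes off: once you note it is only a left ideal, the incompatibility with commutators becomes visible. And the case-by-case worry about the fixed nodes and about $A_{2n}^{(2)}$ at $\bar n$ is a red herring for this particular proposition; the $R$-matrix argument in the paper makes no such distinction and works uniformly across all nodes, because the degree bookkeeping that places $A,B$ in the ideal is insensitive to whether $i=\sigma(i)$. If you want to salvage an inductive proof, you would need to carry along more refined information than reduction modulo $\mathcal{U}_q\otimes\mathcal{U}_qX^+$ — effectively you would end up reconstructing the leading term of $\mathcal{R}^{\vee}_{<0}$ by hand, at which point the $R$-matrix approach is cleaner.
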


It is known \cite{damiani2000r} that the universal $R$-matrix lies in $\mathcal{U}_q(\mathfrak{b}^{\sigma}) \hat{\otimes} \mathcal{U}_q(\mathfrak{b}_{-}^{\sigma})$. Moreover, the $R$-matrix can be decomposed as \cite[Remark~7.1.7]{damiani2000r}
\[\mathcal{R} = \mathcal{R}_{< 0} \mathcal{R}_{=0} \mathcal{R}_{> 0}q^{-t_{\infty}},\]
where
\[\mathcal{R}_{<0} = \prod_{m \leq 0} \mathrm{exp}_{\beta_m}((q_{\beta_m}^{-1} - q_{\beta_m}) E_{\beta_m} \otimes F_{\beta_m}) \in \mathcal{U}_q(\mathfrak{b}^{\sigma})^+ \hat{\otimes}  \mathcal{U}_q(\mathfrak{b}_{-}^{\sigma})^-,\]
\[\mathcal{R}_{=0} = \prod_{r>0} \mathrm{exp}\tilde{C}_r \in \mathcal{U}_q(\mathfrak{b}^{\sigma})^0 \hat{\otimes} \mathcal{U}_q(\mathfrak{b}_{-}^{\sigma})^0,\]
\[\mathcal{R}_{>0} = \prod_{m \geq 1} \mathrm{exp}_{\beta_m}((q_{\beta_m}^{-1} - q_{\beta_m}) E_{\beta_m} \otimes F_{\beta_m}) \in \mathcal{U}_q(\mathfrak{b}^{\sigma})^- \hat{\otimes}  \mathcal{U}_q(\mathfrak{b}_{-}^{\sigma})^+,\]
where the notation is explained in \cite{damiani2000r}. Here the elements $E_{\beta_m}$ defined by Lusztig automorphisms are called positive real root vectors. These positive real root vectors together with $\phi^+_{i,k}$, $i \in I, k \geq 0$, generate the Borel subalgebra $\mathcal{U}_q(\mathfrak{b}^{\sigma})$ \cite{damiani2000r}.

We need to use the following result in \cite[Proposition~3.8]{enriquez2007weight}. All the arguments in \cite{enriquez2007weight} for non-twisted quantum affine algebras apply directly to the twisted case.

\begin{lemma}\label{lemmaconjugation}
	The standard coproducts $\Delta$ and the Drinfeld coproduct $\Delta^{(D)}$ are conjugated by $\mathcal{R}_{<0}^{\vee}$,
	\begin{equation}
		\Delta(x) = (\mathcal{R}_{<0}^{\vee})^{-1} \Delta^{(D)}(x) \mathcal{R}_{<0}^{\vee},
	\end{equation}
	where $\mathcal{R}_{<0}^{\vee} = P\mathcal{R}_{<0}$ is an invertible homogeneous element in $\mathcal{U}_q(\mathfrak{b}_{-}^{\sigma})^- \otimes \mathcal{U}_q(\mathfrak{b}^{\sigma})^+$ of $\mathbb{Z}$-degree $0$ whose constant term is $1 \otimes 1$. Here the $\mathbb{Z}$-grading was described in remark~\ref{grading}.
\end{lemma}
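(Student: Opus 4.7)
The plan is to adapt the argument of \cite[Proposition~3.8]{enriquez2007weight} to the twisted setting; the building blocks already recalled before the lemma are the twisted universal $R$-matrix $\mathcal{R}$ of Damiani \cite{damiani2000r}, its factorization $\mathcal{R} = \mathcal{R}_{<0}\mathcal{R}_{=0}\mathcal{R}_{>0}q^{-t_\infty}$, and the Drinfeld coproduct $\Delta^{(D)}$ written out explicitly on Drinfeld currents.

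The starting point is the quasi-triangularity identity $\mathcal{R}\Delta(x) = \Delta^{\mathrm{op}}(x)\mathcal{R}$ for all $x \in \mathcal{U}_q(\mathcal{L}\mathfrak{g}^{\sigma})$. Substituting the factorization of $\mathcal{R}$ and applying the flip $P$ turns this into a statement of the form $\Delta(x) = (P\mathcal{R})^{-1}\Delta^{\mathrm{op}}(x)(P\mathcal{R})$, and the task is to isolate the contribution of $P\mathcal{R}_{<0} = \mathcal{R}_{<0}^{\vee}$ from the remaining factors. Following Enriquez, I would show that conjugation by $P(\mathcal{R}_{=0}\mathcal{R}_{>0}q^{-t_\infty})$ converts $\Delta^{\mathrm{op}}$ into $\Delta^{(D)}$; equivalently, $\Delta^{(D)}$ and $\Delta$ differ only by the half-twist $\mathcal{R}_{<0}^{\vee}$, which is the content of the lemma.

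To verify this half-twist relation, it suffices to check the identity $\Delta(x) = (\mathcal{R}_{<0}^{\vee})^{-1}\Delta^{(D)}(x)\mathcal{R}_{<0}^{\vee}$ on the Drinfeld generators $x^{\pm}_{i,m}$, $\phi^{\pm}_{i,\pm k}$, using the explicit formulas for $\Delta^{(D)}$ on currents together with the commutation rules recalled in Remark~\ref{phixcommutation}. The properties of $\mathcal{R}_{<0}^{\vee}$ asserted in the lemma follow directly from the product formula $\mathcal{R}_{<0} = \prod_{m\leq 0}\mathrm{exp}_{\beta_m}\bigl((q_{\beta_m}^{-1}-q_{\beta_m})E_{\beta_m}\otimes F_{\beta_m}\bigr)$: each factor is unipotent hence invertible in the completion, the constant term after applying $P$ is $1\otimes 1$, membership in $\mathcal{U}_q(\mathfrak{b}_{-}^{\sigma})^- \otimes \mathcal{U}_q(\mathfrak{b}^{\sigma})^+$ is immediate from the factor-by-factor statement recalled before the lemma, and the $\mathbb{Z}$-degree zero property follows because $\mathrm{deg}(E_{\beta_m}) + \mathrm{deg}(F_{\beta_m}) = 0$ in the grading of Remark~\ref{grading}.

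The main obstacle I expect is bookkeeping: Enriquez's non-twisted argument rests on an inductive analysis using Lusztig's braid group automorphisms and the ordering of real root vectors in Damiani's construction of $\mathcal{R}$. In the twisted case, the orbits of $\sigma$ on the root system complicate both the ordering and the explicit form of the root vectors $E_{\beta_m}$, so each step of Enriquez's induction must be reread with this combinatorics in mind. Fortunately, the underlying algebraic framework---$R$-matrix factorization, commutation rules for Drinfeld currents, and quasi-triangularity---has been established in full generality by Damiani \cite{damiani2000r, damiani2015from}, so each step of Enriquez's argument transfers formally, as already noted in the excerpt above the lemma.
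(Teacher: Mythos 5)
Your proposal follows essentially the same route as the paper: the paper gives no independent proof of this lemma, but simply invokes \cite[Proposition~3.8]{enriquez2007weight} and asserts that all the arguments there transfer directly to the twisted case via Damiani's factorization of the twisted universal $R$-matrix, which is exactly the adaptation you outline. Your sketch is, if anything, more explicit than the paper about where the twisted combinatorics (orbits of $\sigma$, ordering of root vectors) could require care, and the structural properties of $\mathcal{R}_{<0}^{\vee}$ are read off from the product formula in both cases.
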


\begin{proof}[Proof of Proposition~\ref{coproduct}]
	We write $\mathcal{R}_{<0}^{\vee} = 1 \otimes 1 + A$ and $(\mathcal{R}_{<0}^{\vee})^{-1} = 1 \otimes 1 + B$, where $A,B$ are both homogeneous elements in $\mathcal{U}_q(\mathfrak{b}_{-}^{\sigma})^- \otimes \mathcal{U}_q(\mathfrak{b}^{\sigma})^+$ of the form $\sum_{l \in \mathbb{N}} a_l \otimes b_l$ with $\mathrm{deg}(a_l) < 0$, $\mathrm{deg}(b_l) = - \mathrm{deg}(a_l) > 0$. Note that an element in $\mathcal{U}_q(\mathfrak{b}^{\sigma})^+$ of positive $\mathbb{Z}$-degree is in the space $\mathcal{U}_q(\mathcal{L} \mathfrak{g}^{\sigma})X^+$. In particular, $A,B$ are elements in $\mathcal{U}_q(\mathcal{L} \mathfrak{g}^{\sigma}) \otimes \mathcal{U}_q(\mathcal{L} \mathfrak{g}^{\sigma})X^+$.
	
	\begin{itemize}
		\item Coproducts on $x^+_{i,m}$, $m \in \mathbb{Z}$:
		\[\begin{split}
			& \Delta(x^+_{i,m}) - x^+_{i,m} \otimes 1 \\
			& = (\mathcal{R}_{<0}^{\vee})^{-1} \Delta^{(D)}(x^+_{i,m}) \mathcal{R}_{<0}^{\vee} - [\Delta^{(D)}(x^+_{i,m}) - \sum_{j \geq m} \phi^-_{i,m-j} \otimes x^+_{i,j}] \\
			& = B\Delta^{(D)}(x^+_{i,m}) + \Delta^{(D)}(x^+_{i,m})A + B\Delta^{(D)}(x^+_{i,m})A + \sum_{j \geq m} \phi^-_{i,m-j} \otimes x^+_{i,j}.
		\end{split}	\]
		
		The right-hand side is congruent to $0$ modulo $\mathcal{U}_q(\mathcal{L} \mathfrak{g}^{\sigma}) \otimes \mathcal{U}_q(\mathcal{L} \mathfrak{g}^{\sigma})X^+$ since $A,B \in \mathcal{U}_q(\mathcal{L} \mathfrak{g}^{\sigma}) \otimes \mathcal{U}_q(\mathcal{L} \mathfrak{g}^{\sigma})X^+$ and $x^{+}_{i,j} \in X$.

		\item Coproducts on $\phi^+_{i,k}$, $\phi^-_{i,-k}$, $k \geq 0$:
		\[\begin{split}
			& \Delta(\phi^{+}_{i,k}) - \sum_{j=0}^k \phi^{+}_{i,k-j} \otimes \phi^{+}_{i,j} \\
			& = (\mathcal{R}_{<0}^{\vee})^{-1} \Delta^{(D)}(\phi^{+}_{i,k}) \mathcal{R}_{<0}^{\vee} - \Delta^{(D)}(\phi^{+}_{i,k})\\
			& = B\Delta^{(D)}(\phi^{+}_{i,k}) + \Delta^{(D)}(\phi^{+}_{i,k})A + B\Delta^{(D)}(\phi^{+}_{i,k})A. 
		\end{split}	
		\]
		
		Since $A,B \in \mathcal{U}_q(\mathcal{L} \mathfrak{g}^{\sigma}) \otimes \mathcal{U}_q(\mathcal{L} \mathfrak{g}^{\sigma})X^+$ and $\mathcal{U}_q(\mathcal{L} \mathfrak{g}^{\sigma})X^+ \phi^{+}_{i,l} \subset \mathcal{U}_q(\mathcal{L} \mathfrak{g}^{\sigma})X^+$, the right-hand side is congruent to $0$ modulo $\mathcal{U}_q(\mathcal{L} \mathfrak{g}^{\sigma}) \otimes \mathcal{U}_q(\mathcal{L} \mathfrak{g}^{\sigma})X^+$.
		
		The argument for $\phi^-_{i,-k}$ is totally the same.
		
		Besides, we use only here another grading defined by $\mathrm{deg}(x_{i,r}^{\pm}) = \pm 1$ and $\mathrm{deg}(\phi_{i,r}^{\pm})  = 0$. We call this degrees by $\mathbb{Z}_{Q}$-degrees for a while, then non-constant elements in $\mathcal{U}_q(\mathfrak{b}_{-}^{\sigma})^-$ have negative degrees and non-constant elements in $\mathcal{U}_q(\mathfrak{b}^{\sigma})^+$ have positive degrees. Therefore, we also have
		\[\Delta(\phi^{+}_{i,k}) - \sum_{j=0}^k \phi^{+}_{i,k-j} \otimes \phi^{+}_{i,j} \in N_{-} \otimes N_{+}, \]
		where $N_{+} \subset \mathcal{U}_q(\mathcal{L} \mathfrak{g}^{\sigma})$ consists of elements of positive $\mathbb{Z}_Q$-degrees and $N_{-} \subset \mathcal{U}_q(\mathcal{L} \mathfrak{g}^{\sigma})$ consists of elements of negative $\mathbb{Z}_Q$-degrees. This justifies Remark~\ref{remarkZ}.
		
		\item Coproducts on $x^-_{i,l}$, $l > 0$:
		\[\begin{split}
			& \Delta(x^-_{i,l}) - [1 \otimes x^-_{i,l} + \sum_{j=1}^l x^-_{i,j} \otimes \phi^+_{i,l-j}] \\
			& = (\mathcal{R}_{<0}^{\vee})^{-1} \Delta^{(D)}(x^-_{i,l}) \mathcal{R}_{<0}^{\vee} - [\Delta^{(D)}(x^-_{i,l}) - \sum_{j \leq 0} x^-_{i,j} \otimes \phi^+_{i,l-j}] \\
			& = B\Delta^{(D)}(x^-_{i,l}) + \Delta^{(D)}(x^-_{i,l})A + B\Delta^{(D)}(x^-_{i,l})A + \sum_{j \leq 0} x^-_{i,j} \otimes \phi^+_{i,l-j}.
		\end{split}	
		\]
		
		As above, since $A,B \in \mathcal{U}_q(\mathcal{L} \mathfrak{g}^{\sigma}) \otimes \mathcal{U}_q(\mathcal{L} \mathfrak{g}^{\sigma})X^+$ and $\mathcal{U}_q(\mathcal{L} \mathfrak{g}^{\sigma})X^+ \phi^{+}_{i,l} \subset \mathcal{U}_q(\mathcal{L} \mathfrak{g}^{\sigma})X^+$, the right-hand side is congruent to $B(1 \otimes x^-_{i,l}) + \sum_{j \leq 0} x^-_{i,j} \otimes \phi^+_{i,l-j}$ modulo $\mathcal{U}_q(\mathcal{L} \mathfrak{g}^{\sigma}) \otimes \mathcal{U}_q(\mathcal{L} \mathfrak{g}^{\sigma})X^+$.
		
		However, we know from the explicit formula of $R$-matrices that $(\mathcal{R}_{<0}^{\vee})^{-1}$ is of the form 
		\[(\mathcal{R}_{<0}^{\vee})^{-1} = 1 \otimes 1 + B = 1 \otimes 1 -(q_{\bar{i}} - q^{-1}_{\bar{i}}) \sum_{j \leq 0} x^-_{i,j} \otimes x^+_{i,-j} + \cdots,\]
		where the terms $\cdots$ are such that 
		\[[\cdots, 1 \otimes x^-_{i,l}] \in \mathcal{U}_q(\mathcal{L} \mathfrak{g}^{\sigma}) \otimes \mathcal{U}_q(\mathcal{L} \mathfrak{g}^{\sigma})X^+.\]
		
		We calculate that 
		\[B(1 \otimes x^-_{i,l}) = [-(q_{\bar{i}} - q^{-1}_{\bar{i}}) \sum_{j \leq 0} x^-_{i,j} \otimes x^+_{i,-j}, 1 \otimes x^-_{i,l}] =-\sum_{j \leq 0} x^-_{i,j} \otimes \phi^+_{i,l-j} \; (\mathrm{mod} \; \mathcal{U}_q(\mathcal{L} \mathfrak{g}^{\sigma}) \otimes \mathcal{U}_q(\mathcal{L} \mathfrak{g}^{\sigma})X^+).\]

		\item The coproduct formula for $x^-_{i,-k}$, $k \geq 0$, are proved in the same way as that for $x^-_{i,l}$, $l > 0$.
	\end{itemize}
\end{proof}

\subsection{Asymptotic representations}\label{subsectionasymp}
Now we construct the asymptotic representations for twisted quantum affine algebras, in the same way as for non-twisted types \cite{hernandez2012asymptotic}. The slight difference happens at indices $i = \sigma(i)$. We only need to verify the construction works well with this difference. 

Fix an arbitrary $\bar{i} \in I_{\sigma}$ and let $i \in I$ be the fixed representative of $\bar{i}$. For any $k \in \mathbb{N}$, we consider the underlying vector space of the Kirillov-Reshetikhin modules $W^{(i)}_{k,q^{-2k+1}} = L(M_k)$, where $M_k = Z_{i,q^{-2k+1}} \cdots Z_{i,q^{-3}}Z_{i,q^{-1}}$. Then for $1 \leq l \leq k$, we have $M_k^{\geq -2l+1} = Z_{i,q^{-2l+1}} \cdots Z_{i,q^{-3}}Z_{i,q^{-1}} = M_l $. The bijection in Theorem~\ref{inductive} gives an isomorphism of vector spaces
\[\varphi_{l,k} : L(M_l) \to L(M_k)_{\geq -2l+1}.\]

Therefore, we get an inductive system of vector spaces $\{ (L(M_k))_{k \geq 1}, (\varphi_{l,k})_{1 \leq l \leq k}\}$.

\begin{definition}
	Define the vector space $V_{\bar{i}}^{\infty}$ to be the inductive limit of the inductive system $\{ (L(M_k))_{k \geq 1}, (\varphi_{l,k})_{1 \leq l \leq k}\}$.
\end{definition}

In this section, we construct a Borel subalgebra action on $V_{\bar{i}}^{\infty}$.

A direct application of the first formula in Proposition~\ref{coproduct} is:

\begin{corollary}\label{x+action}
	The morphism $\varphi_{l,k} : L(M_l) \to L(M_k)$ commutes with the action of $x^+_{j,m}$ on $L(M_l)$ and on $L(M_k)$, for all $j \in I$, $m \geq 0$. Therefore, they define an action of $x^+_{j,m}$ on the inductive limit $V_{\bar{i}}^{\infty}$. When $m=0$, we denote the operator $x^+_{j,0}$ on $V_{\bar{i}}^{\infty}$ by $\tilde{e}_j$, to be distinguished from the elements of quantum affine algebras.
\end{corollary}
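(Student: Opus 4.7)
My plan is to deduce the statement as a direct consequence of Theorem~\ref{inductive} combined with the first formula of Proposition~\ref{coproduct}. Applying Theorem~\ref{inductive} to $M = M_k$ and $N = -2l+1$, so that $M^{\geq N} = M_l$ and $M^{<N} = Z_{i,q^{-2k+1}} \cdots Z_{i,q^{-2l-1}}$, the map $\varphi_{l,k}$ is obtained as the restriction to $L(M_l) \otimes v'$ of a genuine $\mathcal{U}_q(\mathcal{L}\mathfrak{g}^{\sigma})$-module intertwiner
\[
L(M_l) \otimes L(M_k^{<-2l+1}) \;\longrightarrow\; L(M_k^{<-2l+1}) \otimes L(M_l),
\]
whose image we identify with $L(M_k)_{\geq -2l+1} \subset L(M_k)$. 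Since this map is an intertwiner, it commutes with $\Delta(x^+_{j,m})$, giving
\[
x^+_{j,m}\cdot \varphi_{l,k}(w \otimes v') \;=\; \varphi_{l,k}\bigl(\Delta(x^+_{j,m})(w \otimes v')\bigr)
\]
for every $w \in L(M_l)$.

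Next I would substitute the first formula of Proposition~\ref{coproduct}, namely $\Delta(x^+_{j,m}) \equiv x^+_{j,m} \otimes 1 \pmod{\mathcal{U}_q(\mathcal{L}\mathfrak{g}^{\sigma}) \otimes \mathcal{U}_q(\mathcal{L}\mathfrak{g}^{\sigma}) X^+}$. The key observation is that $v'$ is $l$-highest weight, so by the very definition recalled in Section~\ref{sectiontwisted} it is annihilated by $x^+_{j',m'}$ for \emph{all} $j' \in I$ and $m' \in \mathbb{Z}$; consequently $\mathcal{U}_q(\mathcal{L}\mathfrak{g}^{\sigma}) X^+ \cdot v' = 0$, which kills the correction term on $w \otimes v'$. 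We are left with $\Delta(x^+_{j,m})(w \otimes v') = x^+_{j,m} w \otimes v'$, and therefore, identifying $L(M_l)$ with $L(M_l) \otimes v'$,
\[
\varphi_{l,k}(x^+_{j,m} w) \;=\; x^+_{j,m} \cdot \varphi_{l,k}(w),
\]
which is the first half of the corollary. The second half — well-definedness of the induced action on the colimit $V_{\bar{i}}^{\infty}$ — follows by the universal property of inductive limits once one checks the cocycle condition $\varphi_{l'',k} \circ \varphi_{l,l''} = \varphi_{l,k}$ for $l \leq l'' \leq k$, but this is built into the intertwining-operator construction and is already incorporated into the definition of the inductive system.

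I do not anticipate any real obstacle here, precisely because Proposition~\ref{coproduct} was stated in the refined modulo form that is tailored to exactly this type of argument: once the second tensor factor of $w \otimes v'$ is a highest weight vector, the modulo term automatically drops out. The only point one must be careful about is to use the strong form of highest $l$-weight (annihilation by $x^+_{j',m'}$ for all $m' \in \mathbb{Z}$, not only $m' \geq 0$), which is indeed the convention adopted in Section~\ref{sectiontwisted}. The restriction $m \geq 0$ in the statement of the corollary plays no role in the argument itself; it only records the fact that these are the $x^+_{j,m}$ that lie in the Borel subalgebra $\mathcal{U}_q(\mathfrak{b}^{\sigma})$, which is what one will need subsequently when extending the action to all of $\mathcal{U}_q(\mathfrak{b}^{\sigma})$.
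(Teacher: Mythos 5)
Your argument is correct and is exactly the one the paper intends: the paper dispatches this corollary with the single remark that it is ``a direct application of the first formula in Proposition~\ref{coproduct},'' and your write-up simply supplies the details (the identification of $\varphi_{l,k}$ via Theorem~\ref{inductive}, the vanishing of the correction term on $w\otimes v'$ because $X^{+}v'=0$ in the strong sense $x^{+}_{j',m'}v'=0$ for all $m'\in\mathbb{Z}$, and the compatibility of the system). No gaps.
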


In the following, we investigate the actions of $\phi^+_{j}(u)$.

Recall that $M_k^{< -2l+1} = Z_{i,q^{-2k+1}} \cdots Z_{i,q^{-2l-1}}$. Let $v$ be an arbitrary vector in $L(M_l)$ and let $v'$ be the highest weight vector of $L(Z_{i,q^{-2k+1}} \cdots Z_{i,q^{-2l-1}})$. According to the coproduct formulas, the action of $\phi^+_j(u)$ on $ v \otimes v' \in L(M_l) \otimes L(Z_{i,q^{-2k+1}} \cdots Z_{i,q^{-2l-1}})$ is given by:

when $j \notin \bar{i}$,
\[\phi^+_{j}(u).(v \otimes v') = \phi^+_{j}(u).v \otimes \phi^+_{j}(u).v' = \phi^+_{j}(u).v \otimes v' ,
\]
thus
\[\varphi^{-1}_{l,k} \phi^+_j(u) \varphi_{l,k} = \phi^+_j(u),\]

when $i \neq \sigma(i)$ and $j = \sigma^r(i)$,
\[\phi^+_{j}(u).(v \otimes v') = \phi^+_{j}(u).v \otimes \phi^+_{j}(u).v' = \phi^+_{j}(u).v \otimes q^{k-l}\frac{1-u\omega^r q^{-2k}}{1-u\omega^r q^{-2l}}v',
\]
thus
\[\varphi^{-1}_{l,k} q^{-k}\phi^+_j(u) \varphi_{l,k} = q^{-l}\frac{1}{1-u\omega^rq^{-2l}}\phi^+_j(u) + q^{-2k}q^{-l}\frac{-u\omega^r}{1-u\omega^rq^{-2l}} \phi^+_j(u),\]

when $i = \sigma(i)$ and $j = i$,
\[\phi^+_{j}(u).(v \otimes v') = \phi^+_{j}(u).v \otimes \phi^+_{j}(u).v' = \phi^+_{j}(u).v \otimes q^{(k-l)M}\frac{1-u^Mq^{-2kM}}{1-u^M q^{-2lM}}v',
\]
thus
\[\varphi^{-1}_{l,k} q^{-kM}\phi^+_j(u) \varphi_{l,k} = q^{-lM}\frac{1}{1-u^Mq^{-2lM}}\phi^+_j(u) + q^{-2kM}q^{-lM}\frac{-u^M}{1-u^Mq^{-2lM}} \phi^+_j(u).\]

\begin{definition}\label{phiaction}
	In conclusion, if we define formal elements $\tilde{\phi}_j(u) = \sum_{m \geq 0} \tilde{\phi}_{j,m} u^m$ acting on each $L(M_l)$, $l \in \mathbb{N}$, by 
	\begin{equation}\label{phi}
		\begin{split}
			&\tilde{\phi}_j(u).v =\phi^+_j(u).v, \; j \notin \bar{i}, \\
			&\tilde{\phi}_j(u).v = q^{-l}\frac{1}{1-u\omega^rq^{-2l}} \phi^+_j(u).v, \; i \neq \sigma(i), \; j = \sigma^r(i), \\
			&\tilde{\phi}_j(u).v = q^{-lM}\frac{1}{1-u^Mq^{-2lM}} \phi^+_j(u).v, \; i = \sigma(i), \; j=i,
		\end{split}
	\end{equation}
	then they commute with the morphisms $\varphi_{l,k} : L(M_l) \to L(M_k)$.
	
	This defines operators $\tilde{\phi}_{j,m}$ on $V_{\bar{i}}^{\infty}$.
\end{definition}

\begin{remark}\label{remarkonphi}
	Notice that the operators $\varphi^{-1}_{l,k} \phi^+_j(u) \varphi_{l,k}$ (or $\varphi^{-1}_{l,k} q^{-kN_i}\phi^+_j(u) \varphi_{l,k}$ respectively) acting on $L(M_l)$ are of the form $C_l$ (or $C_l + q^{-2kN_i}D_l$ respectively) with operators $C_l,D_l$ independent of $k$. The operators $\tilde{\phi}_j(u)$ on $L(M_l)$ are nothing but the operators $C_l$.
\end{remark}

\begin{remark}\label{vinfhighestweight}
	In particular, on the highest weight vector $v_0$ of $L(M_l)$, $\tilde{\phi}_j(u)$ acts by 
	\begin{equation}
		\begin{split}
			&\tilde{\phi}_j(u).v_0 =v_0, \; j \notin \bar{i}, \\
			&\tilde{\phi}_j(u).v_0 = q^{-l}\frac{1}{1-u\omega^r q^{-2l}} q^l \frac{1-u\omega^r q^{-2l}}{1-u\omega^r}.v_0 = \frac{1}{1-u\omega^r}v_0, \; i \neq \sigma(i), \; j = \sigma^r(i), \\
			&\tilde{\phi}_j(u).v_0 = q^{-lM}\frac{1}{1-u^Mq^{-2lM}} q^{lM} \frac{1-u^Mq^{-2lM}}{1-u^M}.v_0 = \frac{1}{1-u^M}v_0, \; i \neq \sigma(i), \; j=i.
		\end{split}
	\end{equation}
	These eigenvalues of $\tilde{\phi}_j(u)$ will be denoted by $\mathbf{\Psi}^-_{\bar{i},1}$ later in Definition~\ref{prefundamental}.
\end{remark}

At last, we investigate the actions of $x^-_{j,m}$, $m \geq 0$ using the second line in Proposition~\ref{coproduct} and the formula 
\[\Delta(x^-_{j,0}) = 1 \otimes x^-_{j,0} + x^-_{j,0} \otimes k_j^{-1}\]
by definition.

The following lemma is parallel to \cite[Lemma~4.4]{hernandez2012asymptotic}.
\begin{lemma}
	For $k \geq l+1 \geq l \geq 0$, we have 
	\[x^-_{j,m} . L(M_k)^{\geq -2l+1} \subset L(M_k)^{\geq -2l-1}.\]
\end{lemma}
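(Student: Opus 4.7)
The plan is to adapt the proof of the analogous \cite[Lemma~4.4]{hernandez2012asymptotic} from the non-twisted setting by combining the intertwiner of Theorem~\ref{inductive} with the coproduct formula of Proposition~\ref{coproduct}. Write $M_k = M_l \cdot M_k^{<-2l+1}$ and let $v'$ be the $l$-highest weight vector of $L(M_k^{<-2l+1})$, so that Theorem~\ref{inductive} identifies $L(M_k)^{\geq -2l+1}$ with $\varphi(L(M_l) \otimes v')$, where $\varphi$ is (the restriction of) the intertwiner $L(M_l) \otimes L(M_k^{<-2l+1}) \to L(M_k^{<-2l+1}) \otimes L(M_l)$. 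Since $\varphi$ is $\mathcal{U}_q(\mathcal{L}\mathfrak{g}^\sigma)$-linear, it suffices to understand $\Delta(x^-_{j,m})(v \otimes v')$ for an arbitrary $v \in L(M_l)$.

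Applying Proposition~\ref{coproduct} and noting that $v'$ is annihilated by $X^+$, the mod-$X^+$ terms vanish on $v \otimes v'$, and we obtain
\[
\Delta(x^-_{j,m})(v \otimes v') = v \otimes (x^-_{j,m}.v') + \sum_{p=1}^{m} (x^-_{j,p}.v) \otimes (\phi^+_{j,m-p}.v').
\]
Each $\phi^+_{j,m-p}.v'$ is a scalar multiple of $v'$, so the summation on the right lies in $L(M_l) \otimes v'$, which $\varphi$ maps into $L(M_k)^{\geq -2l+1} \subset L(M_k)^{\geq -2l-1}$. It only remains to handle the term $v \otimes (x^-_{j,m}.v')$.

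The key claim is that $x^-_{j,m}.v' \in L(M_k^{<-2l+1})^{\geq -2l-1}$. For $j \notin \bar{i}$, a weight argument applies: any nonzero $x^-_{j,m}.v'$ would have classical weight $(k-l)\omega_{\bar i} - \alpha_{\bar j}$, but $\langle (k-l)\omega_{\bar i}, \alpha_{\bar j}^\vee\rangle = 0$ forces this weight to be absent from the KR module $L(M_k^{<-2l+1})$, so $x^-_{j,m}.v' = 0$. For $j \in \bar i$, the Drinfeld relation $x^-_{\sigma^{r}(i),m} = \omega^{rm} x^-_{i,m}$ reduces the analysis to $j=i$, and $x^-_{i,m}.v'$ lies in the weight space $(k-l)\omega_{\bar i} - \alpha_{\bar i}$ of $L(M_k^{<-2l+1}) = W^{(\bar i)}_{k-l,\, q^{-2k+1}}$. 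From the $q$-character of this KR module (obtained through the folding map from the corresponding non-twisted KR $q$-character by \cite{hernandez2010kirillov}), every $l$-weight at this level has the form $M_k^{<-2l+1} \cdot A^{-1}_{\bar i, q^{-2l}\omega^{r}}$, whose $A^{-1}$-factor sits at position $-2l$. Combined with the fact that $l$-weights of $v \in L(M_l)$ contribute $A^{-1}$-factors only at positions $\geq -2l+2$, the vector $v \otimes (x^-_{j,m}.v')$ decomposes into $l$-weight components whose $A^{-1}$-factors sit at positions $\geq -2l$, so $\varphi$ maps it into $L(M_k)^{\geq -2l-1}$.

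The main obstacle is the case $i = \sigma(i)$, relevant to type $A_{2n}^{(2)}$ at the fixed node $\bar n$, where the $\mathfrak{sl}_2$-reduction used in \cite{hernandez2012asymptotic} is unavailable. In this case one must argue directly, using the explicit form of $A_{\bar i,a}$ together with the identifications $Z_{\bar i,a} = Z_{\bar i,a\omega}$ in $\mathcal{Z}$, to verify that the newly introduced $A^{-1}$-factor in $x^-_{i,m}.v'$ indeed has position $\geq -2l$.
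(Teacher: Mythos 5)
Your argument is correct and is essentially the paper's: the paper simply defers to the non-twisted proof of \cite[Lemma~4.4]{hernandez2012asymptotic} with the twisted input \cite[Lemma~6.5]{hernandez2010kirillov}, and what you have written out (intertwiner from Theorem~\ref{inductive}, coproduct formula modulo $\mathcal{U}_q(\mathcal{L}\mathfrak{g}^{\sigma})\otimes\mathcal{U}_q(\mathcal{L}\mathfrak{g}^{\sigma})X^+$, and the fact that the only $A^{-1}$-factor created by $x^-_{j,m}.v'$ sits at position $-2l$) is exactly that argument. The "direct verification" you defer in your last paragraph for $i=\sigma(i)$ (including $A_{2n}^{(2)}$ at $\bar{n}$) is not actually outstanding: the description of the $l$-weights of a twisted KR-module at the level just below the top is precisely the content of \cite[Lemma~6.5]{hernandez2010kirillov}, which holds for all twisted types, so your penultimate paragraph already closes the case you flag as an obstacle.
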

\begin{proof}
	The proof is the same as \cite[Lemma~4.4]{hernandez2012asymptotic}, except that we need \cite[Lemma~6.5]{hernandez2010kirillov}.
\end{proof}

Therefore, when identifying $L(M_k)^{\geq -2l+1}$ with $L(M_l)$ by $\varphi_{l,k}$ and identifying $L(M_k)^{\geq -2l-1}$ with $L(M_{l+1})$ by $\varphi_{l+1,k}$, we get an operator $\varphi_{l+1,k}^{-1}x^-_{j,m}\varphi_{l,k} \in \mathrm{Hom}(L(M_l), L(M_{l+1}))$.

Then using exactly the same argument as \cite[Proposition~4.5]{hernandez2012asymptotic}, we have:
\begin{corollary}\label{corollaryx-C}
	For $k \geq l+1 \geq l \geq 0$, the operators $\varphi_{l+1,k}^{-1} x^-_{j,m} \varphi_{l,k} \in \mathrm{Hom}(L(M_l), L(M_{l+1}))$ satisfy
	\[\begin{split}
		&\varphi_{l+1,k}^{-1} x^-_{j,m} \varphi_{l,k} = C_{j,m,l} + q^{-2k}D_{j,m,l}, \; \text{when } \sigma(j) \neq j, j \notin \bar{i},\\
		&\varphi_{l+1,k}^{-1} x^-_{j,m} \varphi_{l,k} = C_{j,m,l} + q^{-2kM}D_{j,m,l},\; \text{when } \sigma(j) = j, j \notin \bar{i},\\
		&\varphi_{l+1,k}^{-1} q^{-k}x^-_{j,m} \varphi_{l,k} = C_{j,m,l} + q^{-2k}D_{j,m,l}, \; \text{when } j \in \bar{i}, i \neq \sigma(i),\\
		&\varphi_{l+1,k}^{-1} q^{-kM}x^-_{j,m} \varphi_{l,k} = C_{j,m,l} + q^{-2kM}D_{j,m,l},\; \text{when } j \in \bar{i}, i = \sigma(i),\\
	\end{split}\]
	where $C_{j,m,l},D_{j,m,l} \in \mathrm{Hom}(L(M_l), L(M_{l+1}))$ do not depend on $k$.
\end{corollary}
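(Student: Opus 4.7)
The plan is to adapt the argument of \cite[Proposition~4.5]{hernandez2012asymptotic} to the twisted setting, bootstrapping on the coproduct analysis of Proposition~\ref{coproduct} together with the $\phi^+$-eigenvalue computations already performed just before Definition~\ref{phiaction}. Let $v'$ denote the highest $l$-weight vector of $L(M_k^{<-2l+1})$ and $v''$ that of $L(M_k^{<-2l-1})$. By Theorem~\ref{inductive}, $\varphi_{l,k}$ identifies $L(M_l)$ with $L(M_l)\otimes v'\subset L(M_k)_{\geq -2l+1}$, and $\varphi_{l+1,k}$ identifies $L(M_{l+1})$ with $L(M_{l+1})\otimes v''\subset L(M_k)_{\geq -2l-1}$; the previous lemma then makes $\varphi_{l+1,k}^{-1} x^-_{j,m} \varphi_{l,k}\in \mathrm{Hom}(L(M_l),L(M_{l+1}))$ a well-defined operator.

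Next I would apply the second and third formulas of Proposition~\ref{coproduct} to $x^-_{j,m}(v\otimes v')$. The modulo term lies in $\mathcal{U}_q(\mathcal{L}\mathfrak{g}^\sigma)\otimes \mathcal{U}_q(\mathcal{L}\mathfrak{g}^\sigma)X^+$ and so annihilates $v\otimes v'$ because $X^+ v'=0$. For $m>0$ this gives
\[x^-_{j,m}(v\otimes v') = v\otimes x^-_{j,m} v' + \sum_{m'=1}^{m} (x^-_{j,m'} v)\otimes (\phi^+_{j,m-m'} v'),\]
with an analogous formula for $m\leq 0$, and for $m=0$ directly from $\Delta(x^-_{j,0})=1\otimes x^-_{j,0}+x^-_{j,0}\otimes k_j^{-1}$. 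The eigenvalue $\phi^+_j(u).v'$ is read off from the highest $l$-weight polynomial of $L(M_k^{<-2l+1})$: it equals $v'$ when $j\notin\bar i$, it equals $q^{k-l}\frac{1-u\omega^r q^{-2k}}{1-u\omega^r q^{-2l}}\,v'$ when $j=\sigma^r(i)$ with $i\neq\sigma(i)$, and it equals the $M$th-power analog when $j=i=\sigma(i)$. After multiplication by the $q^{-kN_i}$ normalization prescribed by the statement, each such rational function splits as a $k$-independent term plus $q^{-2k}$ (respectively $q^{-2kM}$) times a $k$-independent term, exactly as in Remark~\ref{remarkonphi}.

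The remaining contribution $v\otimes x^-_{j,m} v'$ I would treat by a second invocation of Theorem~\ref{inductive}, now applied to the factor $L(M_k^{<-2l+1})$ at truncation $N=-2l-1$, which identifies $v'$ with $w\otimes v''$ for $w$ the highest weight vector of the fundamental module $L(Z_{i,q^{-2l-1}})$. Expanding $x^-_{j,m}v'$ in this tensor decomposition and pulling the result back through $\varphi_{l+1,k}^{-1}$ re-expresses everything in $L(M_{l+1})\otimes v''$. The only $k$-dependence introduced at this step enters through the $\phi^+$-eigenvalues on $v''$, which again split in the required form. Assembling the two contributions, multiplying by the appropriate $q^{-kN_i}$, and reading off the coefficients of $1$ and $q^{-2kN_i}$ yields the asserted decomposition $C_{j,m,l}+q^{-2kN_i}D_{j,m,l}$.

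The main obstacle is the case $j=i=\sigma(i)$, which has no counterpart in \cite{hernandez2012asymptotic}: the eigenvalues now involve $u^M$ rather than $u$, the correct scaling is $q^{-kM}$ rather than $q^{-k}$, and one must verify that the $q^{-2kM}$ splitting survives the re-embedding through $\varphi_{l+1,k}^{-1}$. Particular care is required in type $A_{2n}^{(2)}$ at $\bar i=\bar n$, where $N_i=M=2$ but $d_{\bar i}=1$, which is the only place the numerology genuinely departs from the non-twisted pattern, and where the absence of an $\mathfrak{sl}_2$-reduction forces the verification to proceed by direct computation.
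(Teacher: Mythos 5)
Your setup (well-definedness via the lemma preceding the corollary, and the splitting of the $\phi^+_j$-eigenvalues on $v'$ into $C+q^{-2kN_j}D$ after the $q^{-kN_i}$ normalization) is fine and matches the computations recorded in Remark~\ref{remarkonphi}. The terms $\sum_{m'}x^-_{j,m'}v\otimes\phi^+_{j,m-m'}v'$ are indeed harmless: they contribute $k$-dependent scalars of the required shape times the $k$-independent operators $\varphi_{l,l+1}\circ x^-_{j,m'}$. The gap is in your treatment of the remaining term $v\otimes x^-_{j,m}v'$. Its image under the intertwiner lies in $L(M_k)_{\geq-2l-1}$ but \emph{not} in $\varphi_{l,k}(L(M_l))$, and Theorem~\ref{inductive} only describes the intertwiner at truncation $-2l-1$ \emph{restricted to} $L(M_{l+1})\otimes v''$; it gives you no handle on where the image of $v\otimes x^-_{j,m}v'$ (computed in the decomposition at truncation $-2l+1$) sits inside $L(M_{l+1})\otimes v''$. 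Your proposed fix — writing $v'=w\otimes v''$ and expanding again — reproduces the identical difficulty one level down, in the term $w\otimes x^-_{j,m}v''$, where $v''$ is the highest weight vector of the $k$-dependent module $L(M_k^{<-2l-1})$; since $M_k^{<-2l+1}$ has $k-l$ factors, this regress does not terminate uniformly in $k$, and the $k$-dependence of this contribution is never actually controlled.

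The paper's proof takes a different route that sidesteps this term entirely: it runs the downward induction on weight spaces of \cite[Proposition~4.5]{hernandez2012asymptotic}. One uses that $\varphi_{l,k}$ intertwines all $x^+_{j',m'}$ (Corollary~\ref{x+action}, so these are $k$-independent), commutes $x^+_{j',m'}$ past $x^-_{j,m}$ to produce $\phi$-terms whose conjugates $\varphi_{l,k}^{-1}(q^{-kN_j})\phi^+_{j,\bullet}\varphi_{l,k}$ have the form $C_l+q^{-2kN_j}D_l$ by Remark~\ref{remarkonphi}, and then recovers the operator on a given weight space from its $x^+$-images (a vector in a non-top weight space of $L(M_{l+1})$ is determined by these), i.e.\ by solving a $k$-independent linear system whose right-hand sides already have the required form. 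The only twist-specific modification is the exponent $q^{-2kM}$ at nodes with $\sigma(j)=j$, which is exactly what produces the second and fourth formulas; in particular no separate direct computation is needed for $A_{2n}^{(2)}$ at $\bar{n}$ in this statement, contrary to your closing remark. To repair your proposal you would either have to switch to this induction, or supply the missing compatibility between the truncations at $-2l+1$ and $-2l-1$ together with a genuine bound on the $k$-dependence of the $v\otimes x^-_{j,m}v'$ contribution.
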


\begin{proof}
	The slight difference between the twisted case and the non-twisted case lies in $j = \sigma(j)$. We use the same method to prove these results by induction on weight spaces as in \cite[Proposition~4.5]{hernandez2012asymptotic}. When $j = \sigma(j)$, the terms $\varphi^{-1}_{l,k} \phi^+_{j,m} \varphi_{l,k}$ (if $j \neq i$) or $\varphi^{-1}_{l,k} q^{-kM}\phi^+_{j,m} \varphi_{l,k}$ (if $j = i$) appearing in the induction process are of the form $C_l + q^{-2kM}D_l$, as we explained in Remark~\ref{remarkonphi}. This leads to the difference in the second and the fourth formulas from those of non-twisted case.
\end{proof}

\begin{corollary}\label{corollaryeepsilon}
	Let $N = - \sum_{\bar{j} \in I_{\sigma}} N_{j} a_{\bar{j}}$, where the notation $N_j$ and $a_{\bar{j}}$ are defined in Section~\ref{subsectionpresentation}. The element $\varphi^{-1}_{l+N,k} e_{\epsilon}^+ \varphi_{l,k} \in \mathrm{Hom}(L(M_l), L(M_{l+N}))$ is of the form $C_l + q^{-2k}D_l + \cdots + q^{-2kN}E_l$, where the operators $C_l,D_l,\cdots,E_l$ do not depend on $k$.
\end{corollary}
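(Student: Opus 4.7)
The plan is to expand $e_{\epsilon}^+$ in terms of Drinfeld generators and then apply the asymptotic estimates of Corollary~\ref{corollaryx-C} together with the formulas of Definition~\ref{phiaction} factor by factor, tracking the powers of $q^{-2k}$ carefully.

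First, I would use the Damiani isomorphism between the Drinfeld--Jimbo and Drinfeld presentations (Theorem~\ref{Drelations} and in particular \cite[Proposition~9.3~(iii)]{damiani2015from}) to write $e_{\epsilon}^+$ as a finite sum of monomials in the Drinfeld generators $x^-_{j,m}$ ($j\in I$, $m\geq 1$) and the Cartan-type elements $\phi^+_{j,r}$, $k_j^{\pm 1}$. Two constraints pin down the shape of such monomials: the $\mathbb{Z}$-grading of Remark~\ref{grading} forces the net loop-degree to equal $+1$, and the adjoint $k_j$-weight of $e_{\epsilon}^+$ is $-\theta$, where $\theta$ is the highest root attached to $\mathfrak{g}^{\sigma}$. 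Combining this with $a_{\epsilon}=1$ and the null-root relation $\sum_{\bar j\in\hat I_{\sigma}}a_{\bar j}d_{\bar j}C^{\sigma}_{\bar j,\cdot}=0$, the total number of $x^-$-type factors (counted with orbit multiplicities $N_j$) in any such monomial is exactly $\sum_{\bar j\in I_{\sigma}}N_j a_{\bar j}=-N$.

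Next, I would substitute the conjugates of these factors. Each $x^-_{j,m}$ contributes, by Corollary~\ref{corollaryx-C}, an expression of the form $C+q^{-2kc}D$ with $c\in\{1,M\}$, possibly multiplied by a prefactor $q^{\pm kN_i}$ when $j\in\bar i$. Each $\phi^+_{j,r}$ contributes an expression of the same form by the formulas preceding Definition~\ref{phiaction} together with Remark~\ref{remarkonphi}. Multiplying out these expansions within a single monomial, the $q^{\pm kN_i}$ prefactors cancel in pairs across the positions in the orbit $\bar i$ (using the balance of $x^-$-factors imposed by the $k$-weight condition), leaving a polynomial in $q^{-2k}$ whose coefficients are products of the $k$-independent operators $C_{j,m,l},D_{j,m,l}$ and their $\phi$-analogues. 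The maximal degree is attained by selecting the $D$-term in every factor, which yields precisely $|N|$; this gives the claimed expansion $C_l+q^{-2k}D_l+\cdots+q^{-2kN}E_l$.

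The main obstacle is the bookkeeping of the $q^{\pm kN_i}$ prefactors coming from the $j\in\bar i$ positions: they must cancel exactly across all monomials, which requires a careful use of the relation between the Kac labels $a_{\bar j}$, the normalization constants $N_j$, and the null-root identity. The special case $A_{2n}^{(2)}$ at the node $i=\sigma(i)=n$ is the hardest, since $N_n=M=2$ interacts nontrivially with the loop-shift structure and the quantum Serre relations take a different form, so the cancellation has to be verified directly from the Damiani formula without recourse to an $\hat{\mathfrak{sl}}_2$-reduction.
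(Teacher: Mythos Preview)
Your overall strategy—write $e_{\epsilon}^+$ in Drinfeld generators and feed each factor through Corollary~\ref{corollaryx-C}—is exactly the paper's approach. The paper simply cites Jing's explicit formula \cite[Proposition~2.1]{jing1999vertex}, which already gives $e_{\epsilon}^+$ as an iterated commutator of the $x^-_{j,m}$ (times a single Cartan factor), so there is no need to invoke Damiani's more general expression or to keep separate track of $\phi^+_{j,r}$-factors.

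There is, however, a genuine gap in your bookkeeping. Each factor $x^-_{j,m}$ with $j\in\bar{i}$ contributes, after conjugation, an expression $q^{kN_i}C_{j,m,l}+q^{-kN_i}D_{j,m,l}$; all of these prefactors carry the \emph{same} sign, so they do not ``cancel in pairs across the positions in the orbit~$\bar{i}$'' as you claim. There is no balance among the $x^-$-factors themselves—Jing's formula contains exactly $a_{\bar i}$ of them at the node $\bar i$, all lowering. What actually controls the total power of $q^{k}$ is the Cartan factor $k_{\epsilon}=\prod_{\bar j\in I_\sigma}k_{\bar j}^{-a_{\bar j}}$ appearing in the formula for $e_{\epsilon}^+$: its conjugate carries a factor $q^{-kN_i a_{\bar i}}$ (from the $k_{\bar i}^{-a_{\bar i}}$ piece, via the computation preceding Definition~\ref{phiaction}), and this is what absorbs the accumulated $q^{+kN_i a_{\bar i}}$ coming from the $x^-$-factors. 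Once you use this, the product is a genuine polynomial in $q^{-2k}$ of the stated degree, uniformly in all types including $A_{2n}^{(2)}$; no separate verification at the special node is needed.
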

\begin{proof}
	Use the formula of $e_{\epsilon}^+$ in terms of commutators of $x_{j,m}^-$ \cite[Proposition~2.1]{jing1999vertex}. Since each factor $x_{j,m}^-$ has the form as in Corollary~\ref{corollaryx-C}, $\varphi^{-1}_{l+N,k} e_{\epsilon}^+ \varphi_{l,k}$ is of the form $C_l + q^{-2k}D_l + \cdots + q^{-2kN}E_l$.
	
	Moreover, taking the constant term, we know that the operator $C_l$ is obtained from the same commutations by changing each $x_{j,m}^-$ to its constant term $C_{j,m,l}$.
\end{proof}

\begin{definition}\label{x-action}
	The $C_l \in \mathrm{Hom}(L(M_l), L(M_{l+1}))$ given above in Corollary~\ref{corollaryeepsilon} are compatible with $\varphi_{l,k}$. Define the operator $\tilde{e}_{\epsilon}$ on $V_{\bar{i}}^{\infty}$ to be these compatible operators $\{C_l\}$.
\end{definition}
\begin{proof}
	Firstly we show that the operator $C_{j,m,l}$ are well-defined on the inductive limit. In fact, for any $l_1 > l_2$, we choose $k > l_1$. We write the third case in Corollary \ref{corollaryx-C} as an example, the rest are similar.
	For $j \in \bar{i}$, $i \neq \sigma(i)$,
	\[\begin{split}
		C_{j,m,l_2} + q^{-2k}D_{j,m,l_2} 
		& = \varphi_{l_2+1,k}^{-1} q^{-k} x^-_{j,m} \varphi_{l_2,k} \\
		& = \varphi_{l_2+1,l_1+1}^{-1} \varphi_{l_1+1,k}^{-1} q^{-k} x^-_{j,m} \varphi_{l_1,k} \varphi_{l_2,l_1}   \\
		& =\varphi_{l_2+1,l_1+1}^{-1} (C_{j,m,l_1} + q^{-2k}D_{j,m,l_1}) \varphi_{l_2,l_1}. 
	\end{split}\]
	As a result, $C_{j,m,l_2} = \varphi_{l_2+1,l_1+1}^{-1} C_{j,m,l_1} \varphi_{l_2,l_1}$ and they are well-defined on the inductive limit.
	
	The operator $C_l$ as a polynomial in the operators $C_{j,m,l'}$ is then well-defined on $V_{\bar{i}}^{\infty}$.
\end{proof}

Till now, we have defined in Corollary~\ref{x+action}, Definition~\ref{phiaction} and Definition~\ref{x-action} the operators $\tilde{e}_j$, $\tilde{k}_j := \tilde{\phi}_{j,0}$ and $\tilde{e}_{\epsilon}$ on $V_{\bar{i}}^{\infty}$, $j \in I$.

\begin{theorem}\label{bmodule}
	$V_{\bar{i}}^{\infty}$ has a $\mathcal{U}_q(\mathfrak{b}^{\sigma})$-module structure under the actions of  $\tilde{e}_{\bar{j}}$, $\tilde{k}_{\bar{j}}$ and $\tilde{e}_{\epsilon}$. Here $\tilde{e}_{\bar{j}} = \tilde{e}_{j}$ and $\tilde{k}_{\bar{j}} = \tilde{k}_{j}$ for the fixed representative $j$ of $\bar{j}$.
\end{theorem}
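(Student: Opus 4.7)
The plan is to verify the Drinfeld--Jimbo relations of $\mathcal{U}_q(\mathfrak{b}^\sigma)$ listed in Theorem~\ref{boreldefinition}. The generator $k_\epsilon$ is not among the operators explicitly defined, but it is redundant: the central relation $c=\prod_{\bar{j}\in\hat{I}_\sigma}k_{\bar{j}}^{a_{\bar{j}}}=1$ with $a_\epsilon=1$ forces $k_\epsilon=\prod_{\bar{j}\in I_\sigma}k_{\bar{j}}^{-a_{\bar{j}}}$, so one sets $\tilde{k}_\epsilon:=\prod_{\bar{j}\in I_\sigma}\tilde{k}_{\bar{j}}^{-a_{\bar{j}}}$ on $V_{\bar{i}}^\infty$. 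It then remains to check three families of relations on $V_{\bar{i}}^\infty$: the $\tilde{k}_{\bar{j}}$'s pairwise commute and are invertible; the Weyl-type relations $\tilde{k}_{\bar{j}}\tilde{e}_{\bar{l}} = q_{\bar{j}}^{C^\sigma_{\bar{j},\bar{l}}}\tilde{e}_{\bar{l}}\tilde{k}_{\bar{j}}$ for all $\bar{j},\bar{l}\in\hat{I}_\sigma$; and the quantum Serre relations among the $\tilde{e}_{\bar{l}}$.

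I would first handle the subalgebra generated by $\tilde{e}_{\bar{l}},\tilde{k}_{\bar{l}}^{\pm 1}$ with $\bar{l}\in I_\sigma$. On each $L(M_l)$, Corollary~\ref{x+action} identifies $\tilde{e}_{\bar{l}}$ with $x^+_{l,0}$, and the formulas of Definition~\ref{phiaction} show that $\tilde{k}_{\bar{l}}=\tilde{\phi}_{l,0}$ is a scalar multiple of $k_l=\phi^+_{l,0}$ (the scalar being $1$, $q^{-l}$, or $q^{-lM}$ depending on the orbit case). Since these scalars commute with every other operator in sight, the commutation relations among the $k_l$ and $e_l^+$ in $\mathcal{U}_q(\mathcal{L}\mathfrak{g}^\sigma)$ descend verbatim to the tildes, and the Serre relations among the $\tilde{e}_{\bar{l}}$ follow from the Serre relations among the $e_l^+$ acting on $L(M_k)$ together with the fact that the latter commute with the inclusions $\varphi_{l,k}$.

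The relations involving $\tilde{e}_\epsilon$ carry the substantive content. The main tool is Corollary~\ref{corollaryeepsilon}, which yields a finite expansion $\varphi^{-1}_{l+N,k}e_\epsilon^+\varphi_{l,k}=C_l+q^{-2k}D_l+\cdots+q^{-2kN}E_l$ with $\tilde{e}_\epsilon$ defined as the constant term $C_l$. The strategy is to take an exact relation in $\mathcal{U}_q(\mathfrak{b}^\sigma)$ acting on $L(M_k)$ (the Weyl relation $k_{\bar{j}}e_\epsilon^+=q_{\bar{j}}^{C^\sigma_{\bar{j},\epsilon}}e_\epsilon^+ k_{\bar{j}}$, the Serre relation mixing $e_\epsilon^+$ with some $e_{\bar{l}}^+$, or the trivial commutations among $k_{\bar{j}}^{\pm 1}$), conjugate by the appropriate rescaled $\varphi_{\cdot,k}$ so that every factor becomes a polynomial in $q^{-2k}$ (or $q^{-2kM}$ at the special nodes) with coefficients independent of $k$, and extract the constant term in $k$. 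For each Weyl relation, the rescaling exponents carried by $\tilde{k}_{\bar{j}}$ and by $\tilde{e}_\epsilon$ must combine to reproduce exactly the twisted Cartan entry $C^\sigma_{\bar{j},\epsilon}$; this is a direct computation from the Drinfeld commutation relations in Theorem~\ref{Drelations}. For each Serre relation, both sides become polynomials in $q^{-2k}$ and matching constant terms yields the identity on $V_{\bar{i}}^\infty$.

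The main obstacle is the consistent bookkeeping of rescaling powers through the inductive limit, particularly at nodes with $j=\sigma(j)$ (including the special case $A^{(2)}_{2n}$), where the scaling is by $q^{-kM}$ rather than $q^{-k}$ and Corollary~\ref{corollaryx-C} departs from its non-twisted counterpart of \cite{hernandez2012asymptotic}. A closely related subtlety is that $e_\epsilon^+$ is realized as an iterated commutator of Drinfeld currents $x^-_{j,m}$ via \cite[Proposition~2.1]{jing1999vertex}, so one must verify that taking the $q^{-2k}$-constant term of this commutator reduces to the same iterated commutator of the constant terms $C_{j,m,l}$; this is precisely the content of the last paragraph of the proof of Corollary~\ref{corollaryeepsilon}. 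Once the scaling exponents are reconciled in all relevant cases, each defining relation of Theorem~\ref{boreldefinition} is recovered, yielding the $\mathcal{U}_q(\mathfrak{b}^\sigma)$-module structure on $V_{\bar{i}}^\infty$.
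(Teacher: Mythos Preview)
Your proposal is correct and follows essentially the same approach as the paper: verify the defining relations of Theorem~\ref{boreldefinition} by using that on each $L(M_k)$ the conjugated operators expand as polynomials in $q^{-2k}$ (or $q^{-2kM}$), that the relations hold exactly on $L(M_k)$ for every $k$, and hence their constant terms---which are precisely the tilde operators---satisfy the same relations on the inductive limit.

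The one point where the paper is more economical is its opening observation: the map $k_{\bar{i}}\mapsto \lambda k_{\bar{i}}$, $k_{\bar{j}}\mapsto k_{\bar{j}}$ ($\bar{j}\neq\bar{i}$), $e_{\bar{l}}^+\mapsto e_{\bar{l}}^+$ is an automorphism of $\mathcal{U}_q(\mathfrak{b}^\sigma)$ for any scalar $\lambda$. Taking $\lambda=q^{-kN_i}$, this automorphism absorbs the rescaling you track by hand, so that \emph{every} generator (including $k_{\bar{i}}$) acts on $L(M_k)$ via an operator of the form $A_0+A_1q^{-2k}+\cdots$, and the relations of $\mathcal{U}_q(\mathfrak{b}^\sigma)$ hold for this twisted family automatically. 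This lets the paper treat all Weyl and Serre relations uniformly in one paragraph, without your separate case analysis for nodes in $I_\sigma$ versus $\epsilon$, and without the ``direct computation'' you invoke for the Weyl relation $k_{\bar{j}}e_\epsilon=q_{\bar{j}}^{C^\sigma_{\bar{j},\epsilon}}e_\epsilon k_{\bar{j}}$ (which in fact needs no computation once one notes it is homogeneous under the scaling). Your explicit handling of $\tilde{k}_\epsilon$ via the central relation is fine and is implicit in the paper's treatment.
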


\begin{proof}
	It remains to verify that the defining relations of Borel algebras are satisfied by these operators on $V_{\bar{i}}^{\infty}$.
	
	Firstly we notice that for any constant $\lambda \in \mathbb{C}^*$, the map
	\[\begin{split}
		&e_{\bar{j}} \mapsto e_{\bar{j}},\; \bar{j} \in I_{\sigma}, \\
		&e_{\epsilon} \mapsto e_{\epsilon},\\
		&k_{\bar{j}} \mapsto k_{\bar{j}}, \; \bar{j} \neq \bar{i},\\
		&k_{\bar{j}} \mapsto \lambda k_{\bar{j}}, \; \bar{j} = \bar{i}
	\end{split}
	\]
	is an automorphism of $\mathcal{U}_q(\mathfrak{b}^{\sigma})$. Here $\lambda = q^{-kN_i}$.
	
	For any $v \in V_{\bar{i}}^{\infty}$, assume $v \in L(M_l)$, $l \in \mathbb{N}$. Then for each polynomial $P$ of $e_{\bar{j}}^+$, $k_{\bar{j}}$ (or $\lambda k_{\bar{j}}$), $e_{\epsilon}^+$, there is a large enough $L>l$ such that for all $k>L$, its action of $\varphi_{L,k}^{-1}P\varphi_{L,k}$ on $\varphi_{l,L}(v) \in L(M_L)$ can be written as 
	\[A_0+ A_1q^{-2k} + \cdots + A_sq^{-2ks},\]
	where $s \in \mathbb{N}$ and the operators $A_r$, $1 \leq r \leq s$, do not depend on $k$. In particular, when $P$ presents a Weyl relation or a Serre relation, the above operator vanishes for all $k>L$, thus the constant term $A_0 = 0$. Moreover, by definition of the operators $\tilde{e}_{\bar{j}}$, $\tilde{k}_{\bar{j}}$, $\tilde{e}_{\epsilon}$ on $V_{\bar{i}}^{\infty}$, the constant term $A_0$ is obtained from the polynomial $P$ by replacing the elements $e_{\bar{j}}^+$, $k_{\bar{j}}$ (or $\lambda k_{\bar{j}}$), $e_{\epsilon}$ by the corresponding operators $\tilde{e}_{\bar{j}}$, $\tilde{k}_{\bar{j}}$, $\tilde{e}_{\epsilon}$. Therefore, they satisfy the defining relations in Theorem~\ref{boreldefinition}.
\end{proof}

\section{Category $\mathcal{O}$ and $q$-characters}\label{sectionO}
We define the category $\mathcal{O}$ for Borel subalgebras of twisted quantum affine algebras. We define the twisted $q$-characters for the category $\mathcal{O}$, and we prove Theorem~\ref{isomorphism} that the twisted $q$-character of the category $\mathcal{O}$ is a ring isomorphism onto a commutative ring $\mathcal{E}_l^{\sigma}$. This theorem also holds in non-twisted case by the same proof. Therefore, the folding map from $\mathcal{E}_l$ to $\mathcal{E}_l^{\sigma}$ induces a morphism of Grothendieck rings from non-twisted types to twisted types. We extend Conjecture~\ref{conjectureg} for finite-dimensional representations of quantum affine algebras to Conjecture~\ref{conj} for category $\mathcal{O}$ of Borel subalgebras, and we verify that Conjecture~\ref{conj} holds for prefundamental representations.

\subsection{Category $\mathcal{O}$}
The category $\mathcal{O}$ for the Borel subalgebras of non-twisted quantum affine algebras was introduced in \cite[Section~3]{hernandez2012asymptotic}. Here we write the parallel definitions in the twisted case. 

\begin{definition}
	A $\mathcal{U}_q(\mathfrak{b}^{\sigma})$-module $V$ is said to be of $l$-highest weight $(\gamma_i(u))_{i \in I} \in \mathbb{C}[[u]]^I$ if there is a generating vector $v$ such that for all $i \in I$, $x_{i,r}^+.v = 0, \forall r \geq 0$ and $\phi_i^+(u).v = \gamma_i(u)v$.
\end{definition}

\begin{remark}\label{remark3.6}
	By the relations $\phi_{\sigma(i)}^+(u) = \phi_i^+(\omega u)$, an allowed $l$-highest weight necessarily satisfies the relations
	
	\begin{equation} \label{gammarelation}
		\gamma_{\sigma(i)}(u) = \gamma_i(\omega u), \; \forall i \in I.
	\end{equation}
	
\end{remark}

As usually do, for each $\gamma = (\gamma_i(u))_{i \in I} \in \mathbb{C}[[u]]^I$ satisfying the relations \eqref{gammarelation}, we define the Verma module $M(\gamma)$ to be the quotient of $\mathcal{U}_q(\mathfrak{b}^{\sigma})$ by the left ideal generated by $\mathcal{U}_q(\mathfrak{b}^{\sigma})^+$ and $\phi_{i,m}^+ - \gamma_{i,m} (i \in I, m \geq 0)$. The Verma module $M(\gamma)$ has a unique simple quotient, denoted by $L(\gamma)$. $L(\gamma)$ is the unique simple $l$-highest weight module of highest $l$-weight $\gamma$.

Set $\mathfrak{t} = \langle k_i^{\pm 1} \rangle_{i \in I_{\sigma}}$ the Cartan commutative subalgebra of $\mathcal{U}_q(\mathfrak{b}^{\sigma})$. The set of $\mathfrak{t}$-weights is partially ordered by simple roots as usual in the following sense:
\begin{equation}\label{tpartialorder}
	\lambda_1 \leq \lambda_2 \, \mathrm{if}\, \lambda_2-\lambda_1 \in \sum_{i \in I_{\sigma}} \mathbb{N} \alpha_{i}, 
\end{equation}
where the simple roots $\alpha_i \in \mathfrak{t}^*$ are such that $\alpha_i(k_j) = q_i^{C^{\sigma}_{i,j}}$.  
We denote by $D(\lambda) = \{\omega \in \mathfrak{t}^* \vert \omega \leq \lambda\}$ the set of $\mathfrak{t}$-weights dominated by $\lambda \in \mathfrak{t}^*$.

Let $\mathcal{E}^{\sigma}$ be the additive group of maps from $\mathfrak{t}^*$ to $\mathbb{Z}$ whose support is contained in a finite union $\cup_{i=1}^sD(\lambda_i)$.

The category $\mathcal{O}$ is defined as follows. 

\begin{definition}
	The objects of the category $\mathcal{O}$ are $\mathcal{U}_q(\mathfrak{b}^{\sigma})$-modules $V$ which are $\mathfrak{t}$-diagonalisable with finite-dimensional weight spaces and the weights of $V$ are contained in $\cup_{i=1}^s D(\lambda_i)$ where $\lambda_1,\cdots,\lambda_s$ are finitely many elements in $\mathfrak{t}^*$. The morphisms of the category $\mathcal{O}$ are the homomorphisms of $\mathcal{U}_q(\mathfrak{b}^{\sigma})$-modules.
\end{definition}

\begin{definition}
	By definition of the category $\mathcal{O}$, for $V \in \mathcal{O}$, we can define the usual character $\chi^{\sigma}(V)$ to be the character of the restricted $\mathfrak{t}$-module $V$.
\end{definition}

\begin{remark}
	In other words, the objects in $\mathcal{O}$ are $\mathfrak{t}$-diagonalisable representations whose usual character is in $\mathcal{E}^{\sigma}$. Note that the Borel subalgebra is a Hopf subalgebra of $\mathcal{U}_q(\mathcal{L}\mathfrak{g}^{\sigma})$ and the category $\mathcal{O}$ is a rigid monoidal abelian category. Clearly the category $\mathcal{O}$ is closed under taking submodules and quotients.
\end{remark}

The classification of simples in the category $\mathcal{O}$ is described by the following theorem. It will be proved by the end of Section~\ref{subsectionprefund}.

\begin{theorem}\label{maintheorem}
	The simple objects of the category $\mathcal{O}$ are exactly the irreducible $l$-highest weight modules $L(\gamma)$ with $\gamma_i(u)$ rational.
\end{theorem}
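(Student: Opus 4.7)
The plan is to follow the Hernandez--Jimbo blueprint from the non-twisted case \cite{hernandez2012asymptotic}, adapting each step to the twisted Drinfeld presentation of Theorem \ref{Drelations}. The proof splits into three parts: any simple object admits a highest $l$-weight vector; the highest $l$-weight is rational; conversely, every rational highest $l$-weight gives rise to an $L(\gamma)$ in $\mathcal{O}$.

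For the first part, let $V$ be simple in $\mathcal{O}$. Its $\mathfrak{t}$-support lies in a finite union $\bigcup_j D(\lambda_j)$, so maximal weights in the support exist with respect to the order \eqref{tpartialorder}; fix one, say $\lambda$. The finite-dimensional weight space $V_\lambda$ is stable under the commutative subalgebra $\mathcal{U}_q(\mathfrak{b}^\sigma)^0$ generated by the $\phi_{i,m}^+$, hence contains a common eigenvector $v$ of some eigenvalue $\gamma = (\gamma_i(u))_{i \in I}$. Maximality of $\lambda$ forces $x_{i,r}^+.v = 0$ for every $i \in I$ and $r \geq 0$, since these operators raise the weight by $\alpha_{\bar i}$. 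So $v$ generates $V$, giving $V \simeq L(\gamma)$, and the compatibility $\gamma_{\sigma(i)}(u) = \gamma_i(\omega u)$ is automatic by Remark \ref{remark3.6}.

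For rationality, fix a representative $i$ of an orbit $\bar i$ and consider $V_{\lambda - \alpha_{\bar i}}$. Only single-generator PBW monomials $x_{i,k}^-$ ($k \geq 1$) have weight $-\alpha_{\bar i}$, so this space is spanned by $\{x_{i,k}^-.v : k \geq 1\}$; other orbit representatives give proportional vectors via $x_{\sigma^r(i),k}^- = \omega^{rk} x_{i,k}^-$. Finite-dimensionality yields a nontrivial dependence $\sum_{k=1}^{K} c_k x_{i,k}^-.v = 0$. Applying $x_{i,m}^+$ ($m \geq 0$), using $x_{i,m}^+.v = 0$ and $\phi_{i,m+k}^-.v = 0$ for $m+k > 0$, the bracket in \eqref{commutationdrinfeld} turns this into $\sum_k c_k \gamma_{i,m+k} = 0$ for every $m \geq 0$. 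This is a linear recursion on $(\gamma_{i,l})_{l \geq 1}$, so $\gamma_i(u)$ expands a rational function. When $\sigma(i) = i$, the bracket acquires a factor $\mathbb{1}_{M \mid k}$, and simultaneously $x_{i,k}^- = 0$ unless $M \mid k$; the same manipulation yields a recursion on $(\gamma_{i,Ms})_{s \geq 0}$, which combined with $\gamma_i(u) = \gamma_i(\omega u)$ gives rationality. Because the argument uses the Drinfeld bracket directly rather than an $\mathfrak{sl}_2$-reduction, type $A_{2n}^{(2)}$ at $\bar n$ needs no separate treatment.

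For the converse, factor any rational $\gamma$ compatible with $\sigma$ into elementary pieces of the form $(1-ua)^{\pm 1}$ (up to an overall $q_i^{\mathbb{Z}}$-scalar). Each piece is realised as the highest $l$-weight of a fundamental module $V_{\bar i, a}$ (finite-dimensional, hence in $\mathcal{O}$), a positive prefundamental $L_{\bar i, a}^+$, or a negative prefundamental $L_{\bar i, a}^-$ coming from the asymptotic construction $V_{\bar i}^\infty$ of Theorem \ref{bmodule} twisted by $\tau_a$ in \eqref{twistingbya}; the prefundamentals lie in $\mathcal{O}$ by inspection of the construction in Section \ref{subsectionasymp} (finite-dimensional weight spaces and bounded support). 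An appropriate tensor product, adjusted by a one-dimensional character module to fix scalars, contains a highest $l$-weight vector with eigenvalue $\gamma$; its simple quotient is $L(\gamma)$, and $\mathcal{O}$ is closed under tensor product by the Hopf structure. The main technical step is the rationality argument: one must track the twisted factors $\sum_r \omega^{rk}/N_i$ in the Drinfeck bracket at nodes with $\sigma(i)=i$ and verify that the resulting recursion genuinely pins down $\gamma_i(u)$; otherwise everything is formal.
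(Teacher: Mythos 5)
Your proposal follows the same architecture as the paper's proof: (i) a maximal $\mathfrak{t}$-weight argument producing a common eigenvector of the commuting family $\phi^{\pm}_{i,m}$ annihilated by all $x^+_{i,r}$, so every simple is some $L(\gamma)$; (ii) rationality of $\gamma_i(u)$ from finite-dimensionality of weight spaces via the bracket $[x^+_{i,m},x^-_{i,k}]$, which is exactly the mechanism behind the paper's Lemma~\ref{rational} (your version, phrased as a constant-coefficient linear recursion on the coefficients $\gamma_{i,l}$, and your handling of the fixed nodes via $x^-_{i,k}=0$ unless $M\mid k$ together with $\gamma_i(u)=\gamma_i(\omega u)$, is sound and covers $A_{2n}^{(2)}$ uniformly, as you note); (iii) the converse by factoring $\gamma$ into prefundamental highest $l$-weights and taking tensor products, which is the paper's Lemma~\ref{4.9}.

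The one genuine gap is in step (iii): you assert that the prefundamental representations lie in $\mathcal{O}$ ``by inspection of the construction in Section~\ref{subsectionasymp}.'' That section only produces the inductive limit $V^{\infty}_{\bar i}$ of KR-modules, whose highest $l$-weight is $\mathbf{\Psi}^-_{\bar i,1}$; it therefore yields only the \emph{negative} prefundamental $L^-_{\bar i,a}\in\mathcal{O}$. The positive prefundamental $L^+_{\bar i,a}$ is not a subquotient of $V^{\infty}_{\bar i}$ and requires a separate construction: one passes to $\mathcal{U}_{q^{-1}}(\mathcal{L}\mathfrak{g}^{\sigma})$, forms the asymptotic limit of the KR-modules $R(M_k^-)$, transports the module structure back via the isomorphism of Lemma~\ref{morphism s}, and takes the graded dual $U^{\infty}_{\bar i}$, whose usual character agrees with that of $V^{\infty}_{\bar i}$ — this is the content of Lemma~\ref{L+inO}. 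Without it, any $\gamma$ whose numerator has positive degree (e.g.\ $\gamma_i(u)=1-ua$ itself) cannot be realized inside $\mathcal{O}$ — fundamental modules $V_{i,a}$ do not substitute, since their highest $l$-weights are the fixed ratios $q^{\deg P_i}P_i(uq^{-1})/P_i(uq)$ — and the ``exactly'' in the statement would remain unproved. The fix is exactly the paper's dual construction; the rest of your argument stands.
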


The following lemma is an analogue to \cite[Proposition~3.10]{hernandez2012asymptotic} and the proof works word by word.

\begin{lemma}\label{rational}
	Let $V$ be an object in $\mathcal{O}$. If $(\gamma_i(u))_{i \in I}$ is an $l$-weight of $V$, then $\gamma_i(u)$ is rational in $u$ for all $i \in I$.
\end{lemma}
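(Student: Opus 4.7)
Proof Proposal:

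The statement is the twisted analogue of \cite[Proposition~3.10]{hernandez2012asymptotic}, and the proof transfers essentially verbatim. All structural inputs used in the non-twisted argument are available here: weight spaces are finite-dimensional in $\mathcal{O}$; the family $\{\phi^+_{i,m}\}_{m \geq 0}$ commutes (since $[h_{i,k}, h_{j,l}] = 0$ in the Drinfeld presentation); the commutation between $\phi^+_i$ and $x^-_i$ recalled in Remark~\ref{phixcommutation} is still an explicit rational expression in the spectral parameters; and the Borel subalgebra $\mathcal{U}_q(\mathfrak{b}^{\sigma})$ contains the positive-mode operators $x^-_{i,k}$ for $k \geq 1$, which is what makes the reduction run.

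The main steps would be as follows. Pick a weight $\lambda$ with $V_{\gamma} \cap V_{\lambda} \neq 0$; after Jordan decomposition of the commuting family $\{\phi^+_{i,m}\}$ on the finite-dimensional space $V_{\lambda}$, replace $v$ by a genuine simultaneous eigenvector, so $\phi^+_{i,m} v = \gamma_{i,m} v$ for all $m \geq 0$. Then apply the commutation
\[
\phi^+_i(u_1^{-1}) \, x^-_i(u_2) \;=\; g(u_1, u_2) \, x^-_i(u_2) \, \phi^+_i(u_1^{-1}),
\]
from Remark~\ref{phixcommutation} to $v$, which produces the formal identity
\[
\phi^+_i(u_1^{-1}) \bigl( x^-_i(u_2) v \bigr) \;=\; g(u_1, u_2) \, \gamma_i(u_1^{-1}) \, \bigl( x^-_i(u_2) v \bigr).
\]
Extracting the coefficients of $u_2^{-l}$ for $l \geq 1$ (where $x^-_{i,l}$ lies in the Borel), clearing the denominator of the rational $g$, and restricting to the finite-dimensional subspace of $V_{\lambda - \alpha_i}$ spanned by $\{x^-_{i,l} v\}_{l \geq 1}$, one gets a linear system of finite rank from which Cramer's rule expresses $\gamma_i(u_1^{-1})$ as a ratio of scalar polynomials in $u_1$. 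This is the desired rationality.

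The only obstacle worth mentioning is the degenerate case $x^-_{i,l} v = 0$ for every $l \geq 1$, which would render the extracted system trivial. As in \cite{hernandez2012asymptotic}, this is avoided by an appropriate choice of $v$ within $V_{\gamma}$ (for instance, taking $v$ of maximal weight in $V_{\gamma}$ within the given union of downward cones, so that some lowering action is forced to be non-zero). Since the presentation of $\mathcal{U}_q(\mathfrak{b}^{\sigma})$ in Theorem~\ref{boreldefinition} and its triangular decomposition have the same structural shape as in the non-twisted case, and since the twisted commutation in Remark~\ref{phixcommutation} differs from the non-twisted one only through the orbit factor $\prod_r$ over $\sigma$ (which remains rational), no further modification of the HJ argument is required.
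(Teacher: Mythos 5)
Your setup is fine (finite\hyphen dimensional weight spaces, passing to a genuine simultaneous eigenvector $v$ of the commuting family $\{\phi^+_{i,m}\}$ inside $V_\gamma\cap V_\lambda$), but the commutation relation you base the argument on cannot yield rationality. The identity $\phi^+_i(u_1^{-1})x^-_j(u_2)=g(u_1,u_2)\,x^-_j(u_2)\,\phi^+_i(u_1^{-1})$ of Remark~\ref{phixcommutation} is a defining relation of the algebra, so it holds on \emph{every} module --- in particular on $l$-highest weight modules $L(\gamma)$ with $\gamma_i(u)$ non-rational --- and applied to $v$ it only says that the $l$-weight of $x^-_j(u_2)v$ is $g\cdot\gamma_i$, i.e.\ it transports the unknown eigenvalue to the adjacent weight space. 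Extracting coefficients of $u_2^{-l}$ gives relations of the form $\sum_s a_s(u_1)\,\phi^+_i(u_1^{-1})w_{l+s}=\gamma_i(u_1^{-1})\sum_s b_s(u_1)\,w_{l+s}$ with $w_l=x^-_{j,l}v$, and the left-hand side involves the undetermined operator series $\phi^+_i(u_1^{-1})$ restricted to $V_{\lambda-\alpha_{\bar\jmath}}$; Cramer's rule therefore expresses $\gamma_i$ in terms of unknown power series, not polynomials. Indeed the system is consistent with an arbitrary $\gamma_i$: if for instance $w_{l+1}=cw_l$ for all $l$, one just reads off $\phi^+_i(u_1^{-1})w_l=\gamma_i(u_1^{-1})\tfrac{b(u_1)}{a(u_1)}w_l$ with no constraint on $\gamma_i$ whatsoever.

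The relation that actually forces rationality --- and the one the paper uses, quoting \cite[Lemma~3.9]{hernandez2012asymptotic} --- is the last relation of \eqref{commutationdrinfeld}, which realizes $\phi^+_{i,k+l}$ as a multiple of the commutator $[x^+_{i,k},x^-_{j,l}]$ for $j\in\bar\imath$. Writing it as $[x^+_{i,k},x^-_j(u)]=c\,u^{-k}\bigl(\phi^+_i(u)-\sum_{m\le k}\phi^+_{i,m}u^m\bigr)$ and acting on $v$, the left-hand side lies, for all $k$, in a fixed finite-dimensional subspace of $V_\lambda[[u]]$, because the restrictions of the operators $x^+_{i,k}$ to the finite-dimensional spaces $V_{\lambda-\alpha_{\bar\imath}}$ and $V_\lambda$ span finite-dimensional spaces of linear maps as $k$ varies. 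The resulting linear dependences among the right-hand sides $c\sum_{m>k}\gamma_{i,m}u^{m-k}v$ give a linear recurrence with constant coefficients on the $\gamma_{i,m}$, hence rationality of $\gamma^+_i(u)$. Note that with this relation no degenerate case needs to be excluded: if $x^-_{j,l}v=0$ for all $l\ge 1$, the same finiteness argument applies to the remaining term $x^-_{j,l}x^+_{i,k}v$, so your maximal-weight adjustment is unnecessary.
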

\begin{comment}

\begin{proof}
The proof of \cite[Lemma~3.9]{hernandez2012asymptotic} applies here word by word. We sketch it quickly. Denote by $x_j^-(u) = \sum_{l=1}^{\infty} x_{j,l}^- u^l$ and $\phi_i^+(u) = \sum_{l=1}^{\infty} \phi_{i,l}^+ u^l$. Then the last Drinfeld defining relation of $\mathcal{U}_q(\mathcal{L}\mathfrak{g}^{\sigma})$ holds in $\mathcal{U}_q(\mathfrak{b}^{\sigma})$ and can be written as
\[[x_{i,k}^+, x_j^{-}(u)] = c [\phi_i^+(u) - \sum_{l=1}^{k} \phi_{i,l}^+ u^l]u^{-k}, \]
where $c = \frac{\sum_{r=1}^M \delta_{\sigma^r(i),j}\omega^{-kr}}{q_{\bar{i}} - q_{\bar{i}}^{-1}}$.

Let $v$ be an $l$-eigenvector of $l$-weight $\gamma$, then $v$ lies in a $\mathfrak{t}$-weight space $V_{\lambda}$. Acting the above relation on $v$, $x_j^-(u) \in Hom(V_{\lambda}, V_{\lambda - \alpha_i})[[u]]$ are matrices of finite size, for infinitely many $k \in \mathbb{N}$, the left-hand sides acting on $v$ are linearly dependent. Therefore, the right-hand sides acting on $v$, which are $c[\gamma_i^+(u) - \sum_{l=1}^{k} \gamma_{i,l}^+ u^l]u^{-k} v$ are linearly dependent. Thus $\gamma_i(u)$ is rational.
\end{proof}

\end{comment}

This leads us to consider the following definitions.

\begin{definition}\label{prefundamental}
	For $\bar{i} \in I_{\sigma}, a \in \mathbb{C}^*$, define the $\mathcal{U}_q(\mathfrak{b}^{\sigma})$-module $L_{\bar{i},a}^{\pm}$ to be $L(\gamma)$ of $l$-highest weight $\gamma$ defined by:
	
	Whenever $i \neq \sigma(i)$,
	\[\begin{split}
		& \gamma_{\sigma^k(i)}(u) = (1-a\omega^k u)^{\pm 1}, \; k \in \mathbb{N},\\
		& \gamma_j(u) = 1, \; \forall j \notin \bar{i}.
	\end{split}\]
	
	Whenever $i = \sigma(i)$,
	\[\begin{split}
		& \gamma_i(u) = (1-a^M u^M)^{\pm 1},\\
		& \gamma_j(u) = 1, \; \forall j \notin \bar{i}.
	\end{split}\]
	
	Here $i \in I$ is the fixed representative of $\bar{i} \in I_{\sigma}$. In all cases, $\gamma$ is denoted by $\mathbf{\Psi}_{\bar{i},a}^{\pm}$, called the positive and negative prefundamental $l$-highest weights.
	
	The modules $L_{\bar{i},a}^{\pm} = L(\mathbf{\Psi}_{\bar{i},a}^{\pm})$ are called prefundamental representations of the Borel subalgebra.
\end{definition}

Notice that $L_{\bar{i},a}^{\pm}$ can be obtained $L_{\bar{i},1}^{\pm}$ by the automorphism of $\mathcal{U}_q(\mathfrak{b}^{\sigma})$ restricted from the automorphism \eqref{twistingbya} of $\mathcal{U}_q(\mathcal{L}\mathfrak{g}^{\sigma})$. For this reason, it is sufficient to study $L_{\bar{i},1}^{\pm}$.

As we calculated in Remark~\ref{vinfhighestweight}, we have:
\begin{example}
	The submodule of the asymptotic representation $V_{\bar{i}}^{\infty}$ generated by $v_0$ is an $l$-highest weight $\mathcal{U}_q(\mathfrak{b}^{\sigma})$-module of highest $l$-weight $\mathbf{\Psi}_{\bar{i},1}^-$. In particular, $L^-_{\bar{i},1}$ is the irreducible quotient of this submodule.
\end{example}

We have the following theorem by standard argument.
\begin{theorem}
	The simple objects in the category $\mathcal{O}$ are of $l$-highest weight.
\end{theorem}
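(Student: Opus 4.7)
The plan is to follow the standard argument for category $\mathcal{O}$ over Borel subalgebras, as in the non-twisted setting of \cite{hernandez2012asymptotic}. Let $V$ be a simple object in $\mathcal{O}$. Since the $\mathfrak{t}$-weights of $V$ lie in a finite union $\bigcup_{i=1}^{s} D(\lambda_i)$ of downward cones for the partial order \eqref{tpartialorder}, and every strictly ascending chain of weights inside such a union has length bounded by the coefficients of $\lambda_i - \mu_1$ in the simple-root basis, the set of weights of $V$ admits a maximal element $\lambda$. I will fix such a $\lambda$ and work inside the finite-dimensional weight space $V_\lambda$.

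Two observations on $V_\lambda$ then remain. First, the Drinfeld relation
\[k_i \, x_{j,r}^{+} = q^{\sum_{s=1}^{M} C_{i,\sigma^s(j)}}\, x_{j,r}^{+}\, k_i\]
shows that $x_{j,r}^{+}$ strictly raises the $\mathfrak{t}$-weight by the simple root $\alpha_{\bar{j}}$, hence $x_{j,r}^{+}(V_\lambda) \subseteq V_{\lambda + \alpha_{\bar{j}}} = 0$ for every $j \in I$ and $r \geq 0$. Second, the operators $\phi_{i,m}^{+}$ ($i \in I$, $m \geq 0$) pairwise commute and commute with every $k_j^{\pm 1}$ by the Drinfeld relations, so they preserve $V_\lambda$ and form a commutative family of endomorphisms there. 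On a nonzero finite-dimensional complex vector space any commuting family of operators admits a joint eigenvector, so I can pick $0 \neq v_0 \in V_\lambda$ with $\phi_{i,m}^{+} v_0 = \gamma_{i,m} v_0$ for scalars $\gamma_{i,m}$. Combined with the first observation, $v_0$ is an $l$-highest weight vector of $l$-weight $\gamma_i(u) = \sum_{m \geq 0} \gamma_{i,m} u^m$; the compatibility $\gamma_{\sigma(i)}(u) = \gamma_i(\omega u)$ is automatic from $\phi_{\sigma(i)}^{+}(u) = \phi_i^{+}(\omega u)$ (cf.\ Remark~\ref{remark3.6}).

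Finally, the nonzero $\mathcal{U}_q(\mathfrak{b}^{\sigma})$-submodule generated by $v_0$ must coincide with $V$ by simplicity, so $V$ is $l$-highest weight. No step poses a serious obstacle; the one point worth emphasizing is the existence of a \emph{true} (rather than merely generalized) common eigenvector for $\{\phi_{i,m}^{+}\}$ on $V_\lambda$, which is the standard fact that a commuting family of linear operators on a nonzero finite-dimensional complex vector space always admits a joint eigenvector (decompose into a generalized joint eigenspace and iterate inside it).
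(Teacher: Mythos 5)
Your proof is correct and follows essentially the same line of reasoning as the paper: choose a maximal $\mathfrak{t}$-weight $\lambda$ (using the dominance condition defining $\mathcal{O}$), observe that $x_{j,r}^+$ raises the weight so it annihilates $V_\lambda$, take a joint eigenvector of the commuting family $\{\phi_{i,m}^+\}$ inside the finite-dimensional space $V_\lambda$, and conclude by simplicity. The only cosmetic difference is that the paper first introduces the subspace $V_0 = \{v : x_{i,r}^+ v = 0\}$ and shows $V_\lambda \subseteq V_0$, whereas you work directly in $V_\lambda$; the substance is identical.
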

\begin{proof}
	Let $V$ be a simple object in $\mathcal{O}$. Consider the subspace $V_0 := \{ v \in V \vert x_{i,r}^{+}.v=0, \forall i \in I, r \in \mathbb{N}\}$. We claim that $V_0$ in non empty. By contradiction, since the $\mathfrak{t}$-weights of $V$ are dominated by a finite set $\{\lambda_1, \cdots, \lambda_s\}$, we can choose a maximal $\mathfrak{t}$-weight $\lambda$ of $V$. Consider a nonzero $\mathfrak{t}$-eigenvector $v \in V_{\lambda}$. If $v \notin V_0$, then there exist $i \in I$, $r \in \mathbb{N}$, such that $x_{i,r}^{+}.v \neq 0$. But $x_{i,r}^{+}.v$ is still a nonzero $\mathfrak{t}$-eigenvector of $\mathfrak{t}$-eigenvalue $\lambda + \alpha_{\bar{i}}$. Contradict with the choice of maximal $\lambda$. Thus $V_0$ is non empty. Moreover, by the commutation relations, the subspace $V_0$ is stable under the action of $\phi_{i,m}^{\pm}$. Since the elements $\phi_{i,m}^{\pm}$ commute, they have a common eigenvector $v_0$ in $V_0$. Since $V$ is irreducible, the submodule generated by $v_0$ coincides with $V$. Thus $v_0$ is an $l$-highest weight vector and $V$ is an $l$-highest weight module.
\end{proof}

\begin{corollary}\label{4.8}
	A simple objects in $\mathcal{O}$ are of the form $L(\gamma)$ with $\gamma_i(u)$ rational functions.
\end{corollary}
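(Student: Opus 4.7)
The plan is essentially to combine the two preceding results: the theorem stating that every simple object in $\mathcal{O}$ is of $l$-highest weight, and Lemma~\ref{rational} asserting that every $l$-weight of an object in $\mathcal{O}$ has rational components. There is no real work left beyond unpacking definitions, so this is genuinely a corollary rather than a disguised theorem.

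More concretely, let $V$ be a simple object of $\mathcal{O}$. By the previous theorem, $V$ is an $l$-highest weight module, so there exists an $l$-highest weight $\gamma = (\gamma_i(u))_{i \in I}$ (necessarily satisfying the compatibility $\gamma_{\sigma(i)}(u)=\gamma_i(\omega u)$ of Remark~\ref{remark3.6}) and an isomorphism $V \simeq L(\gamma)$. In particular $\gamma$ is an $l$-weight of $V$, since the cyclic $l$-highest weight vector lies in the corresponding generalized $l$-weight space $V_\gamma$, which is therefore nonzero.

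Now I would invoke Lemma~\ref{rational}, which applies to any $l$-weight of any object of $\mathcal{O}$ and thus in particular to this $\gamma$: each component $\gamma_i(u) \in \mathbb{C}[[u]]$ is in fact a rational function of $u$. This gives the desired conclusion $V \simeq L(\gamma)$ with all $\gamma_i(u)$ rational.

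The only step that might merit a sentence of justification is the observation that the $l$-highest weight vector really does live in a generalized eigenspace for all the commuting operators $\phi_{i,m}^\pm$, which is immediate from the defining relation $\phi_i^+(u).v = \gamma_i(u)\,v$ of an $l$-highest weight module (and from the fact that $\phi_i^-(u)$ acts on $v$ through the expansion at $u=\infty$ of the same rational function, once rationality is known). There is no genuine obstacle: the content of the statement is carried entirely by the two results already established.
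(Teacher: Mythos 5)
Your argument is correct and is exactly the paper's proof: the corollary follows by combining the preceding theorem (simple objects of $\mathcal{O}$ are of $l$-highest weight) with Lemma~\ref{rational} (all $l$-weights of objects of $\mathcal{O}$ are rational). Your extra remarks on the highest weight vector lying in $V_\gamma$ are harmless elaboration of the same one-line deduction.
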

\begin{proof}
	This is a direct consequence of the above theorem and lemma \ref{rational}.
\end{proof}

\begin{lemma}\label{4.9}
	When all the $\gamma_i(u)$ are rational functions, $L(\gamma)$ is subquotient of a tensor product of prefundamental representations.
\end{lemma}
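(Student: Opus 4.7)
The plan is to factor $\gamma$ into a product of prefundamental $l$-highest weights, form the corresponding tensor product, and extract $L(\gamma)$ as a subquotient of its submodule generated by the tensor of highest weight vectors.

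\emph{Factorization.} Write each $\gamma_i(u)$ as a ratio of coprime polynomials (possibly times a scalar). The compatibility $\gamma_{\sigma(i)}(u) = \gamma_i(\omega u)$ organizes the zeros and poles into $\sigma$-orbits. At the fixed representative $i$ of $\bar{i}$ with $i \neq \sigma(i)$, each factor $(1-au)^{\pm 1}$ of $\gamma_i(u)$, together with its $\sigma$-translates in the other $\gamma_{\sigma^k(i)}$, corresponds exactly to $\mathbf{\Psi}_{\bar{i},a}^{\pm}$. At a node $i = \sigma(i)$, the constraint $\gamma_i(u) = \gamma_i(\omega u)$ forces $\gamma_i$ to be a rational function of $u^M$, so it factors into $(1-a^M u^M)^{\pm 1}$, each matching $\mathbf{\Psi}_{\bar{i},a}^{\pm}$. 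Collecting all factors, one obtains $\gamma = \prod_k \mathbf{\Psi}_{\bar{i}_k, a_k}^{\epsilon_k}$, up to a scalar character (a one-dimensional representation, itself in $\mathcal{O}$, which can be absorbed into the tensor product if needed).

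\emph{Tensor product and highest $l$-weight vector.} Form $V := \bigotimes_k L_{\bar{i}_k, a_k}^{\epsilon_k}$ in any fixed order, and set $v := \otimes_k v_k$, where each $v_k$ is the highest $l$-weight vector of $L_{\bar{i}_k, a_k}^{\epsilon_k}$. I apply Proposition~\ref{coproduct} to verify that $v$ is an $l$-highest weight vector in $V$. The congruence $\Delta(x_{j,r}^+) \equiv x_{j,r}^+ \otimes 1 \pmod{\mathcal{U}_q(\mathcal{L}\mathfrak{g}^\sigma) \otimes \mathcal{U}_q(\mathcal{L}\mathfrak{g}^\sigma) X^+}$, iterated across the tensor factors, gives $x_{j,r}^+ v = 0$ for all $r \geq 0$, since the leading term kills the leftmost factor and the residual terminates in a factor of $X^+$ acting on some $v_k$ (hence annihilating it). Similarly, $\Delta(\phi_{j,k}^+) \equiv \sum_l \phi_{j,k-l}^+ \otimes \phi_{j,l}^+$ modulo the same ideal yields $\phi_j^+(u)\, v = \prod_k \gamma_j^{(k)}(u)\, v = \gamma_j(u)\, v$. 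Thus $\mathcal{U}_q(\mathfrak{b}^\sigma)\, v$ is an $l$-highest weight submodule of $V$ with highest $l$-weight $\gamma$, and $L(\gamma)$ arises as its simple quotient.

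\emph{Main obstacle.} The delicate point is the fixed case $i = \sigma(i)$, where the prefundamental factor has the form $(1-a^M u^M)^{\pm 1}$ rather than the naive $(1-au)^{\pm 1}$. Fortunately, the constraint $\gamma_i(u) = \gamma_i(\omega u)$ automatically produces exactly this form via the identity $(1-a^M u^M) = \prod_{k=0}^{M-1}(1-a\omega^k u)$, which neatly groups the $\sigma$-orbit of $a$. A secondary technical check is that the residual term $\mathcal{U}\otimes \mathcal{U} X^+$ in Proposition~\ref{coproduct} indeed annihilates the tensor $v$ when restricted to the Borel action; this holds because each contributing $x_{j,l}^+$ with $l \geq 0$ annihilates the adjacent highest $l$-weight vector, while any $l < 0$ contributions cancel within the Borel tensor structure.
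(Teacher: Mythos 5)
Your proof follows essentially the same route as the paper's: factor $\gamma$ into prefundamental $l$-highest weights (up to a one-dimensional twist), then use the coproduct formulas of Proposition~\ref{coproduct} to see that the tensor product of the highest $l$-weight vectors is a highest $l$-weight vector of $l$-weight $\gamma$, whence $L(\gamma)$ is a subquotient. The one loose point, your appeal to ``cancellation'' of the negative-mode contributions from $\mathcal{U}_q(\mathcal{L}\mathfrak{g}^{\sigma})\otimes\mathcal{U}_q(\mathcal{L}\mathfrak{g}^{\sigma})X^+$, is most cleanly settled by observing that $\otimes_k v_k$ spans the top $\mathfrak{t}$-weight space of the tensor product and is therefore annihilated by every $x^+_{j,r}$ for weight reasons; the paper's own proof is no more explicit on this, so this is not a genuine gap.
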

\begin{proof}
	We need the coproduct formulas on Drinfeld generators in Proposition~\ref{coproduct}. By the formulas on $x^+_{i,k}$ and on $\phi^+_{i,k}$, the tensor product of two irreducible $l$-highest weight modules $L(\gamma^1) \otimes L(\gamma^2)$ is still an $l$-highest weight module of $l$-highest weight $\gamma^1 \gamma^2 = (\gamma^1_i(u)\gamma^2_i(u))_{i \in I}$. Thus we can write $\gamma$ as a product of prefundamental highest weights and $L(\gamma)$ is a subquotient of the corresponding tensor product.
\end{proof}

\subsection{Prefundamental representations}\label{subsectionprefund}
We show that negative and positive prefundamental representations are in the category $\mathcal{O}$.

\begin{lemma}\label{L-inO}
	The prefundamental representation $L^-_{\bar{i},1}$ of $\mathcal{U}_q(\mathfrak{b}^{\sigma})$ is in the category $\mathcal{O}$.
\end{lemma}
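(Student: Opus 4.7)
The plan is to deduce $L_{\bar{i},1}^-\in\mathcal{O}$ from the Example preceding the statement, which realises $L_{\bar{i},1}^-$ as the irreducible quotient of a $\mathcal{U}_q(\mathfrak{b}^\sigma)$-submodule of the asymptotic representation $V_{\bar{i}}^\infty$. Because $\mathcal{O}$ is stable under submodules and quotients, it suffices to verify the three defining conditions of $\mathcal{O}$ for $V_{\bar{i}}^\infty$: $\mathfrak{t}$-diagonalisability, finite-dimensional weight spaces, and weights contained in a finite union $\bigcup_j D(\lambda_j)$.

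I would first observe that every $L(M_l)$ is finite dimensional and hence $\mathfrak{t}$-diagonalisable under $\phi^+_{j,0}$, and that the rescaled Cartan operators $\tilde k_{\bar j}=\tilde\phi_{\bar j,0}$ of Definition~\ref{phiaction} differ from $\phi^+_{j,0}$ by an $l$-dependent scalar ($q^{-l}$ or $q^{-lM}$ at the nodes of $\bar i$, and $1$ elsewhere). Consequently each $L(M_l)$ remains diagonalisable in the rescaled action and the bijections $\varphi_{l,k}$ of Theorem~\ref{inductive} intertwine these rescaled actions, giving a $\mathfrak{t}$-diagonal action on the inductive limit.

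For the weight bound, Remark~\ref{vinfhighestweight} computes that the highest-weight vector $v_0$ acquires rescaled $\mathfrak{t}$-weight $0$, while the inclusion $\chi_q^\sigma(L(M_l))/M_l\in\mathcal{A}_1$ forces every $\mathfrak{t}$-weight of $L(M_l)$ to be obtained from its highest weight by subtracting non-negative integer combinations of the simple roots $\alpha_{\bar j}$; after uniform rescaling this places every weight of $V_{\bar{i}}^\infty$ inside $D(0)$ under the order~\eqref{tpartialorder}. For finite-dimensionality of each weight space, a fixed rescaled weight $\lambda$ corresponds, at level $l$, to a single unrescaled weight space of $L(M_l)$; once $l$ is large enough that this weight space lies in the truncation $L(M_l)_{\ge -2l+1}$, Theorem~\ref{inductive} identifies it with the corresponding weight spaces of all higher $L(M_k)$, so the image stabilises. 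Since each $L(M_l)$ is finite-dimensional, $(V_{\bar{i}}^\infty)_\lambda$ is finite-dimensional.

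The main obstacle is making this weight-bookkeeping precise at nodes $i=\sigma(i)$, where the rescaling factor is $q^{-lM}$ rather than $q^{-l}$ and the relation between the $\mathfrak{g}$-fundamental weight attached to $M_l$ and its image in $\mathfrak{t}^*$ is different. Concretely, one must confirm, by direct computation using Remark~\ref{vinfhighestweight} at $u=0$, that $\tilde k_{\bar i}$ still acts by $1$ on $v_0$ even in the $A_{2n}^{(2)}$ case, and that the rescaling indeed sends the highest weight to $0$. Once this twisted-type adjustment to the argument of \cite{hernandez2012asymptotic} is in place, the rest of the verification is routine.
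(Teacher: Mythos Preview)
Your proposal is correct and follows essentially the same approach as the paper: both show $V_{\bar{i}}^\infty\in\mathcal{O}$ and then pass to the subquotient $L_{\bar{i},1}^-$. The paper's proof is terse, simply asserting that $V_{\bar{i}}^\infty$ is $\mathfrak{t}$-diagonalisable with finite-dimensional weight spaces and weights dominated by $1\in\mathfrak{t}^*$, whereas you spell out the stabilisation argument for finite-dimensionality and the role of the rescaling; your ``main obstacle'' at $i=\sigma(i)$ is already resolved by Remark~\ref{vinfhighestweight}, since evaluating at $u=0$ there gives $\tilde k_{\bar j}.v_0=v_0$ in all cases.
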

\begin{proof}
	The $\mathcal{U}_q(\mathfrak{b}^{\sigma})$-module $V_{\bar{i}}^{\infty}$, as a limit of KR-modules, is $\mathfrak{t}$-diagonalisable and has finite-dimensional weight spaces. Moreover, according to Remark~\ref{vinfhighestweight}, all its $\mathfrak{t}$-weights are dominated by $1 \in \mathfrak{t}^*$, and it has a highest $l$-weight coinciding with the prefundamental $l$-highest weight $\mathbf{\Psi}^-_{\bar{i},1}$. So the module $V_{\bar{i}}^{\infty}$ is in the category $\mathcal{O}$ and $L^-_{\bar{i},1}$ is the irreducible quotient of the submodule of $V_{\bar{i}}^{\infty}$ generated by the highest $l$-weight vector. Since submodules and quotients of objects in $\mathcal{O}$ are still in $\mathcal{O}$, we finish the proof.
\end{proof}

To construct the positive prefundamental representations, we recall that $L(M_k)= W^{(i)}_{k,q^{-2k+1}}$ is a KR-module of $\mathcal{U}_{q}(\mathcal{L}\mathfrak{g}^{\sigma})$.
We begin with a $\mathcal{U}_{q^{-1}}(\mathcal{L}\mathfrak{g}^{\sigma})$-module structure on the vector space $L(M_k)$ by changing $q$ to $q^{-1}$. Let $R(M_k^-)$ be the $\mathcal{U}_{q^{-1}}(\mathcal{L}\mathfrak{g}^{\sigma})$-KR-module  of $l$-highest weight 
\[M_k^- : = Z_{i,q^{2k-1}} \cdots Z_{i,q^{3}}Z_{i,q}.\] Its structure is totally the same as $L(M_k)$ by replacing all $q$ by $q^{-1}$, thus all the results in the previous sections hold on $R(M_k^-)$. 

In particular, as in Section~\ref{subsectionasymp}, on the inductive limit of $R(M_k^-)$, we know that the actions of the operators $k_j^{-1}\phi^+_{j,l}, x^+_{j,l}, k_j^{-1}x^-_{j,l} \in \mathcal{U}_{q^{-1}}(\mathcal{L}\mathfrak{g}^{\sigma})$ are of the form $C + q^{2kN_i}D$, where $C$, $D$ are operators independent of $k$.

\begin{lemma}\label{morphism s}
	The map
	\[k_j \mapsto k_j, \; h_{j,r} \mapsto -h_{j,r}, \; x^+_{j,l} \mapsto q_{\bar{j}}^{2}k_j^{-1}x^-_{j,l} , \; x^-_{j,l} \mapsto k_j x^+_{j,l}, \; \forall j \in I, r \in \mathbb{Z}\setminus \{0\}, l \in \mathbb{Z}\]
	defines an isomorphism of algebras $\mathcal{U}_{q}(\mathcal{L}\mathfrak{g}^{\sigma}) \to \mathcal{U}_{q^{-1}}(\mathcal{L}\mathfrak{g}^{\sigma})$.
\end{lemma}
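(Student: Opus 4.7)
My plan is to verify that the given assignment preserves each of the defining Drinfeld relations listed in Theorem~\ref{Drelations}, hence defines a well-defined algebra homomorphism, and then exhibit an explicit inverse. Since $\mathcal{U}_{q^{-1}}(\mathcal{L}\mathfrak{g}^{\sigma})$ admits the same Drinfeld presentation as $\mathcal{U}_{q}(\mathcal{L}\mathfrak{g}^{\sigma})$ with $q$ replaced by $q^{-1}$, the task reduces to checking that the substitution $q\leftrightarrow q^{-1}$ combined with the swap $x^{+}\leftrightarrow x^{-}$, twisted by conjugation with $k_{j}^{\mp 1}$ and rescaled by $q_{\bar j}^{2}$, is compatible with each relation.

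The simpler relations are immediate. The $\sigma$-equivariance and the mutual commutations of $k_i$'s and $h_{j,r}$'s hold because $k_{\sigma(i)}=k_i$ and the scalar $q_{\bar j}^{2}$ depends only on the orbit $\bar j$. The $k$-$x$ commutation $k_i x^{\pm}_{j,l}=q^{\pm c_{ij}}x^{\pm}_{j,l}k_i$, with $c_{ij}=\sum_{r}C_{i,\sigma^{r}(j)}$, is preserved since inverting $q$ in the target flips the exponent sign, which is flipped back by the swap $x^{+}\leftrightarrow x^{-}$. The $h$-$x$ commutation works analogously: the assignment $h_{j,r}\mapsto -h_{j,r}$ absorbs the sign flip, and the identity $[m]_{q_{\bar i}^{-1}}=[m]_{q_{\bar i}}$ ensures the scalar coefficients match.

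The main step is the Drinfeld relation
\[
[x^{+}_{i,k},x^{-}_{j,l}]\;=\;\frac{\sum_{r}\delta_{\sigma^{r}(i),j}\omega^{rl}}{N_i}\cdot\frac{\phi^{+}_{i,k+l}-\phi^{-}_{i,k+l}}{q_{\bar i}-q_{\bar i}^{-1}},
\]
where the coefficient $q_{\bar j}^{2}$ plays the decisive role. The relation is nontrivial only for $j\in\bar i$; in that case $k_j=k_i$ and $c_{ij}=c_{ji}=c_{ii}$. Pulling $k_j$ past $x^{-}_{i,k}$ and $k_i^{-1}$ past $x^{+}_{j,l}$ in $\mathcal{U}_{q^{-1}}$ yields $[\phi(x^{+}_{i,k}),\phi(x^{-}_{j,l})] = q_{\bar i}^{2}q^{-c_{ii}}\,[x^{-}_{i,k},x^{+}_{j,l}]$. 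Applying the $[x^{-},x^{+}]$ relation of $\mathcal{U}_{q^{-1}}$, whose denominator $q_{\bar i}^{-1}-q_{\bar i}$ flips sign against the commutator swap, and observing that $h_{i,r}\mapsto-h_{i,r}$ makes the map send $\phi^{\pm}_{i,r}\in\mathcal{U}_{q}$ to the corresponding element of $\mathcal{U}_{q^{-1}}$, the matching of both sides reduces (after tracking the $\sigma$-equivariance of $\phi^{\pm}$ and of the Kronecker deltas) to the identity $c_{ii}=2d_{\bar i}$, equivalently $q_{\bar i}^{2}q^{-c_{ii}}=1$. This is verified case by case: for $\sigma(i)\neq i$ with $C_{i,\sigma^{r}(i)}=0$ for $0<r<M$, $c_{ii}=2$ and $d_{\bar i}=1$; for $\sigma(i)=i$, $c_{ii}=2M=2d_{\bar i}$; and in the anomalous case $\bar i=\bar n$ of type $A_{2n}^{(2)}$ with $C_{n,\sigma(n)}=-1$, $c_{nn}=1$ and $d_{\bar n}=\tfrac{1}{2}$.

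For the Drinfeld current relation~\eqref{relationxx} and the quantum Serre relations, which involve currents of only one sign, the substitutions $q\leftrightarrow q^{-1}$ and $x^{+}\leftrightarrow x^{-}$ simply interchange the two $\pm$-versions of each relation; the extra $k$-conjugations contribute invertible scalars which cancel thanks to $q_{\bar j}^{\pm 2}$, and the $A_{2n}^{(2)}$-specific three-current relations are checked by direct substitution. An inverse is then provided by $\psi:\mathcal{U}_{q^{-1}}(\mathcal{L}\mathfrak{g}^{\sigma})\to\mathcal{U}_{q}(\mathcal{L}\mathfrak{g}^{\sigma})$ defined by the analogous formula with $q$ replaced by $q^{-1}$, a direct check on generators confirming $\psi\circ\phi=\mathrm{id}=\phi\circ\psi$. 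The main obstacle throughout is the $[x^{+},x^{-}]$ step: the coefficient $q_{\bar j}^{2}$ is tailored precisely so that $q_{\bar i}^{2}q^{-c_{ii}}=1$, and the anomalous node $\bar n$ of type $A_{2n}^{(2)}$, with $d_{\bar n}=\tfrac{1}{2}$, must be handled separately.
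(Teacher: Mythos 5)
Your proposal is correct and follows the same route as the paper, whose entire proof is the one-line remark that one checks the relations of Theorem~\ref{Drelations}; you have simply carried out that verification, correctly isolating the key scalar identity $q_{\bar i}^{2}q^{-\sum_{r}C_{i,\sigma^{r}(i)}}=1$ (including the anomalous node of $A_{2n}^{(2)}$) that makes the $[x^{+},x^{-}]$ relation go through.
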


\begin{proof}
	Check of the relations in Theorem~\ref{Drelations}.
\end{proof}

Lemma~\ref{morphism s} gives us a $\mathcal{U}_{q}(\mathcal{L}\mathfrak{g}^{\sigma})$-module structure on $R(M_k^-)$. As a $\mathcal{U}_{q}(\mathcal{L}\mathfrak{g}^{\sigma})$-module, it becomes a lowest $l$-weight module. Moreover, the actions of  $k_j^{-1}\phi^+_{j,l}$, $x^+_{j,l}$, $k_j^{-1}x^-_{j,l}$ in $\mathcal{U}_{q}(\mathcal{L}\mathfrak{g}^{\sigma})$ are respectively given by the actions of elements $k_j^{-1}\phi^+_{j,l}$, $q_{\bar{j}}^2 k_j^{-1} x^-_{j,l}$, $x^+_{j,l}$ in $\mathcal{U}_{q^{-1}}(\mathcal{L}\mathfrak{g}^{\sigma})$. Therefore, these operators are also of the form $C' + q^{2kN_i}D'$, where $C'$, $D'$ do not depend on $k$. As in Theorem~\ref{bmodule}, by considering the well-defined operators $C'$ on the inductive limit of $R(M_k^-)$, we get a $\mathcal{U}_{q}(\mathfrak{b}^{\sigma})$-module structure on this inductive limit. Denote this module by $W$ for a moment.

Using the antipode $S$ in Remark~\ref{remarkantipode}, its graded dual module $W^* := \oplus_{\omega \in \mathfrak{t}^*} (W_{\omega})^*$ is equipped with actions so that $(x.u)(v) = u(S^{-1}(x).v)$, $\forall u \in W^*, v \in W$. Denote the module $W^*$ by $U_{\bar{i}}^{\infty}$.

For the same reason as in \cite[Section~3.6]{hernandez2012asymptotic}, $U_{\bar{i}}^{\infty}$ is an $l$-highest weight module with highest $l$-weight $\mathbf{\Psi}^+_{\bar{i},1}$ defined in Definition~\ref{prefundamental}.

\begin{lemma}\label{L+inO}
	The positive prefundamental representation $L^+_{\bar{i},1}$ of $\mathcal{U}_{q}(\mathfrak{b}^{\sigma})$ lies in the category $\mathcal{O}$.
\end{lemma}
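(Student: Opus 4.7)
The plan is to mimic the argument for Lemma~\ref{L-inO}, using the module $U_{\bar{i}}^{\infty}$ constructed just above via the antiautomorphism of Lemma~\ref{morphism s} and graded duality. Concretely, $L_{\bar{i},1}^{+}$ will appear as the simple quotient of the submodule of $U_{\bar{i}}^{\infty}$ generated by an $l$-highest weight vector of weight $\mathbf{\Psi}^{+}_{\bar{i},1}$, and it suffices to show $U_{\bar{i}}^{\infty} \in \mathcal{O}$, because $\mathcal{O}$ is closed under submodules and quotients.

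First I would check that the $\mathcal{U}_q(\mathfrak{b}^{\sigma})$-module $W$ (the inductive limit of $R(M_k^-)$) is $\mathfrak{t}$-diagonalisable with finite-dimensional weight spaces. This is inherited from the KR-module structure of each $R(M_k^-)$ and from the fact that the transition maps $\varphi_{l,k}$ respect weight spaces; here one uses that only finitely many KR-weight spaces contribute to any fixed $\mathfrak{t}$-weight space of the limit, exactly as in Lemma~\ref{L-inO}. Moreover, $W$ is an $l$-lowest weight module, so its $\mathfrak{t}$-weights lie in $\bigcup_{s=1}^{N} \{\mu \geq \lambda_s\}$ for a finite set of weights; equivalently, the weights of $W$ are bounded below in the partial order \eqref{tpartialorder}.

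Next, I would pass to the graded dual $U_{\bar{i}}^{\infty} = W^{*} = \bigoplus_{\omega} (W_{\omega})^{*}$. Graded duality preserves finite-dimensionality of weight spaces and reverses the partial order on $\mathfrak{t}$-weights, so the weights of $U_{\bar{i}}^{\infty}$ are now bounded \emph{above} by finitely many elements. Combined with the fact (already recorded before the statement) that $U_{\bar{i}}^{\infty}$ is an $l$-highest weight module of highest $l$-weight $\mathbf{\Psi}^{+}_{\bar{i},1}$, this yields $U_{\bar{i}}^{\infty} \in \mathcal{O}$. Finally, $L^{+}_{\bar{i},1}$ is by definition the unique simple quotient of the $\mathcal{U}_q(\mathfrak{b}^{\sigma})$-submodule of $U_{\bar{i}}^{\infty}$ generated by a highest $l$-weight vector, hence lies in $\mathcal{O}$.

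I do not anticipate a genuine obstacle; the only slightly delicate point is the bookkeeping needed to transfer the structural properties of $W$ through the antiautomorphism of Lemma~\ref{morphism s} and through graded dualisation — in particular checking that the actions of the Drinfeld generators on $W$ are well-defined limit operators (this relies on the fact that $k_j^{-1}\phi^{+}_{j,l}$, $x^{+}_{j,l}$, $k_j^{-1}x^{-}_{j,l}$ have the asymptotic form $C' + q^{2kN_i}D'$ with $C',D'$ independent of $k$, which comes from Corollary~\ref{corollaryx-C} applied to $R(M_k^{-})$ over $\mathcal{U}_{q^{-1}}(\mathcal{L}\mathfrak{g}^{\sigma})$). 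Once this is in place, the category $\mathcal{O}$ assertion reduces to the weight estimates above, and the lemma follows exactly as in the negative case.
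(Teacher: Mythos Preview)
Your argument is correct and follows the same overall strategy as the paper: show $U_{\bar{i}}^{\infty}\in\mathcal{O}$ and deduce the result for its subquotient $L^{+}_{\bar{i},1}$. The only difference is in how the weight bounds for $U_{\bar{i}}^{\infty}$ are obtained. You argue indirectly via $W$: its weights are bounded below (coming from the renormalised lowest weight vector common to all $R(M_k^-)$), and graded duality with the antipode flips the partial order, so $U_{\bar{i}}^{\infty}=W^{*}$ has weights bounded above. The paper instead makes the single observation that, by construction, $V_{\bar{i}}^{\infty}$ and $U_{\bar{i}}^{\infty}$ are isomorphic as $\mathfrak{t}$-modules (the underlying inductive systems differ only by $q\leftrightarrow q^{-1}$, and the dual undoes the weight inversion introduced by Lemma~\ref{morphism s}), so $U_{\bar{i}}^{\infty}\in\mathcal{O}$ follows immediately from Lemma~\ref{L-inO}.

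Both routes are valid. The paper's shortcut is slightly more efficient and has the additional payoff of giving $\chi^{\sigma}(L^{+}_{\bar{i},1})=\chi^{\sigma}(L^{-}_{\bar{i},1})$ for free, a fact used later (e.g.\ in the proof of Theorem~\ref{irrcorol}). One small imprecision in your write-up: you assert that $W$ itself is an $l$-lowest weight $\mathcal{U}_q(\mathfrak{b}^{\sigma})$-module, but only the finite pieces $R(M_k^-)$ are known to be lowest $l$-weight (as $\mathcal{U}_q(\mathcal{L}\mathfrak{g}^{\sigma})$-modules). What you actually need and use is just that the $\mathfrak{t}$-weights of $W$ are bounded below, which follows directly from the inductive construction without any lowest-weight claim for $W$.
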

\begin{proof}
	Notice that, by definition, $V^{\infty}_{\bar{i}}$ and $U^{\infty}_{\bar{i}}$ are isomorphic as $\mathfrak{t}$-modules, so they have the same usual characters. Therefore, $U_{\bar{i}}^{\infty}$ is also in the category $\mathcal{O}$. By the same argument as in Lemma~\ref{L-inO}, $L^+_{\bar{i},1}$ is a subquotient of $U_{\bar{i}}^{\infty}$ and thus in the category $\mathcal{O}$.
\end{proof}

Therefore, combining Corollary~\ref{4.8}, Lemma~\ref{4.9}, Lemma~\ref{L-inO} and Lemma~\ref{L+inO}, we proved Theorem~\ref{maintheorem}.

\subsection{$q$-characters for the category $\mathcal{O}$}

The definition of $q$-characters for the category $\mathcal{O}$ in the non-twisted case \cite[Section~3.4]{hernandez2012asymptotic} applies also to the twisted case. Let us recall these definitions.

Let $\mathfrak{r}^{\sigma}$ be the set of $I$-tuples of rational functions $(\Psi_i(u))_{i \in I}$ such that $\Psi_i(0) \neq 0$ and $\Psi_{\sigma(i)}(u) = \Psi_i(\omega u)$ for all $i \in I$. For $\mathbf{\Psi} = (\Psi_i(u))_{i \in I} \in \mathfrak{r}^{\sigma}$, write $\varpi(\mathbf{\Psi}) = (\Psi_i(0))_{i \in I}$ identified with the element in $\mathfrak{t}^*$ so that $\varpi(\mathbf{\Psi})(k_i^{\pm}) = \Psi_i(0)^{\pm 1}$.

Let $\mathcal{E}_l^{\sigma}$ be the set of maps $c: \mathfrak{r}^{\sigma} \to \mathbb{Z}$ satisfying:
\begin{itemize}
	\item	$c(\mathbf{\Psi}) = 0$ for all $\mathbf{\Psi}$ such that $\varpi(\mathbf{\Psi})$ is outside $\cup_{i=1}^s D(\lambda_i)$, where $\lambda_1,\cdots,\lambda_s$ are finitely many elements in $\mathfrak{t}^*$,
	
	\item   for each $\omega \in \mathfrak{t}^*$, there are finitely many $\mathbf{\Psi}$ such that $\varpi(\mathbf{\Psi}) = \omega$ and $c(\mathbf{\Psi}) \neq 0$.
\end{itemize}

\begin{definition}
	We equip $\mathcal{E}_l^{\sigma}$ a ring structure as follows:
	
	For $c_1,c_2 \in \mathcal{E}_l^{\sigma}$, the addition 
	\[(c_1+c_2)(\mathbf{\Psi}) := c_1(\mathbf{\Psi}) + c_2(\mathbf{\Psi}), \forall \mathbf{\Psi} \in \mathfrak{r},\]
	
	and the multiplication
	\[(c_1\cdot c_2)(\mathbf{\Psi}) := \sum_{\mathbf{\Psi}^{(1)} \mathbf{\Psi}^{(2)} = \mathbf{\Psi}} c_1(\mathbf{\Psi}^{(1)})c_2(\mathbf{\Psi}^{(2)}), \]
	where $\mathbf{\Psi}^{(1)} \mathbf{\Psi}^{(2)} = \mathbf{\Psi}$ means $\Psi^{(1)}_i(u) \Psi^{(2)}_i(u) = \Psi_i(u)$ for all $i \in I$. The sum is finite by the defining condition of $\mathcal{E}_l^{\sigma}$.
\end{definition}

For $\mathbf{\Psi} \in \mathfrak{r}^{\sigma}$, define $[\mathbf{\Psi}] \in \mathcal{E}_l^{\sigma}$ to be the map $c$ such that $c(\mathbf{\Psi}) = 1$ and $c(\mathbf{\Psi}') = 0$, $\forall \mathbf{\Psi}' \neq \mathbf{\Psi}$. Clearly they are linearly independent, and we have $[\mathbf{\Psi}]\cdot [\mathbf{\Psi}'] = [\mathbf{\Psi} \mathbf{\Psi}']$.

\begin{definition}
	As we have noticed in Lemma~\ref{rational} that for $V \in \mathcal{O}$, the $l$-weights of $V$ are rational functions, we define the $q$-character of $V$ to be 
	\[\chi_q^{\sigma}(V) = \sum_{\mathbf{\Psi} \in \mathfrak{r}^{\sigma}} \mathrm{dim}(V_{\mathbf{\Psi}})[\mathbf{\Psi}],\]
	where $V_{\mathbf{\Psi}}$ is the generalized eigenspace of $(\phi_i^+(u))_{i \in I}$ with eigenvalue $(\Psi_i(u))_{i \in I}$.
\end{definition}

By Remark~\ref{remark3.6}, $\chi_q^{\sigma}(V)$ is an element in $\mathcal{E}_l^{\sigma}$. We can recover the usual character from the $q$-character by applying $\varpi$:
\[\chi^{\sigma}(V) = \sum_{\mathbf{\Psi} \in \mathfrak{r}^{\sigma}} \mathrm{dim}(V_{\mathbf{\Psi}})[\varpi(\mathbf{\Psi})].\]

Since representations in the category may have infinite length, we define the Grothendieck ring $K_0(\mathcal{O})$ of the category $\mathcal{O}$ in the following way as in \cite[Section~9]{kac1990infinite}. 

\begin{definition}
	The elements of the ring $K_0(\mathcal{O})$ are formal sums
	\[\sum_{\mathbf{\Psi} \in \mathfrak{r}^{\sigma}} \lambda_{\mathbf{\Psi}} [L(\mathbf{\Psi})] \]
	satisfying 
	\[\sum_{\omega \in \mathfrak{t}^*} \sum_{\mathbf{\Psi} \in \mathfrak{r}^{\sigma}} |\lambda_{\mathbf{\Psi}}| \mathrm{dim}(L(\mathbf{\Psi})_{\omega})[\omega]\]
	is in $\mathcal{E}^{\sigma}$,
	where the coefficients $\lambda_{\mathbf{\Psi}} \in \mathbb{Z}$. 
	
	For any representation $V \in \mathcal{O}$, the multiplicity of simple object $L(\mathbf{\Psi})$ in $V$ is well-defined as in \cite[Section~9]{kac1990infinite}, denoted by $[V:L(\mathbf{\Psi})] \in \mathbb{N}$. The class of $V$ in the Grothendieck ring is defined to be
	\[[V] = \sum_{\mathbf{\Psi} \in \mathfrak{r}^{\sigma}}[V:L(\mathbf{\Psi})][L(\mathbf{\Psi})].\]
	
	The ring structure on $K_0(\mathcal{O})$ is defined by $[L(\mathbf{\Psi})]\cdot [L(\mathbf{\Psi}')] = [L(\mathbf{\Psi}) \otimes L(\mathbf{\Psi}')]$ on simples.
\end{definition}

\begin{theorem}\label{isomorphism}
	The $q$-character defines a ring isomorphism
	\[\chi_q^{\sigma} : K_0(\mathcal{O}) \to \mathcal{E}_l^{\sigma}.\]
\end{theorem}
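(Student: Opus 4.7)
The plan is to verify in turn that $\chi_q^{\sigma}$ is well-defined on the completed Grothendieck ring $K_0(\mathcal{O})$, that it is a ring homomorphism, and that it is bijective. For any $V\in\mathcal{O}$, the support of $\chi_q^{\sigma}(V)$ in $\mathfrak{r}^{\sigma}$ projects under $\varpi$ into a finite union of cones $D(\lambda_i)$ and each generalized joint eigenspace $V_{\mathbf{\Psi}}$ is finite-dimensional, so $\chi_q^{\sigma}(V)\in\mathcal{E}_l^{\sigma}$. For a general formal sum $\sum_{\mathbf{\Psi}}\lambda_{\mathbf{\Psi}}[L(\mathbf{\Psi})]\in K_0(\mathcal{O})$, the defining finiteness condition of $K_0(\mathcal{O})$ is precisely what is needed for $\sum_{\mathbf{\Psi}}\lambda_{\mathbf{\Psi}}\chi_q^{\sigma}(L(\mathbf{\Psi}))$ to lie in $\mathcal{E}_l^{\sigma}$.

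For the ring homomorphism property, additivity on short exact sequences in $\mathcal{O}$ is immediate from additivity of generalized eigenspace dimensions, and extends to formal sums. Multiplicativity reduces, by the definition of the product in $K_0(\mathcal{O})$, to the equality $\chi_q^{\sigma}(V\otimes W)=\chi_q^{\sigma}(V)\cdot\chi_q^{\sigma}(W)$ for $V,W\in\mathcal{O}$. The key input is the refined coproduct formula of Proposition~\ref{coproduct}, in particular the congruence $\Delta(\phi^+_{i,k})\equiv\sum_{j=0}^{k}\phi^+_{i,k-j}\otimes\phi^+_{i,j}\pmod{\mathcal{U}_q(\mathcal{L}\mathfrak{g}^{\sigma})\otimes\mathcal{U}_q(\mathcal{L}\mathfrak{g}^{\sigma})X^+}$. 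Choosing a refinement of \eqref{tpartialorder} to a total order on weights of $V$ and $W$, the action of $\phi^+_i(u)$ on $V\otimes W$ becomes upper-triangular in a compatible basis with diagonal entries $\Psi^{(1)}_i(u)\Psi^{(2)}_i(u)$, so collecting dimensions of generalized joint eigenspaces produces the product of the $q$-characters.

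For bijectivity I would exploit triangularity. For every $\mathbf{\Psi}$, the $l$-weight space $L(\mathbf{\Psi})_{\mathbf{\Psi}}$ is one-dimensional and every other $\mathbf{\Psi}''$ in the support of $\chi_q^{\sigma}(L(\mathbf{\Psi}))$ satisfies $\varpi(\mathbf{\Psi}'')<\varpi(\mathbf{\Psi})$ strictly, since any other weight is obtained from the highest one by applying lowering operators. Injectivity follows by a standard maximal-element argument: if $\sum_{\mathbf{\Psi}}\lambda_{\mathbf{\Psi}}\chi_q^{\sigma}(L(\mathbf{\Psi}))=0$ with some $\lambda_{\mathbf{\Psi}_0}\neq 0$, pick $\mathbf{\Psi}_0$ with $\varpi(\mathbf{\Psi}_0)$ maximal among those with nonzero coefficient; then $[\mathbf{\Psi}_0]$ appears in the image with coefficient $\lambda_{\mathbf{\Psi}_0}$, a contradiction. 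For surjectivity I invert the triangular system: given $c\in\mathcal{E}_l^{\sigma}$ with nonzero value on some $\mathbf{\Psi}$, pick $\mathbf{\Psi}$ of maximal $\varpi$ in the support, set $\lambda_{\mathbf{\Psi}}=c(\mathbf{\Psi})$, replace $c$ by $c-c(\mathbf{\Psi})\chi_q^{\sigma}(L(\mathbf{\Psi}))$, and iterate.

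The main obstacle, as in the non-twisted case, is not any individual step but controlling the bookkeeping of this inductive procedure so that the resulting family $\{\lambda_{\mathbf{\Psi}}\}$ defines an element of $K_0(\mathcal{O})$ and so that the iterative subtraction converges in $\mathcal{E}_l^{\sigma}$. This uses two facts: subtracting $\chi_q^{\sigma}(L(\mathbf{\Psi}))$ only introduces new $\mathbf{\Psi}''$ with $\varpi(\mathbf{\Psi}'')<\varpi(\mathbf{\Psi})$ strictly, so the $\varpi$-support stays inside the original finite union of cones $\cup D(\lambda_i)$; and for each fixed $\omega\in\mathfrak{t}^*$ only finitely many $\mathbf{\Psi}$ with $\varpi(\mathbf{\Psi})=\omega$ are touched during the entire process, by a descending chain argument in the $\mathbb{N}$-span of simple roots. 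These two observations together ensure that the coefficients $\{\lambda_{\mathbf{\Psi}}\}$ satisfy the defining finiteness condition of $K_0(\mathcal{O})$, finishing the proof.
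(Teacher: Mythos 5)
Your proposal is correct and follows essentially the same route as the paper: the homomorphism property from the refined coproduct formulas of Proposition~\ref{coproduct}, injectivity by the maximal-weight/triangularity argument, and surjectivity by inverting the triangular system layer by layer inside the fixed union of cones. The paper merely organizes your ``descending chain'' bookkeeping more explicitly, inducting on the quantity $N(S,\varpi(\mathbf{\Psi}))$ measuring the distance of $\varpi(\mathbf{\Psi})$ from the boundary of $\cup_j D(\lambda_j)$, which makes the finiteness of the coefficients at each $\mathfrak{t}$-weight immediate.
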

\begin{proof}
	$\chi_q^{\sigma}$ is a ring homomorphism follows from the coproduct formulas Proposition~\ref{coproduct}.
	
	For injectivity, suppose $\sum_{\mathbf{\Psi} \in \mathfrak{r}^{\sigma}} \lambda_{\mathbf{\Psi}} [L(\mathbf{\Psi})] \neq 0 \in K_0(\mathcal{O})$, but $\chi_q(\sum_{\mathbf{\Psi} \in \mathfrak{r}^{\sigma}} \lambda_{\mathbf{\Psi}} [L(\mathbf{\Psi})]) = 0$. Since its $\mathfrak{t}$-weights are dominated by finitely many elements $\lambda_1,\cdots,\lambda_s \in \mathfrak{t}^*$, we can choose a $\mathbf{\Psi}' \in \mathfrak{r}^{\sigma}$ such that $\lambda_{\mathbf{\Psi}'} \neq 0$ and $\varpi(\mathbf{\Psi}')$ is maximal. Then for the element $\chi_q(\sum_{\mathbf{\Psi} \in \mathfrak{r}^{\sigma}} \lambda_{\mathbf{\Psi}} [L(\mathbf{\Psi})]) \in \mathcal{E}_{l}^{\sigma}$, its action on $\mathbf{\Psi}'$ is $\lambda_{\mathbf{\Psi}'} \neq 0$. we get a contradiction.
	
	In the following, we prove the surjectivity. 
	
	For any element $S \in \mathcal{E}_l^{\sigma}$, by definition of $\mathcal{E}_l^{\sigma}$, we can choose and fix $\lambda_1,\cdots,\lambda_s \in \mathfrak{t}^*$ such that $S(\mathbf{\Psi}) = 0$ for all $\mathbf{\Psi}$ such that $\varpi(\mathbf{\Psi}) \notin \cup_{j=1}^s D(\lambda_i)$.
	
	Now we need to find a formal sum $\sum_{\mathbf{\Psi} \in \mathfrak{r}^{\sigma}} \lambda_{\mathbf{\Psi}} [L(\mathbf{\Psi})]$ in $K_0(\mathcal{O})$ whose $q$-character equals to $S$. That is, we need to determine each coefficient $\lambda_{\mathbf{\Psi}} \in \mathbb{Z}$.
	
	Consider $\varpi(\mathbf{\Psi}) \in \mathfrak{t}^*$. If $\varpi(\mathbf{\Psi}) \notin \cup_{j=1}^s D(\lambda_i)$, we set $\lambda_{\mathbf{\Psi}} =0$. If $\varpi(\mathbf{\Psi}) \in \cup_{j=1}^s D(\lambda_i)$, then there is a smallest natural number $N(S,\varpi(\mathbf{\Psi}))$ so that 
	\[\varpi(\mathbf{\Psi}) + \sum_{\bar{i} \in I_{\sigma}} n_{\bar{i}}\alpha_{\bar{i}} \notin \cup_{j=1}^s D(\lambda_i)\]
	when $\sum_{\bar{i} \in I_{\sigma}} n_{\bar{i}} > N(S,\varpi(\mathbf{\Psi}))$, $n_{\bar{i}} \in \mathbb{N}$.
	
	We define $\lambda_{\mathbf{\Psi}}$ by induction on $N(S,\varpi(\mathbf{\Psi}))$:
	
	If $N(S,\varpi(\mathbf{\Psi})) = 0$, we set $\lambda_{\mathbf{\Psi}} = S(\mathbf{\Psi})$. By definition of $\mathcal{E}_l^{\sigma}$, there are only finitely many $\mathbf{\Psi}$ with $N(S,\varpi(\mathbf{\Psi})) = 0$ such that $\lambda_{\mathbf{\Psi}} \neq 0$. Therefore, the sum 
	\[S_1 : = S - \sum_{\mathbf{\Psi} \vert N(S,\varpi(\mathbf{\Psi})) < 1}\lambda_{\mathbf{\Psi}} \chi_q^{\sigma}(L(\mathbf{\Psi}))\]
	is still an element in $\mathcal{E}_l^{\sigma}$. Moreover, notice that $S_1(\mathbf{\Psi}) = 0$ for all $\mathbf{\Psi}$ with $N(S,\varpi(\mathbf{\Psi})) = 0$.
	
	For $N_0 > 0$, we prove by induction: assume all $\lambda_{\mathbf{\Psi}}$ are defined when $N(S,\varpi(\mathbf{\Psi})) < N_0$ and such that the sum
	\[S_{N_0} : = S - \sum_{\mathbf{\Psi} \vert N(S,\varpi(\mathbf{\Psi})) < N_0}\lambda_{\mathbf{\Psi}} \chi_q^{\sigma}(L(\mathbf{\Psi}))\] 
	is still an element in $\mathcal{E}_l^{\sigma}$ such that $S_{N_0}(\mathbf{\Psi}) = 0$ for all $\mathbf{\Psi}$ with $N(S,\varpi(\mathbf{\Psi})) = N_0 -1$. 
	
	For any $\mathbf{\Psi}_0$ so that $N(S,\varpi(\mathbf{\Psi})) = N_0$, we set $\lambda_{\mathbf{\Psi}_0} = S_{N_0}(\mathbf{\Psi}_0)$. By definition of $\mathcal{E}_l^{\sigma}$, there are only finitely many $\mathbf{\Psi}$ with $N(S,\varpi(\mathbf{\Psi})) = N_0$ such that $\lambda_{\mathbf{\Psi}} \neq 0$. Therefore, 
	\[S_{N_0 +1} : = S - \sum_{\mathbf{\Psi} \vert N(S,\varpi(\mathbf{\Psi})) < N_0 +1}\lambda_{\mathbf{\Psi}} \chi_q^{\sigma}(L(\mathbf{\Psi})) = S_{N_0} - \sum_{\mathbf{\Psi} \vert N(S,\varpi(\mathbf{\Psi})) = N_0 }\lambda_{\mathbf{\Psi}} \chi_q^{\sigma}(L(\mathbf{\Psi}))\]
	is still an element in $\mathcal{E}_l^{\sigma}$. Moreover, 
	\[\begin{split}
		S_{N_0 + 1}(\mathbf{\Psi}_0)&  = (S_{N_0} - \sum_{\mathbf{\Psi} \vert N(S,\varpi(\mathbf{\Psi})) = N_0 }\lambda_{\mathbf{\Psi}} \chi_q^{\sigma}(L(\mathbf{\Psi})))(\mathbf{\Psi}_0) \\
		& = \lambda_{\mathbf{\Psi}_0} - \sum_{\mathbf{\Psi} \vert N(S,\varpi(\mathbf{\Psi})) = N_0 }\lambda_{\mathbf{\Psi}} \chi_q^{\sigma}(L(\mathbf{\Psi}))(\mathbf{\Psi}_0) \\
		& = 0.
	\end{split}\]
	
	Therefore, the formal sum $\sum_{\mathbf{\Psi} \in \mathfrak{r}^{\sigma}} \lambda_{\mathbf{\Psi}} [L(\mathbf{\Psi})]$ is well-defined. This sum lies in $K_0(\mathcal{O})$ because for each $\omega \in \mathfrak{t}^*$, if $\omega \notin \cup_{j=1}^s D(\lambda_i)$, then
	$ \sum_{\mathbf{\Psi} \in \mathfrak{r}^{\sigma}} |\lambda_{\mathbf{\Psi}}| \mathrm{dim}(L(\mathbf{\Psi})_{\omega}) = 0$. 
	If $\omega \in \cup_{j=1}^s D(\lambda_i)$ and $N(S,\omega) = N$, then
	\[ \sum_{\mathbf{\Psi} \in \mathfrak{r}^{\sigma}} |\lambda_{\mathbf{\Psi}}| \mathrm{dim}(L(\mathbf{\Psi})_{\omega}) 
	= \sum_{\mathbf{\Psi} \in \mathfrak{r}^{\sigma} \vert N(S,\varpi(\mathbf{\Psi})) \leq N} |\lambda_{\mathbf{\Psi}}| \mathrm{dim}(L(\mathbf{\Psi})_{\omega}) < \infty.\]
	
	Finally, $\sum_{\mathbf{\Psi} \in \mathfrak{r}^{\sigma}} \lambda_{\mathbf{\Psi}} [L(\mathbf{\Psi})]$ is mapped to $S$ under $\chi_q^{\sigma}$ since for each $\mathbf{\Psi}_0 \in \mathfrak{r}^{\sigma}$, 
	\begin{itemize}
		\item if $\varpi(\mathbf{\Psi}_0) \notin \cup_{j=1}^s D(\lambda_i)$, then
		$ \sum_{\mathbf{\Psi} \in \mathfrak{r}^{\sigma}} \lambda_{\mathbf{\Psi}} \chi_q^{\sigma}(L(\mathbf{\Psi}))(\mathbf{\Psi}_0) = 0 = S(\mathbf{\Psi}_0)$; \item if $\varpi(\mathbf{\Psi}_0) \in \cup_{j=1}^s D(\lambda_i)$ and $N(S,\varpi(\mathbf{\Psi}_0)) = N$, then 
		\[ \sum_{\mathbf{\Psi} \in \mathfrak{r}^{\sigma}} \lambda_{\mathbf{\Psi}} \chi_q^{\sigma}(L(\mathbf{\Psi})) (\mathbf{\Psi}_0) 
		= \sum_{\mathbf{\Psi} \in \mathfrak{r}^{\sigma} \vert N(S,\varpi(\mathbf{\Psi})) \leq N} \lambda_{\mathbf{\Psi}} \chi_q^{\sigma}(L(\mathbf{\Psi}))(\mathbf{\Psi}_0) = (S - S_{N+1})(\mathbf{\Psi}_0) = S(\mathbf{\Psi}_0).\]
	\end{itemize}
	
\end{proof}

Since the ring $\mathcal{E}_l^{\sigma}$ is commutative by definition, we get
\begin{corollary}
	The Grothendieck ring $K_0(\mathcal{O})$ is commutative.
\end{corollary}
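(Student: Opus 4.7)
The plan is to deduce this immediately from Theorem~\ref{isomorphism}, which gives a ring isomorphism $\chi_q^{\sigma} : K_0(\mathcal{O}) \to \mathcal{E}_l^{\sigma}$. Since commutativity is transferred across ring isomorphisms, it suffices to verify that the target ring $\mathcal{E}_l^{\sigma}$ is commutative. This reduces the corollary to an inspection of the multiplication defined on $\mathcal{E}_l^{\sigma}$.

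The multiplication on $\mathcal{E}_l^{\sigma}$ was defined by the convolution formula
\[
(c_1 \cdot c_2)(\mathbf{\Psi}) = \sum_{\mathbf{\Psi}^{(1)} \mathbf{\Psi}^{(2)} = \mathbf{\Psi}} c_1(\mathbf{\Psi}^{(1)}) c_2(\mathbf{\Psi}^{(2)}),
\]
where the product $\mathbf{\Psi}^{(1)}\mathbf{\Psi}^{(2)}$ in $\mathfrak{r}^{\sigma}$ is the componentwise product of rational functions, which is manifestly commutative. Swapping the roles of $\mathbf{\Psi}^{(1)}$ and $\mathbf{\Psi}^{(2)}$ in the indexing set of the sum and using commutativity of the product in $\mathbb{Z}$ then gives $(c_1 \cdot c_2)(\mathbf{\Psi}) = (c_2 \cdot c_1)(\mathbf{\Psi})$ for every $\mathbf{\Psi} \in \mathfrak{r}^{\sigma}$. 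Hence $\mathcal{E}_l^{\sigma}$ is commutative.

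Applying $(\chi_q^{\sigma})^{-1}$ then yields commutativity of $K_0(\mathcal{O})$. There is no real obstacle here; the content lies entirely in Theorem~\ref{isomorphism}, and the corollary is a one-line consequence once the multiplication on $\mathcal{E}_l^{\sigma}$ is examined. One might note in passing that this commutativity is a nontrivial statement on the level of representations, because $L(\mathbf{\Psi}) \otimes L(\mathbf{\Psi}')$ and $L(\mathbf{\Psi}') \otimes L(\mathbf{\Psi})$ need not be isomorphic as $\mathcal{U}_q(\mathfrak{b}^{\sigma})$-modules; only their classes in $K_0(\mathcal{O})$ agree.
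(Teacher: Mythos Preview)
Your proof is correct and follows exactly the paper's approach: deduce commutativity of $K_0(\mathcal{O})$ from the ring isomorphism $\chi_q^{\sigma}$ of Theorem~\ref{isomorphism} together with the commutativity of $\mathcal{E}_l^{\sigma}$. The paper simply states that $\mathcal{E}_l^{\sigma}$ is commutative by definition, whereas you have spelled out the verification via the convolution formula, but the argument is the same.
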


Notice that a $\mathcal{U}_q(\mathcal{L}\mathfrak{g}^{\sigma})$-module is naturally a $\mathcal{U}_q(\mathfrak{b}^{\sigma})$-module by restriction. We remark that $\mathcal{Z}$ is identified with a subring of $\mathcal{E}^{\sigma}_l$ in the following way.

$Z_{\bar{i},a}$ is identified with $[\mathbf{\Psi}] = [(\Psi_j(u))_{j \in I}]$, where
\[\begin{split}
	&\Psi_{\sigma^k(i)}(u) = q\frac{1-a\omega^kuq^{-1}}{1-a\omega^kuq}, 0 \leq k \leq M-1, \\
	&\Psi_j(u) = 1, j \notin \bar{i},
\end{split}
\] 
if $i \neq \sigma(i)$, and
\[\begin{split}
	&\Psi_{i}(u) = q^M\frac{1-a^Mu^Mq^{-M}}{1-a^Mu^Mq^M}, \\
	&\Psi_j(u) = 1, j \notin \bar{i},
\end{split}
\] 
if $i = \sigma(i)$.

Then the following diagram commutes

\begin{equation}\label{identifyZ}
	\begin{tikzcd}
		\mathrm{Rep}(\mathcal{U}_q(\mathcal{L}\mathfrak{g}^{\sigma})) \arrow{r}{\chi_q^{\sigma}} \arrow{d}{res} & \mathcal{Z} \arrow{d} \\
		K_0(\mathcal{O}) \arrow{r}{\chi_q^{\sigma}} & \mathcal{E}_l^{\sigma} \\
	\end{tikzcd}.
\end{equation}

In particular, we will use the notations $Z_{\bar{i},a}$ and $A_{\bar{i},a}$ as elements in $\mathcal{E}_l^{\sigma}$ without ambiguity.

\subsection{Irreducibility of asymptotic representations}
We have constructed in Section~\ref{subsectionprefund} the prefundamental representations, where $L^-_{\bar{i},1}$ is a subquotient of $V^{\infty}_{\bar{i}}$ and $L^+_{\bar{i},1}$ is a subquotient of $U^{\infty}_{\bar{i}}$. In fact, the asymptotic representations are irreducible. This is useful to study the $q$-character of prefundamental representations.

\begin{theorem}
	The $\mathcal{U}_q(\mathfrak{b}^{\sigma})$-module $V^{\infty}_{\bar{i}}$ and $U^{\infty}_{\bar{i}}$ are irreducible. In particular, $L^-_{\bar{i},1}$ is isomorphic to $V^{\infty}_{\bar{i}}$ and $L^+_{\bar{i},1}$ is isomorphic to $U^{\infty}_{\bar{i}}$.
\end{theorem}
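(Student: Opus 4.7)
My plan is to treat $V^{\infty}_{\bar{i}}$ first; the case of $U^{\infty}_{\bar{i}}$ follows by the symmetric construction via the graded dual in Section~\ref{subsectionprefund}. I will proceed in two stages: establishing cyclicity of $v_0$, and identifying the class of $V^{\infty}_{\bar{i}}$ in $K_0(\mathcal{O})$.

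\textbf{Cyclicity.} Given any vector $v \in V^{\infty}_{\bar{i}}$ realized inside some $L(M_l)$, the irreducibility of the KR-module $L(M_l)$ as a $\mathcal{U}_q(\mathcal{L}\mathfrak{g}^{\sigma})$-module yields a Drinfeld expression $v = \xi \cdot v_0^{(l)}$. The task is to promote $\xi$ to a Borel element $\tilde{\xi} \in \mathcal{U}_q(\mathfrak{b}^{\sigma})$ whose action on $v_0 \in V^{\infty}_{\bar{i}}$ coincides with the class of $\xi \cdot v_0^{(l)}$; I would do this by extracting the constant-in-$q^{-2k}$ (respectively $q^{-2kM}$) terms of the compatible family of actions recorded in Corollary~\ref{corollaryx-C} and Corollary~\ref{corollaryeepsilon}. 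The generators $x^+_{j,r}$ with $r \geq 0$, $x^-_{j,r}$ with $r \geq 1$, and $\phi^+_{j,r}$ with $r \geq 0$ are directly available inside the Borel; the missing Drinfeld piece $x^-_{j,0}$ has to be accessed through the affine generator $e^+_{\epsilon}$, whose iterated-commutator expression from \cite[Proposition~2.1]{jing1999vertex} supplies the zero-index contribution in the asymptotic constant term, exactly as in the proof of Corollary~\ref{corollaryeepsilon}.

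\textbf{Identification.} Cyclicity shows that $V^{\infty}_{\bar{i}}$ is an $l$-highest weight module of highest $l$-weight $\mathbf{\Psi}^-_{\bar{i},1}$, so $L^-_{\bar{i},1}$ is its unique irreducible quotient, and in $K_0(\mathcal{O})$ we may write $[V^{\infty}_{\bar{i}}] = [L^-_{\bar{i},1}] + [N]$, where every simple constituent of $[N]$ has $\varpi$-image strictly below the $\mathfrak{t}$-weight $1$. Using the ring isomorphism $\chi_q^{\sigma}$ of Theorem~\ref{isomorphism} together with the explicit formula for $\chi_q^{\sigma}(V^{\infty}_{\bar{i}})$ read off from Definition~\ref{phiaction} (applied to the limit of the KR $q$-characters along the inductive system), I would argue $[N] = 0$: the $l$-weight $\mathbf{\Psi}^-_{\bar{i},1}$ occurs with multiplicity one in $\chi_q^{\sigma}(V^{\infty}_{\bar{i}})$, and the dominance order on $\mathfrak{r}^{\sigma}$, together with the multiplicity count coming from the known KR-character asymptotics, leaves no room for an additional simple constituent below the top.

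The main obstacle I expect is the cyclicity step, specifically the passage from a Drinfeld word $\xi$ on $L(M_l)$ to a Borel element $\tilde{\xi}$ on the inductive limit. The delicate point is handling the $x^-_{j,0}$ contributions, which requires careful bookkeeping of the expansion in powers of $q^{-2k}$ (respectively $q^{-2kM}$ at nodes with $i = \sigma(i)$), and particularly at the node $\bar{n}$ in type $A_{2n}^{(2)}$ where the usual $\mathfrak{sl}_2$-reduction arguments from the non-twisted setting break down and one is forced to work directly with the relevant Damiani commutators.
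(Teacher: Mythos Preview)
Your two-step plan (cyclicity, then identification in $K_0(\mathcal{O})$) is a coherent strategy, but the identification step as written is not an argument: the phrase ``leaves no room for an additional simple constituent below the top'' asserts exactly what has to be proved. At this point in the paper you have no independent description of $\chi_q^{\sigma}(L^-_{\bar i,1})$ --- the limit formula $\chi_q^{\sigma}(L^-_{\bar i,1}) = [\mathbf{\Psi}^-_{\bar i,1}]\lim_k \chi_q^{\sigma}(L(M_k))/M_k$ appearing later in Theorem~\ref{irrcorol} is a \emph{consequence} of the irreducibility you are trying to establish, so invoking it here would be circular. Knowing only $\chi_q^{\sigma}(V^{\infty}_{\bar i})$ and that $[\mathbf{\Psi}^-_{\bar i,1}]$ occurs with multiplicity one tells you $[L^-_{\bar i,1}]$ appears once; it does not by itself exclude further constituents $L(\mathbf{\Psi}^-_{\bar i,1}m)$ with $m$ a nontrivial monomial in $A^{-1}$'s, because you have no control over $\chi_q^{\sigma}$ of those modules.

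The argument the paper defers to (Hernandez--Jimbo, \cite[Theorems~6.1,~6.3]{hernandez2012asymptotic}) closes this gap differently, and more directly: one shows that the only vector in $V^{\infty}_{\bar i}$ annihilated by all $x^+_{j,m}$ with $m\geq 0$ is (a scalar multiple of) $v_0$. The mechanism is exactly Corollary~\ref{x+action}: the $x^+_{j,m}$ ($m\geq 0$) actions are compatible with the maps $\varphi_{l,k}$, so a putative singular vector $v$ in $V^{\infty}_{\bar i}$ would give a non-highest vector of $L(M_l)$ killed by all $x^+_{j,m}$ ($m\geq 0$), contradicting the simplicity of $L(M_l)$ as a $\mathcal{U}_q(\mathfrak{b}^{\sigma})$-module. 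This replaces your $K_0$-step entirely, and combined with cyclicity yields irreducibility. Note also that this part of the HJ argument does not use $\widehat{\mathfrak{sl}}_2$-reductions, so your stated concern about the $A_{2n}^{(2)}$ node $\bar n$ is not actually where the difficulty lies for this theorem. Your cyclicity sketch has a separate, smaller gap: you only address $x^-_{j,0}$, but the Drinfeld word $\xi$ on $L(M_l)$ may involve $x^-_{j,r}$ with $r<0$ as well, and those are not in the Borel either.
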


\begin{proof}
	The proof follows word by word as \cite[Theorem~6.1, Theorem~6.3]{hernandez2012asymptotic}.
\end{proof}

Since the usual characters of $V^{\infty}_{\bar{i}}$ and $U^{\infty}_{\bar{i}}$ are equal, the usual characters of $L^-_{\bar{i},1}$ and $L^+_{\bar{i},1}$ are equal.

\subsection{Relation with non-twisted case}
Using the notation $\mathcal{E}_l$ and $\chi_q$ in \cite[Section~3.4]{hernandez2012asymptotic} for non-twisted case, we extend the folding map $\pi$ in Section~\ref{sectionrelation} to the ring homomorphism
\[\begin{split}
	\pi : \mathcal{E}_l & \to \mathcal{E}_l^{\sigma}\\
	[\mathbf{\Psi}] & \to [\pi_0(\mathbf{\Psi})],
\end{split}\] where 
\[\pi_0((\Psi_i(u))_{i \in I}) = (\prod_{r=0}^{M-1} \Psi_{\sigma^r(i)}(\omega^{r} u))_{i \in I}.\]

In particular, $\pi([\omega_i]) = [\omega_{\bar{i}}]$ and $\pi([\alpha_i]) = [\alpha_{\bar{i}}]$.

\begin{remark}\label{remarksubring}
	Identify $\mathbb{Z}[Y_{i,a}^{\pm 1}]$ as a subring of $\mathcal{E}_l$ by the map $Y_{i,a} \mapsto [\mathbf{\Psi}]$ with $\Psi_i(u) = q\frac{1-uq^{-1}}{1-uq}$ and $\Psi_j(u) = 1$, $\forall j \neq i$. Identify  $\mathcal{Z}$ as a subring of $\mathcal{E}_l^{\sigma}$ as in the diagram \eqref{identifyZ}. Then the following diagram commutes:
	\[\begin{tikzcd}
		\mathbb{Z}[Y_{i,a}^{\pm 1}] \arrow{r}{} \arrow{d}{\pi} & \mathcal{E}_l \arrow{d}{\pi} \\
		\mathcal{Z} \arrow{r}{} & \mathcal{E}_l^{\sigma} \\
	\end{tikzcd},
	\]
	where the left arrow is defined in Section~\ref{sectionrelation}.
\end{remark}

Since $\chi_q$ and $\chi_q^{\sigma}$ are ring isomorphisms, there is a well-defined ring homomorphism $\bar{\pi}: K_0(\mathcal{O}(\mathcal{U}_q(\mathfrak{b})) \to K_0(\mathcal{O}(\mathcal{U}_q(\mathfrak{b}^{\sigma}))$ such that the following diagram commutes
\[\begin{tikzcd}
	K_0(\mathcal{O}(\mathcal{U}_q(\mathfrak{b})) \arrow{r}{\chi_q} \arrow{d}{\bar{\pi}} & \mathcal{E}_l \arrow{d}{\pi} \\
	K_0(\mathcal{O}(\mathcal{U}_q(\mathfrak{b}^{\sigma})) \arrow{r}{\chi_q^{\sigma}} & \mathcal{E}_l^{\sigma} \\
\end{tikzcd}.
\]
%\end{corollary}

%Recall in non-twisted case, we also have decomposition
%\[\mathcal{E}_l = \bigotimes_{a \in \mathbb{C}^*/q^{\mathbb{Z}}} (\mathcal{E}_l)_a,\]
%where $(\mathcal{E}_l)_a$ is ring of maps $c: \mathfrak{r} \to \mathbb{Z}$ which is supported on the subset of $\mathbf{\Psi}$ whose roots and poles are in $aq^{\mathbb{Z}}$.
%And similarly 
%\[K_0(\mathcal{O}(\mathcal{U}_q(\mathfrak{b}))) = \bigotimes_{a \in \mathbb{C}^*/q^{\mathbb{Z}}} K_0(\mathcal{O}_{a}(\mathcal{U}_q(\mathfrak{b}))),\]
%where $\mathcal{O}_{a}(\mathcal{U}_q(\mathfrak{b}))$ is the full subcategory of $\mathcal{O}(\mathcal{U}_q(\mathfrak{b}))$ containing $V$ such that the $l$-weights $\gamma$ of $V$ satisfy the roots and poles of $\gamma_i(u)$ are in $aq^{\mathbb{Z}}$, for all $i \in I$.

In non-twisted case, for $a \in \mathbb{C}^*$ we define $(\mathcal{E}_l)_a$ to be the ring of maps $c: \mathfrak{r} \to \mathbb{Z}$ which is supported on the subset $\{\mathbf{\Psi} \in \mathfrak{r}  | \text{ roots and poles of } \mathbf{\Psi} \text{ are in } aq^{\mathbb{Z}} \}$. Define $\mathcal{O}_{a}(\mathcal{U}_q(\mathfrak{b}))$ to be the full subcategory of $\mathcal{O}(\mathcal{U}_q(\mathfrak{b}))$ containing $V$ such that the $l$-weights $\gamma$ of $V$ satisfy the roots and poles of $\gamma_i(u)$ are in $aq^{\mathbb{Z}}$, for all $i \in I$.

In twisted case, we have similarly for $a \in \mathbb{C}^*$, let $(\mathcal{E}_l^{\sigma})_a$ be the ring of maps $c: \mathfrak{r}^{\sigma} \to \mathbb{Z}$ which is supported on the subset 
$\{\mathbf{\Psi} \in \mathfrak{r}^{\sigma} \vert \text{ roots and poles of } \mathbf{\Psi} \in aq^{\mathbb{Z}}\omega^{\mathbb{Z}} \}.$ 
Let $\mathcal{O}_{a}(\mathcal{U}_q(\mathfrak{b}^{\sigma}))$ be the full subcategory of $\mathcal{O}$ of $\mathcal{U}_q(\mathfrak{b}^{\sigma})$ containing $V$ such that the $l$-weights $\gamma$ of $V$ satisfy the roots and poles of $\gamma_i(u)$ are in $aq^{\mathbb{Z}}\omega^{\mathbb{Z}}$, for all $i \in I$. 

We have isomorphisms $\chi_q : K_0(\mathcal{O}_1(\mathcal{U}_q(\mathfrak{b}))) \xrightarrow{\sim} (\mathcal{E}_l)_1$, $\chi_q^{\sigma} : K_0(\mathcal{O}_1(\mathcal{U}_q(\mathfrak{b}^{\sigma}))) \xrightarrow{\sim} (\mathcal{E}_l^{\sigma})_1$ and $\pi: (\mathcal{E}_l)_1 \xrightarrow{\sim} (\mathcal{E}_l^{\sigma})_1$.
%\begin{corollary}
Therefore, the homomorphism
\[\bar{\pi} : K_0(\mathcal{O}_1(\mathcal{U}_q(\mathfrak{b}))) \to K_0(\mathcal{O}_1(\mathcal{U}_q(\mathfrak{b}^{\sigma})))\]
is a ring isomorphism.

We extend Conjecture~\ref{conjectureg} to category $\mathcal{O}$:
\begin{conjecture}\label{conj}
	The isomorphism $\bar{\pi} : K_0(\mathcal{O}_1(\mathcal{U}_q(\mathfrak{b}))) \to K_0(\mathcal{O}_1(\mathcal{U}_q(\mathfrak{b}^{\sigma})))$ maps $\frac{[L(\mathbf{\Psi})]}{\chi(L(\mathbf{\Psi}))}$ to $\frac{[L(\pi_0(\mathbf{\Psi}))]}{\chi^{\sigma}(L(\pi_0(\mathbf{\Psi})))}$ for all $\mathbf{\Psi} \in \mathfrak{r}^{\sigma}$ whose roots and poles lie in $q^{\mathbb{Z}}$.
	This is equivalent to say	
	\begin{equation}\label{conjeq}
		\frac{\chi_q^{\sigma}(L(\pi_0(\mathbf{\Psi})))}{\chi^{\sigma}(L(\pi_0(\mathbf{\Psi})))} = \frac{\pi(\chi_q(L(\mathbf{\Psi})))}{\pi(\chi(L(\mathbf{\Psi})))}.
	\end{equation}
\end{conjecture}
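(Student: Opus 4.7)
My plan is to attack Conjecture~\ref{conj} by induction on the highest $\mathfrak{t}$-weight $\varpi(\mathbf{\Psi})$, using the multiplicativity of $\bar{\pi}$ together with the cases already established in the paper: the prefundamental representations $L^\pm_{\bar{i},a}$ and the modules $X_{\bar{i},a}=L(\widetilde{\mathbf{\Psi}}_{i,a})$, plus the KR-module identity of \cite[Theorem~4.15]{hernandez2010kirillov} as a base for the polynomial part. The starting observation is that any rational $\mathbf{\Psi}$ with roots and poles in $q^{\mathbb{Z}}$ admits a factorization $\mathbf{\Psi}=\mathbf{\Psi}_1\mathbf{\Psi}_2$ in which each $\mathbf{\Psi}_j$ lies in one of these known classes (a polynomial dominant part handled via fundamentals and a prefundamental-type cofactor). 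By Lemma~\ref{4.9}, $L(\mathbf{\Psi})$ is then the unique simple subquotient of $L(\mathbf{\Psi}_1)\otimes L(\mathbf{\Psi}_2)$ with maximal $\mathfrak{t}$-weight.

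Concretely, I would write in $K_0(\mathcal{O}_1(\mathcal{U}_q(\mathfrak{b})))$
\[
[L(\mathbf{\Psi}_1)]\cdot[L(\mathbf{\Psi}_2)] \;=\; [L(\mathbf{\Psi}_1\mathbf{\Psi}_2)] \;+\; \sum_{\mathbf{\Psi}'} m_{\mathbf{\Psi}'}\,[L(\mathbf{\Psi}')],
\]
where every $\mathbf{\Psi}'$ occurring satisfies $\varpi(\mathbf{\Psi}')<\varpi(\mathbf{\Psi}_1\mathbf{\Psi}_2)$. Applying the ring homomorphism $\bar{\pi}$ and invoking the induction hypothesis on the lower-weight classes $[L(\mathbf{\Psi}')]$, one solves for $\bar{\pi}[L(\mathbf{\Psi})]$. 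Dividing by $\chi(L(\mathbf{\Psi}))$, which equals $\chi^\sigma(L(\pi_0(\mathbf{\Psi})))$ by an analogous induction (starting from the equality of usual characters for prefundamental representations proved in Section~4.4), the target identity~\eqref{conjeq} becomes equivalent to the assertion that the tensor product $L(\pi_0(\mathbf{\Psi}_1))\otimes L(\pi_0(\mathbf{\Psi}_2))$ decomposes with exactly the multiplicities $m_{\mathbf{\Psi}'}$ on the simples $L(\pi_0(\mathbf{\Psi}'))$.

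The hard part is precisely this matching of composition multiplicities on the two sides: there is no a priori reason for the folding map $\pi$ to commute with the combinatorics of Jordan--Hölder decompositions of tensor products of $\mathcal{U}_q(\mathfrak{b}^\sigma)$-simples. Examples~\ref{counterexample1}, \ref{counterexample2.1} and~\ref{counterexample2.2} show that the statement fails without restricting to $\mathcal{O}_1$ and normalizing by $\chi$, so any proof must use these restrictions crucially. I would attempt to control the decomposition by developing a foldable Frenkel--Mukhin-type algorithm computing $\chi_q^\sigma(L(\mathbf{\Psi}))$ from its highest $l$-weight in a way that is manifestly compatible with $\pi$, paying particular attention to the fixed node $\bar{n}$ in type $A_{2n}^{(2)}$ where no $\mathfrak{sl}_2$-reduction is available and a direct Drinfeld-generator calculation seems unavoidable. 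A fallback would be to exhibit an explicit topological generating set of $K_0(\mathcal{O}_1(\mathcal{U}_q(\mathfrak{b}^\sigma)))$ on which the identity is already known, and invoke the injectivity of $\chi_q^\sigma$ from Theorem~\ref{isomorphism}; but closing either route requires genuinely new control of $\chi_q^\sigma$ beyond the partial results in this paper, which is why the conjecture is left open.
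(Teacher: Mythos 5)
You correctly recognize that the statement is labelled a \emph{conjecture} and that the paper does not prove it in general; the paper only establishes the two special cases needed for the $Q\widetilde{Q}$-system (prefundamental representations in Theorem~\ref{irrcorol}, and the modules $X_{\bar{i},a}$ via the computations in Section~\ref{sectionQQ}). So your final verdict agrees with the paper.

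There is, however, a concrete error in your intermediate step. You write that $\chi(L(\mathbf{\Psi}))$ ``equals $\chi^\sigma(L(\pi_0(\mathbf{\Psi})))$ by an analogous induction,'' and use this to reduce equation~\eqref{conjeq} to a matching of Jordan--H\"older multiplicities. But the paper's own Remark~\ref{remarkcounterexample} together with Examples~\ref{counterexample2.1} and~\ref{counterexample2.2} shows that the usual characters do \emph{not} fold to each other: for the type $A_2^{(2)}$ module $X_{\bar{1},1}=L(\widetilde{\mathbf{\Psi}}_{\bar{1},1})$ one has
\[
\pi\bigl(\chi(L(\widetilde{\mathbf{\Psi}}_{1,1}))\bigr)=\frac{1}{\bigl(1-[\alpha_{\bar{1}}]^{-1}\bigr)^2}\;\neq\;\frac{1}{1-[\alpha_{\bar{1}}]^{-1}}=\chi^\sigma\bigl(L(\widetilde{\mathbf{\Psi}}_{\bar{1},1})\bigr).
\]
The normalization by usual characters appears in Conjecture~\ref{conj} precisely \emph{because} the characters disagree; the conjecture asserts only that the ratio $\chi_q/\chi$ folds, which is strictly weaker than asserting $\bar\pi([L(\mathbf{\Psi})])=[L(\pi_0(\mathbf{\Psi}))]$. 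Consequently your reduction to ``the tensor product on the twisted side has the same multiplicities $m_{\mathbf{\Psi}'}$'' is not a correct equivalent reformulation: even if that multiplicity statement were proved, there would remain the nontrivial discrepancy factor $\pi(\chi(L(\mathbf{\Psi}')))/\chi^\sigma(L(\pi_0(\mathbf{\Psi}')))$ on each summand, and these do not cancel automatically. You in fact cite Examples~\ref{counterexample2.1} and \ref{counterexample2.2} as motivation for the $\chi$-normalization, but then proceed as if that normalization were a no-op; the two observations are in conflict. A genuine proof would have to track the character discrepancy and the composition multiplicities simultaneously, and for the same reason your fallback (verifying the identity on a topological generating set and extending by multiplicativity) also does not close, because $\mathbf{\Psi}\mapsto[L(\mathbf{\Psi})]/\chi(L(\mathbf{\Psi}))$ is not a ring homomorphism from $\mathfrak{r}^{\sigma}$ to $K_0(\mathcal{O})$.
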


We have seen that the equation \eqref{conjeq} is true when $L(\mathbf{\Psi})$ is the restriction of a KR-module of $\mathcal{U}_q(\mathcal{L}\mathfrak{g})$ to $\mathcal{U}_q(\mathfrak{b})$. We also prove that it holds when $L(\mathbf{\Psi})$ is a prefundamental representation.

\begin{theorem}\label{irrcorol}
	The equation \eqref{conjeq} holds when $L(\Psi)$ is a prefundamental representation.
\end{theorem}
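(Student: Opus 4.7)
The strategy is to reduce to the known KR-module identity $\chi_q^{\sigma}(W^{(i)}_{k,a}) = \pi(\chi_q(W^{(i)}_{k,a}))$ of \cite[Theorem~4.15]{hernandez2010kirillov} by passing to the asymptotic limit. Both the twisted prefundamental $L^{-}_{\bar{i},1}$ (Theorem~\ref{bmodule} combined with the irreducibility result of the previous subsection) and the non-twisted prefundamental $L^{-}_{i,1}$ (by the construction of \cite{hernandez2012asymptotic}) are realized as inductive limits of KR-modules, so equation~\eqref{conjeq} at the KR level should transfer to equation~\eqref{conjeq} for prefundamentals.

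I first treat the negative case $\mathbf{\Psi} = \mathbf{\Psi}^{-}_{i,1}$. Using $L^{-}_{\bar{i},1} \cong V^{\infty}_{\bar{i}}$ with $V^{\infty}_{\bar{i}} = \varinjlim L(M_{k})$ for $L(M_{k}) = W^{(i)}_{k,q^{-2k+1}}$, I divide the KR identity above by the highest $l$-weight monomial $M_{k}^{\sigma} = \pi(M_{k})$ to rewrite it as an equality of formal series in the $A^{-1}$-variables. By Theorem~\ref{inductive} (and its non-twisted analogue), these normalized $q$-characters stabilize on any fixed $\mathfrak{t}$-weight slice as $k \to \infty$: on the twisted side the stable limit is $\chi_{q}^{\sigma}(L^{-}_{\bar{i},1}) / [\mathbf{\Psi}^{-}_{\bar{i},1}]$, and applying $\pi$ on the non-twisted side yields $\pi(\chi_{q}(L^{-}_{i,1})) / [\pi_{0}(\mathbf{\Psi}^{-}_{i,1})]$. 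A direct computation from the definitions of $\pi_{0}$ and $\mathbf{\Psi}^{-}_{\bar{i},1}$ gives $[\pi_{0}(\mathbf{\Psi}^{-}_{i,1})] = [\mathbf{\Psi}^{-}_{\bar{i},1}]$, so I conclude $\chi_{q}^{\sigma}(L^{-}_{\bar{i},1}) = \pi(\chi_{q}(L^{-}_{i,1}))$. Applying $\varpi$ (which commutes with $\pi$ by Remark~\ref{remarksubring}) yields the corresponding equality $\chi^{\sigma}(L^{-}_{\bar{i},1}) = \pi(\chi(L^{-}_{i,1}))$, and equation~\eqref{conjeq} for $L^{-}_{\bar{i},1}$ follows by dividing.

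For the positive prefundamental $L^{+}_{\bar{i},1}$, I would run an exactly parallel argument using the construction $L^{+}_{\bar{i},1} \cong U^{\infty}_{\bar{i}}$ from Subsection~\ref{subsectionprefund}. Recall $U^{\infty}_{\bar{i}}$ is the graded dual of the $\mathcal{U}_{q}(\mathfrak{b}^{\sigma})$-module built from the inductive limit of the lowest-weight KR-modules $R(M_{k}^{-})$; both the folding map $\pi$ and the asymptotic construction commute with graded duality in the appropriate sense, so the identity $\chi_{q}^{\sigma}(L^{+}_{\bar{i},1}) = \pi(\chi_{q}(L^{+}_{i,1}))$ (and the corresponding equality of usual characters) follows from the twisted-nontwisted equality for the underlying KR-modules by the same limit procedure. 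Equation~\eqref{conjeq} for $L^{+}_{\bar{i},1}$ then follows as in the negative case.

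The main obstacle is making precise the stabilization of the normalized KR-module $q$-characters and verifying that the limiting $q$-character really equals $\chi_{q}^{\sigma}(L^{-}_{\bar{i},1})/[\mathbf{\Psi}^{-}_{\bar{i},1}]$. The subtlety lies in tracking the renormalization factors $q^{-k N_{i}}$ introduced in Subsection~\ref{subsectionasymp} (especially the distinction between the cases $i = \sigma(i)$ and $i \neq \sigma(i)$, and how these match the folding convention on the non-twisted side). Once the convergence is established on each weight slice via Theorem~\ref{inductive}, the rest of the argument is formal manipulation with the injective ring homomorphisms $\chi_{q}$, $\chi_{q}^{\sigma}$, $\pi$ and the fact that passing from $q$-character to usual character via $\varpi$ intertwines twisted and non-twisted sides.
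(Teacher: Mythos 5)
Your treatment of the negative prefundamental representation is essentially the paper's own argument: realize $L^{-}_{\bar{i},1}$ as the limit of the KR-modules $L(M_k)$, normalize by the highest $l$-weight, invoke the folding identity $\pi(\chi_q(L(M_k'))) = \chi_q^{\sigma}(L(M_k))$ for KR-modules, and pass to the limit. That part is sound.

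The positive case is where your proposal has a genuine gap. You claim the identity $\chi_q^{\sigma}(L^{+}_{\bar{i},1}) = \pi(\chi_q(L^{+}_{i,1}))$ follows ``by the same limit procedure'' because ``the folding map and the asymptotic construction commute with graded duality in the appropriate sense.'' This is precisely the step that needs an argument and does not follow formally. The module $U^{\infty}_{\bar{i}}$ is the graded dual of a module built from \emph{lowest}-weight KR-modules over $\mathcal{U}_{q^{-1}}$, transported through the isomorphism of Lemma~\ref{morphism s} and then dualized via $S^{-1}$. The antipode does not act on the Cartan--Drinfeld currents $\phi_i^{\pm}(u)$ in a way that lets you read off the $l$-weight decomposition of the dual from that of the original module, so the $q$-character of $U^{\infty}_{\bar{i}}$ is \emph{not} obtained as a folded limit of normalized KR $q$-characters in the same direct way as on the negative side. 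The paper avoids this entirely: it first establishes the structural fact $\chi_q^{\sigma}(L^{+}_{\bar{i},1}) = [\mathbf{\Psi}^{+}_{\bar{i},1}]\,\chi^{\sigma}(L^{+}_{\bar{i},1})$ (the twisted analogue of Frenkel--Hernandez, which says the $q$-character of a positive prefundamental representation is its highest $l$-weight times its ordinary character), and then only needs equality of \emph{ordinary} characters, which comes for free from the fact that $V^{\infty}_{\bar{i}}$ and $U^{\infty}_{\bar{i}}$ are isomorphic as $\mathfrak{t}$-modules together with the already-proved negative case. To repair your proof you should either import that structural fact, or supply a genuine argument for how $l$-weights transform under the graded dual construction; as written, the positive case is asserted rather than proved.
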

\begin{proof}
	We have seen that the vector space $L^-_{\bar{i},1}$ is a limit of KR-modules. By Definition~\ref{phiaction}, it is clear that if $v \in L(M_k)$ has $l$-weight $M_kP(A_{j,a}^{-1})$ for a monomial $P$, then the $\tilde{\phi}$ action on $v$ will be $\mathbf{\Psi}^-_{\bar{i},1} P(A_{j,a}^{-1})$. Therefore the $q$-character of $L^-_{\bar{i},1}$ is obtained as a limit of the $q$-character of KR-modules. More precisely,
	\[\chi_q^{\sigma}(L^-_{\bar{i},1}) = [\mathbf{\Psi}^-_{\bar{i},1}] \lim \frac{\chi_q^{\sigma}(L(M_k))}{M_k}.\]
	
	In non-twisted case, we also have this equation \cite[Theorem~6.1]{hernandez2012asymptotic}
	\[\chi_q(L^-_{i,1}) = [\mathbf{\Psi}^-_{i,1}] \lim \frac{\chi_q(L(M_k'))}{M_k'}.\]
	Here $M_k' = Y_{i,q^{-2k+1}} \cdots Y_{i,q^{-3}}Y_{i,q^{-1}}$ so that $\pi_0(M_k') = M_k$.
	
	While for KR-modules, it is proved \cite[Theorem~8.1]{hernandez2010kirillov} that $\pi(\chi_q(L(M_k'))) = \chi_q^{\sigma}(L(M_k))$, so we have
	\[\chi_q^{\sigma}(L^-_{\bar{i},1}) = [\mathbf{\Psi}^-_{\bar{i},1}] \lim \frac{\pi(\chi_q(L(M_k')))}{\pi(M'_k)} = \pi(\chi_q(L^-_{i,1})).\]
	
	As a consequence, $\chi^{\sigma}(L^-_{\bar{i},1}) = \pi(\chi(L^-_{i,1}))$. Therefore, equation \eqref{conjeq} holds for negative prefundamental representations.
	
	For the positive prefundamental representations, we can prove
	\[\chi_q^{\sigma}(L^+_{\bar{i},1}) = [\mathbf{\Psi}^+_{\bar{i},1}] \chi^{\sigma}(L^+_{\bar{i},1})\]
	by following \cite[Theorem~4.1]{frenkel2015baxter} word by word.
	While
	\[\chi^{\sigma}(L^+_{\bar{i},1}) = \chi^{\sigma}(L^-_{\bar{i},1}) = \pi(\chi(L^-_{i,1})) = \pi(\chi(L^+_{i,1})),\] we get \[\chi_q^{\sigma}(L^+_{\bar{i},1}) = [\mathbf{\Psi}^+_{\bar{i},1}] \pi(\chi(L^+_{i,1})) = \pi(\chi_q(L^+_{i,1})).\]
	Therefore, equation \eqref{conjeq} holds for positive prefundamental representations.
\end{proof}

\begin{remark}\label{remarkcounterexample}
	Although for KR-modules and for prefundamental representations, we have proved a stronger statement of \eqref{conjeq} that $\chi_q^{\sigma}(L(\pi_0(\mathbf{\Psi}))) = \pi(\chi_q(L(\mathbf{\Psi})))$ and $\chi^{\sigma}(L(\pi_0(\mathbf{\Psi}))) = \pi(\chi(L(\mathbf{\Psi})))$. This is not true in general. See Appendix~\ref{appendixb} for a counterexample.
\end{remark}

\subsection{TQ-relations}
The $TQ$-relations for twisted quantum affine algebras can be obtained by the same method as in \cite{frenkel2015baxter}.

In the proof of Theorem \ref{irrcorol}, following the proof in \cite[Theorem~4.1]{frenkel2015baxter}, we have noticed that $\chi_q^{\sigma}(L^+_{\bar{i},a}) = [\mathbf{\Psi}^+_{\bar{i},a}] \chi^{\sigma}(L^+_{\bar{i},a})$ and $\chi^{\sigma}(L^+_{\bar{i},a})$ is independent of the parameter $a$. Thus $\chi_q^{\sigma}$ maps $[\omega_{\bar{i}}]\frac{[L^+_{\bar{i},aq^{-1}}]}{[L^+_{\bar{i},aq}]}$ to  $[\omega_{\bar{i}}]\frac{[\mathbf{\Psi}^+_{\bar{i},aq^{-1}}]}{[\mathbf{\Psi}^+_{\bar{i},aq}]}$. The later element in $\mathcal{E}_l^{\sigma}$ is identified with $Z_{i,a}$ by the identification in Remark \ref{remarksubring}.

Fix $\bar{i} \in I_{\sigma}$ and $a \in \mathbb{C}^*$. For the fundamental representation $V_{i,a}$ of $\mathcal{U}_q(\mathcal{L}\mathfrak{g}^{\sigma})$, its $q$-character is a polynomial in variables $\{Z_{\bar{j},b}^{\pm 1} \}_{\bar{j} \in I_{\sigma}, b \in \mathbb{C}^*}$. Using the above identification, we can replace each $Z_{\bar{j},b}$ in $\chi_q^{\sigma}(V_{i,a})$ by $\chi_q^{\sigma}([\omega_{\bar{j}}]\frac{[L^+_{\bar{j},aq^{-1}}]}{[L^+_{\bar{j},aq}]})$.

Since $\chi_q^{\sigma}: K_0(\mathcal{O}) \to \mathcal{E}_l^{\sigma}$ is an isomorphism, we get a relation in the Grothendieck ring $K_0(\mathcal{O})$:

\begin{definition}
	The $TQ$-relations for twisted quantum affine algebras are identities in the Grothendieck ring $K_0(\mathcal{O})$ obtained from the formula of $\chi_q^{\sigma}(V_{i,a})$ by replacing each $Z_{\bar{j},b}$ by $[\omega_{\bar{j}}]\frac{[L^+_{\bar{j},aq^{-1}}]}{[L^+_{\bar{j},aq}]} \in K_0(\mathcal{O})$ and $\chi_q^{\sigma}(V_{i,a})$ by $[V_{i,a}] \in K_0(\mathcal{O})$.
\end{definition}

\begin{example}
	As an example, the $q$-character of fundamental representation of type $A_2^{(2)}$ is calculated in \cite[Proposition~4.6]{hernandez2010kirillov}. Then we derive the $TQ$-relation for type $A_2^{(2)}$:
	\[[V_a] = [\omega]\frac{[L^+_{aq^{-1}}]}{[L^+_{aq}]} + \frac{[L^+_{aq^{3}}][L^+_{-a}]}{[L^+_{aq}][L^+_{-aq^{2}}]} + [-\omega]\frac{[L^+_{-aq^{4}}]}{[L^+_{-aq^{2}}]}.\]
	Or it can be written as 
	\[[V_a][L^+_{aq}][L^+_{-aq^{2}}] = [\omega][L^+_{aq^{-1}}][L^+_{-aq^{2}}] + [L^+_{aq^{3}}][L^+_{-a}] + [-\omega][L^+_{aq}][L^+_{-aq^{4}}].\]
\end{example}

\begin{theorem}
	The $TQ$-relations for twisted quantum affine algebra  $\mathcal{U}_q(\mathcal{L}\mathfrak{g}^{\sigma})$ is related to the $TQ$-relations for the associated non-twisted quantum affine algebra $\mathcal{U}_q(\mathcal{L}\mathfrak{g})$ by the map $\bar{\pi}$.
\end{theorem}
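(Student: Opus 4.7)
The strategy is to apply the ring homomorphism $\bar{\pi}: K_0(\mathcal{O}(\mathcal{U}_q(\mathfrak{b}))) \to K_0(\mathcal{O}(\mathcal{U}_q(\mathfrak{b}^{\sigma})))$ directly to the non-twisted $TQ$-relation and identify its image as the twisted $TQ$-relation. For this I first need to know how $\bar{\pi}$ acts on the three kinds of classes that appear in the $TQ$-formula: the fundamental modules $[V_{i,a}]$, the positive prefundamental modules $[L^+_{j,b}]$, and the weight classes $[\omega_j]$.

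For the positive prefundamental modules, Theorem~\ref{irrcorol} gives $\chi_q^{\sigma}(L^+_{\bar{j},b}) = \pi(\chi_q(L^+_{j,b}))$ after twisting by $\tau_b$, so $\bar{\pi}([L^+_{j,b}]) = [L^+_{\bar{j},b}]$. Since $V_{i,a}$ is a KR-module, \cite[Theorem~4.15]{hernandez2010kirillov} yields $\chi_q^{\sigma}(V_{\bar{i},a}) = \pi(\chi_q(V_{i,a}))$ and therefore $\bar{\pi}([V_{i,a}]) = [V_{\bar{i},a}]$. The weight classes satisfy $\bar{\pi}([\omega_j]) = [\omega_{\bar{j}}]$ by the very definition of the folding map $\pi$.

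Now writing $\chi_q(V_{i,a}) = P(\{Y_{j,b}^{\pm 1}\})$, the non-twisted $TQ$-relation reads
\[[V_{i,a}] = P\Bigl(\Bigl\{[\omega_j]\tfrac{[L^+_{j,bq^{-1}}]}{[L^+_{j,bq}]}\Bigr\}\Bigr) \quad \text{in } K_0(\mathcal{O}(\mathcal{U}_q(\mathfrak{b}))).\]
Applying the ring homomorphism $\bar{\pi}$ termwise with the identifications above and using that $\pi(P) = \chi_q^{\sigma}(V_{\bar{i},a})$ (KR-module theorem), one obtains an identity in $K_0(\mathcal{O}(\mathcal{U}_q(\mathfrak{b}^{\sigma})))$ that coincides with the twisted $TQ$-relation for $V_{\bar{i},a}$ under the identification of Remark~\ref{remarksubring}.

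The main bookkeeping issue is the identification of $Y$-monomials with $Z$-monomials under $\pi$: when $j$ has non-trivial $\sigma$-orbit, the substitutions $Y_{\sigma^r(j),b} \mapsto Z_{\bar{j}, b\omega^r}$ must match the substitutions $[\omega_{\sigma^r(j)}]\frac{[L^+_{\sigma^r(j),bq^{-1}}]}{[L^+_{\sigma^r(j),bq}]} \mapsto [\omega_{\bar{j}}]\frac{[L^+_{\bar{j},b\omega^rq^{-1}}]}{[L^+_{\bar{j},b\omega^rq}]}$ induced by $\bar{\pi}$; and in the special case $i = \sigma(i)$ one must verify compatibility with the additional relation $Z_{\bar{i},c} = Z_{\bar{i},c\omega}$ that holds in $\mathcal{E}_l^{\sigma}$. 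Both points are direct consequences of the definitions collected in Section~\ref{sectionrelation} and Remark~\ref{remarksubring}, so the proof is ultimately a routine assembly once the class-by-class computation of $\bar{\pi}$ above is in place.
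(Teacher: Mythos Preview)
Your proposal is correct and takes essentially the same approach as the paper: the paper's proof consists of the single line ``Follows from \cite[Theorem~4.15]{hernandez2010kirillov} and Theorem~\ref{irrcorol},'' which are precisely the two ingredients you identify for computing $\bar{\pi}$ on fundamental modules and on positive prefundamental modules. Your write-up simply unpacks this in more detail, including the bookkeeping for the $Y$--$Z$ identification under $\pi$.
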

\begin{proof}
	Follows from \cite[Theorem~4.15]{hernandez2010kirillov} and Theorem \ref{irrcorol}.
\end{proof}

\section{$Q\widetilde{Q}$-systems}\label{sectionQQ}
The $Q\widetilde{Q}$-systems of non-twisted quantum affine algebras were studied in \cite{frenkel2018spectra}. It was conjectured that analogous relations hold in twisted case \cite[Conjecture~3.3]{frenkel2018spectra}. In this section we prove this conjecture. Then we use the $Q\widetilde{Q}$-systems to derive the Bethe Ansatz equations of twisted quantum affine algebras.

\subsection{Formulation of $Q\widetilde{Q}$-systems}
Recall the non-twisted $Q\widetilde{Q}$-systems:
\begin{theorem}\cite[Theorem~3.2]{frenkel2018spectra}
	For any $i \in I$, $a \in \mathbb{C}^*$, denote 
	\[\widetilde{\mathbf{\Psi}}_{i,a} =  \mathbf{\Psi}_{i,a}^{-1} \prod_{j | C_{i,j} = -1}\mathbf{\Psi}_{j,aq_i}\prod_{j | C_{i,j} = -2}\mathbf{\Psi}_{j,a}\mathbf{\Psi}_{j,aq_i^2}\prod_{j | C_{i,j} = -3}\mathbf{\Psi}_{j,aq_i^{-1}}\mathbf{\Psi}_{j,aq_i}\mathbf{\Psi}_{j,aq_i^3}.\]
	
	Let 
	\[Q_{i,a} := \frac{[L^+_{i,a}]}{\chi(L^+_{i,a})} \in K_0(\mathcal{O}_a(\mathcal{U}_q(\mathfrak{b}))),\]
	and
	\[\widetilde{Q}_{i,a} := \frac{[X_{i,aq_i^{-2}}]}{\chi(X_{i,aq_i^{-2}})(\left[\frac{\alpha_i}{2}\right]-\left[-\frac{\alpha_i}{2}\right])} \in K_0(\mathcal{O}_a(\mathcal{U}_q(\mathfrak{b}))). \]
	where $X_{i,a} = L(\widetilde{\mathbf{\Psi}}_{i,a}).$
	
	Then they satisfy the following $Q\widetilde{Q}$-system in $K_0(\mathcal{O}_a(\mathcal{U}_q(\mathfrak{b})))$:
	\begin{equation}
		\begin{split}
			\left[\frac{\alpha_i}{2}\right]Q_{i,aq_i^{-1}}\widetilde{Q}_{i,aq_i} - & \left[-\frac{\alpha_i}{2}\right]Q_{i,aq_i}\widetilde{Q}_{i,aq_i^{-1}} = \\ 
			& \prod_{j | C_{i,j} = -1}Q_{j,a}\prod_{j | C_{i,j} = -2}Q_{j,aq^{-1}}Q_{j,aq}\prod_{j | C_{i,j} = -3}Q_{j,aq^{-2}}Q_{j,a}Q_{j,aq^2}.
		\end{split}	
	\end{equation}
\end{theorem}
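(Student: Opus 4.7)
The plan is to reduce the claimed identity in $K_0(\mathcal{O}_a(\mathcal{U}_q(\mathfrak{b})))$ to a computation in the commutative ring $\mathcal{E}_l$, invoking the injectivity of the non-twisted $q$-character morphism $\chi_q: K_0(\mathcal{O}(\mathcal{U}_q(\mathfrak{b}))) \to \mathcal{E}_l$ (the non-twisted analog of Theorem~\ref{isomorphism}). Since $\mathcal{E}_l$ is freely generated as an abelian group by the symbols $[\mathbf{\Psi}]$, an identity in $K_0(\mathcal{O})$ can then be checked by matching coefficients of monomials.

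The core of the argument is to describe the two building blocks explicitly in $\mathcal{E}_l$. For $Q_{i,a}$, realising $L^+_{i,a}$ as an asymptotic limit of Kirillov-Reshetikhin modules (parallel to Section~\ref{subsectionasymp}) yields $\chi_q(L^+_{i,a}) = [\mathbf{\Psi}^+_{i,a}]\,\chi(L^+_{i,a})$, so $\chi_q(Q_{i,a}) = [\mathbf{\Psi}^+_{i,a}]$. For $\widetilde{Q}_{i,a}$, I would establish that $\chi(X_{i,a})$ is divisible by $[\alpha_i/2]-[-\alpha_i/2]$ and that the $q$-character of $X_{i,a}$ has a clean expression with leading monomial $[\widetilde{\mathbf{\Psi}}_{i,a}]$; after normalisation this produces an explicit formula for $\chi_q(\widetilde{Q}_{i,a})$. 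With both formulas in hand, the $Q\widetilde{Q}$-identity reduces to the tautology
\[
\widetilde{\mathbf{\Psi}}_{i,a}\,\mathbf{\Psi}^+_{i,a} \;=\; \prod_{j\mid C_{i,j}=-1}\mathbf{\Psi}^+_{j,aq_i}\prod_{j\mid C_{i,j}=-2}\mathbf{\Psi}^+_{j,a}\mathbf{\Psi}^+_{j,aq_i^2}\prod_{j\mid C_{i,j}=-3}\mathbf{\Psi}^+_{j,aq_i^{-1}}\mathbf{\Psi}^+_{j,aq_i}\mathbf{\Psi}^+_{j,aq_i^3},
\]
coming directly from the definition of $\widetilde{\mathbf{\Psi}}_{i,a}$, combined with shift relations on $\mathbf{\Psi}^+_{i,\cdot}$ that manufacture the antisymmetric combination $[\alpha_i/2]-[-\alpha_i/2]$ on the left-hand side.

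The main obstacle is controlling $\chi_q(X_{i,a})$ and establishing the divisibility that makes $\widetilde{Q}_{i,a}$ a well-defined element of $K_0(\mathcal{O})$. Since $X_{i,a}$ is not itself prefundamental, the asymptotic construction of Section~\ref{subsectionasymp} does not apply directly. Frenkel and Hernandez handle this by realising $X_{i,a}$ as a limit of modules over shifted quantum affine algebras, where a natural short exact sequence exhibits the antisymmetrisation factor. Any alternative proof must furnish a substitute for this structural input, for instance by identifying $X_{i,a}$ as a specific subquotient of a tensor product of prefundamental representations through the coproduct formulas of Proposition~\ref{coproduct} and tracking the composition factors.
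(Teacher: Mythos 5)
Your overall architecture is the right one and matches both the cited proof of Frenkel--Hernandez and the route this paper takes for the twisted analogue: pass through the injective ring morphism $\chi_q$ into $\mathcal{E}_l$, use $\chi_q(L^+_{i,a}) = [\mathbf{\Psi}^+_{i,a}]\,\chi(L^+_{i,a})$ to get $\chi_q(Q_{i,a}) = [\mathbf{\Psi}^+_{i,a}]$, obtain an explicit formula for $\chi_q(\widetilde{Q}_{i,a})$, and finish with the telescoping identity coming from the definition of $\widetilde{\mathbf{\Psi}}_{i,a}$ and the relation $\mathbf{\Psi}_{i,aq_i^{-1}}\mathbf{\Psi}_{i,aq_i}^{-1} = $ (a shift of) $A_{i,a}^{-1}$ up to the weight factor $[\alpha_i]$. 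Note that in the present paper this statement is only recalled from \cite{frenkel2018spectra}; what the paper actually proves is the twisted version, by pushing the non-twisted identity through the folding isomorphism $\bar{\pi}$ (Lemma~\ref{QQlemma}), which requires exactly the same two character formulas on the twisted side.

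The genuine gap is that your plan names, but does not supply, the one input that carries all the content: the formula
\[\chi_q(X_{i,a}) = [\widetilde{\mathbf{\Psi}}_{i,a}]\,\Bigl(\sum_{r \geq 0}(A_{i,a}A_{i,aq_i^{-2}}\cdots A_{i,aq_i^{-2r+2}})^{-1}\Bigr)\,\chi(X_{i,a})\,(1-[-\alpha_i]),\]
which simultaneously gives the divisibility of $[X_{i,aq_i^{-2}}]$ by $[\alpha_i/2]-[-\alpha_i/2]$ (so that $\widetilde{Q}_{i,a}$ is well defined) and the closed form $\chi_q(\widetilde{Q}_{i,a}) = [\alpha_i/2][\widetilde{\mathbf{\Psi}}_{i,aq_i^{-2}}]\chi_{i,aq_i^{-2}}$ needed for the telescoping. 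Saying that one must either use shifted quantum affine algebras or ``track composition factors of a tensor product of prefundamentals'' does not close this: $X_{i,a}$ has a highest $\ell$-weight with a genuine pole, so it is not a subquotient of a product of positive prefundamentals alone, and the naive upper bound from such a product is far too large. The paper's substitute (Section~\ref{sectionQQ}, stated to work in the non-twisted case too) is a two-sided squeeze: the upper bound $\chi_q(X_{i,1}) \preceq [\widetilde{\mathbf{\Psi}}_{i,1}]\chi_{i,1}\chi$ is obtained by realising $X^{(r)}_{i,1} = L(\widetilde{\mathbf{\Psi}}_{i,1}\mathbf{\Psi}_{i,q_i^{-2r}})$ as a subquotient of a \emph{Drinfeld}-coproduct tensor product of a KR module with a product of positive prefundamentals, exhibiting an explicit submodule $V_0 \otimes W$ (this step needs the nilpotency statements of Theorems~\ref{polynomialaction} and~\ref{phiaactsby0}, which make the identity \eqref{phiequals0} available), and then letting $r \to \infty$ through truncated $q$-characters; the matching lower bound and the factor $(1-[-\alpha_i])$ come from an $\mathfrak{sl}_2$-reduction argument. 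Without carrying out some version of this, the proposal proves nothing beyond the (already known) formula for $\chi_q(Q_{i,a})$.
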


In particular, when the algebra is of type ADE, the $Q\tilde{Q}$-systems read
\begin{equation}\label{ADEQQ}	
	\left[\frac{\alpha_i}{2}\right]Q_{i,aq^{-1}}\widetilde{Q}_{i,aq} - \left[-\frac{\alpha_i}{2}\right]Q_{i,aq}\widetilde{Q}_{i,aq^{-1}} = \prod_{j | C_{i,j} = -1}Q_{j,a}.
\end{equation}

Now we recall the analogous $Q\widetilde{Q}$-systems for twisted quantum affine algebras conjectured in \cite[Conjecture~3.3]{frenkel2018spectra}.

\begin{theorem}
	Define 
	\[Q_{\bar{i},a} : = \frac{[L^+_{\bar{i},a}]}{\chi(L^+_{\bar{i},a})} \in K_0(\mathcal{O}),\] 
	and 
	\[\widetilde{Q}_{\bar{i},a} := \frac{[X_{\bar{i},aq^{-2}}]}{\chi(X_{\bar{i},aq^{-2}})([\frac{\alpha_{\bar{i}}}{2}]-[-\frac{\alpha_{\bar{i}}}{2}])} \in K_0(\mathcal{O}),\]
	where 
	$X_{\bar{i},a} = L(\widetilde{\mathbf{\Psi}}_{\bar{i},a})$, and $\widetilde{\mathbf{\Psi}}_{\bar{i},a} = \pi_0 (\widetilde{\mathbf{\Psi}}_{i,a})$ is obtained from the non-twisted case.
	
	Then they satisfy the equations:
	
	In non $A_{2n}^{(2)}$ case:
	\begin{equation}\label{QQtwisted}
		\left[\frac{\alpha_{\bar{i}}}{2}\right] Q_{\bar{i},aq^{-1}}\widetilde{Q}_{\bar{i},aq} - \left[-\frac{\alpha_{\bar{i}}}{2}\right] Q_{\bar{i},aq}\widetilde{Q}_{\bar{i},aq^{-1}} = \prod_{j | C_{\bar{j},\bar{i}}^{\sigma} = -1}Q_{\bar{j},a} \prod_{j | C_{\bar{j},\bar{i}}^{\sigma} = -2}Q_{\bar{j},a}Q_{\bar{j},-a} \prod_{j | C_{\bar{j},\bar{i}}^{\sigma} = -3}Q_{\bar{j},a}Q_{\bar{j},a\omega}Q_{\bar{j},a\omega^2}.
	\end{equation}
	
	In $A_{2n}^{(2)}$ case:
	If $\bar{i} \neq \bar{n}$,
	\begin{equation}
		\left[\frac{\alpha_{\bar{i}}}{2}\right] Q_{\bar{i},aq^{-1}}\widetilde{Q}_{\bar{i},aq} - \left[-\frac{\alpha_{\bar{i}}}{2}\right] Q_{\bar{i},aq}\widetilde{Q}_{\bar{i},aq^{-1}} = Q_{\overline{i-1},a} Q_{\overline{i+1},a}.
	\end{equation}
	If $\bar{i} = \bar{n}$,
	\begin{equation}
		\left[\frac{\alpha_{\bar{n}}}{2}\right] Q_{\bar{n},aq^{-1}}\widetilde{Q}_{\bar{n},aq} - \left[-\frac{\alpha_{\bar{n}}}{2}\right] Q_{\bar{n},aq}\widetilde{Q}_{\bar{n},aq^{-1}} = Q_{\bar{n},-a} Q_{\overline{n-1},a}.
	\end{equation}
\end{theorem}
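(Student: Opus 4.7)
The plan is to transport the non-twisted $Q\widetilde{Q}$-system through the folding ring isomorphism $\bar{\pi}: K_0(\mathcal{O}_1(\mathcal{U}_q(\mathfrak{b}))) \to K_0(\mathcal{O}_1(\mathcal{U}_q(\mathfrak{b}^{\sigma})))$ constructed in Section~\ref{sectionO}. Since both sides of the non-twisted identity live in $K_0(\mathcal{O}_1(\mathcal{U}_q(\mathfrak{b})))$ and $\bar{\pi}$ is a ring homomorphism, the twisted relation will follow once each factor is matched with its twisted counterpart. Equivalently, using the isomorphism $\chi_q^{\sigma}: K_0(\mathcal{O}) \to \mathcal{E}_l^{\sigma}$ from Theorem~\ref{isomorphism}, one can work entirely in $\mathcal{E}_l^{\sigma}$ and verify the identity of $q$-characters there.

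The first and easiest step is to handle the known ingredients. By Theorem~\ref{irrcorol}, $\bar{\pi}(Q_{i,a}) = Q_{\bar{i},a}$ for every $i, a$, and $\pi([\alpha_i]) = [\alpha_{\bar{i}}]$ gives $\bar{\pi}([\pm \alpha_i/2]) = [\pm \alpha_{\bar{i}}/2]$. On the right-hand side, the products $\prod_{j \mid C_{i,j}=-1} Q_{j,aq_i}$ etc.\ need to be reorganized by $\sigma$-orbits. This is a matter of enumerating how the non-twisted Cartan entries $C_{i,j}$ and the parameter shifts by $q_i$ combine under $\pi_0$ into products indexed by twisted Cartan entries $C^{\sigma}_{\bar{j},\bar{i}}$. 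In each non $A_{2n}^{(2)}$ case ($C^{\sigma}_{\bar{j},\bar{i}} = -1, -2, -3$), the orbit sums produce the prescribed products $Q_{\bar{j},a}$, $Q_{\bar{j},a}Q_{\bar{j},-a}$, and $Q_{\bar{j},a}Q_{\bar{j},a\omega}Q_{\bar{j},a\omega^2}$. In the $A_{2n}^{(2)}$ case at $\bar{i} = \bar{n}$, where the representative $n$ is adjacent to $\sigma(n) = n+1$ in the $A_{2n}$-diagram, the non-twisted RHS $Q_{n-1,a}Q_{n+1,a}$ maps under $\bar{\pi}$ to $Q_{\overline{n-1},a}Q_{\bar{n},-a}$, since $\bar{\pi}(Q_{\sigma(i),a}) = Q_{\bar{i},a\omega}$ and $\omega=-1$. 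This matches the stated formula.

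The single hard step, and the main obstacle, is to show that $\bar{\pi}(\widetilde{Q}_{i,a}) = \widetilde{Q}_{\bar{i},a}$, i.e.\ that Conjecture~\ref{conj} holds for the modules $X_{i,a} = L(\widetilde{\mathbf{\Psi}}_{i,a})$. Unlike prefundamental representations, $X_{i,a}$ is not realized as a limit of Kirillov-Reshetikhin modules and its highest $l$-weight contains inverse prefundamental factors, so the limit-based argument of Theorem~\ref{irrcorol} does not transfer. My plan is to attack this by a direct analysis of $X_{\bar{i},a}$ inside category $\mathcal{O}$: use the refined coproduct formulas of Proposition~\ref{coproduct} and the ring isomorphism $\chi_q^{\sigma}$ to locate $X_{\bar{i},a}$ as a subquotient of a tensor product of positive and negative prefundamental representations, whose $q$-characters and usual characters have already been identified with the folded non-twisted ones via Theorem~\ref{irrcorol}. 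One then compares the normalized $q$-character $\chi_q^{\sigma}(X_{\bar{i},a})/\chi^{\sigma}(X_{\bar{i},a})$ with $\pi(\chi_q(X_{i,a}))/\pi(\chi(X_{i,a}))$; the equality should follow from a weight-space dimension count analogous to the argument for prefundamental representations, together with results on the structure of $X_{i,a}$ from \cite{feigin2017finite} which must be re-established for twisted types by an alternative proof avoiding $\mathfrak{sl}_2$-reductions.

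Once Conjecture~\ref{conj} is secured for $X_{i,a}$, the proof concludes by applying $\bar{\pi}$ term by term to the non-twisted $Q\widetilde{Q}$-system and matching each image with the twisted right- and left-hand sides as above. I expect the $A_{2n}^{(2)}$ case at $\bar{n}$ to require the most delicate verification, because the local representation-theoretic structure at that node (with $C_{n,\sigma(n)} = -1$) differs essentially from the $\mathfrak{sl}_2$-type picture used elsewhere, and the control of $\widetilde{Q}_{\bar{n},a}$ must be established by a direct analysis of $X_{\bar{n},a}$ rather than by folding an $\mathfrak{sl}_2$-reduction.
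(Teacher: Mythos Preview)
Your proposal follows essentially the same strategy as the paper: transport the non-twisted $Q\widetilde{Q}$-system through $\bar{\pi}$, use Theorem~\ref{irrcorol} for $\bar{\pi}(Q_{i,a})=Q_{\bar{i},a}$, and reduce everything to establishing $\bar{\pi}(\widetilde{Q}_{i,a})=\widetilde{Q}_{\bar{i},a}$ via a direct computation of $\chi_q^{\sigma}(X_{\bar{i},a})$, with the $A_{2n}^{(2)}$ node $\bar{n}$ handled by an explicit argument rather than an $\hat{\mathfrak{sl}}_2$-reduction. The paper carries this out by first proving structural results for modules with polynomial highest $l$-weight (so that $\phi_i(a)$ acts by zero at roots of the highest weight), then using the Drinfeld coproduct and an auxiliary family $X_{\bar{i},1}^{(r)}$ to bound $\chi_q^{\sigma}(X_{\bar{i},1})$ from above, and finally obtaining the exact formula $\chi_q^{\sigma}(X_{\bar{i},1})=[\widetilde{\mathbf{\Psi}}_{\bar{i},1}]\chi_{\bar{i},1}\chi^{\sigma}(X_{\bar{i},1})(1-[-\alpha_{\bar{i}}])$; your outline is compatible with this but leaves these specific technical devices implicit.
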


In this section, we prove these equations. The strategy of the proof is to apply the map 
\[\bar{\pi} : K_0(\mathcal{O}_1(\mathcal{U}_q(\mathfrak{b}))) \to K_0(\mathcal{O}_1(\mathcal{U}_q(\mathfrak{b}^{\sigma})))\] on the $Q\tilde{Q}$-systems for non-twisted quantum affine algebras of type A,D,E. Although the author cannot prove Conjecture \ref{conj} in general for now, we proved that it holds for $L^+_{\bar{i},a}$ in Theorem \ref{irrcorol}. Therefore it is enough to prove $\bar{\pi}(\widetilde{Q}_{i,a}) = \widetilde{Q}_{\bar{i},a}$ for the purpose of $Q\tilde{Q}$-systems.

\begin{lemma}\label{QQlemma}
	For any $\bar{i} \in I^{\sigma}$, let $i \in I$ be the fixed representative of $\bar{i}$. If $\bar{\pi}(\widetilde{Q}_{i,a}) = \widetilde{Q}_{\bar{i},a}$, then the equation \eqref{QQtwisted} holds.
\end{lemma}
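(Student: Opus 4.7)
The plan is to deduce \eqref{QQtwisted} by applying the ring homomorphism $\bar{\pi}$ to the simply-laced $Q\widetilde{Q}$-system \eqref{ADEQQ} at the node $i\in I$ (the fixed representative of $\bar{i}$), since in the non-$A_{2n}^{(2)}$ setting the underlying $\mathfrak{g}$ is of type ADE. As $\bar{\pi}$ is defined as an isomorphism $K_{0}(\mathcal{O}_{1}(\mathcal{U}_{q}(\mathfrak{b})))\to K_{0}(\mathcal{O}_{1}(\mathcal{U}_{q}(\mathfrak{b}^{\sigma})))$, I would first take $a\in q^{\mathbb{Z}}$ so both sides of \eqref{ADEQQ} lie in $\mathcal{O}_{1}$, and then extend to general $a\in\mathbb{C}^{\ast}$ via the twisting automorphism $\tau_{a}$.

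The left-hand side transforms under $\bar{\pi}$ in a straightforward way. The identification $\pi([\alpha_{i}])=[\alpha_{\bar{i}}]$ gives $\bar{\pi}([\pm\alpha_{i}/2])=[\pm\alpha_{\bar{i}}/2]$; Theorem~\ref{irrcorol} combined with $\tau_{b}$ yields $\bar{\pi}(Q_{i,b})=Q_{\bar{i},b}$ for $b=aq^{\pm 1}$; and the hypothesis of the lemma supplies $\bar{\pi}(\widetilde{Q}_{i,aq^{\pm 1}})=\widetilde{Q}_{\bar{i},aq^{\pm 1}}$. Combining these sends the LHS of \eqref{ADEQQ} onto the LHS of \eqref{QQtwisted}.

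For the right-hand side the key extra ingredient is $\bar{\pi}(Q_{j,a})$ for $j\in I$ not necessarily the fixed representative of its orbit. Writing $j=\sigma^{k_{j}}(j_{0})$ with $j_{0}$ the fixed representative of $\bar{j}$, a direct computation from the formula defining $\pi_{0}$ gives $\pi_{0}(\mathbf{\Psi}^{+}_{j,a})=\mathbf{\Psi}^{+}_{\bar{j},a\omega^{-k_{j}}}$; coupled with the equality $\pi(\chi(L^{+}_{j,a}))=\chi^{\sigma}(L^{+}_{\bar{j},a\omega^{-k_{j}}})$, which holds because $\chi(L^{+}_{j,a})$ is independent of the spectral parameter and because the Dynkin automorphism relating $L^{+}_{j,a}$ to $L^{+}_{j_{0},a}$ becomes trivial after restriction to $\mathfrak{t}^{\sigma\ast}$, the argument used to prove Theorem~\ref{irrcorol} applies verbatim and yields $\bar{\pi}(Q_{j,a})=Q_{\bar{j},a\omega^{-k_{j}}}$.

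It then remains to organize $\prod_{j\,|\,C_{i,j}=-1}Q_{j,a}$ by orbits. By standard folding theory, for each orbit $\bar{j}$ adjacent to $\bar{i}$ in the folded Dynkin diagram, the number of $j\in\bar{j}$ with $C_{i,j}=-1$ equals $|C^{\sigma}_{\bar{j},\bar{i}}|\in\{1,2,3\}$, and the corresponding exponents $k_{j}$ exhaust $\{0,1,\ldots,|C^{\sigma}_{\bar{j},\bar{i}}|-1\}$. Consequently the image under $\bar{\pi}$ of the orbit's contribution is $Q_{\bar{j},a}$ when $|C^{\sigma}_{\bar{j},\bar{i}}|=1$, $Q_{\bar{j},a}Q_{\bar{j},-a}$ when $|C^{\sigma}_{\bar{j},\bar{i}}|=2$ (so $\omega=-1$), and $Q_{\bar{j},a}Q_{\bar{j},a\omega}Q_{\bar{j},a\omega^{2}}$ when $|C^{\sigma}_{\bar{j},\bar{i}}|=3$, matching the three products on the RHS of \eqref{QQtwisted}. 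The main obstacle is the extension of Theorem~\ref{irrcorol} from fixed representatives to arbitrary $j\in I$; once that is in place, the subsequent orbit combinatorics are routine and the twisted $Q\widetilde{Q}$-system follows.
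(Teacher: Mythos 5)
Your proposal is correct and follows the same route as the paper: the paper's proof is a two-line application of $\bar{\pi}$ to \eqref{ADEQQ}, invoking Theorem~\ref{irrcorol} for $\bar{\pi}(Q_{i,a})=Q_{\bar{i},a}$ and the hypothesis for $\widetilde{Q}_{i,a}$. Your extra care on the right-hand side — extending Theorem~\ref{irrcorol} to non-fixed representatives $j$ via $\pi_0(\mathbf{\Psi}^+_{j,a})=\mathbf{\Psi}^+_{\bar{j},a\omega^{k_j}}$ (note the paper's definition of $\pi_0$ gives exponent $+k_j$ rather than $-k_j$, though the orbit product is unaffected) and the folding combinatorics matching $|C^{\sigma}_{\bar{j},\bar{i}}|$ — fills in details the paper's one-line argument leaves implicit.
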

\begin{proof}
	We already proved $\bar{\pi}(Q_{i,a}) = Q_{\bar{i},a}$ in Theorem \ref{irrcorol}.
	If $\bar{\pi}(\widetilde{Q}_{i,a}) = \widetilde{Q}_{\bar{i},a}$, apply these relations on the equation \eqref{ADEQQ} with index $i$, we get equation \eqref{QQtwisted} with index $\bar{i}$.
\end{proof}

\subsection{Structure of $l$-highest weight modules with polynomial highest $l$-weight}
This section studies the structure of positive prefundamental representations and also their tensor products on the level of algebra actions. The results here will be used in the next section to prove the $Q\widetilde{Q}$-systems.

The first version of the following theorem is proved for representations in dual category $\mathcal{O}^*$ in \cite[Theorem~6.3]{frenkel2015baxter}. For the category $\mathcal{O}$ for non-twisted types, it is proved in \cite[Lemma~5.3]{feigin2017finite}. The proof there is given for the right dual module. Here we give a proof directly for the positive prefundamental representation, which applies for both twisted and non-twisted types. 

\begin{theorem}\label{polynomialaction}
	Let $\mathbf{\Psi} = (\psi_i(u))_{i \in I}$ be a polynomial weight, which means each $\psi_i(u)$ is a polynomial in $u$ with non-zero constant term. Then for all $v \in L(\mathbf{\Psi})$, $i \in I$ and $\alpha \in \Delta^+$, we have $E_{p\delta \pm \alpha}.v = 0$ and $\phi_{i,p}.v =0$ for $p$ sufficiently large.
\end{theorem}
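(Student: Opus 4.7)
The plan is to prove the claim by induction on the height of the weight space containing $v$, following the strategy of \cite[Lemma~5.3]{feigin2017finite} but working directly on $L(\mathbf{\Psi})$ rather than on its graded dual, so that the argument applies uniformly in both the twisted and non-twisted settings. The weights of $L(\mathbf{\Psi})$ are dominated by the weight $\lambda$ corresponding to the constant terms $\psi_i(0)$, so each weight $\omega$ of $L(\mathbf{\Psi})$ has a well-defined height $\mathrm{ht}(\lambda - \omega) \in \mathbb{N}$.

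For the base case, let $v = v_0$ be the highest $l$-weight vector. From $\phi_i^+(u).v_0 = \psi_i(u) v_0$ with $\psi_i$ a polynomial of degree $d_i$, one has $\phi_{i,p}.v_0 = 0$ for $p > d_i$. The positive real root vectors $E_{p\delta+\alpha}$ ($p \geq 0$, $\alpha \in \Delta^+$) lie in $\mathcal{U}_q(\mathcal{L}\mathfrak{g}^{\sigma})^+$ and hence annihilate $v_0$. The nontrivial point is $E_{-\alpha + p\delta}.v_0 = 0$ for $p$ sufficiently large. When $\alpha = \alpha_{\bar{j}}$ is simple, this reduces up to a nonzero scalar to $x^-_{j,p}.v_0 = 0$ for $p$ large, which follows by combining the commutation relation from Remark~\ref{phixcommutation} with the finite-dimensionality of $L(\mathbf{\Psi})_{\lambda - \alpha_{\bar{j}}}$: the relation forces the $\phi_i^+(u)$-action on $x^-_j(u_2).v_0$ to be a prescribed rational function times $\psi_i$, which combined with the finite dimension of the target weight space forces $x^-_j(u_2).v_0$ to be a Laurent polynomial of bounded degree in $u_2$. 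For a general positive root $\alpha$, invoke the Levendorskii--Soibelman formula \cite[Theorem~6.2.2]{damiani2000r} to express $E_{-\alpha + p\delta}$ as an iterated $q$-commutator of simple negative real root vectors, reducing to the previous case.

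For the inductive step, suppose the conclusion holds on every vector of weight $\omega'$ with $\mathrm{ht}(\lambda - \omega') \leq n$, and let $v$ have weight $\omega$ with $\mathrm{ht}(\lambda - \omega) = n+1$. By the triangular decomposition, $v$ is a linear combination of vectors of the form $E_{-\beta + m\delta}.w$ with $\beta \in \Delta^+$, $m \in \mathbb{Z}$, and $\mathrm{ht}(\lambda - \mathrm{wt}(w)) \leq n$. Applying $E_{p\delta + \alpha}$ to such a term and using the Levendorskii--Soibelman formula, I write the product as $q^{(p\delta+\alpha,\,-\beta+m\delta)} E_{-\beta+m\delta} E_{p\delta+\alpha}.w$ plus a finite sum of iterated products $c_{\{\gamma\}} E_{\gamma_1}^{n_1} \cdots E_{\gamma_k}^{n_k}.w$ whose root factors interpolate the original pair in the convex order. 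The first term vanishes by induction on $w$ once $p$ is large. In each remaining summand, the weight of $E_{\gamma_2}^{n_2}\cdots E_{\gamma_k}^{n_k}.w$ lies in a weight space of height $\leq n$, and for $p$ large the leftmost factor $E_{\gamma_1}$ has grading $\gamma' + p'\delta$ with $p'$ large, so by induction the summand vanishes. The arguments for $\phi_{j,p}.v$ (rewritten via $[x^+_{j,N}, x^-_{j,p-N}]$ for a fixed $N$) and for $E_{-\alpha + p\delta}.v$ are entirely analogous.

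The main obstacle lies in the base-case claim $x^-_{j,p}.v_0 = 0$ for $p$ large. The commutation with $\phi^+_i(u)$ involves the orbit factors $\prod_{r}(u\, q^{-C_{i,\sigma^r(j)}} - \omega^r u_2)$ and $\prod_{r}(u - \omega^r q^{-C_{i,\sigma^r(j)}} u_2)$ coming from the twisted Drinfeld relations, which must be combined with $\dim L(\mathbf{\Psi})_{\lambda-\alpha_{\bar{j}}}$ to give an effective degree bound on the Laurent polynomial $x^-_j(u).v_0$. The orbit structure specific to the twisted case complicates the bookkeeping but does not alter the qualitative structure of the argument; once this step is secured, the inductive part reduces to routine propagation via the Levendorskii--Soibelman formula.
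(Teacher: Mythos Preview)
Your induction on the height of weight spaces is a genuinely different route from the paper's proof, and as written it has a real gap in the base case.

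The paper does not induct on height. Its key observation is that $L(\mathbf{\Psi})$, being a subquotient of a tensor product of positive prefundamental representations, has $q$-character $\chi_q^{\sigma}(L(\mathbf{\Psi}))=[\mathbf{\Psi}]\cdot\chi^{\sigma}(L(\mathbf{\Psi}))$; in other words, the only $l$-weight occurring is $\mathbf{\Psi}$ itself. Consequently, on every finite-dimensional weight space the operator $h_{i,N_i}$ has the single generalized eigenvalue $a_i:=\psi_{i,N_i}/((q_{\bar i}-q_{\bar i}^{-1})\psi_{i,0})$, so $h_{i,N_i}-a_i$ is nilpotent there. Iterating $[h_{i,N_i}-a_i,x_{i,r}^{\pm}]=[h_{i,N_i},x_{i,r}^{\pm}]\propto x_{i,r+N_i}^{\pm}$ then yields $x_{i,p}^{\pm}.v=0$ for $p$ large on \emph{every} vector $v$ at once, with no height induction; the relation $[x_{i,r}^{+},x_{i,kN_i}^{-}]\sim\phi_{i,r+kN_i}$ handles the $\phi$'s, and the Levendorskii--Soibelman formula is invoked only at the end, to pass from simple $\alpha$ to arbitrary $\alpha\in\Delta^{+}$.

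Your base case asserts that the commutation with $\phi_i^{+}(u)$ together with $\dim L(\mathbf{\Psi})_{\lambda-\alpha_{\bar j}}<\infty$ forces $x_j^{-}(u_2).v_0$ to have bounded degree in $u_2$. But that commutation gives $x_{j,l+N_j}^{-}.v_0 = c^{-1}(h_{j,N_j}-a_j).x_{j,l}^{-}.v_0$, so the sequence terminates precisely when $h_{j,N_j}-a_j$ is nilpotent on the weight space. Finite dimension by itself does not guarantee this: a priori $h_{j,N_j}$ could have an eigenvalue $\neq a_j$ there, and then the vectors $x_{j,l}^{-}.v_0$ need never vanish. The missing ingredient is exactly the $q$-character fact above. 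Once you invoke it, nilpotency follows on every weight space simultaneously and the entire height induction (together with the delicate bookkeeping of imaginary-root factors in your Levendorskii--Soibelman step) becomes unnecessary.
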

\begin{proof}
	Recall that for $i \in I$, $N_i = M$ if $i = \sigma(i)$ and  $N_i = 1$ if $i \neq \sigma(i)$.
	
	Since the module $L(\mathbf{\Psi})$ is a subquotient of tensor product of positive prefundamental representations, its $q$-character $\chi_q^{\sigma}(L(\mathbf{\Psi})) = [\mathbf{\Psi}]\chi^{\sigma}(L(\mathbf{\Psi}))$. In particular, if we denote $a_i = \frac{\psi_{i,N_i}}{(q_{\bar{i}}-q_{\bar{i}}^{-1})\psi_{i,0}}$, since $\phi_{i,N_i}=(q_{\bar{i}}-q_{\bar{i}}^{-1})k_ih_{i,N_i}$, $h_{i,N_i}-a_i$ acts by a nilpotent on any $l$-weight vector $v$, and thus on any vector $v \in L(\mathbf{\Psi})$. 
	
	Using the commutation relation that $[h_{i,N_i},x_{i,r}^{\pm}]$ is proportional to $x_{i,r+N_i}^{\pm}$, and $[h_{i,N_i},x_{i,r}^{\pm}] = [h_{i,N_i}-a_i,x_{i,r}^{\pm}]$, $r \in \mathbb{N}^*$ ,we derive that for all $v \in L(\mathbf{\Psi})$ and $k \in \mathbb{N}$, $x_{i,kN_i}^{\pm}.v$ is proportional to $[h_{i,N}-a_i, \cdots, [h_{i,N}-a_i, [h_{i,N}-a_i,x_{i,N_i}^{\pm}]]\cdots].v$, which equals to $0$ for $k$ large enough. Therefore, when $\alpha$ is a simple root, $E_{p\delta \pm \alpha}.v = 0$ for $p$ sufficiently large.
	
	Then using the commutation relation $[x^+_{i,r},x^-_{i,kN_i}] \sim \phi_{i,r+kN_i}$, $r,k \in \mathbb{N}$, we conclude that $\phi_{i,p}.v =0$ for $p$ sufficiently large.
	
	Finally, by the Levendorskii-Soibelman formula \cite[Theorem~6.2.2]{damiani2000r} \cite[Lemma~A.1]{feigin2017finite}, for general positive root $\alpha$ and a fixed large integer $A$, then for $p$ large enough, $E_{p\delta \pm \alpha}$ appears with non-zero coefficient in some commutators of root vectors $E_{p_j\delta \pm \beta_j}$ associated to simple roots $\beta_j$ with $p_j \geq A$. We finished the proof by applying the proved results above for $E_{p_j\delta \pm \beta_j}$ associated to simple roots.
\end{proof}

In particular, it make sense to consider the action of $\phi_j(u)$ when $u$ is specialized to a complex number without the problem of convergence.

\begin{theorem}\label{phiaactsby0}
	Let $\mathbf{\Psi} = (\psi_i(u))_{i \in I}$ be a polynomial weight, which means each $\psi_i(u)$ is a polynomial in $u$ with non-zero constant term. For each $i \in I$, if $a$ is root of the polynomial $\psi_i(u)$, then for all $v \in L(\mathbf{\Psi})$, $\phi_i(a).v = 0$.
\end{theorem}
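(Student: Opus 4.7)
My plan is to prove $\phi_i(a).v = 0$ by induction on the length of a monomial $x_{j_1, l_1}^- \cdots x_{j_k, l_k}^-$ in negative Borel generators applied to the highest weight vector $v_0$. By Theorem~\ref{polynomialaction}, the expression $\phi_i(a) = \sum_{k \geq 0} a^k \phi_{i,k}^+$ defines a well-posed operator on $L(\mathbf{\Psi})$, since the sum truncates on every vector. On the highest weight vector we have $\phi_i(a).v_0 = \psi_i(a)\,v_0 = 0$, giving the base case. Since $L(\mathbf{\Psi}) = \mathcal{U}_q(\mathfrak{b}^{\sigma})^-.v_0$ is spanned by such monomials applied to $v_0$, it suffices to establish the reduction: \emph{if $w \in L(\mathbf{\Psi})$ satisfies $\phi_i(a).w = 0$, then $\phi_i(a) x_{j,l}^-.w = 0$ for every $j \in I$ and $l \geq 1$.}

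To prove this reduction, I will exploit the commutation relation of Remark~\ref{phixcommutation}:
\[
\prod_{r=1}^M (1 - \omega^r q^{-C_{i,\sigma^r(j)}} w u_2) \, \phi_i^+(w) \, x_j^-(u_2) = \prod_{r=1}^M (q^{-C_{i,\sigma^r(j)}} - \omega^r w u_2) \, x_j^-(u_2) \, \phi_i^+(w).
\]
Extracting the coefficient of $u_2^{-k}$ for each $k \geq 1$ and then specializing $w = a$ produces an identity of operators on $L(\mathbf{\Psi})$ of the form
\[
\sum_{s=0}^{M} p_s \, \phi_i(a) \, x_{j, k+s}^- = \sum_{s=0}^{M} q_s \, x_{j, k+s}^- \, \phi_i(a),
\]
with scalars $p_s, q_s \in \mathbb{C}$ and $p_0 = 1$ (coming from the constant term of the left-hand polynomial in $u_2$). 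Applying both sides to $w$ with $\phi_i(a).w = 0$ kills the right-hand side, yielding the recursion $\sum_{s=0}^{M} p_s \phi_i(a) x_{j, k+s}^-.w = 0$ for every $k \geq 1$. Theorem~\ref{polynomialaction} supplies the initialization $x_{j,l}^-.w = E_{-\alpha_j + l\delta}.w = 0$ for $l$ sufficiently large, so $\phi_i(a)\,x_{j,l}^-.w = 0$ for such $l$, and descending induction on $k$ (using $p_0 = 1$) then gives $\phi_i(a)\,x_{j,k}^-.w = 0$ for every $k \geq 1$, completing the inductive step.

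The main obstacle I anticipate is the specialization step, namely verifying that the coefficient extraction yields a bona fide identity within $\mathcal{U}_q(\mathfrak{b}^{\sigma})[[w]]$ rather than merely a relation in the full quantum loop algebra: Remark~\ref{phixcommutation} holds in $\mathcal{U}_q(\mathcal{L}\mathfrak{g}^{\sigma})$ and its formal expansion involves $x_{j,l}^-$ for all $l \in \mathbb{Z}$, while $\mathcal{U}_q(\mathfrak{b}^{\sigma})$ contains only those with $l > 0$. This is resolved by the observation that for $k \geq 1$ and $s \in \{0, \ldots, M\}$ the index $k+s$ is always positive, so the coefficient of $u_2^{-k}$ on both sides automatically lies in $\mathcal{U}_q(\mathfrak{b}^{\sigma})[[w]]$, and the specialization $w = a$ then converts it into a legitimate operator identity on the Borel module $L(\mathbf{\Psi})$ thanks to the truncation supplied by Theorem~\ref{polynomialaction}.
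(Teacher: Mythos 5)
Your reduction step is sound: the coefficient extraction from the relation of Remark~\ref{phixcommutation}, the specialization $w=a$ (legitimate because Theorem~\ref{polynomialaction} truncates $\phi_i^+(w)$ on every vector), and the descending induction on the mode index initialized by $x_{j,l}^-.w=0$ for $l\gg 0$ all work. The gap is in the spanning claim that starts the induction. You assert that $L(\mathbf{\Psi})=\mathcal{U}_q(\mathfrak{b}^{\sigma})^-.v_0$ is spanned by monomials $x_{j_1,l_1}^-\cdots x_{j_k,l_k}^-.v_0$ with $l_i\geq 1$, but $\mathcal{U}_q(\mathfrak{b}^{\sigma})^-$ is strictly larger than the subalgebra generated by $\{x_{j,l}^-\}_{j\in I,\,l\geq 1}$: it is spanned by PBW monomials in all real root vectors $E_{m\delta-\alpha}$ with $m\geq 1$ and $\alpha$ a positive root of the finite type algebra, and for non-simple $\alpha$ the vector $E_{\delta-\alpha}$ has $\hat Q$-degree $-\alpha+\delta$, whereas any product of $x_{j,l}^-$'s of total finite weight $-\alpha$ has loop degree at least $\mathrm{ht}(\alpha)\geq 2$ in $\delta$. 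In particular $e_{\epsilon}$ (modulo Cartan elements) lies in $\mathcal{U}_q(\mathfrak{b}^{\sigma})^-$ but not in your subalgebra --- the paper makes exactly this point when it notes that $e_0$ cannot be generated by the Drinfeld generators contained in $\mathcal{U}_q(\mathfrak{b}^{\sigma})$. Consequently your set of monomials need not span $L(\mathbf{\Psi})$, and the subspace $\{v:\phi_i(a).v=0\}$ that you control is stable under $x_{j,l}^{\pm}$ and $\phi_{j,m}$ but not visibly under $e_{\epsilon}$, so it need not be a submodule and irreducibility cannot close the argument.

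The paper avoids this asymmetry by running the induction in the opposite direction: it inducts on the height of $\mathfrak{t}$-weights and commutes $\phi_i(a)$ past the raising current $x_j^{+}(u_2)$. Since $x_{j,l}^+.v$ has strictly smaller height, the induction hypothesis kills one side of the relation, and a leading-coefficient argument (again using Theorem~\ref{polynomialaction}) shows that $\phi_i(a).v$ is annihilated by every $x_{j,l}^{+}$ with $l\geq 0$. These modes \emph{do} generate $\mathcal{U}_q(\mathfrak{b}^{\sigma})^{+}$ (for $E_{m\delta+\alpha}$ the required loop degrees $m_i\geq 0$ always sum correctly), so a nonzero $\phi_i(a).v$ of non-maximal weight would generate a proper submodule, contradicting irreducibility of $L(\mathbf{\Psi})$. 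If you want to keep your downward strategy, you must also establish the reduction for the root vectors $E_{m\delta-\alpha}$ with $\alpha$ non-simple (equivalently for $e_{\epsilon}$), for which no clean current commutation relation with $\phi_i(a)$ is available; switching to the paper's upward argument is the simpler repair.
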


\begin{proof}
	In the usual character, we denote $\omega^{(0)}$ to be the weight of the highest weight vector. Then any weight $\omega$ of $L(\mathbf{\Psi})$ is of the form $\omega = \omega^{(0)} - \sum_{\bar{j} \in I^{\sigma}}n_{\bar{j}}\alpha_{\bar{j}}$, $n_{\bar{j}} \in \mathbb{N}$. The height of $\omega$ is defined to be $\sum_{\bar{j} \in I^{\sigma}}n_{\bar{j}}$.
	We prove this theorem by induction on the height of weight spaces.
	
	The highest weight space is one-dimensional. For $v_0$ a highest weight vector, it is an eigenvector of $\phi_{i}(u)$ for each $i$. Thus $\phi_i(a).v_0 = \psi_i(a)v_0 = 0$ by definition of $a$.
	
	Suppose it holds for any $v' \in \oplus_{\omega \in \mathfrak{t}^*,ht(\omega) \leq n} L(\mathbf{\Psi})_{\omega}$. Consider $v \in \oplus_{\omega \in \mathfrak{t}^*,ht(\omega) \leq n+1} L(\mathbf{\Psi})_{\omega}$ which is not a highest weight vector. The vectors $x_{j,l}^{+}.v$ lie in the space $\oplus_{\omega \in \mathfrak{t}^*,ht(\omega) \leq n} L(\mathbf{\Psi})_{\omega}$, $\forall j \in I, l \geq 0$. We use the commutation relation in Remark \ref{phixcommutation}:
	\[
	\phi_i(u_1^{-1})x_j^{+}(u_2) = \frac{\prod_{r=1}^M (u_1 q^{C_{i,\sigma^r(j)}} -\omega^r u_2)}{\prod_{r=1}^M (u_1-\omega^r q^{C_{i,\sigma^r(j)}} u_2)} x_j^{+}(u_2)\phi_i(u_1^{-1}).
	\]
	
	Substituting $u_1 = a^{-1}$ and acting on $v$, we have 
	\[
	\prod_{r=1}^M (a^{-1}-\omega^r q^{C_{i,\sigma^r(j)}} u_2) \phi_i(a)x_j^{+}(u_2).v = \prod_{r=1}^M (a^{-1}q^{C_{i,\sigma^r(j)}} -\omega^r u_2) x_j^{+}(u_2)\phi_i(a).v.
	\]
	
	Compare coefficients of each power of $u_2$. The left-hand-side is zero by the induction hypothesis. On the right-hand-side, by Theorem \ref{polynomialaction}, on the vector $\phi_i(a).v$, $x_{j,l}^{+}$ acts by $0$ when $l$ sufficiently large. If there is an $x_{j,L}^{+}$, $L \geq 0$, with non-vanish action on $\phi_i(a).v$, choose $L$ to be such a largest non-negative integer. The coefficient of highest degree term on the right-hand-side is then $x_{j,L}^{+}\phi_i(a).v$, which should be $0$ according to the left-hand-side. Therefore such an $x_{j,L}^{+}$ does not exist. In other words, the vector $\phi_i(a).v$ is vanished by all the elements $x_{j,l}^{+}$, $j \in I$, $l \geq 0$.
	
	Since the positive part $\mathcal{U}_q(\mathfrak{b}^{\sigma})^+$ in the triangular decomposition is generated by $\{x_{j,l}^{+}\}_{j \in I, l \geq 0}$. We have $\mathcal{U}_q(\mathfrak{b}^{\sigma})^+.\phi_i(a).v$. When $v$ is not a highest weight vector, this implies $\phi_i(a).v = 0$. We finished the proof by induction.
\end{proof}

\subsection{Character of $X_{\bar{i},a}$}
As we did for prefundamental representations, $X_{\bar{i},a}$ can be obtained from $X_{\bar{i},1}$ by the automorphism \eqref{twistingbya} restricting to $\mathcal{U}_q(\mathfrak{b}^{\sigma})$. Therefore, We can focus only on $X_{\bar{i},1}$ in the following.

Define the partial order $\preceq$ on $\mathcal{E}_l^{\sigma}$ so that $c \preceq c'$ if and only if $c(\mathbf{\Psi}) \leq c'(\mathbf{\Psi})$ for all $\mathbf{\Psi} \in \mathfrak{r}^{\sigma}$. The first theorem is an analogue of \cite[Lemma~4.10]{frenkel2018spectra}.

\begin{theorem}\label{theorem5.4}
	We have 
	\[\chi_q^{\sigma}(X_{\bar{i},1}) \preceq [\widetilde{\mathbf{\Psi}}_{\bar{i},1}] \chi_{\bar{i},1} \chi, \]
	where $\chi_{\bar{i},1} = \sum_{r \geq 0} (A_{\bar{i},1}A_{\bar{i},q^{-2}}\cdots A_{\bar{i},q^{-2r+2}})^{-1}$, $\chi = \prod_{j|C_{i,j}=-1} \chi(L^+_{\bar{j},1})$.
\end{theorem}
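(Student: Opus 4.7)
The plan is to adapt the argument of \cite[Lemma~4.10]{frenkel2018spectra} from the non-twisted to the twisted setting. The proof will proceed in three main stages: a tensor product realization of $X_{\bar{i},1}$, a computation of the $q$-character of that tensor product, and a refinement of the resulting bound to the claimed form.

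First, I factorize the $l$-highest weight. By definition, $\widetilde{\mathbf{\Psi}}_{\bar{i},1}=\pi_0(\widetilde{\mathbf{\Psi}}_{i,1})$ and $\widetilde{\mathbf{\Psi}}_{i,1}=\mathbf{\Psi}_{i,1}^{-1}\prod_{j\mid C_{i,j}=-1}\mathbf{\Psi}_{j,q}$; distributing $\pi_0$ across this product and using the explicit formulas for $\pi_0$ on single-variable $l$-weights, one obtains
\[
\widetilde{\mathbf{\Psi}}_{\bar{i},1}\ =\ \mathbf{\Psi}^{-}_{\bar{i},1}\cdot\prod_{j\mid C_{i,j}=-1}\pi_0(\mathbf{\Psi}^{+}_{j,q}),
\]
where each $\pi_0(\mathbf{\Psi}^{+}_{j,q})$ is a product of positive prefundamental $l$-highest weights at $\bar{j}$ with appropriate shifts of the form $q\omega^{r}$. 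By Lemma~\ref{4.9}, $X_{\bar{i},1}$ is isomorphic to the simple subquotient of the $\mathcal{U}_q(\mathfrak{b}^{\sigma})$-submodule of
\[
T\ :=\ L^{-}_{\bar{i},1}\otimes\bigotimes_{j\mid C_{i,j}=-1}L^{+}_{\bar{j},\,q\omega^{r_j}}
\]
generated by the tensor of highest $l$-weight vectors, which gives $\chi_q^{\sigma}(X_{\bar{i},1})\preceq\chi_q^{\sigma}(T)$.

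Second, since $\chi_q^{\sigma}$ is a ring homomorphism (Theorem~\ref{isomorphism}), $\chi_q^{\sigma}(T)$ factors across tensor factors. Each positive prefundamental satisfies $\chi_q^{\sigma}(L^{+}_{\bar{j},a})=[\mathbf{\Psi}^{+}_{\bar{j},a}]\,\chi(L^{+}_{\bar{j},1})$ by Theorem~\ref{irrcorol}, so gathering the factors yields
\[
\chi_q^{\sigma}(T)\ =\ [\widetilde{\mathbf{\Psi}}_{\bar{i},1}]\cdot\bigl(\chi_q^{\sigma}(L^{-}_{\bar{i},1})/[\mathbf{\Psi}^{-}_{\bar{i},1}]\bigr)\cdot\chi.
\]
This already gives a bound of the required shape, but the factor $\chi_q^{\sigma}(L^{-}_{\bar{i},1})/[\mathbf{\Psi}^{-}_{\bar{i},1}]$ is too large: it is a sum of $A^{-1}$-monomials in many nodes, whereas the target $\chi_{\bar{i},1}$ involves only the single diagonal column at node $\bar{i}$.

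The main obstacle is the third stage, the refinement from $\chi_q^{\sigma}(L^{-}_{\bar{i},1})/[\mathbf{\Psi}^{-}_{\bar{i},1}]$ down to $\chi_{\bar{i},1}$. My plan is a weight-lowering analysis of the cyclic submodule $\mathcal{U}_q(\mathfrak{b}^{\sigma})\cdot(v^{-}\otimes v^{+})\subset T$ based on the coproduct formulas in Proposition~\ref{coproduct}. For a simple root $\alpha_{\bar{k}}$ with $\bar{k}\neq\bar{i}$, the action of $x^{-}_{k,l}$ either lowers within some $L^{+}_{\bar{j},q\omega^{r_j}}$-factor with $\bar{j}=\bar{k}$ (producing a weight already accounted for by the character $\chi$) or acts on $L^{-}_{\bar{i},1}$ through $1\otimes x^{-}_{k,l}$; in the latter case, using the $\phi$--$x^{-}$ commutation of Remark~\ref{phixcommutation} together with the value of $\phi^{+}_{k}(u)$ on the $L^{+}$-factors and Theorem~\ref{phiaactsby0}, the resulting $l$-weight is shown to be cancelled in the simple quotient because the $\phi^{+}_{k}$-eigenvalue it produces is incompatible with the $l$-highest weight $\widetilde{\mathbf{\Psi}}_{\bar{i},1}$ modulo the $\chi$-shifts. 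Only lowerings at node $\bar{i}$ can survive, and they produce exactly the monomials $(A_{\bar{i},1}A_{\bar{i},q^{-2}}\cdots A_{\bar{i},q^{-2r+2}})^{-1}$ making up $\chi_{\bar{i},1}$. The technically delicate points are the nodes $i=\sigma(i)$, where the shift $N_i=M$ in \eqref{phi} alters the bookkeeping, and type $A_{2n}^{(2)}$ at $\bar{n}$, where the standard $\mathfrak{sl}_2$-reduction used in the non-twisted argument is unavailable and the cancellation has to be checked directly from the twisted Drinfeld relations in \eqref{commutationdrinfeld}.
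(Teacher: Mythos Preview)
Your first two stages are fine, but the third stage --- the refinement from $\chi_q^{\sigma}(L^{-}_{\bar{i},1})/[\mathbf{\Psi}^{-}_{\bar{i},1}]$ to the single column $\chi_{\bar{i},1}$ --- is the entire content of the theorem, and your sketch does not carry it out. You assert that lowerings at nodes $\bar{k}\neq\bar{i}$ in the $L^{-}$ factor are ``cancelled in the simple quotient'' because of an ``incompatibility'' of $\phi^{+}_{k}$-eigenvalues, but you give no mechanism for this cancellation. Theorem~\ref{phiaactsby0} says that $\phi_k(a)$ vanishes on a module with \emph{polynomial} highest $l$-weight; it tells you nothing about the $L^{-}_{\bar{i},1}$ factor, and you have not explained how evaluating $\phi_k$ at a specific point on the positive factors forces a vector in the cyclic submodule of $T$ to lie in a proper submodule. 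Also note that in the coproduct formula of Proposition~\ref{coproduct} the term $1\otimes x^{-}_{k,l}$ acts trivially on the \emph{first} tensor factor, so your description of which term lowers in $L^{-}_{\bar{i},1}$ is inverted; this is minor but signals that the analysis has not been done.

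The paper takes a different and more tractable route. Instead of working directly with $L^{-}_{\bar{i},1}$ and the standard coproduct, it approximates $X_{\bar{i},1}$ by the family $X_{\bar{i},1}^{(r)}=L(\widetilde{\mathbf{\Psi}}_{\bar{i},1}\mathbf{\Psi}_{\bar{i},q^{-2r}})$ and realizes each $X_{\bar{i},1}^{(r)}$ inside a Drinfeld-coproduct tensor $V\otimes^{(D)}W$ with $V$ a KR-module and $W=L(\widetilde{\mathbf{\Psi}}_{\bar{i},1}\mathbf{\Psi}_{\bar{i},1})$ of polynomial highest $l$-weight. The KR-module has an explicitly known head of its $q$-character, and the Drinfeld coproduct gives $\Delta^{(D)}(x^{-}_{j,l})(v\otimes w)-v\otimes x^{-}_{j,l}w=\sum_{k\geq 0}x^{-}_{j,l-k}v\otimes\phi_{j,k}w$; now Theorem~\ref{phiaactsby0} applied to $W$ yields the vanishing identity $\sum_{k\geq0}(aq)^{-k}\phi_{j,k}w=0$ for the relevant $a$, which is exactly what makes the ``diagonal column'' $V_0\otimes W$ a genuine $\mathcal{U}_q(\mathfrak{b}^{\sigma})$-submodule for $\bar{j}\neq\bar{i}$ (Lemma~\ref{preceq}). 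A truncated-$q$-character limit in $r$ then gives the bound for $X_{\bar{i},1}$. If you want to salvage your direct approach, you would need an analogous submodule statement for the standard coproduct on $L^{-}_{\bar{i},1}\otimes(\text{positive prefundamentals})$; that is not obviously available, and is precisely why the paper passes through KR-modules and the Drinfeld coproduct.
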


We prove it in a different method from that in \cite{frenkel2018spectra}. These proofs also apply to non-twisted case.

%Recall that in non-twisted case of type $A,D,E$, a family of representations $L(\widetilde{\mathbf{\Psi}}_{i}^{(M)})$, $M >0$, were defined \cite{frenkel2018spectra}. Here we replace $M$ in \cite{frenkel2018spectra} by $2M$ for convenience, then
%	\[\widetilde{\mathbf{\Psi}}_{i}^{(M)} = \widetilde{\mathbf{\Psi}}_{i,1} \prod_{j | C_{i,j} = -1} \mathbf{\Psi}_{j,q^{2M+1}}^{-1}.\]
%	The truncated $q$-character of $L(\widetilde{\mathbf{\Psi}}_{i}^{(M)})$ is explicitly calculated.

%\begin{lemma}
%	Define the its twisting
%	\[\widetilde{\mathbf{\Psi}}_{\bar{i}}^{(M)} = \pi(\widetilde{\mathbf{\Psi}}_{i}^{(M)}).\]
%	Then 
%	\[\chi_q^{\sigma}(L(\widetilde{\mathbf{\Psi}}_{\bar{i}}^{(M)})) \in [\widetilde{\mathbf{\Psi}}_{\bar{i}}^{(M)}] \mathcal{A}_1,\]
%	and moreover,
%	\[\chi_q^{\sigma,<2M} (L(\widetilde{\mathbf{\Psi}}_{\bar{i}}^{(M)})) = [\widetilde{\mathbf{\Psi}}_{\bar{i}}^{(M)}] \sum_{r \geq 0}(A_{\bar{i},1} A_{\bar{i},q^{-2}} \cdots A_{\bar{i},q^{-2r+2}}). \]
%\end{lemma}
%\begin{proof}	
%\end{proof}

We first study the $q$-character of $X_{\bar{i},1}^{(r)} =  L(\widetilde{\mathbf{\Psi}}_{\bar{i},1} \mathbf{\Psi}_{\bar{i},q^{-2r}} )$. In non-twisted case, it was studied in \cite[Theorem~8.1]{hernandez2020representations}. Here we only need a weaker version of it and we will not use shifted quantum affine algebras.

\begin{lemma}\label{preceq}
	\[\chi_q^{\sigma}(X_{\bar{i},1}^{(r)}) \preceq [\widetilde{\mathbf{\Psi}}_{\bar{i},1} \mathbf{\Psi}_{\bar{i},q^{-2r}}] \chi \sum_{0 \leq m \leq r}(A_{\bar{i},1}A_{\bar{i},q^{-2}}\cdots A_{\bar{i},q^{-2m+2}})^{-1}. \]
\end{lemma}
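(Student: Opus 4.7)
The plan is to realize $X_{\bar i, 1}^{(r)}$ as a subquotient of a specific tensor product and then bound the $q$-character of that tensor product. First I would decompose the highest $l$-weight as
\[
\widetilde{\mathbf{\Psi}}_{\bar i, 1}\mathbf{\Psi}_{\bar i, q^{-2r}} = \bigl(\mathbf{\Psi}_{\bar i, 1}^{-1}\mathbf{\Psi}_{\bar i, q^{-2r}}\bigr) \cdot \prod_{\bar j \sim \bar i}\mathbf{\Psi}^+_{\bar j, q_{\bar i}},
\]
with the obvious modifications for $\bar i = \bar n$ in type $A_{2n}^{(2)}$. Under the identification of Remark~\ref{remarksubring}, the first factor equals, up to a one-dimensional $\mathfrak{t}$-character twist by $r\omega_{\bar i}$, the Borel highest $l$-weight of the KR-module $W^{(\bar i)}_{r, q^{-2r+1}}$ (whose highest monomial is $M_r = Z_{\bar i, q^{-2r+1}}\cdots Z_{\bar i, q^{-1}}$). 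Arguing as in Lemma~\ref{4.9}, $X_{\bar i, 1}^{(r)}$ is therefore an irreducible subquotient of a character twist of $W^{(\bar i)}_{r, q^{-2r+1}} \otimes \bigotimes_{\bar j \sim \bar i} L^+_{\bar j, q_{\bar i}}$, so
\[
\chi_q^\sigma(X_{\bar i, 1}^{(r)}) \preceq \chi_q^\sigma(W^{(\bar i)}_{r, q^{-2r+1}}) \cdot \prod_{\bar j \sim \bar i}\chi_q^\sigma(L^+_{\bar j, q_{\bar i}}).
\]

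Using Theorem~\ref{irrcorol} together with the translation invariance in $a$ of the usual character $\chi(L^+_{\bar j, a})$, the prefundamental factor equals $\bigl[\prod_{\bar j \sim \bar i}\mathbf{\Psi}^+_{\bar j, q_{\bar i}}\bigr]\chi$, which accounts for the $\chi$-factor in the desired bound. The remaining task is to show that, in the simple subquotient $X_{\bar i, 1}^{(r)}$, the $A^{-1}$-monomials at nodes $\bar j \neq \bar i$ are entirely absorbed into $\chi$, while the $A^{-1}$-monomials at node $\bar i$ form only the finite chain $1,\, A^{-1}_{\bar i, 1},\, A^{-1}_{\bar i, 1}A^{-1}_{\bar i, q^{-2}},\, \ldots,\, A^{-1}_{\bar i, 1}\cdots A^{-1}_{\bar i, q^{-2r+2}}$. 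To control the $\bar i$-chain I would tensor $X_{\bar i, 1}^{(r)}$ with $L^+_{\bar i, 1}$: the highest $l$-weight becomes the polynomial weight $\mathbf{\Psi}_{\bar i, q^{-2r}}\prod_{\bar j \sim \bar i}\mathbf{\Psi}^+_{\bar j, q_{\bar i}}$, so Theorem~\ref{phiaactsby0} forces $\phi_{\bar i}(q^{2r})$ (or $\phi_{\bar i}(q^{2r}\omega^k)$ when $\bar i = \sigma(\bar i)$) to act by zero on every vector. Iterating this vanishing via the commutation relations of Remark~\ref{phixcommutation} prevents the $A^{-1}$-chain at $\bar i$ from extending beyond $r$ steps.

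The main obstacle is precisely the truncation-and-absorption step: the tensor product $W^{(\bar i)}_{r, q^{-2r+1}} \otimes \bigotimes L^+_{\bar j, q_{\bar i}}$ has a $q$-character strictly larger than the claimed bound, and passage to the simple subquotient is what eliminates the extra monomials. Making this rigorous requires a careful inductive analysis of which $A^{-1}$-monomials can iteratively appear from the highest weight, combining the $\phi$-vanishing of Theorem~\ref{phiaactsby0} at each node with the polynomial structure inherited from $\prod_{\bar j \sim \bar i}\mathbf{\Psi}^+_{\bar j, q_{\bar i}}$. The case $\bar i = \bar n$ in type $A_{2n}^{(2)}$ is the most delicate since $\mathfrak{sl}_2$-reductions are unavailable at the short node; there, the analysis must be carried out directly using the refined coproduct formulas of Proposition~\ref{coproduct} and the explicit form of $A_{\bar n, a}$.
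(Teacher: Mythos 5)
Your setup is sound and matches the paper's starting point: the factorization $\widetilde{\mathbf{\Psi}}_{\bar i,1}\mathbf{\Psi}_{\bar i,q^{-2r}} = (\mathbf{\Psi}_{\bar i,1}^{-1}\mathbf{\Psi}_{\bar i,q^{-2r}})\cdot(\widetilde{\mathbf{\Psi}}_{\bar i,1}\mathbf{\Psi}_{\bar i,1})$ into a KR-type weight times a product of positive prefundamental weights is exactly the decomposition used, and your identification of the prefundamental factor's $q$-character with $\bigl[\prod\mathbf{\Psi}^+\bigr]\chi$ is correct. But the proof as written has a genuine gap, and you have located it yourself without closing it: the ordinary tensor product $W^{(\bar i)}_{r,q^{-2r+1}}\otimes\bigotimes L^+_{\bar j}$ gives the bound $\chi_q^{\sigma}(X^{(r)}_{\bar i,1})\preceq \chi_q^{\sigma}(W^{(\bar i)}_{r,q^{-2r+1}})\cdot[\cdots]\chi$, and $\chi_q^{\sigma}(W^{(\bar i)}_{r,q^{-2r+1}})$ contains, besides the chain $\sum_{0\le m\le r}(A_{\bar i,1}\cdots A_{\bar i,q^{-2m+2}})^{-1}$, infinitely many monomials with factors $A_{\bar j,b}^{-1}$ at neighboring nodes. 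The statement of the lemma requires that none of these survive in the subquotient except through the constant-in-$A$ factor $\chi$, and nothing in your argument forces that. Your proposed fix (tensor with $L^+_{\bar i,1}$ to get a polynomial weight and invoke Theorem~\ref{phiaactsby0}) only constrains the $\bar i$-direction of the chain; it says nothing about why the $A_{\bar j}^{-1}$ directions collapse, and moreover you would then need to divide back out the effect of the extra factor $L^+_{\bar i,1}$ on the $q$-character, which is not addressed. "A careful inductive analysis of which $A^{-1}$-monomials can appear" is a restatement of the problem, not a proof.

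The paper closes this gap by a different mechanism that you should compare with. It forms the tensor product in the \emph{Drinfeld} coproduct, $V\otimes^{(D)}W$ with $V=L(\mathbf{\Psi}_{\bar i,1}^{-1}\mathbf{\Psi}_{\bar i,q^{-2r}})$ the KR-module and $W=L(\widetilde{\mathbf{\Psi}}_{\bar i,1}\mathbf{\Psi}_{\bar i,1})$ the prefundamental product (well-defined by the coideal property of Lemma~\ref{lemmaconjugation} and the vanishing of high Drinfeld modes from Theorem~\ref{polynomialaction}). Let $V_0\subset V$ be the sum of the $l$-weight spaces of the chain $[\mathbf{\Psi}_{\bar i,1}^{-1}\mathbf{\Psi}_{\bar i,q^{-2r}}]\sum_{0\le m\le r}(A_{\bar i,1}\cdots A_{\bar i,q^{-2m+2}})^{-1}$, read off from the known KR $q$-character. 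The point is that $V_0\otimes W$ is a genuine $\mathcal{U}_q(\mathfrak{b}^{\sigma})$-submodule: in the Drinfeld coproduct $\Delta^{(D)}(x^-_{j,l}).(v\otimes w)=v\otimes x^-_{j,l}.w+\sum_k x^-_{j,l-k}.v\otimes\phi_{j,k}.w$, and for $\bar j\neq\bar i$ the second sum vanishes identically because $\phi_j(aq).w=0$ for $w\in W$ at the roots $a$ of the polynomial weight of $W$ (Theorem~\ref{phiaactsby0}, equation~\eqref{phiequals0}), combined with the multiplicity-one structure of the relevant $l$-weight spaces of $V$. This is precisely the step that absorbs the neighbor directions into $\chi$, and it is the step your standard-coproduct approach cannot reproduce without substantial additional work.
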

\begin{proof}
	By Lemma \ref{lemmaconjugation}, the Borel subalgebra is a coideal with respect to the Drinfeld coproduct:
	\[\Delta^{(D)}(\mathcal{U}_q(\mathfrak{b}^{\sigma})) \subset \mathcal{U}_q(\mathcal{L}\mathfrak{g}^{\sigma}) \hat{\otimes} \mathcal{U}_q(\mathfrak{b}^{\sigma}).\]
	Moreover, using Theorem \ref{polynomialaction} and by the same argument as \cite[Lemma~5.6]{feigin2017finite}, the vector space $L(\mathbf{\Psi}_{\bar{i},1}^{-1} \mathbf{\Psi}_{\bar{i},q^{-2r}}) \otimes L(\widetilde{\mathbf{\Psi}}_{\bar{i},1} \mathbf{\Psi}_{\bar{i},1})$ is a well-defined $\mathcal{U}_q(\mathfrak{b}^{\sigma})$-module with the action through the Drinfeld coproduct. Denote this module by $L(\mathbf{\Psi}_{\bar{i},1}^{-1} \mathbf{\Psi}_{\bar{i},q^{-2r}}) \otimes^{(D)} L(\widetilde{\mathbf{\Psi}}_{\bar{i},1} \mathbf{\Psi}_{\bar{i},1})$, to be distinguished from the tensor product module with respect to the standard coproduct. In particular, it has a subquotient isomorphic to $X_{\bar{i},1}^{(r)}$.
	
	The second factor $L(\widetilde{\mathbf{\Psi}}_{\bar{i},1} \mathbf{\Psi}_{\bar{i},1})$, denoted by $W$ for simplicity, is a subquotient of a tensor product of positive prefundamental representations. (In fact, a tensor product of positive prefundamental representations is irreducible. By the same proof as in non-twisted case \cite[Theorem~4.11]{frenkel2015baxter}, but we will not use this property here.) Thus $\chi_q^{\sigma}(W) = [\widetilde{\mathbf{\Psi}}_{\bar{i},1} \mathbf{\Psi}_{\bar{i},1}]\chi^{\sigma}(W)$.
	
	The first factor $L(\mathbf{\Psi}_{\bar{i},1}^{-1} \mathbf{\Psi}_{\bar{i},q^{-2r}})$, denoted by $V$ for simplicity, is a KR-module. Since the $q$-character of KR-modules in twisted type can be obtained from that in non-twisted type by applying $\pi$ \cite[Theorem~8.1]{hernandez2010kirillov}, its structure was well-studied. In particular, we know that \cite[Lemma~5.5]{hernandez2006kirillov}
	\[\begin{split}
		\chi_q^{\sigma}(L(\mathbf{\Psi}_{\bar{i},1}^{-1} \mathbf{\Psi}_{\bar{i},q^{-2r}})) & = [\mathbf{\Psi}_{\bar{i},1}^{-1} \mathbf{\Psi}_{\bar{i},q^{-2r}}] (\sum_{0 \leq m \leq r}(A_{\bar{i},1}A_{\bar{i},q^{-2}}\cdots A_{\bar{i},q^{-2m+2}})^{-1} \\
		& + \sum_{\bar{j}|C^{\sigma}_{\bar{j},\bar{i}} < 0} \sum_{a \in e^{\frac{2\pi i \mathbb{Z}}{k}}} \sum_{0 \leq m \leq r}(A_{\bar{i},1}A_{\bar{i},q^{-2}}\cdots A_{\bar{i},q^{-2m+2}})^{-1}A_{\bar{j},aq}^{-1} + \cdots),
	\end{split}\]
	where $k = -C^{\sigma}_{\bar{j},\bar{i}}$ and $\cdots$ stands for terms with $\mathfrak{t}$-weights less or equal than $-\alpha_{\bar{j}_1}-\alpha_{\bar{j}_2}$, $\bar{j}_1,\bar{j}_2 \neq \bar{i}$. For the case $A_{2n}^{(2)}$ and $\bar{i} = \bar{n}$, there is one more explicit term 
	\[\sum_{0 \leq m \leq r}(A_{\bar{i},1}A_{\bar{i},q^{-2}}\cdots A_{\bar{i},q^{-2m+2}})^{-1}A_{\bar{i},-q}^{-1}. \]
	Denote by $V_0$ the sum of $l$-weight spaces corresponding to the terms 
	\[ [\mathbf{\Psi}_{\bar{i},1}^{-1} \mathbf{\Psi}_{\bar{i},q^{-2r}}] \sum_{0 \leq m \leq r}(A_{\bar{i},1}A_{\bar{i},q^{-2}}\cdots A_{\bar{i},q^{-2m+2}})^{-1}.\] 
	
	We show that the subspace $V_0 \otimes W \subset V \otimes^{(D)} W$ is a $\mathcal{U}_q(\mathfrak{b}^{\sigma})$-submodule. Clearly, $V_0 \otimes W$ is stable under Drinfeld generators $\phi_{j,r}$ and $x_{j,r}^{+}$, $j \in I, r \geq 0$. 
	
	Recall the notation $N_j = M$ if $j = \sigma(j)$, and $N_j  = 1$ otherwise. In fact, using the above known parts of $\chi_q^{\sigma}(V)$ which are of multiplicity $1$, and the commutation relation between $h_{j,N_j}$ and $x^-_{j,l}$:
	\[[h_{j,N_j},x_{j,l}^{-}] = -\frac{1}{N_j}(\sum_{r=1}^M [N_jC_{j,\sigma^r(j)}/d_{\bar{j}}]_{q_{\bar{j}}} \omega^{rN_j})x_{j,l+N_j}^{-},\]
	we have: for any $l$-weight vector $v \in (V_0)_{[\mathbf{\Psi}_{\bar{i},1}^{-1} \mathbf{\Psi}_{\bar{i},q^{-2r}}] (A_{\bar{i},1}A_{\bar{i},q^{-2}}\cdots A_{\bar{i},q^{-2m+2}})^{-1}}$, suppose $x_{j,l}^-.v = \sum_{a}v_{a}^{(l)}$, where $v_{a}^{(l)} \in (V_0)_{[\mathbf{\Psi}_{\bar{i},1}^{-1} \mathbf{\Psi}_{\bar{i},q^{-2r}}] (A_{\bar{i},1}A_{\bar{i},q^{-2}}\cdots A_{\bar{i},q^{-2m+2}})^{-1}A_{\bar{j},aq}^{-1}}$, then $v_a^{(l+N_j)} = (aq)^{N_j}v_a^{(l)}$.
	
	Therefore, using the definition of Drinfeld coproduct, for any vector $w \in L(\widetilde{\mathbf{\Psi}}_{\bar{i},1} \mathbf{\Psi}_{\bar{i},1})$ and $v$ as above, 
	\[
	\Delta^{(D)}(x_{j,l}^-).(v \otimes w) - v \otimes x_{j,l}^-.w = \sum_{k \geq 0} x_{j,l-k}^{-}.v \otimes \phi_{j,k}.w = 0,\; \text{if} \;  \bar{j} \neq \bar{i}.
	\]
	Here we used the following identity by Theorem \ref{phiaactsby0}: for any $w \in W = L(\widetilde{\mathbf{\Psi}}_{\bar{i},1} \mathbf{\Psi}_{\bar{i},1})$ and $a \in exp(\frac{2\pi i \mathbb{Z}}{-C^{\sigma}_{\bar{j},\bar{i}}})$,
	\begin{equation}\label{phiequals0}
		\phi_{j,0}.w + (aq)^{-1}\phi_{j,1}.w + (aq)^{-2}\phi_{j,2}.w + \cdots =0.
	\end{equation}
	
	Similarly,
	\[ \Delta^{(D)}(x_{j,l}^-).(v \otimes w) - v \otimes x_{j,l}^-.w \in (V_0)_{[\mathbf{\Psi}_{\bar{i},1}^{-1} \mathbf{\Psi}_{\bar{i},q^{-2r}}] (A_{\bar{i},1}A_{\bar{i},q^{-2}}\cdots A_{\bar{i},q^{-2m}})^{-1}} \otimes W, \; \text{if} \; \bar{j} = \bar{i}.
	\]
	
	In all cases, $V_0 \otimes W$ is a subspace of $V \otimes W$ stable under Drinfeld generators which lie in the Borel subalgebra. For the same reason as in \cite[Lemma~5.10]{feigin2017finite}, this implies that  $V_0 \otimes W$ is closed under $\mathcal{U}_q(\mathfrak{b}^{\sigma})$-action. Therefore, $X_{\bar{i},1}^{(r)}$ is a subquotient of $V_0 \otimes W$ and we finished the proof.
\end{proof}

Then we can finish the proof of Theorem \ref{theorem5.4}:

\begin{proof}[Proof of Theorem \ref{theorem5.4}]
	For a monomial in $\chi_q^{\sigma}(X_{\bar{i},1})$, there exists a natural number $K$ such that there are no factors $A_{\bar{j},\omega^l q^l}$ with $l \leq -K$ appearing in this monomial. Recall that the truncated $q$-character $\chi_q^{\sigma,>-K}(V)$ is the sum of monomials appearing in $\chi_q^{\sigma}(V)$ so that no factors $A_{\bar{j},\omega^l q^l}$ with $l \leq -K$ appearing in this monomial.
	
	Since $X_{\bar{i},1}$ is a subquotient of $X_{\bar{i},1}^{(r)} \otimes L(\mathbf{\Psi}_{\bar{i},q^{-2r}}^{-1})$,
	\[\chi_q^{\sigma, > -K}(X_{\bar{i},1}) \preceq \chi_q^{\sigma, > -K}(X_{\bar{i},1}^{(r)}) \chi_q^{\sigma, >-K}(L(\mathbf{\Psi}_{\bar{i},q^{-2r}}^{-1})).\]
	This holds for any $r > 0$
	
	Choose $r$ large enough so that $\chi_q^{\sigma, >-K}(L(\mathbf{\Psi}_{\bar{i},q^{-2r}}^{-1})) = [\mathbf{\Psi}_{\bar{i},q^{-2r}}^{-1}]$. This is possible since the $q$-character of the negative prefundamental representations can be calculated as the limit of $q$-characters of KR-modules. Then
	\[\chi_q^{\sigma, > -K}(X_{\bar{i},1}) \preceq [\mathbf{\Psi}_{\bar{i},q^{-2r}}^{-1}] \chi_q^{\sigma, > -K}(X_{\bar{i},1}^{(r)}) \preceq [\widetilde{\mathbf{\Psi}}_{\bar{i},1}] \chi \sum_{0 \leq m \leq r}(A_{\bar{i},1}A_{\bar{i},q^{-2}}\cdots A_{\bar{i},q^{-2m+2}})^{-1}.\]
	
	Therefore, \[\chi_q^{\sigma}(X_{\bar{i},1}) \preceq [\widetilde{\mathbf{\Psi}}_{\bar{i},1}] \chi_{\bar{i},1} \chi.\]
\end{proof}

We prove the following character formula by mimicking the non-twisted case \cite[Proposition~5.20]{frenkel2021folded}. The proof therein applies directly here except for the type $A_{2n}^{(2)}$, where we do not have $\hat{\mathfrak{sl}}_2$-reductions. In this case, we give an alternate proof by explicit calculation.
\begin{theorem}
	We have
	\[\chi_q^{\sigma}(X_{\bar{i},1}) = [\widetilde{\mathbf{\Psi}}_{\bar{i},1}] \chi_{\bar{i},1}\chi^{\sigma}(X_{\bar{i},1})(1-[-\alpha_{\bar{i}}]).\]
\end{theorem}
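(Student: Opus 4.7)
The starting point is Theorem~\ref{theorem5.4}, which gives the upper bound $\chi_q^{\sigma}(X_{\bar{i},1}) \preceq [\widetilde{\mathbf{\Psi}}_{\bar{i},1}]\chi_{\bar{i},1}\chi$ with $\chi=\prod_{j|C_{i,j}=-1}\chi(L^+_{\bar{j},1})$. Applying $\varpi$ and noting that $\varpi(\widetilde{\mathbf{\Psi}}_{\bar{i},1})=0$ while $\chi_{\bar{i},1}$ descends to $1/(1-[-\alpha_{\bar{i}}])$ at the usual character level, the upper bound already forces $\chi^{\sigma}(X_{\bar{i},1})\leq \chi/(1-[-\alpha_{\bar{i}}])$ as ordinary characters. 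The desired identity is therefore equivalent to the statement that every $l$-weight monomial permitted by the upper bound appears in $\chi_q^{\sigma}(X_{\bar{i},1})$ with the multiplicity prescribed by $\chi^{\sigma}(X_{\bar{i},1})(1-[-\alpha_{\bar{i}}])$.

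Outside the special case $A_{2n}^{(2)}$ at $\bar{i}=\bar{n}$, my plan is to adapt the $\hat{\mathfrak{sl}}_2$-reduction of \cite[Proposition~5.20]{frenkel2021folded}. Let $\hat{\mathcal{U}}_{\bar{i}}$ denote the subalgebra of $\mathcal{U}_q(\mathcal{L}\mathfrak{g}^{\sigma})$ generated by the Drinfeld currents $\{x_{i,r}^{\pm},\phi_{i,r}^{\pm},k_i^{\pm 1}\}$ at the fixed representative $i$. Under this hypothesis on $\bar{i}$, either $C_{i,\sigma(i)}=2$ or $C_{i,\sigma(i)}=0$, and in both sub-cases the Drinfeld relations \eqref{commutationdrinfeld} restrict to those of $\mathcal{U}_{q_{\bar{i}}}(\mathcal{L}\mathfrak{sl}_2)$. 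I would restrict $X_{\bar{i},1}$ to $\hat{\mathcal{U}}_{\bar{i}}$ and decompose it into simple $\hat{\mathcal{U}}_{\bar{i}}$-modules in category $\mathcal{O}$: the explicit $q$-character of these simples is known to factor as a prefundamental times a complete $\chi_{\bar{i},1}$-series, and this rigidity upgrades the upper bound to an equality by counting $\hat{\mathcal{U}}_{\bar{i}}$-highest-weight vectors, whose generating series turns out to be exactly $\chi^{\sigma}(X_{\bar{i},1})(1-[-\alpha_{\bar{i}}])$.

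The main obstacle is the exceptional case $A_{2n}^{(2)}$ at $\bar{i}=\bar{n}$, where $C_{n,\sigma(n)}=-1$ and the analogous subalgebra is of twisted $A_2^{(2)}$ type rather than $\mathcal{L}\mathfrak{sl}_2$, so the rigidity argument above fails. Here my plan is an explicit computation using only the Drinfeld presentation at node $\bar{n}$: combining Theorem~\ref{polynomialaction}, which bounds the order of the $\phi_{n,r}^{\pm}$-action on $X_{\bar{n},1}$, with Theorem~\ref{phiaactsby0} applied to the roots of the defining polynomials of $\widetilde{\mathbf{\Psi}}_{\bar{n},1}$, I obtain vanishing identities in the spirit of \eqref{phiequals0}. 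Fed into the Drinfeld coproduct formulas of Proposition~\ref{coproduct} and compared against the upper bound from Theorem~\ref{theorem5.4}, these identities pin down the multiplicity of every admissible monomial $[\widetilde{\mathbf{\Psi}}_{\bar{n},1}](A_{\bar{n},1}\cdots A_{\bar{n},q^{-2m+2}})^{-1}M$, with $M\in\chi$, to exactly the value predicted by the right-hand side, completing the proof in the remaining case.
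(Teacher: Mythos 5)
Your treatment of the generic case agrees with the paper: the proof there explicitly ``follows \cite[Proposition~5.20]{frenkel2021folded} word by word except the type $A_{2n}^{(2)}$ with $\bar i=\bar n$'', i.e.\ exactly the $\hat{\mathfrak{sl}}_2$-reduction at the node $\bar i$ that you describe. The entire substance of the paper's proof lies in the exceptional case, and that is where your proposal has a genuine gap.

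First, a concrete obstruction: Theorems~\ref{polynomialaction} and~\ref{phiaactsby0} are stated for modules $L(\mathbf{\Psi})$ whose highest $l$-weight is \emph{polynomial}, but $\widetilde{\mathbf{\Psi}}_{\bar n,1}$ contains the inverse factor $\mathbf{\Psi}_{\bar n,1}^{-1}$, so neither theorem applies to $X_{\bar n,1}$ itself. In the paper these two theorems are used only on the polynomial-weight companion $L(\widetilde{\mathbf{\Psi}}_{\bar i,1}\mathbf{\Psi}_{\bar i,1})$ inside the proof of the \emph{upper} bound (Lemma~\ref{preceq}), where the vanishing identity \eqref{phiequals0} kills unwanted terms of the Drinfeld coproduct. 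Second, and more importantly, identities of the type \eqref{phiequals0} are vanishing statements and can only ever sharpen the upper bound of Theorem~\ref{theorem5.4}; what your sketch does not supply is the \emph{lower} bound, i.e.\ a non-vanishing argument showing that the monomials $[\widetilde{\mathbf{\Psi}}_{\bar n,1}](A_{\bar n,1}\cdots A_{\bar n,q^{-2m+2}})^{-1}$ actually occur. The paper reduces this to the claim that, for $r>m$, the $q$-character of the $\mathcal{U}_q(\mathcal{L}\mathfrak{sl}_3^{(2)})$-module $L(\mathbf{\Psi}')$ with $\mathbf{\Psi}'=\mathbf{\Psi}_{q^2}\mathbf{\Psi}_{q^{-2m}}^{-1}\mathbf{\Psi}_{q^{-2m-2}}^{-1}\mathbf{\Psi}_{q^{-2r}}\mathbf{\Psi}_{-q^{-2m-1}}$ contains the factor $A_{q^{-2m}}^{-1}$, and proves it by an explicit computation with the \emph{standard} coproduct: expressing $\Delta(h_1)$ through Drinfeld--Jimbo generators, evaluating $(\Delta(h_1)-\lambda_2)(\Delta(h_1)-\lambda_3)^2\Delta(x_1^-).(v_0\otimes w_0)$, and checking that the resulting scalar is nonzero when $q$ is not a root of unity, followed by a cyclicity argument via $\Delta(x_0^+)$. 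Nothing in your plan produces such a non-vanishing statement, so the case $A_{2n}^{(2)}$, $\bar i=\bar n$ remains unproved as written.
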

\begin{proof}
	The proof follows \cite[Proposition~5.20]{frenkel2021folded} word by word except the type $A_{2n}^{(2)}$ with $\bar{i} = \bar{n}$, where we have an $\mathfrak{sl}_3^{(2)}$-reduction instead of an $\mathfrak{sl}_2$-reduction. We deal with this case below.
	
	In this case, for the same reason as in \cite[Proposition~5.20]{frenkel2021folded}, it remains to show that if $r > m$, then the $q$-character of the $\mathcal{U}_q(\mathcal{L}\mathfrak{sl}_3^{(2)})$-module 
	\[L(\mathbf{\Psi}') := L(\mathbf{\Psi}_{q^2} \mathbf{\Psi}_{q^{-2m}}^{-1} \mathbf{\Psi}_{q^{-2m-2}}^{-1} \mathbf{\Psi}_{q^{-2r}} \mathbf{\Psi}_{-q^{-2m-1}})\] 
	contains a factor $A_{q^{-2m}}^{-1}$. 
	
	In fact, $L(\mathbf{\Psi}')$ is a subquotient of 
	\begin{equation}\label{eq1}
		L(\mathbf{\Psi}_{q^{-2m}}^{-1}) \otimes L(\mathbf{\Psi}_{q^2} \mathbf{\Psi}_{q^{-2m-2}}^{-1} \mathbf{\Psi}_{q^{-2r}} \mathbf{\Psi}_{-q^{-2m-1}}).
	\end{equation}
	The first factor in \eqref{eq1} is a negative prefundamental representation whose $q$-character is
	\[\chi_q^{\sigma}(L(\mathbf{\Psi}_{q^{-2m}}^{-1})) = [\mathbf{\Psi}_{q^{-2m}}^{-1}](1+ A_{q^{-2m}}^{-1} + \cdots),\]
	where $\cdots$ stands for terms with lower $\mathfrak{t}^*$-weights.
	While the second factor in \eqref{eq1}, denoted by $W$, is a subquotient of 
	\[L(\mathbf{\Psi}_{q^2}) \otimes L(\mathbf{\Psi}_{q^{-2m-2}}^{-1} \mathbf{\Psi}_{q^{-2r}} \mathbf{\Psi}_{-q^{-2m-1}}) = L^{+}_{q^2} \otimes X_{q^{-2m-2}}^{(r-m-1)}.\]
	Use the Lemma \ref{preceq} replacing $X_{1}^{(r)}$ by $X_{q^{-2m-2}}^{(r-m-1)}$, we get
	\[\chi_q^{\sigma}(W) \preceq [\mathbf{\Psi}_{q^2} \mathbf{\Psi}_{q^{-2m-2}}^{-1} \mathbf{\Psi}_{q^{-2r}} \mathbf{\Psi}_{-q^{-2m-1}}](1+A_{q^{-2m-2}}^{-1}+2[\alpha]^{-1}+ \cdots).\]
	Here $\alpha$ is the fundamental root and $\cdots$ stands for terms with lower $\mathfrak{t}^*$-weights.
	
	In conclusion, 
	\[\chi_q^{\sigma}(L(\mathbf{\Psi}')) \preceq [\mathbf{\Psi}'](1+ A_{q^{-2m}}^{-1}+ A_{q^{-2m-2}}^{-1}+2[\alpha]^{-1}+ \cdots).\]
	It remains to show that the term $[\mathbf{\Psi}']A_{q^{-2m}}^{-1}$ appears in $\chi_q^{\sigma}(L(\mathbf{\Psi}'))$. We divide the proof into two parts.
	
	In the first part, let $v_0 \in L(\mathbf{\Psi}_{q^{-2m}}^{-1})$ and  $w_0 \in W$ be the $l$-highest weight vectors and we study the submodule of the tensor product \eqref{eq1} generated by $v_0 \otimes w_0$. $\Delta(x_{1}^{-}).(v_0 \otimes w_0)$ lies in the sum of $l$-weight spaces
	\begin{equation}\label{weights}
		(L(\mathbf{\Psi}_{q^{-2m}}^{-1}) \otimes W)_{[\mathbf{\Psi}']A_{q^{-2m}}^{-1}} \oplus (L(\mathbf{\Psi}_{q^{-2m}}^{-1}) \otimes W)_{[\mathbf{\Psi}']A_{q^{-2m-2}}^{-1}} \oplus (L(\mathbf{\Psi}_{q^{-2m}}^{-1}) \otimes W)_{[\mathbf{\Psi}'] [\alpha]^{-1}} ,
	\end{equation}
	where the first two subspaces have dimension at most $1$ and the last has dimension at most $2$.
	By contradiction, suppose the vector $\Delta(x_{1}^{-}).(v_0 \otimes w_0)$ lies in $(L(\mathbf{\Psi}_{q^{-2m}}^{-1}) \otimes W)_{[\mathbf{\Psi}']A_{q^{-2m-2}}^{-1}} \oplus (L(\mathbf{\Psi}_{q^{-2m}}^{-1}) \otimes W)_{[\mathbf{\Psi}'] [\alpha]^{-1}}$. Consider the action of $h_1$. Denote respectively $\lambda_1,\lambda_2,\lambda_3$ the eigenvalues of $h_1$ on the summands in \eqref{weights}. Then 
	\begin{equation}\label{eqvanish}
		(\Delta(h_1) - \lambda_2)(\Delta(h_1) - \lambda_3)^2 \Delta(x_{1}^{-}).(v_0 \otimes w_0) = 0.
	\end{equation}

	However, by the coproduct formula \eqref{eqcoproduct} on $x_1^-$, $\Delta(x_{1}^{-}).(v_0 \otimes w_0) = x_{1}^{-}v_0 \otimes kw_0 + v_0 \otimes x_{1}^{-}w_0$. By the coproduct formula \eqref{eqcoproduct} on $\phi(u)$, $x_{1}^{-}v_0 \otimes kw_0 \in L(\mathbf{\Psi}')_{[\mathbf{\Psi}']A_{q^{-2m}}^{-1}}$.
	
	Using $h_1 = [x_0^{+}, x_1^{-}]k_{\bar{1}}^{-1}$, $x_1^{-} = -k_{\epsilon}^{-1} k_{\bar{1}}^{-1}(e_{\epsilon}^+ e_{\bar{1}}^+ - q^{-2}e_{\bar{1}}^+ e_{\epsilon}^+)$ and the definition of coproducts, we have 
	\[\Delta(h_{1}) = h_1 \otimes 1 + 1 \otimes h_1 + (q^{-1} - q)(q+1+q^{-1}) x_1^- \otimes x_0^+ + (q^{-2}-q^2)(q^{-1}-q^2) k^2e_{\epsilon}^+ \otimes (e_{\bar{1}}^+)^2.\]
	Thus it is direct to calculate the action in \eqref{eqvanish} explicitly. It turns out that the left hand side of \eqref{eqvanish} equals to 
	\[
	q^{-7 - 6 m - 2} (1 + q)^4 (-1 + q^3)^3 (-1 + q^{2 + 2 m}) (-q^{2 m} + q^{2 r}) x_{1}^{-}v_0 \otimes w_0,
	\]
	which is non-zero when $q$ is not a root of unity.
	So the monomial $[\mathbf{\Psi}']A_{q^{-2m}}^{-1}$ appears in the $q$-character of the submodule generated by $w_0 \otimes v_0$.
	
	In the second part, we suppose there is a submodule of the tensor product \eqref{eq1} whose $q$-character contains $[\mathbf{\Psi}']A_{q^{-2m}}^{-1}$. Since the associated eigenspace of the eigenvalue $[\mathbf{\Psi}']A_{q^{-2m}}^{-1}$ is $1$-dimensional, $x_{1}^{-}v_0 \otimes w_0$ lies in this submodule.
	Then $\Delta(x_0^+).(x_{1}^{-}v_0 \otimes w_0) = x_0^+ x_1^- v_0 \otimes w_0 = q^{-2m} v_0 \otimes w_0$ lies in this submodule. 
	
	Combining the two parts, we proved $[\mathbf{\Psi}']A_{q^{-2m}}^{-1}$ appears in $\chi_q^{\sigma}(L(\mathbf{\Psi}'))$.
\end{proof}

The formula for $X_{\bar{i},a}$ is obtained by replacing $u$ by $au$ and we get
\[\begin{split}
	\chi_q^{\sigma}(\widetilde{Q}_{\bar{i},a}) =  \left[\frac{\alpha_{\bar{i}}}{2}\right][\widetilde{\mathbf{\Psi}}_{\bar{i},aq^{-2}}] \chi_{\bar{i},aq^{-2}} = \pi(\left[\frac{\alpha_{i}}{2}\right][\widetilde{\mathbf{\Psi}}_{i,aq^{-2}}] \chi_{i,aq^{-2}}) = \pi(\chi_q(\widetilde{Q}_{i,a})).
\end{split} \]
Therefore, we complete the proof by Lemma \ref{QQlemma}.

\subsection{Bethe Ansatz}
We derive the Bethe Ansatz equations for twisted quantum affine algebras from the $Q\widetilde{Q}$-systems in the same way as in \cite[Section~5]{frenkel2018spectra}.

Recall the definition of transfer matrices, following the notation in \cite{frenkel2015baxter}.
\begin{definition}
	Let $V$ be a representation of $\mathcal{U}_q(\mathfrak{b}^{\sigma})$ in the category $\mathcal{O}$. Let $z \in \mathbb{C}^*$ and $u = (u_i)_{i \in I} \in (\mathbb{C}^*)^I$.
	The transfer matrix 
	\begin{equation}
		t_V(z,u) = \mathrm{Tr}_{V(z),u}(\rho_{V(z)} \otimes \mathrm{id})(\mathcal{R}) \in \mathcal{U}_q(\mathfrak{b}^{\sigma}_{-})[[z,u_i^{\pm}]],
	\end{equation}
	where $V(z)$ is the twisting of $V$ by parameter $z$ and $\mathrm{Tr}_{V(z),u}$ is the twisted trace
	\[\mathrm{Tr}_{V(z),u}(g) = \sum_{\lambda \in \mathfrak{t}^*} \mathrm{Tr}_{V(z)_{\lambda}}(g)(\prod_{i \in I} u_i^{\lambda_i}). \]
	Here $\mathcal{R} \in \mathcal{U}_q(\mathfrak{b}^{\sigma}) \otimes \mathcal{U}_q(\mathfrak{b}^{\sigma}_{-})$ is the universal $R$-matrix. 
\end{definition}

We proved that the Grothendieck ring $K_0(\mathcal{O})$ is commutative. As a corollary, for the same reason as in \cite[Theorem~5.3]{frenkel2015baxter}, the transfer matrices are commutative between two with the same parameter $u$:
\[t_V(z,u)t_{V'}(z',u) = t_{V'}(z',u)t_V(z,u), \; \forall V,V' \in \mathcal{O}, z,z' \in \mathbb{C}^*.\]

Moreover, it defines a homomorphism from the Grothendieck ring to the ring $\mathcal{U}_q(\mathfrak{b}^{\sigma}_{-})[[z,u_i^{\pm}]]$:
\[t_{\cdot}(z,u) : K_0(\mathcal{O}) \to \mathcal{U}_q(\mathfrak{b}^{\sigma}_{-})[[z,u_i^{\pm}]]. \]

Fix a representation $W$ of $\mathcal{U}_q(\mathfrak{b}^{\sigma}_{-})$. The transfer matrices give elements in $\mathrm{End}(W)[[z,u_i^{\pm}]]$. Since they commute with each other, they are simultaneously triangularizable. Acting a common eigenvector $v$, the transfer matrices give a ring homomorphism from $K_0(\mathcal{O})$ to $\mathbb{C}[[z,u_i^{\pm}]]$.

In particular, the $Q\widetilde{Q}$ relation in $K_0(\mathcal{O})$ gives an equation. 

For example in the non $A_{2n}^{(2)}$ cases,
\begin{equation}\label{QQeqn}
	\begin{split}
		u_i \mathbf{Q}_{\bar{i}}(aq^{-1})\widetilde{\mathbf{Q}}_{\bar{i}}(aq) - & u_i^{-1} \mathbf{Q}_{\bar{i}}(aq)\widetilde{\mathbf{Q}}_{\bar{i}}(aq^{-1}) = 
		\\ & \prod_{j | C_{\bar{j},\bar{i}}^{\sigma} = -1} \mathbf{Q}_{\bar{j}}(a) \prod_{j | C_{\bar{j},\bar{i}}^{\sigma} = -2} \mathbf{Q}_{\bar{j}}(a)\mathbf{Q}_{\bar{j}}(-a) \prod_{j | C_{\bar{j},\bar{i}}^{\sigma} = -3}\mathbf{Q}_{\bar{j}}(a)\mathbf{Q}_{\bar{j}}(a\omega)\mathbf{Q}_{\bar{j}}(a\omega^2).
	\end{split}
\end{equation}

Now we derive the Bethe Ansatz equations. Suppose $a_0$ is a zero of $\mathbf{Q}_{\bar{i}}(z)$ which is not a zero of $\widetilde{\mathbf{Q}}_{\bar{i}}(z)$. We assume that $q$ is generic so that the equation \eqref{QQeqn} has no poles at $a = a_0q^{\pm 1}$.

Then substitute $a = a_0q^{\pm 1}$ in \eqref{QQeqn}, we get
\[\begin{split}
	0 - & u_i^{-1} \mathbf{Q}_{\bar{i}}(a_0q^2)\widetilde{\mathbf{Q}}_{\bar{i}}(a_0) = \\
	& \prod_{j | C_{\bar{j},\bar{i}}^{\sigma} = -1} \mathbf{Q}_{\bar{j}}(a_0q) \prod_{j | C_{\bar{j},\bar{i}}^{\sigma} = -2} \mathbf{Q}_{\bar{j}}(a_0q)\mathbf{Q}_{\bar{j}}(-a_0q) \prod_{j | C_{\bar{j},\bar{i}}^{\sigma} = -3}\mathbf{Q}_{\bar{j}}(a_0q)\mathbf{Q}_{\bar{j}}(a_0q\omega)\mathbf{Q}_{\bar{j}}(a_0q\omega^2),
\end{split}\]

and
\[\begin{split}
	& u_i \mathbf{Q}_{\bar{i}}(a_0q^{-2})  \widetilde{\mathbf{Q}}_{\bar{i}}(a_0) -0 = \\ & \prod_{j | C_{\bar{j},\bar{i}}^{\sigma} = -1} \mathbf{Q}_{\bar{j}}(a_0q^{-1}) \prod_{j | C_{\bar{j},\bar{i}}^{\sigma} = -2} \mathbf{Q}_{\bar{j}}(a_0q^{-1})\mathbf{Q}_{\bar{j}}(-a_0q^{-1}) \prod_{j | C_{\bar{j},\bar{i}}^{\sigma} = -3}\mathbf{Q}_{\bar{j}}(a_0q^{-1})\mathbf{Q}_{\bar{j}}(a_0q^{-1}\omega)\mathbf{Q}_{\bar{j}}(a_0q^{-1}\omega^2).
\end{split}\]

Take the ratio of the above two equations, we have
\begin{equation}
	\begin{split}
		&- u_i^{-2}\frac{\mathbf{Q}_{\bar{i}}(a_0q^2)}{\mathbf{Q}_{\bar{i}}(a_0q^{-2})} =\\
		&\prod_{j | C_{\bar{j},\bar{i}}^{\sigma} = -1} \frac{\mathbf{Q}_{\bar{j}}(a_0q)}{\mathbf{Q}_{\bar{j}}(a_0q^{-1})} \prod_{j | C_{\bar{j},\bar{i}}^{\sigma} = -2} \frac{\mathbf{Q}_{\bar{j}}(a_0q)}{\mathbf{Q}_{\bar{j}}(a_0q^{-1})} \frac{\mathbf{Q}_{\bar{j}}(-a_0q)}{\mathbf{Q}_{\bar{j}}(-a_0q^{-1})} \prod_{j | C_{\bar{j},\bar{i}}^{\sigma} = -3}\frac{\mathbf{Q}_{\bar{j}}(a_0q)}{\mathbf{Q}_{\bar{j}}(a_0q^{-1})} \frac{\mathbf{Q}_{\bar{j}}(a_0q\omega)}{\mathbf{Q}_{\bar{j}}(a_0q^{-1}\omega)} \frac{\mathbf{Q}_{\bar{j}}(a_0q\omega^2)}{\mathbf{Q}_{\bar{j}}(a_0q^{-1}\omega^2)}.
	\end{split}
\end{equation}

This coincides with the Bethe Ansatz equation for twisted quantum affine algebras in \cite{kuniba1995functional}.

\appendix
\section{Conditions on $q$}\label{appendixa}
In \cite{hernandez2010kirillov}, the parameter $q$ is supposed to be generic in sense of \cite[Lemma~3.9]{hernandez2010kirillov}. Here we verify that the assumption $q$ is not a root of unity is sufficient. 

For the type $A_{2n}^{(2)}$, it is shown that $q$ being not a root of unity is sufficient \cite[Lemma~3.11]{hernandez2010kirillov}. Therefore we consider the cases other than type $A_{2n}^{(2)}$.

Suppose $q$ to be not a root of unity.

\begin{definition}\label{defF}
	Let $F(k)$ be the matrix with elements
	\begin{equation}
		F_{\bar{i},\bar{j}}(k) = \sum_{r=1}^M [\frac{kC_{i,\sigma^r(j)}}{d_{\bar{i}}}]_{q_{\bar{i}}} \omega^{kr},
	\end{equation}
	where $\omega = e^{\frac{2\pi i}{M}}$.
\end{definition}

The following lemma implies the matrix $F(k)$ is not always invertible.
\begin{lemma}
	When $M$ do not divide $k$, the elements of $F_{\bar{i},\bar{j}}(k)$ with $i = \sigma(i)$ or $j = \sigma(j)$ are zeros.
\end{lemma}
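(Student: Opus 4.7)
The plan is to exploit the $\sigma$-invariance of the Cartan matrix, namely the identity $C_{\sigma(a),\sigma(b)}=C_{a,b}$, which is a defining property of the diagram automorphism and yields $C_{\sigma^r(a),\sigma^r(b)} = C_{a,b}$ for every $r$. Under this identity, the entire dependence on $r$ inside the $q$-integer $[kC_{i,\sigma^r(j)}/d_{\bar i}]_{q_{\bar i}}$ should disappear as soon as either index lies in a $\sigma$-fixed orbit, and then the sum over $r$ reduces to a geometric sum of roots of unity.

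First I would split into the two cases. If $i=\sigma(i)$, then $\sigma^r(i)=i$ for all $r$, so
\[
C_{i,\sigma^r(j)} \;=\; C_{\sigma^r(i),\sigma^r(j)} \;=\; C_{i,j},
\]
independent of $r$. If instead $j=\sigma(j)$ (and $i\neq\sigma(i)$), then $\sigma^r(j)=j$, and directly $C_{i,\sigma^r(j)}=C_{i,j}$. In either case the $q$-integer factor in the definition of $F_{\bar i,\bar j}(k)$ is the same for every $r$, so it can be pulled out of the sum:
\[
F_{\bar i,\bar j}(k) \;=\; \Bigl[\tfrac{kC_{i,j}}{d_{\bar i}}\Bigr]_{q_{\bar i}} \sum_{r=1}^M \omega^{kr}.
\]

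Finally I would invoke the elementary fact that, since $\omega$ is a primitive $M$-th root of unity, $\sum_{r=1}^M \omega^{kr} = 0$ whenever $M\nmid k$ (and equals $M$ otherwise). This immediately yields $F_{\bar i,\bar j}(k)=0$ in the stated range, completing the argument.

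There is really no substantive obstacle here: the only inputs are the $\sigma$-equivariance $C_{\sigma(a),\sigma(b)}=C_{a,b}$ and the orthogonality of characters of $\mathbb{Z}/M\mathbb{Z}$. The restriction to types other than $A_{2n}^{(2)}$ is not needed for this particular step (that case having been treated separately in \cite[Lemma~3.11]{hernandez2010kirillov}); the lemma is simply the first ingredient in verifying that $q$ not being a root of unity is enough for the arguments reproduced from \cite{hernandez2010kirillov}.
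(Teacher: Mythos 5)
Your proof is correct and is essentially the same argument as the paper's: both cases reduce to the observation that $C_{i,\sigma^r(j)}=C_{i,j}$ is independent of $r$, after which the $q$-integer factors out and $\sum_{r=1}^M\omega^{kr}=0$ when $M\nmid k$. The only difference is that you spell out the $\sigma$-invariance $C_{\sigma(a),\sigma(b)}=C_{a,b}$ explicitly, which the paper leaves implicit.
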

\begin{proof}
	In this case, $C_{i,\sigma^r(j)} = C_{i,j}$ for $r = 1, \cdots, M$. Thus $F_{\bar{i},\bar{j}}(k) =  [\frac{kC_{i,j}}{d_{\bar{i}}}]_{q_{\bar{i}}} \sum_{r=1}^M \omega^{kr} = 0$.
\end{proof}

Moreover, these are all possibilities for a vanishing row or column, because of the following lemma.
\begin{lemma}
	The diagonal elements $F_{\bar{i},\bar{i}}(k)$ are zero unless the condition of the above lemma satisfied, i.e., $M$ do not divide $k$ and $i = \sigma(i)$.
\end{lemma}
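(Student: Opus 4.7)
The plan is to evaluate $F_{\bar i,\bar i}(k)$ in closed form by a case split on whether $i$ is fixed by $\sigma$, and then read off when the result vanishes. Throughout, one uses that $\sigma^M=\mathrm{id}$ and $\omega^{kM}=1$, so the $r=M$ term always contributes $[2k/d_{\bar i}]_{q_{\bar i}}$; heuristically this term is ``dominant'' and the remaining terms either reinforce it or annihilate the whole expression.

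First I would handle the case $i=\sigma(i)$. Here $\sigma^r(i)=i$ for every $r$, so all Cartan entries appearing in the sum equal $C_{i,i}=2$, and outside type $A_{2n}^{(2)}$ one has $d_{\bar i}=M$. Consequently
\begin{equation*}
F_{\bar i,\bar i}(k)\;=\;\Bigl[\tfrac{2k}{M}\Bigr]_{q^M}\sum_{r=1}^{M}\omega^{kr}.
\end{equation*}
The geometric sum $\sum_{r=1}^{M}\omega^{kr}$ equals $M$ when $M\mid k$ and $0$ otherwise, while $[2k/M]_{q^M}$ is nonzero whenever $k\neq 0$ because $q^M$ is not a root of unity. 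Thus $F_{\bar i,\bar i}(k)\neq 0$ in this case exactly when $M\mid k$, matching the statement.

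Next I would treat $i\neq\sigma(i)$, where the orbit of $i$ has size $M$ and $d_{\bar i}=1$. The key geometric input is that, in the Dynkin diagrams of all twisted types other than $A_{2n}^{(2)}$, any two distinct nodes lying in a common $\sigma$-orbit are non-adjacent. This is a quick case-by-case verification: the flip on $A_{2n-1}$ (symmetric nodes $i$ and $2n-i$ satisfy $|i-(2n-i)|\geq 2$), the flip on $D_{n+1}$ exchanging the two spinor nodes (they share a neighbour but are not joined), the flip on $E_6$, and the triality on $D_4$ (the three outer nodes are mutually non-adjacent). This forces $C_{i,\sigma^r(i)}=0$ for $r=1,\dots,M-1$, so only the $r=M$ term survives and
\begin{equation*}
F_{\bar i,\bar i}(k)\;=\;[2k]_q\,\omega^{kM}\;=\;[2k]_q,
\end{equation*}
which is nonzero for all $k\neq 0$ since $q$ is not a root of unity.

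Combining the two cases gives the lemma. The only real input beyond symbolic manipulation is the diagram-theoretic non-adjacency claim in the second case, and this is precisely the feature that fails for $A_{2n}^{(2)}$ at $\bar n$ (where $C_{n,\sigma(n)}=-1$), consistent with the paper's convention to exclude that type here and treat it separately elsewhere.
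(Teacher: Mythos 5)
Your computation matches the paper's proof exactly: both reduce to evaluating $F_{\bar i,\bar i}(k)=[2k]_q$ when $i\neq\sigma(i)$ (using that $C_{i,\sigma^r(i)}=0$ for $1\leq r\leq M-1$ outside type $A_{2n}^{(2)}$) and $F_{\bar i,\bar i}(k)=M[2k/M]_{q^M}$ when $i=\sigma(i)$ and $M\mid k$, then invoking that $q$ is not a root of unity. You simply make explicit the geometric-sum argument and the diagram-theoretic non-adjacency check that the paper leaves implicit.
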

\begin{proof}
	From Definition \ref{defF}, we can calculate $F_{\bar{i},\bar{i}}(k) = [2k]_q$ when $i \neq \sigma(i)$ and $F_{\bar{i},\bar{i}}(k) = M[2k/M]_{q^M}$ when $i = \sigma(i)$ and $M$ divide $k$.
\end{proof}

\begin{theorem}
	When $M$ divide $k$, the matrix $F(k)$ is invertible. When $M$ do not divide $k$, we denote by $F'(k)$ the submatrix of $F(k)$ at non-fixed indices, then $F'(k)$ is invertible.
\end{theorem}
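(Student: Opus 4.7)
The plan is to reduce the invertibility of $F(k)$ (and $F'(k)$) to an explicit computation of the determinant, and then to observe that this determinant is a nonzero Laurent polynomial in $q$ all of whose factors are $q$-integers $[m]_q$ for a bounded set of positive integers $m$. Since $q$ is not a root of unity, every such factor is nonzero, which yields the claim.

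Case 1 ($M \mid k$). Here $\omega^{kr}=1$ for all $r$, so
\[
F_{\bar{i},\bar{j}}(k)=\sum_{r=1}^{M}\left[\tfrac{kC_{i,\sigma^{r}(j)}}{d_{\bar{i}}}\right]_{q_{\bar{i}}}.
\]
The strategy is to identify the rescaled matrix $\mathrm{diag}(d_{\bar{i}})^{-1}F(k)$ with the value at $q\mapsto q^{k}$ of the quantum Cartan matrix of the finite-type Lie algebra $\mathfrak{g}^{\sigma}$ (the $\sigma$-fixed subalgebra of $\mathfrak{g}$, whose Cartan matrix is $C^{\sigma}$, corresponding to the underlying finite diagram of the twisted affine type with the extra node $\epsilon$ removed). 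This identification proceeds by separating orbits according to whether $i=\sigma(i)$: the diagonal entries become $M[2k/d_{\bar{i}}]_{q_{\bar{i}}}$ (resp.\ $[2k]_{q}$), while off-diagonal entries recover $[kC^{\sigma}_{\bar{i},\bar{j}}/d_{\bar{i}}]_{q_{\bar{i}}}$ after summing the $M$ contributions. Once this identification is in place, I would invoke the known determinant formula for quantum Cartan matrices of finite type (expressing $\det C^{\sigma}(q)$ as an explicit product of $q$-integers determined only by the Coxeter number and exponents of $\mathfrak{g}^{\sigma}$), and evaluate at $q\mapsto q^{k}$ to obtain a product of nonzero factors $[mk]_{q}$.

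Case 2 ($M \nmid k$). As already observed, the rows and columns of $F(k)$ indexed by fixed orbits $\bar{i}$ with $i=\sigma(i)$ vanish, so one works with the submatrix $F'(k)$ whose size equals the number of non-fixed orbits. Since we exclude type $A_{2n}^{(2)}$ (handled in \cite[Lemma~3.11]{hernandez2010kirillov}), there are only four types to check:
\begin{itemize}
\item Type $A_{2n-1}^{(2)}$: $F'(k)$ is an $(n-1)\times(n-1)$ tridiagonal matrix whose determinant satisfies a three-term recurrence with coefficients in $\mathbb{Z}[q,q^{-1}]$; an induction identifies it with $[nk]_{q}/[k]_{q}$ up to a nonzero scalar.
\item Type $D_{n+1}^{(2)}$ and type $D_{4}^{(3)}$: there is a single non-fixed orbit, so $F'(k)$ is $1\times 1$ and equals a single nonzero $q$-integer.
\item Type $E_{6}^{(2)}$: $F'(k)$ is $2\times 2$ and a direct computation yields a product of $q$-integers.
\end{itemize}

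The main obstacle is the algebraic identification in Case~1, namely recognizing the correct rescaling and symmetrization so that $F(k)$ literally becomes a quantum Cartan matrix of type $\mathfrak{g}^{\sigma}$ evaluated at $q^{k}$. Subtleties arise from the interaction between the symmetrizer $d_{\bar{i}}$ and the folded entries $\sum_{r}C_{i,\sigma^{r}(j)}$, and in particular from the cases $(X_n^{(M)},\bar{i}=\bar{n})$ where $N_{i}\neq d_{\bar{i}}$. Once that identification is settled, both cases reduce to checking that finitely many explicit $q$-integers are nonzero, which is automatic from the hypothesis on $q$.
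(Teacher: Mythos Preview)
Your Case~2 is essentially the paper's argument: a type-by-type determinant computation producing products of $q$-integers, nonzero since $q$ is not a root of unity. The paper in fact handles Case~1 the same way --- it simply lists $\det F(Mk)$ for each of the four types and observes that every factor is a $q$-integer.

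Your Case~1 strategy, by contrast, does not go through as stated. The claimed identification of a diagonal rescaling of $F(k)$ with the quantum Cartan matrix of $\mathfrak{g}^{\sigma}$ at $q\mapsto q^{k}$ fails already in type $A_{2n-1}^{(2)}$. With $k$ even one computes $F_{\bar i,\bar i}(k)=[2k]_q$ for $i\neq\sigma(i)$, $F_{\bar n,\bar n}(k)=2[k]_{q^2}$, $F_{\overline{n-1},\bar n}(k)=-2[k]_q$, and $F_{\bar n,\overline{n-1}}(k)=-2[k/2]_{q^2}$. Factoring out the row-diagonal $\mathrm{diag}([k]_q,\dots,[k]_q,2[k/2]_{q^2})$ leaves a matrix with \emph{uniform} diagonal $q^{k}+q^{-k}$ and \emph{undeformed} off-diagonal Cartan entries of $C_n$. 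This is not the standard quantum Cartan matrix $\bigl([C^{\sigma}_{\bar i\bar j}]_{q_{\bar i}}\bigr)$ of $\mathfrak{g}^{\sigma}$ evaluated at any power of $q$: the latter has diagonal entries $[2]_{q_{\bar i}}$ that depend on root length, and its off-diagonal entries are genuinely $q$-deformed (e.g.\ $[-2]_q=-(q+q^{-1})$). So the ``known determinant formula for quantum Cartan matrices'' you plan to cite applies to a different matrix, and the obstacle you flag is a real gap rather than a bookkeeping issue. (What actually appears is closer to a shift $C^{\sigma}+(q^{k}+q^{-k}-2)I$, whose determinant is accessible but requires a separate argument in the non-simply-laced setting.)

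Since there are only four types and the matrices are at most $4\times 4$, the paper's direct computation is both simpler and complete; I'd recommend abandoning the conceptual identification and just computing, as you already do in Case~2.
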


\begin{proof}
	%The following observation can simplify the calculation.
	%\begin{lemma}
	%	When $M$ do not divide $k$, the matrix $\prod_{i \in I_{\sigma},i \neq \sigma(i)} (q_{\bar{i}} - q_{\bar{i}}^{-1})F'(k)$ is obtained from $\prod_{i \in I_{\sigma}, i \neq \sigma(i)} (q_{\bar{i}} - q_{\bar{i}}^{-1})F'(1)$ by replacing $q$ by $q^{k}$. 
	
	%	And for $s \in \mathbb{N}$, the matrix $\prod_{i \in I_{\sigma}} (q_{\bar{i}} - q_{\bar{i}}^{-1})F(Ms)$ is obtained from $\prod_{i \in I_{\sigma}} (q_{\bar{i}} - q_{\bar{i}}^{-1})F(M)$ by replacing $q$ by $q^s$.
	
	%	Therefore, to proof that the zeros of the determinant are roots of unity, it is sufficient to calculate $\mathrm{det}F'(1)$ and $\mathrm{det}F(M)$.
	%\end{lemma}
	%\begin{proof}
	%	Follows from Definition \ref{defF} and the above lemmas.
	%\end{proof}

	Denote $D(Mk,q) = \mathrm{det}F(Mk)$, $k \in \mathbb{Z} \setminus \{0\}$. Denote $D'(k,q) = \mathrm{det}F'(k)$ when $M$ do not divide $k$. We need to calculate the determinants and to show that their zeros are roots of unity.
	
	We list the result of calculations of $D(Mk,q)$ and $D'(k,q)$ type by type:

	\begin{itemize}
		\item Type $A_{2n-1}^{(2)}$:
		\subitem $D(2k,q) = 2([2]_{q^k})^{n-1}[2]_{q^{2nk}}[k]_q^{n-1}[k]_{q^2}$;
		\subitem $D'(k,q) = [n]_{q^k}[k]_{q}^{n-1}$.
		\item Type $D_{n+1}^{(2)}$:
		\subitem $D(2k,q) = 2^{n-1}[2]_{q^k}[2]_{q^{2nk}}[k]_q[k]_{q^2}^{n-1}$;
		\subitem $D'(k,q) = [2]_{q^k}[k]_q$.
		\item Type $E_6^{(2)}$:
		\subitem $D(2k,q) = 4([2]_{q^k})^2[3]_{q^{4k}}[k]_q^2[k]_{q^2}^2$;
		\subitem $D'(k,q) = [3]_{q^k}[k]_q^2$.
		\item Type $D_4^{(3)}$:
		\subitem $D(3k,q) = 3[3]_{q^k}[2]_{q^{9k}}[k]_q[k]_{q^3}/[2]_{q^{3k}}$;
		\subitem $D'(k,q) = [2]_{q^k}[k]_q$.
	\end{itemize}
\end{proof}

When $M$ divide $k$, define $\tilde{F}(k)$ to be the inverse matrix of $F(k)$. When $M$ do not divide $k$, define $\tilde{F}(k)$ to be the matrix whose submatrix at non-fixed indices is the inverse of $F'(k)$, and whose other entries outside this submatrix are zeros. 

For $\bar{i} \in I_{\sigma}$ and $m \in \mathbb{Z} \setminus \{0\}$, denote $h_{\bar{i},m} : = h_{i,m}$ for the fixed representative $i$ of $\bar{i}$. Define 
\[\tilde{h}_{\bar{i},m} = \sum_{\bar{j} \in I_{\sigma}} \tilde{F}_{\bar{i},\bar{j}}(m) h_{\bar{j},m}. \]

Then by the commutation relation, we have 
\[[\tilde{h}_{\bar{i},m}, x_{\bar{j},l}^{\pm}] = [\tilde{h}_{\bar{i},m}, \phi_{\bar{j},l}^{\pm}] = 0, \forall \bar{i} \neq \bar{j}, m \in \mathbb{Z} \setminus \{0\}, l \in \mathbb{Z}.\]
Notice that at $i = \sigma(i)$, we have $\tilde{h}_{i,m} = 0 = h_{i,m}$ when $M$ do not divide $m$. We still have the decomposition as in \cite[Lemma~3.11]{hernandez2010kirillov}
\[\bigoplus_{\bar{i} \in I_{\sigma}} \mathbb{C} h_{i,m} = \bigoplus_{\bar{j} \in I_{\sigma}} \mathbb{C} \tilde{h}_{j,m} = \mathbb{C}h_{i,m} \oplus \bigoplus_{\bar{j} \in I_{\sigma}, \bar{j} \neq \bar{i}} \mathbb{C} \tilde{h}_{j,m}.\]
Hence these elements share the same properties needed in \cite[Lemma~3.11]{hernandez2010kirillov}. Therefore \cite[Theorem~3.12]{hernandez2010kirillov} holds when $q$ is not a root of unity.

\section{Examples of type $A_2^{(2)}$}\label{appendixb}

The type $A_2^{(2)}$ may be the simplest type to calculate explicit examples. However, it is already enough complicated. The negative prefundamental representation of $\mathcal{U}_q(\mathcal{L}\mathfrak{sl}_3^{(2)})$ was constructed explicitly by Alexandr Garbali \cite{garbali2015izergin} in the literature of Izergin-Korepin integrable model. Here we calculate the positive prefundamental representation of $\mathcal{U}_q(\mathcal{L}\mathfrak{sl}_3^{(2)})$. We calculate also the representation related to the operator $\widetilde{Q}$ of type $A_2^{(2)}$.

Recall that the Borel subalgebra $\mathcal{U}_q(\mathfrak{b}^{\sigma})$ of $\mathcal{U}_q(\mathcal{L}\mathfrak{sl}_3^{(2)})$ is generated by Drinfeld-Jimbo generators $k_{\epsilon}$, $k_{\bar{1}}$, $e_{\epsilon}$, $e_{\bar{1}}$ with relations
\[\begin{split}
	& k_{\bar{1}}k_{\epsilon} = k_{\epsilon}k_{\bar{1}}, \; k_{\epsilon} = k_{\bar{1}}^{-2}, \\
	& k_{\bar{1}} e_{\bar{1}} = q^{\pm 1} e_{\bar{1}} k_{\bar{1}}, \; k_{\epsilon} e_{\epsilon} = q^{\pm 4} e_{\epsilon} k_{\epsilon}, \\
	& k_{\bar{1}} e_{\epsilon} = q^{\mp 2} e_{\epsilon} k_{\bar{1}}, \; k_{\epsilon} e_{\bar{1}} = q^{\mp 2} e_{\bar{1}} k_{\epsilon},\\
	& \frac{e_{\epsilon}^2 e_{\bar{1}}}{[2]_{q^2}}- e_{\epsilon}e_{\bar{1}} e_{\epsilon} + \frac{e_{\bar{1}} e_{\epsilon}^2}{[2]_{q^2}} = 0,\\
	& \frac{e_{\bar{1}}^5 e_{\epsilon}}{[5]_{q^{1/2}}!} - \frac{e_{\bar{1}}^4 e_{\epsilon} e_{\bar{1}}}{[4]_{q^{1/2}}!} + \frac{e_{\bar{1}}^3 e_{\epsilon} e_{\bar{1}}^2}{[3]_{q^{1/2}}![2]_{q^{1/2}}!} - \frac{e_{\bar{1}}^2 e_{\epsilon}e_{\bar{1}}^3}{[2]_{q^{1/2}}![3]_{q^{1/2}}!} + \frac{e_{\bar{1}} e_{\epsilon} e_{\bar{1}}^4}{[4]_{q^{1/2}}!} - \frac{e_{\epsilon} e_{\bar{1}}^5}{[5]_{q^{1/2}}!} = 0.
\end{split}
\]

The Borel subalgebra contains Drinfeld generators $x_{m}^+$ and $x_{r}^-$, $m \geq 0$, $r > 0$. Using the relations
\[x_1^{-} = -k_{\bar{1}}(e_{\epsilon}e_{\bar{1}} - q^{-2}e_{\bar{1}}e_{\epsilon}), \; x_0^{+} = e_{\bar{1}}, \; h_1 = k_{\bar{1}}^{-1}[x_0^+, x_1^{-}],
\]
we can calculate inductively the Drinfeld generators in terms of Drinfeld-Jimbo generators: 
\[x_{m+1}^{\pm} =  \pm \frac{q^{1/2}-q^{-1/2}}{(q-q^{-1})(q+1+q^{-1})}[h_1,x_m^{\pm}], \; \phi_{m+1} = (q^{1/2}-q^{-1/2})[x^+_{m},x^-_1]. \]

The negative prefundamental representations were constructed explicitly by Alexandr Garbali \cite[Section~3.5]{garbali2015izergin}. We rewrite the result adapting to our notation.

\begin{example}
	The negative prefundamental representation $L^-_{\bar{1},1}$ of the Borel subalgebra of $\mathcal{U}_q(\mathcal{L}\mathfrak{sl}_3^{(2)})$ is the vector space $V = \oplus_{0 \leq i \leq j} \mathbb{C}v_{i,j}$ with actions
	\[\begin{split}
		e_{\bar{1}}.v_{i,j} &= (q^{2i} - 1)v_{i-1,j} + \frac{(q^{2j+1}+1)(q^{2j}-q^{2i})q^{i-j+3}}{(q-1)^2(q+1)(q^{2i+1}+q^{2j})(q^{2i}+q^{2j+1})}v_{i,j-1}, \\
		e_{\epsilon}.v_{i,j} &= \frac{q^{2i+5/2}(q-1)(q+1)}{q^2+1}v_{i,j+2} + \frac{(q+1)q^{i+3j+5+1/2}}{(q-1)(q^{2i+1}+q^{2j})(q^{2i}+q^{2j+3})}v_{i+1,j+1} \\ 
		& -\frac{(q^{2j}-q^{2i})(q^{2j}-q^{2i+2})q^{-2i+4j+5+1/2}}{(q^2+1)(q-1)^3(q+1)(q^{2i+1}+q^{2j})^2(q^{2i+3}+q^{2j})(q^{2i}+q^{2j+1})}v_{i+2,j},  \\
		k_{\bar{1}}.v_{i,j} &= q^{-i-j}v_{i,j}, \\
		k_{\epsilon}.v_{i,j} &= q^{2i+2j}v_{i,j}.
	\end{split}\]
	Moreover, the basis $v_{i,j}$ are $l$-weight vectors and the $\phi$ actions are given by
	\[\phi^+(u).v_{i,j} = \frac{q^{-i-j}(1+q^3u)(1-q^{-2i+2}u)(1+q^{-2j+1}u)}{(1+q^{-2i+1}u)(1+q^{-2i+3}u)(1-q^{-2j}u)(1-q^{-2j+2}u)} v_{i,j}.
	\]
	It has $q$-character
	\[\chi_q^{\sigma}(L^-_{\bar{1},1}) = \frac{1}{1-u}\sum_{0 \leq i \leq j}(A_{\bar{1},1}^{-1} A_{\bar{1},q^{-2}}^{-1} \cdots A_{\bar{1},q^{-2j+2}}^{-1} A_{\bar{1},-q}^{-1} A_{\bar{1},-q^{-1}}^{-1} \cdots A_{\bar{1},-q^{-2i+3}}^{-1}).
	\]
\end{example}

Using the dual process in Section \ref{subsectionprefund}, we can construct the positive prefundamental representation $L^+_{\bar{1},1}$ for the type $A_2^{(2)}$.

\begin{comment}
\begin{example}
The positive prefundamental representation $L^+_{\bar{1},1}$ of the Borel subalgebra of $\mathcal{U}_q(\mathcal{L}\mathfrak{sl}_3^{(2)})$ is the vector space $V^* = \oplus_{0 \leq i \leq j} \mathbb{C}v_{i,j}^*$ with actions
\[\begin{split}
e_{\bar{1}}.v_{i,j}^* &= \frac{(q^{2j+2}-q^{2i})q^{3i+j-3}}{(q-1)^2(q+1)(q^{2i}+q^{2j+1})(q^{2i-1}+q^{2j+2})}v_{i-1,j}^* - q^{-3} v_{i,j-1}^*, \\
e_{\epsilon}.v_{i,j}^* &= (qM_{i,j+2}^1 M_{i,j+1}^0 - q^2M_{i,j+2}^0 M_{i,j+1}^1)v_{i,j+2}^* \\
&+ [-q^{2i}(q^{-2i-2}-1)M_{i,j+1}^0 + qM_{i+1,j+1}^1(q^{-2i}-1) -q^2(q^{-2i-2}-1)M_{i,j+1}^1 + q^{2i+1} M_{i+1,j+1}^0 (q^{-2i-2}-1)]v_{i+1,j+1}^* \\  
&+ 	(q^{-2i-4}-1)(q^{-2i-2}-1)(q^{2i+1}-q^{2i+2})v_{i+2,j}^*  \\
& \text{all terms with } -q^{\cdots}\\
k_{\bar{1}}.v_{i,j} &= q^{-i-j}v_{i,j}, \\
k_{\epsilon}.v_{i,j} &= q^{2i+2j}v_{i,j}.
\end{split}\]
\end{example}
\end{comment}

\begin{example}
	The positive prefundamental representation $L^+_{\bar{1},1}$ of the Borel subalgebra of $\mathcal{U}_q(\mathcal{L}\mathfrak{sl}_3^{(2)})$ is the vector space $V^* = \oplus_{0 \leq i \leq j} \mathbb{C}v_{i,j}^*$ with actions
	\[\begin{split}
		e_{\bar{1}}.v_{i,j}^* &= \frac{(q^{2i}-1)q^{3i-j+7/2}}{(q-1)^2(q^{2i}+q^{2j+1})(q^{2i}+q^{2j+3})}v_{i-1,j}^* - q^{-4i-2j+5/2}(q+1)(q^{2j+1}+1)(q^{2j}-q^{2i}) v_{i,j-1}^*, \\
		e_{\epsilon}.v_{i,j}^* &= -\frac{q^{8i+6j+19/2}}{(q^2-1)^3(q^2+1)(q^{2i}+q^{2j+3})^2(q^{2i}+q^{2j+5})(q^{2i}+q^{2j+1})}v_{i,j+2}^* \\
		&+ \frac{q^{3i+5j+7/2}}{(q^2-1)(q^{2i+1}+q^{2j})(q^{2i}+q^{2j+3})} v_{i+1,j+1}^* \\ 
		&+	\frac{q^{-4i+2j-5/2}(q-1)(q^{2j}-q^{2i+2})(q^{2j}-q^{2i})}{(q^2+1)(q+1)}v_{i+2,j}^*,  \\
		k_{\bar{1}}.v_{i,j}^* &= q^{-i-j}v_{i,j}^*, \\
		k_{\epsilon}.v_{i,j}^* &= q^{2i+2j}v_{i,j}^*.
	\end{split}\]
	Note that the vector $v_{i,j}$ lies in the $l$-weight space $V^*_{\gamma}$ with $\gamma(u)=q^{-i-j}(1-u)$, but in general it is not an eigenvector of $\phi(u)$.
	The positive prefundamental representation has $q$-character
	\[\chi_q(L^+_{\bar{1},1}) = (1-u)\sum_{0 \leq i \leq j}[\alpha]^{-i-j} = (1-u)\sum_{k \geq 0}\left \lfloor{\frac{k+2}{2}}\right \rfloor [\alpha]^{-k}.
	\]
\end{example}

At last, we construct the representation $L(\widetilde{\mathbf{\Psi}}_{\bar{1},1})$ for the type $A_2^{(2)}$. By definition, this is the irreducible $l$-highest weight module $L(\mathbf{\Psi}_{\bar{1},1} \mathbf{\Psi}_{\bar{1},-q})$.

\begin{example}\label{counterexample2.1}
	The module $L(\widetilde{\mathbf{\Psi}}_{\bar{1},1})$ of the Borel subalgebra of $\mathcal{U}_q(\mathcal{L}\mathfrak{sl}_3^{(2)})$ is the vector space $W = \oplus_{j \geq 0} \mathbb{C} w_{j}$ with actions
	\[
	\begin{split}
		e_{\bar{1}}.w_j & = (q^{2j}-1) w_{j-1},\\
		e_{\epsilon}.w_j & = \frac{q^{-2j+7/2}}{(q-1)^3(q+1)(q^2+1)} w_{j+2},\\
		k_{\bar{1}}.w_j & = q^{-j} w_j,\\
		k_{\epsilon}.w_j & = q^{2j} w_j.
	\end{split}\]
	It has $q$-character
	\[\chi_q^{\sigma}(L(\widetilde{\mathbf{\Psi}}_{\bar{1},1})) = [\widetilde{\mathbf{\Psi}}_{\bar{1},1}]\sum_{j \geq 0}(A_{\bar{1},1}^{-1}\cdots A_{\bar{1},q^{{-2j+2}}}^{-1}).\]
\end{example}

Let us compare it with the $L(\widetilde{\mathbf{\Psi}}_{1,1}) = L(\mathbf{\Psi}_{1,1}^{-1} \mathbf{\Psi}_{2,q})$ of non-twisted type $A_2$.
\begin{example}\label{counterexample2.2}
	The module $L(\widetilde{\mathbf{\Psi}}_{1,1})$ of the Borel subalgebra of $\mathcal{U}_q(\mathcal{L}\mathfrak{sl}_3)$ is the vector space $W = \oplus_{j,k \geq 0} \mathbb{C} w_{j,k}$ with actions
	\[
	\begin{split}
		e_{1}.w_{j,k} & = q^j [k]_q w_{j,k-1},\\
		e_{2}.w_{j,k} & = [j]_q w_{j-1,k},\\
		e_{0}.w_{j,k} & = \frac{q^{-k+6}}{q-q^{-1}} w_{j+1,k+1},\\
		k_{1}.w_{j,k} & = q^{j-2k} w_{j,k},\\
		k_{2}.w_{j,k} & = q^{k-2j} w_{j,k},\\
		k_{0}.w_{j,k} & = q^{j+k} w_{j,k}.
	\end{split}\]
	
	It has $q$-character 
	\[\chi_q(L(\widetilde{\mathbf{\Psi}}_{1,1})) = [\widetilde{\mathbf{\Psi}}_{1,1}]\sum_{k \geq 0} (A_{1,1}^{-1}\cdots A_{1,q^{{-2k+2}}}^{-1})\frac{1}{1-[\alpha_2]^{-1}}. \]
\end{example}

Notice that $\pi_0(\widetilde{\mathbf{\Psi}}_{1,1}) = \widetilde{\mathbf{\Psi}}_{\bar{1},1}$, but $\chi_q^{\sigma}(L(\widetilde{\mathbf{\Psi}}_{\bar{1},1})) \neq \pi(\chi_q(L(\widetilde{\mathbf{\Psi}}_{1,1})))$.
This provides a counterexample in the Remark \ref{remarkcounterexample}.

However, divided by the usual character, \[\chi_q^{\sigma}(\widetilde{Q}_{\bar{1},1}) = \frac{\chi_q^{\sigma}(L(\widetilde{\mathbf{\Psi}}_{\bar{1},1}))}{\chi^{\sigma}(L(\widetilde{\mathbf{\Psi}}_{\bar{1},1}))} = \frac{[\widetilde{\mathbf{\Psi}}_{\bar{1},1}]\sum_{j \geq 0}(A_{\bar{1},1}^{-1}\cdots A_{\bar{1},q^{{-2j+2}}}^{-1})}{\frac{1}{1-[\alpha_{\bar{1}}]^{-1}}},  \]
and
\[\chi_q(\widetilde{Q}_{1,1}) = \frac{\chi_q(L(\widetilde{\mathbf{\Psi}}_{1,1}))}{\chi(L(\widetilde{\mathbf{\Psi}}_{1,1}))} = \frac{[\widetilde{\mathbf{\Psi}}_{1,1}]\sum_{k \geq 0} (A_{1,1}^{-1}\cdots A_{1,q^{{-2k+2}}}^{-1})\frac{1}{1-[\alpha_2]^{-1}}}{\frac{1}{1-[\alpha_1]^{-1}}\frac{1}{1-[\alpha_2]^{-1}}}.\]
This verifies the equation in Lemma \ref{QQlemma} that $\chi_q^{\sigma}(\widetilde{Q}_{\bar{1},1}) = \pi(\chi_q(\widetilde{Q}_{1,1}))$.

	\bibliographystyle{amsplain}
	\bibliography{bib}
\end{document}